 \numberwithin{equation}{section}
\newtheorem{theorem}{Theorem}[section]
\newtheorem{proposition}[theorem]{Proposition}
\newtheorem{remark}[theorem]{Remark}
\newtheorem{lemma}[theorem]{Lemma}
\newtheorem{corollary}[theorem]{Corollary}
\def\eps{\varepsilon}
\newcommand{\cA}{{\cal A}}
\newcommand{\cE}{{\cal E}}
\newcommand{\cK}{{\cal K}}
\newcommand{\di}{\operatorname{\text{div}}}
\newcommand{\R}{{\mathbb R}}
\title{Concentration of ground states in stationary \\ Mean-Field Games systems}
\author{Annalisa Cesaroni and Marco Cirant }
\date{ }
\begin{document}

\maketitle

\begin{abstract}
 In this paper we provide the  existence of classical solutions to stationary mean field game systems in the whole space $\R^N$, 
with coercive  potential and aggregating local coupling, under general conditions on the Hamiltonian.
The only  structural assumption we make is   on the growth  at infinity of the  coupling term in terms of the growth of the Hamiltonian.
This result is obtained  using a variational approach based on the analysis of the non-convex energy associated to the system. 
Finally, we show that in the vanishing viscosity limit mass concentrates around the flattest minima of the potential. We also describe  
the asymptotic shape of the rescaled solutions in the vanishing viscosity limit, in particular proving the existence of ground states,
 i.e. classical solutions to mean field game systems in the whole space without potential, and with aggregating coupling. 
\medskip

\noindent
{\footnotesize \textbf{AMS-Subject Classification}}. {\footnotesize 35J50, 49N70, 35J47, 91A13, 35B25}\\
{\footnotesize \textbf{Keywords}}. {\footnotesize Ergodic Mean-Field Games, Semiclassical limit, Concentration-compactness method, Mass concentration, Elliptic systems, Variational methods.}
\end{abstract}

\tableofcontents

\section{Introduction} 
We consider a class of ergodic Mean-Field Games systems set on the whole space $\R^N$ with  unbounded decreasing coupling: our problem is, given $\eps >0$ and $M>0$, to find a constant $\lambda \in \R$ for which there exists a couple $(u,m)\in C^{2}(\R^N) \times W^{1,p}(\R^N)$, for any $p>1$, solving
\begin{equation}\label{mfg}\begin{cases}
-\eps \Delta u+ H(\nabla u)+\lambda= f(m)+V(x) \\
- \eps\Delta m-\di(m \nabla H(\nabla u) )=0 & \text{on $\R^N$,} \\   \int_{\R^N} m=M. \end{cases}
\end{equation}The aim of this work is two-fold. Firstly, for any fixed $\eps >0$, we prove the existence of classical ground states of \eqref{mfg}. Secondly, we study their behavior in the vanishing viscosity limit $\eps \to 0$.

The Hamiltonian $H:\R^N\to\R$ is  strictly convex,  $H\in C^2(\R^N\setminus \{0\})$ and has superlinear growth: we assume that there exist $C_H > 0$, $K>0$ and $\gamma > 1$
such that, for all $p\in\R^N$,
\begin{equation}\begin{split}\label{Hass}
&C_H |p|^{\gamma} - K \le H(p) 
\le C_H |p|^{\gamma}, \\ 
&\nabla H(p)\cdot p-H(p)\geq K^{-1} |p|^\gamma -K \quad
{\mbox{   and  }} \quad   |\nabla H(p)|\leq K|p|^{\gamma-1}. \end{split}\end{equation}
The coupling term  $f:[0, +\infty)\to\R $ 
is  a locally Lipschitz continuous function such
 that  there exist $C_f>0$ and $K>0$ for which 
\begin{equation}\label{assFlocal}  -C_fm^{\alpha}-K\leq  f( m)
\leq -C_f m^{\alpha}+K,\end{equation}
with \begin{equation}\label{alpha}   0<\alpha<\frac{\gamma}{N(\gamma-1)}=\frac{\gamma'}{N},
 \end{equation} where $\gamma'=\frac{\gamma}{\gamma-1}$ is the conjugate exponent of $\gamma$.\\
  Finally, we assume that the potential $V$ is a locally H\"older continuous function, and that  there exist $b>0$ and a constant $C_V>0$ such that 
\begin{equation}\label{vass}
C_V^{-1} (\max\{ |x|-C_V, 0\})^{b} \le V(x) \le C_V (1 + |x|)^{b}. 
\end{equation} Note that the requirement of $V$ to be non-negative is not crucial, we just need it to be bounded from below.

Mean-Field Games (MFG) is a recent theory that models the behaviour of a very large number of indistinguishable rational agents aiming at minimizing a common cost. The theory was introduced in the seminal works by Lasry, Lions \cite{LL061, LL062, LL07, Lcol} and by Huang, Caines, Malham\'e \cite{HCM06}, and has been rapidly growing during the last decade due to its mathematical challenges and several potential applications (from economics and finance, to engineering and models of social systems).
In the ergodic MFG setting, the dynamics of a typical agent is given by the controlled stochastic differential equation
\[
d X_s = -v_s ds + \sqrt{2 \eps} \, d B_s, \ s>0,
\]
where $v_s$ is the control and $B_s$ is a Brownian motion, with initial state given by a random variable $X_0$.
The cost (of long-time average form) is given by
\[
\lim_{T \to \infty } \frac{1}{T} \mathbb{E} \int_{0}^T [L(v_s) + V(X_s) + f(m(X_s))] ds,
\]
where the Lagrangian $L$ is the Legendre transform of $H$ (see \eqref{Leg}) and $m(x)$ denotes the density of population of small agents at a position $x \in \R^N$. A typical agent minimizes his own cost, and the density of its corresponding distribution law $\mathcal{L}(X_s)$ converges  as time $s \to \infty$ to a stationary density $\mu$, which is independent of the initial distribution $\mathcal{L}(X_0)$. In an equilibrium regime, $\mu$ coincides with the population density $m$. This equilibrium is encoded from the PDE viewpoint in \eqref{mfg}: a solution $u$ of the Hamilton-Jacobi-Bellman equation gives an optimal control for the typical agent in feedback form $\nabla H(\nabla u(\cdot))$, and the Kolmogorov equation provides the density $m$ of the agents playing in an optimal way. 

The two key points of our setting are the following: firstly, the cost is monotonically {\it decreasing} with respect to the population distribution $m$, namely agents are attracted toward congested areas. A large part of the MFG literature focuses on the study of systems with competition, namely when the coupling in the cost is monotonically increasing; this assumption is essential if one seeks for uniqueness of equilibria, and it is in general crucial in many existence and regularity arguments, see, e.g \cite{gpv}, and references therein. On the other hand, models with aggregation like \eqref{mfg} have been considered in few cases, see \cite{notebari, c16, CirO, cpre, Go16}.

Secondly, the state of a typical agent here is the {\it whole euclidean space} $\R^N$. Usually, the analysis of \eqref{mfg} is carried out in the periodic setting, in order to avoid boundary issues and the non-compactness of $\R^N$. Few investigations are available in the truly non-periodic setting: see \cite{Porretta} for time-dependent problems, \cite{Arapostathis} for the case of bounded controls, \cite{GoPi16} for some regularity results and \cite{BarPri} for the Linear-Quadratic framework. We observe that the non-compact setting is even more delicate for stationary (ergodic) problems like \eqref{mfg}: a stable long-time regime of a typical player is ensured if the Brownian motion is compensated by the optimal velocity $v_s$. In other words, if a force that drives players to bounded states is missing, dissipation eventually leads their distribution to vanish on the whole $\R^N$. This phenomenon is impossible if the state space is compact. The main issue here is that the behaviour of the optimal velocity $v_s(\cdot) = \nabla H(\nabla u(\cdot))$ is a priori unknown, and depends in an implicit way on $V$ and the distribution $m$ itself.  Note that $V(\cdot)$ represents the spatial preference of a single agent; if it grows as $|x| \to \infty$, it discourages agents to be far away from the origin. At the PDE level, this will compensate the lack of compactness of $\R^N$.  Let us mention that even without the coupling term $f(m^\alpha)$, the ergodic control problem in unbounded domains has received a considerable attention, see e.g. \cite{bm, ichi11, ichi15} and references therein.

In our analysis, we exploit the variational nature of the system \eqref{mfg}, which has been pointed out already in the first papers on MFG, see \cite{LL07}, or the more recent work \cite{MesSil}. Indeed,
solutions to \eqref{mfg} can be put in correspondence with critical points of  
the following energy \begin{equation}\label{energiaintro}
\mathcal{E}(m, w) := \begin{cases}
\displaystyle \int_{\R^N} m L\left(-\frac{w}{m}\right) + V(x)m  + F(m) \; dx & \text{ if $(m,w) \in\mathcal{K}_{\eps, M}$}, \\
+\infty & \text{otherwise},
\end{cases}
\end{equation}
where  $F(m)=\int_0^m f(n)dn$ for $m\geq 0$ and $F(m)=0$ for $m\leq 0$ and \begin{equation}\label{dati}
L\left(-\frac{w}{m}\right):=\begin{cases}  \sup_{p\in\R^N}(-\frac{p\cdot w}{m}-H(p)) & {\mbox{ if }}m>0,\\ 
0 & {\mbox{ if }}m=0, w=0,\\ +\infty & \text{otherwise.} \end{cases} \end{equation} Note that $m L(-\cdot/m)$ reads as the Legendre transform of $m H(\cdot)$.  
 The  constraint set  is defined as  \begin{equation}\begin{split} 
\label{kcalconstraint}
\mathcal{K}_{\eps, M}: =\,&\left\{ (m,w) \in L^1(\R^N)\cap L^q(\R^N)
\times L^{1}(\R^N)\; {\mbox{ s.t.}} \right. \\ &\quad
\eps \int_{\R^N}  m  (-\Delta \varphi) \, dx = \int_{\R^N} w \cdot \nabla \varphi \, dx 
\quad \forall \varphi \in C_0^\infty(\R^N), \\ &\quad \left.
\int_{\R^N} m \, dx = M, \quad \text{$m \ge 0$ a.e.}  \right\}\qquad\text{ with } q=\begin{cases} \frac{N}{N-\gamma'+1} & \gamma'\leq N \\ \gamma' & \gamma'>N.\end{cases}
\end{split}\end{equation} 

%

Under assumption  \eqref{assFlocal} on  the coupling term, the energy $\mathcal{E}$ is not convex. Condition \eqref{alpha} is necessary for  the problem $e_\eps(M):=\min_{(m,w)\in \mathcal{K}_{\eps, M}} \mathcal{E}(m, w)$ to be well-posed. Indeed, consider any $(m_0, w_0) \in \cK_{\eps,M}$ such that $m_0$ has compact support. An easy computation shows that if $\alpha > \gamma'/N$, then
\[
\cE(\sigma^{-N}m_0(\sigma^{-1} \cdot), \sigma^{-(N+1)}w_0(\sigma^{-1} \cdot))\to-\infty
\]
as $\sigma \to 0$, so $\cE$ is not bounded from below on $\cK_{\eps,M}$. We show that \eqref{alpha} is indeed sufficient for $e_\eps(M)$ to be finite, and allows to look for {\it ground states} of \eqref{mfg}. This will be accomplished by a study of the Sobolev regularity of the Kolmogorov equation, see in particular Section \ref{s:kolmo_reg}. Note that the critical case $\alpha = \gamma'/N$ is more delicate, and requires additional analysis.  We also mention that another critical exponent is intrinsic in \eqref{mfg}: if $\alpha > \gamma'/(N-\gamma')$, one has to expect non-existence of solutions (see \cite{c16}).
 We refer to our case as the \emph{subcritical case}, in analogy with the $L^2$-subcritical regime in nonlinear 
Schr\"odinger equations with prescribed mass (see \cite[Remark 2.9]{c16} for additional comments). 
The analogy can be made precise in the  purely quadratic framework, that is when $H(p)=\frac{1}{2}|p|^2$. Indeed, as observed in \cite{LL061, LL062}, the so-called Hopf-Cole transformation permits to reduce  the number of unknowns in the system.  Setting $v^2(x):=m(x)=  c e^{-\frac{u(x)}{\eps}}$,  with $c$ normalizing constant, then $v$ is a solution to 
\[-2\eps^2\Delta v+ (V(x)-\lambda) v= -f(v^2)v\] with $\int_{\R^N} v^2(x)dx=M$. Then the energy reads $\mathcal{E}(v)=\int_{\R^N} \eps^2 |\nabla v|^2 +\frac{1}{2}V(x) v^2+\frac{1}{2 } F(v^{2}) dx$.

In our approach,  to construct solutions to \eqref{mfg}, we look for  minimizers $(m,w)\in\mathcal{K}_{\eps, M}$ of the energy \eqref{energiaintro}. These minimizers can be obtained by classical direct methods, by using in particular estimates and compactness in some $L^p$ space for elements $(m,w)$ in $\mathcal{K}_{\eps, M}$ with bounded action, i.e.  which satisfy $\int_{\R^N} m L\left(-\frac{w}{m}\right) dx\leq C$, obtained in Section \ref{s:kolmo_reg}.  Then, the existence of a solution $(u_\eps, \lambda_\eps)$ of the HJB equation in \eqref{mfg} is obtained by considering another functional with linearized coupling (around the minimizer) and the associated dual functional in the sense of Fenchel-Rockafellar (as in \cite{BC16}). One has to take care of the interplay between $u$ and $m$ as $|x| \to \infty$.  To handle the lack of a priori regularity on the function $m$, we first regularize the problem, by applying standard  regularizing convolution kernels on the coupling (see Section \ref{sectionreg}). We construct minimizers $(m_k, w_k)$ of the regularized energy and associated solutions $(u_k, m_k)$ of the regularized version of \eqref{mfg}. Then, in order to come back to the initial problem, we provide some new a priori uniform $L^\infty$ bounds on $m_k$, which in turn imply a priori uniform bounds on $|\nabla u_k|$ and (local) H\"older regularity of $m_k$ that is uniform in $k$.  This key a priori bound is provided by Theorem \ref{stimalinfinito} 

 Note that we will consider classical solutions to this system (with a slight abuse of terminology), that is $(u,m)\in C^{2 }(\R^N)\times W^{1, p}(\R^N)$, for  all $p>1$. The existence result, proved in Section \ref{secex}, is the following. 
\begin{theorem} \label{exthm} Under the assumptions \eqref{Hass}, \eqref{assFlocal}, \eqref{alpha} and \eqref{vass}, for every $\eps>0$ there exists a classical solution $(u_\eps,m_\eps,\lambda_\eps)\in C^{2}(\R^N) \times W^{1,p}(\R^N)\times\R$, 
for all $p>1$, to \eqref{mfg}.  Moreover, $(m_\eps, -m_\eps \nabla H (\nabla u_\eps))$ is a minimizer in the set $\mathcal{K}_{\eps, M}$ of
the energy \eqref{energiaintro}.  
 \end{theorem}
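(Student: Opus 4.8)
The plan is to follow the \textbf{regularize -- solve -- pass to the limit} scheme outlined in the introduction. Fix $\eps>0$. \textbf{Step 1 (regularized energy).} For $k\in\N$ let $\mathcal{E}_k$ be the energy obtained from \eqref{energiaintro} by mollifying the coupling (convolving with a standard kernel $\rho_k$), so that the nonlinearity becomes smooth while the two-sided bound \eqref{assFlocal} is retained uniformly in $k$; in particular the primitive still satisfies a bound of the form $F_k(m)\le C m^{1+\alpha}+C$ with $1+\alpha<1+\gamma'/N$ uniformly in $k$. Combining the $L^q$ estimate of Section \ref{s:kolmo_reg} for elements $(m,w)\in\mathcal{K}_{\eps,M}$ with bounded action, the $L^p$ bound on $w$, and an interpolation argument, $\mathcal{E}_k$ is then bounded from below on $\mathcal{K}_{\eps,M}$ uniformly in $k$, so $e_{\eps,k}(M):=\inf_{\mathcal{K}_{\eps,M}}\mathcal{E}_k>-\infty$.

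\textbf{Step 2 (minimizers of $\mathcal{E}_k$ and the regularized system).} Apply the direct method to a minimizing sequence. The coercive term $\int_{\R^N}V m$, via \eqref{vass}, yields tightness (no mass escapes to infinity), while the action bound gives, through Section \ref{s:kolmo_reg}, equi-integrability and weak compactness in suitable $L^p$ spaces; convexity and lower semicontinuity of $(m,w)\mapsto\int_{\R^N}mL(-w/m)$, lower semicontinuity of $\int_{\R^N}Vm$ under tight weak convergence, and continuity of the mollified coupling term along the convergence then produce a minimizer $(m_k,w_k)$, with $\int_{\R^N}m_k=M$ since $\mathcal{K}_{\eps,M}$ is weakly closed. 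To extract the Hamilton-Jacobi solution, freeze the coupling around $m_k$: replace the nonlinear term by the affine one obtained from $f_k$ evaluated at $m_k$, consider the resulting convex minimization over $\mathcal{K}_{\eps,M}$ together with its Fenchel-Rockafellar dual, and argue as in \cite{BC16} that the dual problem is attained at some $u_k$; the optimality relations then show that $(u_k,m_k,\lambda_k)$ solves the regularized version of \eqref{mfg} with $w_k=-m_k\nabla H(\nabla u_k)$. Since the coupling is now smooth and locally bounded, elliptic regularity gives $u_k\in C^2$ and $m_k\in W^{1,p}_{loc}$. The subtle point, typical of the non-compact setting, is controlling the behaviour of $u_k$ as $|x|\to\infty$, needed to make the duality pairing and the integration by parts $\eps\int_{\R^N}m_k(-\Delta u_k)\,dx=\int_{\R^N}w_k\cdot\nabla u_k\,dx$ legitimate; this is handled using \eqref{vass} together with the structural conditions \eqref{Hass}.

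\textbf{Step 3 (uniform bounds and limit $k\to\infty$).} By Theorem \ref{stimalinfinito}, $\|m_k\|_{L^\infty(\R^N)}\le C$ uniformly in $k$ --- this is where subcriticality \eqref{alpha} is essential. Consequently $|f_k(m_k)|\le C$ uniformly in $k$ and in $x$, and $\lambda_k$ is bounded by pairing the Hamilton-Jacobi equation with $m_k$ and invoking the energy bound and \eqref{Hass}; hence the right-hand side of the Hamilton-Jacobi equation is bounded on each ball $B_R$ uniformly in $k$. Local gradient estimates for viscous Hamilton-Jacobi equations with Hamiltonian as in \eqref{Hass} then give uniform-in-$k$ bounds on $\|\nabla u_k\|_{L^\infty(B_R)}$, hence uniform $C^{2,\theta}_{loc}$ bounds on $u_k$ via Schauder, and in turn uniform local H\"older and $W^{1,p}_{loc}$ bounds on $m_k$ from the Kolmogorov equation. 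Extract subsequences $u_k\to u_\eps$ in $C^2_{loc}$, $m_k\to m_\eps$ in $W^{1,p}_{loc}$ and weakly in $L^q$, and $\lambda_k\to\lambda_\eps$; tightness coming from $\int_{\R^N}Vm_k\le C$ guarantees $\int_{\R^N}m_\eps=M$, and passing to the limit in the regularized system produces the desired classical solution $(u_\eps,m_\eps,\lambda_\eps)$ of \eqref{mfg}. Finally, $e_{\eps,k}(M)\to e_\eps(M)$ (since the mollified primitives converge to $F$ with uniform growth control, so $\mathcal{E}_k$ converges to $\mathcal{E}$ along the relevant sequences), and lower semicontinuity of $\mathcal{E}$ along $(m_k,w_k)$ shows that $(m_\eps,-m_\eps\nabla H(\nabla u_\eps))$ is a minimizer of $\mathcal{E}$ on $\mathcal{K}_{\eps,M}$.

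\textbf{Main obstacle.} Granting Theorem \ref{stimalinfinito}, the technical heart is twofold: carrying out the Fenchel-Rockafellar duality of Step 2 on the unbounded domain $\R^N$, where one must both ensure attainment of the dual problem and exclude the loss of a term at infinity in the identity relating primal and dual solutions; and preserving tightness of $m_k$ as $k\to\infty$ so that the mass constraint survives in the limit. Both rely crucially on the coercivity \eqref{vass} of $V$, which is what compensates the non-compactness of $\R^N$.
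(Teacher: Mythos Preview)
Your outline matches the paper's strategy step for step: regularize the coupling, minimize $\mathcal{E}_k$ by the direct method (tightness from the coercive potential), extract $(u_k,\lambda_k)$ by linearizing the coupling and running Fenchel--Rockafellar duality, then invoke Theorem \ref{stimalinfinito} for a uniform $L^\infty$ bound on $m_k$, pass to the limit locally, recover the mass constraint via $\int V m_k \le C$, and conclude minimality by $\Gamma$-convergence of $\mathcal{E}_k$ to $\mathcal{E}$.

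One point to flag: Theorem \ref{stimalinfinito} assumes, as a hypothesis, that each $m_k$ is already bounded from above on $\R^N$ (and each $u_k$ bounded from below); it then upgrades this to a bound independent of $k$. In your Step 2 you only record $m_k\in W^{1,p}_{\rm loc}$, which does not by itself give $m_k\in L^\infty(\R^N)$. The paper closes this gap in Proposition \ref{mbond} via a Lyapunov-function argument built on powers of $u_k$: using the growth $u_k(x)\gtrsim |x|^{1+b/\gamma}$ (from Theorem \ref{principal_eig}) one shows $-\eps\Delta\phi+\nabla H(\nabla u_k)\cdot\nabla\phi \ge c|x|^{(1+b/\gamma)(p-1)}$ for $\phi=u_k^p$ and $|x|$ large, hence $|x|^{(1+b/\gamma)(p-1)}m_k\in L^1$, which feeds into a Metafune--Pallara--Rhandi bound to get $m_k\in L^\infty$. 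This is a genuine, if short, step that your sketch should include before invoking Theorem \ref{stimalinfinito}.
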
 
 
 We observe (see Remarks \ref{remarkene}, \ref{rem2}) that Theorem \ref{exthm} holds under 
 more general conditions on $H$ and  $f$, that is, 
if there exist $C_H$, $C_f>0$ and $K>0$ such that
\begin{equation}\label{assFlocal2}
C_H^{-1} |p|^{\gamma} - K \le H(p) 
\le C_H (|p|^{\gamma} + 1),\qquad 
 -C_fm^{\alpha}-K\leq  f( m)
\leq C_f^{-1}m^{\alpha}+K,\end{equation}
where  $\alpha$ satisfies \eqref{alpha}. 

In the second part of the work, in Section \ref{secco}, we analyze the behavior of the triple $(u_\eps, \lambda_\eps, m_\eps)$ coming from a minimizer of $\cE$ as $\eps \to 0$, under the assumptions \eqref{Hass}, \eqref{assFlocal}. From the viewpoint of the model, this amounts to remove the Brownian noise from the agents' dynamics. Heuristically, if the diffusion becomes negligible, one should observe aggregation of players (induced by the decreasing monotonicity of coupling in the cost) towards minima of the potential $V$, that are the preferred sites.  Moreover, in the case $V$ has a finite number of minima and polynomial behavior (that is, when \eqref{Vpoly} holds) we specialize the result showing that the limit procedure  selects the more stable minima of $V$, implying e.g. full convergence in the case that there exists a unique flattest minimum. 

In order to bring as much as possible information to the limit, we consider an appropriate rescaling of $m, u$, namely 
\begin{equation}\label{risca}
\bar m_\eps(\cdot)= \eps^{\frac{N \gamma'}{\gamma'-\alpha N}}  m(\eps^{\frac{\gamma'}{\gamma'-\alpha N}} \cdot+x_\eps), \quad
\bar{u} _\eps(\cdot) = \eps^{\frac{N \alpha(\gamma'-1)-\gamma'}{\gamma'-\alpha N}}\left({u} (\eps^{\frac{\gamma'}{\gamma'-\alpha N}}  \cdot+x_\eps)-{u} (x_\eps)\right),
\end{equation}
for all $\eps > 0$. The rescaling is designed so that $(\bar u_\eps, \bar m_\eps)$ solves a MFG system where the non-linearities have the same behavior of the original ones, i.e. $H_\eps \sim |p|^\gamma$ as $p \to \infty$, but the coefficient in front of the Laplacian is equal to one for all $\eps$, see \eqref{mfgresc1}. Moreover, the couple $\bar u_\eps, \bar m_\eps$ is associated to a minimizer of a rescaled energy $\cE_\eps$, see \eqref{energiarisc}. It turns out that in this rescaling process, the potential $V$ becomes
\[
V_\eps(\cdot) = \eps^{\frac{N\alpha \gamma'}{\gamma'-\alpha N}}V(\eps^{\frac{\gamma'}{\gamma'-\alpha N}} \cdot),
\]
and vanishes (locally) as $\eps \to 0$. Therefore, as one passes to the limit, the potential cannot compensate anymore the lack of compactness of $\R^N$, and the convergence of $\bar m_\eps$ in $L^1(\R^N)$ has to be proven by other methods. Heuristically, the aggregating force should be strong enough to overcome the dissipation effect, but the clustering point can be hard to predict by lack of spatial preference. This is why we also have to translate in \eqref{risca} by $x_\eps$. We will select $x_\eps$ to be the minimum of $u_\eps$: heuristically, being $u_\eps$ the value function, this is the point where most of the players should be located. In order to recover compactness for the sequence $\bar m_\eps$, we implement some ideas of the celebrated {\it concentration-compactness} method \cite{Lions84}. This principle states intuitively that if loss of compactness occurs, $\bar m_\eps$ splits in (at least) two parts which are going infinitely far away from each other, that is 
\begin{equation}\label{msplit}
\bar m_\eps \sim \chi_{B_R(0)} \bar m_\eps + \chi_{\R^N \setminus B_{2R}(0)} \bar m_\eps,
\end{equation}
with $R \to \infty$,  $\int \chi_{B_R(0)} \bar m_\eps \sim a$ and $\int \chi_{\R^N \setminus B_{2R}(0)} \bar m_\eps \sim M - a$ for some $a \in (0,M)$ (a third possibility might happen, but it is easily ruled out here by local estimates). This induces a splitting in the energy $\cE$, that is
\begin{equation}\label{split}
\inf_{\int m = M} \cE_\eps \gtrsim \inf_{\int m = a} \cE_\eps + \inf_{\int m = M-a} \cE_\eps.
\end{equation}
One then exploits a special feature of $\cE_\eps$, which is called sub-additivity:
\[
\inf_{\int m = M} \cE_\eps < \inf_{\int m = a} \cE_\eps + \inf_{\int m = M-a} \cE_\eps,
\]
that makes \eqref{split} impossible. While sub-additivity is easy to prove for $\cE_\eps$ (see Lemma \ref{subadd}), the splitting \eqref{split} requires technical work, in particular due to the presence of the term $mL(-w/m)$ in $\cE_\eps$, that becomes increasingly singular as $m$ approaches zero (a simple cut-off as in \eqref{msplit} is not useful). The property \eqref{split} is proven in Theorem \ref{nodicotomy}. It relies on the Brezis-Lieb lemma and a perturbation argument. 
The $L^1$ convergence of $\bar m_\eps$ enables us to obtain the full convergence of $(\bar u_\eps, \bar m_\eps)$ to a limit MFG system. By a uniform control of the decay of $\bar m_\eps$ as $|x| \to \infty$, that comes from a Lyapunov function built upon $\bar u_\eps$,  energy arguments and   the crucial $L^\infty$ estimate of Theorem \ref{stimalinfinito}, we are also able to keep track of  $x_\eps$. In terms of the non-rescaled density $m_\eps$, $x_\eps$ is the point around which most of the mass is located.

The second main result of this work is stated in the following two theorems. The first one is about concentration of $m_\eps$.
  \begin{theorem} \label{concthm} Under the assumptions of Theorem \ref{exthm}, there exist sequences $\eps \to 0$ and $x_\eps$,  such that for all $\eta>0$ there exists $R$ and $\eps_0$ for which for all $\eps<\eps_0$, 
\[
\int_{|x - x_\eps| \le  R\eps^{\frac{\gamma'}{\gamma'-\alpha N}}} m_\eps \, dx \ge M-\eta.
\]
Moreover,  $x_\eps\to \bar x$, where $V(\bar x)=0$, i.e. $\bar x$ is a minimum of $V$.

If, in addition, $V$ has the form
\begin{equation}\label{Vpoly}
V(x) = h(x) \prod_{j = 1}^{n} |x-x_j|^{b_j}, \qquad C_V^{-1} \le h(x) \le C_V \text{ on $\R^N$,}
\end{equation}
for some $x_j \in \R^N$, and $b_j>0$ (with $\sum_{j=1}^n b_j=b$), then $x_\eps\to x_i$,
with $i\in \{j=1,\dots, n \ | \ b_j=\max_{k} b_k\}$. 
 \end{theorem}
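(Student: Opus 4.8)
\medskip
\noindent\textbf{Proof proposal.}
The plan is to run a concentration--compactness argument on the rescaled densities and then to pin down $x_\eps$ by an energy comparison obtained by \emph{transporting} the minimizer. First I would pass to $(\bar u_\eps,\bar m_\eps)$ as in \eqref{risca}, writing $\sigma_\eps:=\eps^{\gamma'/(\gamma'-\alpha N)}$; as will be verified in Section~\ref{secco}, the change of variables $x=\sigma_\eps y+x_\eps$ is a mass--preserving bijection $\cK_{\eps,M}\to\cK_{1,M}$ under which $\cE$ becomes, up to an explicit positive power of $\eps$, a rescaled energy $\cE_\eps(\bar m,\bar w)=\cE_0(\bar m,\bar w)+\int_{\R^N}V_\eps\,\bar m$, with $\cE_0$ the potential--free energy and $V_\eps(y)=\sigma_\eps^{\alpha N}V(\sigma_\eps y+x_\eps)\ge0$; in particular $(\bar m_\eps,\bar w_\eps)$ with $\bar w_\eps=-\bar m_\eps\nabla H_\eps(\nabla\bar u_\eps)$ minimizes $\cE_\eps$ over $\cK_{1,M}$, and $\int_{\R^N}Vm_\eps\,dx=\int_{\R^N}V(\sigma_\eps y+x_\eps)\,\bar m_\eps(y)\,dy$. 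I would then collect the uniform-in-$\eps$ bounds: $\|\bar m_\eps\|_{L^\infty}\le C$ from Theorem~\ref{stimalinfinito} applied to the rescaled system, hence by elliptic regularity for the rescaled HJB equation $\|\nabla\bar u_\eps\|_{L^\infty}\le C$ and local $C^{0,\theta}$ bounds on $\bar m_\eps$; and a uniform tail bound $\bar m_\eps(y)\le\omega(|y|)$ with $\int_{\R^N}(1+|y|)^b\,\omega(|y|)\,dy<\infty$, produced by a Lyapunov function built on $\bar u_\eps$. Testing $\cE_\eps$ with a fixed, compactly supported near--minimizer of $\cE_0$ translated over a global minimum of $V$, and using $V_\eps\to0$ locally uniformly, gives $\limsup_\eps\min_{\cK_{1,M}}\cE_\eps\le e_0(M):=\inf_{\cK_{1,M}}\cE_0$; since $V_\eps\ge0$ the reverse inequality is immediate, so $\min_{\cK_{1,M}}\cE_\eps\to e_0(M)$, where \eqref{alpha} guarantees $-\infty<e_0(M)<0$.

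Next I would apply the concentration--compactness alternative to $\bar m_\eps$. Vanishing is excluded: if $\sup_y\int_{B_1(y)}\bar m_\eps\to0$, an $L^1$--$L^\infty$ interpolation on unit cubes yields $\int_{\R^N}\bar m_\eps^{1+\alpha}\to0$, so the coupling part of $\cE_\eps$ tends to $0$ and $\cE_\eps(\bar m_\eps,\bar w_\eps)\ge-o(1)$, contradicting $\cE_\eps(\bar m_\eps,\bar w_\eps)=\min_{\cK_{1,M}}\cE_\eps\to e_0(M)<0$. Dichotomy is excluded because it is incompatible with the sub-additivity of $\cE_\eps$ (Lemma~\ref{subadd}) via the splitting estimate of Theorem~\ref{nodicotomy}: a split of $\bar m_\eps$ into masses $a$ and $M-a$ would force $e_0(M)\ge e_0(a)+e_0(M-a)$, against the strict inequality $e_0(M)<e_0(a)+e_0(M-a)$. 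Therefore, along a subsequence, $\bar m_\eps\to\bar m$ in $L^1(\R^N)$ with $\int\bar m=M$; in particular for every $\eta>0$ there are $R,\eps_0$ with $\int_{|y|\le R}\bar m_\eps\ge M-\eta$ for $\eps<\eps_0$, which after undoing $y=(x-x_\eps)/\sigma_\eps$ reads $\int_{|x-x_\eps|\le R\sigma_\eps}m_\eps\,dx\ge M-\eta$ (with $x_\eps$ the minimum point of $u_\eps$). This is the first assertion.

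To locate $x_\eps$ I would transport the minimizer rather than build a new competitor. Let $x_0$ be a global minimum of $V$ (normalise $\min V=V(x_0)=0$) and set $\tilde m_\eps:=m_\eps(\cdot-x_0+x_\eps)$, $\tilde w_\eps:=w_\eps(\cdot-x_0+x_\eps)$; then $(\tilde m_\eps,\tilde w_\eps)\in\cK_{\eps,M}$ has the same kinetic and coupling parts as $(m_\eps,w_\eps)$, so $e_\eps(M)\le\cE(\tilde m_\eps,\tilde w_\eps)$ gives
\[
\int_{\R^N}Vm_\eps\ \le\ \int_{\R^N}V\tilde m_\eps\ =\ \int_{\R^N}V(\sigma_\eps y+x_0)\,\bar m_\eps(y)\,dy\ \longrightarrow\ V(x_0)\,M=0,
\]
the limit following from continuity of $V$ at $x_0$ on $\{|y|\le R\}$ and from $V(x)\le C_V(1+|x|)^b$ together with the tail bound on $\bar m_\eps$ on $\{|y|>R\}$. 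Since $\int Vm_\eps\ge0$ we obtain $\int_{\R^N}Vm_\eps\to0$, whence for every $R$
\[
o(1)=\int_{\R^N}Vm_\eps\ \ge\ \Bigl(\inf_{B_{R\sigma_\eps}(x_\eps)}V\Bigr)\int_{|y|\le R}\bar m_\eps\ \ge\ \tfrac{M}{2}\inf_{B_{R\sigma_\eps}(x_\eps)}V
\]
for $\eps$ small, so $\inf_{B_{R\sigma_\eps}(x_\eps)}V\to0$; by the coercivity in \eqref{vass} this forces $x_\eps$ to stay bounded, and by continuity of $V$ every subsequential limit $\bar x$ satisfies $V(\bar x)=0=\min V$.

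Finally, under \eqref{Vpoly}, after a further extraction $x_\eps\to x_{j_0}$ for some $j_0\in\{1,\dots,n\}$ (the minima of $V$ being exactly $x_1,\dots,x_n$), I would sharpen the last step using $V(x)\asymp A_j|x-x_j|^{b_j}$ near $x_j$, where $A_j:=h(x_j)\prod_{k\ne j}|x_j-x_k|^{b_k}>0$. Transporting $m_\eps$ onto a flattest minimum $x_i$ ($b_i=\max_kb_k$) and using $|x_i-x_k|>0$ for $k\ne i$ and the tail bound on $\bar m_\eps$ gives $\int_{\R^N}Vm_\eps\le\int_{\R^N}V(\sigma_\eps y+x_i)\,\bar m_\eps(y)\,dy\le C\,\sigma_\eps^{b_i}\int_{\R^N}(1+|y|)^b\bar m_\eps\le C'\sigma_\eps^{b_i}$. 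On the other hand, since $x_\eps\to x_{j_0}$, $\bar m_\eps\to\bar m$ in $L^1$ with $\int\bar m=M$ and $\bar m$ is not a Dirac mass,
\[
\int_{\R^N}Vm_\eps=\int_{\R^N}V(\sigma_\eps y+x_\eps)\,\bar m_\eps(y)\,dy\ \ge\ \tfrac{A_{j_0}}{2}\,\sigma_\eps^{b_{j_0}}\int_{|y|\le R_1}\bigl|\,y+(x_\eps-x_{j_0})/\sigma_\eps\,\bigr|^{b_{j_0}}\bar m_\eps(y)\,dy\ \ge\ c\,\sigma_\eps^{b_{j_0}}
\]
for $R_1$ large and $\eps$ small, irrespective of whether $(x_\eps-x_{j_0})/\sigma_\eps$ stays bounded. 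Comparing, $c\,\sigma_\eps^{b_{j_0}}\le C'\sigma_\eps^{b_i}$, that is $\sigma_\eps^{-(b_i-b_{j_0})}\le C'/c$, which as $\sigma_\eps\to0$ is impossible unless $b_{j_0}=b_i=\max_kb_k$; this gives the last assertion. The step I expect to be the main obstacle is the exclusion of dichotomy (Theorem~\ref{nodicotomy}): since $mL(-w/m)$ becomes increasingly singular as $m\to0$, a plain cut-off of $\bar m_\eps$ is useless and one must combine the Brezis--Lieb lemma with a delicate perturbation of the vector field; securing the uniform-in-$\eps$ tail bound on $\bar m_\eps$ despite the locally vanishing potential $V_\eps$ is the other sensitive point, as it underpins all the tail estimates above.
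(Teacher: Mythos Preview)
Your overall strategy---rescaling to $(\bar u_\eps,\bar m_\eps)$, ruling out loss of mass via subadditivity plus the splitting estimate, and then locating $x_\eps$ by an energy comparison---follows the paper's route for the first assertion. The gap is in the location step, and it stems from an incorrect claim early on: you assert $\|\nabla\bar u_\eps\|_{L^\infty}\le C$ globally. This is false. The rescaled HJB equation carries the term $V_\eps(y+y_\eps)$, which by \eqref{Vtilda} is bounded only by $C(\eps^{\frac{(N\alpha+b)\gamma'}{\gamma'-N\alpha}}|y|^b+1)$; hence Theorem~\ref{bernstein2} yields only $|\nabla\bar u_\eps(y)|\le C(1+|y|)^{b/\gamma}$ (this is \eqref{gradienttilda}). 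Global Lipschitz bounds hold for the \emph{limit} $\bar u$ (where the potential has vanished), not for $\bar u_\eps$. As a consequence, your claimed uniform tail bound $\bar m_\eps\le\omega$ with $\int(1+|y|)^b\omega<\infty$ via a Lyapunov function built on $\bar u_\eps$ is not justified: in the Lyapunov computation the term $-\kappa|\nabla\bar u_\eps|^2$ can dominate $K^{-1}|\nabla\bar u_\eps|^\gamma$ when $\gamma<2$ and $|y|$ is large, and the energy bound $\int V_\eps(y+y_\eps)\bar m_\eps\le C$ gives no uniform $b$-th moment either (the prefactor on $|y|^b$ vanishes with $\eps$). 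Without this moment control, neither $\int V(\sigma_\eps y+x_0)\bar m_\eps\to 0$ nor the upper bound $\int V(\sigma_\eps y+x_i)\bar m_\eps\le C\sigma_\eps^{b_i}$ follows, so your transport-of-$m_\eps$ argument for the location and for the selection of flattest minima collapses.

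The paper circumvents this obstacle by \emph{not} relying on tails of $\bar m_\eps$. It introduces an auxiliary minimizer $(\hat m_\eps,\hat w_\eps)$ of the \emph{potential-free} rescaled energy (Remark~\ref{remgs} and the lemma preceding Proposition~\ref{minimo}); for this problem the associated $\hat u_\eps$ has globally bounded gradient (no growing potential), and one obtains a genuine uniform exponential decay $\hat m_\eps(y)\le c_1e^{-c_2|y|}$. Minimality of $(\bar m_\eps,\bar w_\eps)$ for $\cE_\eps$ and of $(\hat m_\eps,\hat w_\eps)$ for its potential-free part yield the key inequality $\int \bar m_\eps V_\eps(y+y_\eps)\,dy\le\int \hat m_\eps(y)V_\eps(y+y_\eps-z)\,dy$ for all $z$ (see \eqref{dise}); choosing $z$ to center $\hat m_\eps$ over a minimum of $V$ then gives $V(\bar x)=0$ (Proposition~\ref{minimo}) and the sharper rate $\le C\sigma_\eps^{\max_j b_j}$ needed for Proposition~\ref{teoy}. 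Your idea of translating $m_\eps$ itself is appealing, but to make it rigorous you would need to supply the missing uniform moment bound on $\bar m_\eps$; the paper's detour through $\hat m_\eps$ is precisely what makes the tail estimates come for free.
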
 

Secondly, we describe the asymptotic profile of $(\bar u_\eps, \bar m_\eps)$ as $\eps \to 0$. Note that as a byproduct we obtain the existence of solutions to MFG systems without potential.  
 \begin{theorem}\label{fullconv}
Up to subsequences, $(\bar u_\eps, \bar m_\eps)$ converges in $C^1_{\rm loc}(\R^N) \times C_{\rm loc}(\R^N) \cap L^p(\R^N)$, for all $p \ge 1$, to a  solution $(\bar u, \bar m)$ of
\begin{equation}\label{mfglimit2}\begin{cases}
- \Delta u+ C_H |\nabla u|^\gamma+ \lambda = - C_f  m^\alpha\\
- \Delta  m -C_H\gamma \di( m  |\nabla  u|^{\gamma-2}\nabla  u)=0\\\int_{\R^N}  m=M. \end{cases}
\end{equation}
The function $\bar u$ is globally Lipschitz continuous on $\R^N$, and there exists $c_1, c_2>0$ such that $0<\bar m(x)\leq c_1 e^{-c_2|x|}$.

Finally, if $\bar w=-C_H\gamma \bar m  |\nabla \bar u|^{\gamma-2}\nabla  \bar u$, then
\begin{equation}\label{min2} 
 \mathcal{E}_0(\bar m,\bar w)=\min \left\{ \mathcal{E}_0(m,w) \ |\ (m,w)\in \mathcal{K}_{1, M},\ m(1+|y|^b)\in L^1(\R^N)\right\},
\end{equation}  where
\begin{equation} \label{energia0} \mathcal{E}_0(m,w)=\int_{\R^N}  C_L  \frac{|w|^{\gamma'}}{m^{\gamma'-1}} - \frac{1}{\alpha+1} m^{\alpha+1}dy.
\end{equation} 
\end{theorem}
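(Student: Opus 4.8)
The plan is to pass to the limit $\eps\to0$ in the rescaled system \eqref{mfgresc1} solved by $(\bar u_\eps,\bar m_\eps)$, combining the uniform a priori estimates with the compactness already furnished by Theorem~\ref{concthm}, and then to transfer the variational characterization to the limiting pair. The starting point is the uniform bounds. The crucial one is the $L^\infty$ estimate of Theorem~\ref{stimalinfinito}, which in the rescaled variables gives $\|\bar m_\eps\|_{L^\infty(\R^N)}\le C$ with $C$ independent of $\eps$. Feeding this into the rescaled Hamilton--Jacobi--Bellman equation (whose potential $V_\eps\to0$ and whose Hamiltonian $H_\eps\to C_H|\cdot|^\gamma$ locally uniformly), and using the boundedness of $\lambda_\eps$ --- obtained from bounds on $e_\eps(M)$ together with the integral identity produced by testing the HJB equation against $\bar m_\eps$ and integrating by parts against the Kolmogorov equation --- Bernstein-type gradient estimates yield $\|\nabla\bar u_\eps\|_{L^\infty(\R^N)}\le C$ uniformly; normalising by $\bar u_\eps(0)=0$, the family is uniformly Lipschitz. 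Elliptic regularity then gives uniform local $C^{2,\theta}$ bounds for $\bar u_\eps$, and the Kolmogorov equation with the Lipschitz drift $\nabla H_\eps(\nabla\bar u_\eps)$ gives uniform local $C^{\theta}$ and $W^{1,p}$ bounds for $\bar m_\eps$. By Ascoli--Arzelà and a diagonal argument I extract a subsequence along which $\bar u_\eps\to\bar u$ in $C^1_{\rm loc}$, $\nabla H_\eps(\nabla\bar u_\eps)\to C_H\gamma|\nabla\bar u|^{\gamma-2}\nabla\bar u$, $\bar m_\eps\to\bar m$ in $C_{\rm loc}$, and $\lambda_\eps\to\lambda$; since also $f_\eps\to-C_f(\cdot)^\alpha$, the pair $(\bar u,\bar m)$ solves the first two equations of \eqref{mfglimit2} classically.

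Next I would rule out loss of mass. Since the translation by $x_\eps$ is already incorporated in \eqref{risca}, the conclusion of Theorem~\ref{concthm} says precisely that $\{\bar m_\eps\}$ is tight, i.e. for every $\eta>0$ there is $R$ with $\int_{B_R}\bar m_\eps\ge M-\eta$ for $\eps$ small. Together with the local uniform convergence this forces $\bar m_\eps\to\bar m$ in $L^1(\R^N)$, hence $\int_{\R^N}\bar m=M$; interpolating with the uniform $L^\infty$ bound upgrades this to convergence in $L^p(\R^N)$ for every $p\ge1$, and accordingly $\bar w_\eps:=-\bar m_\eps\nabla H_\eps(\nabla\bar u_\eps)\to\bar w:=-C_H\gamma\,\bar m|\nabla\bar u|^{\gamma-2}\nabla\bar u$ in $L^p$. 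Positivity $\bar m>0$ follows from the Harnack inequality / strong maximum principle for the linear elliptic equation solved by $\bar m\ge0$, $\bar m\not\equiv0$. The uniform exponential decay $\bar m_\eps(x)\le c_1e^{-c_2|x|}$ furnished by the Lyapunov function built on $\bar u_\eps$ (mentioned before the statement) passes to the limit, giving $0<\bar m(x)\le c_1e^{-c_2|x|}$, while the uniform Lipschitz bound on $\bar u_\eps$ gives $\bar u$ globally Lipschitz on $\R^N$; in particular $\bar m(1+|y|^b)\in L^1(\R^N)$, so $(\bar m,\bar w)$ is admissible in \eqref{min2}.

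Finally I would establish the minimality \eqref{min2}. As $(\bar m_\eps,\bar w_\eps)$ minimises the rescaled energy $\cE_\eps$ (see \eqref{energiarisc}) over the rescaled constraint set, $\cE_\eps(\bar m_\eps,\bar w_\eps)=\inf\cE_\eps$. For the lower bound, joint convexity of $(m,w)\mapsto mL_\eps(-w/m)$ and weak lower semicontinuity, the $L^{\alpha+1}$ convergence of $\bar m_\eps$ (which follows from $L^1$ and uniform $L^\infty$ convergence and controls the coupling term) and $\int V_\eps\bar m_\eps\le C\eps^{\kappa}\int(1+|y|)^b c_1e^{-c_2|y|}\,dy\to0$ (using the uniform decay) give $\cE_0(\bar m,\bar w)\le\liminf_\eps\cE_\eps(\bar m_\eps,\bar w_\eps)$. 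For the reverse inequality, given any competitor $(m,w)\in\cK_{1,M}$ with $m(1+|y|^b)\in L^1(\R^N)$, I would use it (possibly after a harmless truncation/regularisation so that it lies in the $\eps$-constraint set) as a recovery sequence: since $H_\eps\to C_H|\cdot|^\gamma$, $f_\eps\to-C_f(\cdot)^\alpha$ and $\int V_\eps m\le C\eps^{\kappa}\int(1+|y|)^b m\,dy\to0$ precisely by the weighted integrability of $m$, one gets $\limsup_\eps\inf\cE_\eps\le\cE_0(m,w)$. Chaining $\cE_0(\bar m,\bar w)\le\liminf_\eps\cE_\eps(\bar m_\eps,\bar w_\eps)=\liminf_\eps\inf\cE_\eps\le\limsup_\eps\inf\cE_\eps\le\cE_0(m,w)$ yields \eqref{min2}.

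The main obstacle is this last step: the lower-semicontinuity argument for the singular term $mL_\eps(-w/m)$, which degenerates where $m\to0$ so that a simple cut-off is not allowed, and --- above all --- the construction of a recovery sequence that genuinely sits in the $\eps$-dependent constraint set while making the vanishing-potential contribution $\int V_\eps m$ tend to zero, which is exactly why the competitor class in \eqref{min2} is restricted to densities with $m(1+|y|^b)\in L^1$. A secondary but essential technical point is the uniform gradient bound on $\bar u_\eps$ and the boundedness of $\lambda_\eps$, on which all the subsequent compactness rests.
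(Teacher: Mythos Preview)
Your overall scaffold---uniform estimates, local compactness, a $\Gamma$-convergence argument for \eqref{min2}---is the paper's strategy, but two of the ingredients you invoke are not available, and this leaves the main step unproved.

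First, the global bound $\|\nabla\bar u_\eps\|_{L^\infty(\R^N)}\le C$ does not hold: the shifted potential $V_\eps(y+y_\eps)$ in \eqref{mfgresc1} satisfies only $0\le V_\eps(y+y_\eps)\le C(1+\eps^{\kappa}|y|^b)$ (see \eqref{Vtilda}), so Theorem~\ref{bernstein2} gives merely $|\nabla\bar u_\eps(y)|\le C(1+|y|)^{b/\gamma}$, as in \eqref{gradienttilda}. Global Lipschitz continuity is a property of the \emph{limit} $\bar u$ only, precisely because the potential has vanished there. For the same reason the Lyapunov argument does not furnish a uniform-in-$\eps$ exponential decay of $\bar m_\eps$; the paper proves exponential decay only for the limit $\bar m$ (Proposition~\ref{propconv}, Remark~\ref{remarkcrescita}).

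The crucial missing step is the justification that $V_\eps(y+y_\eps)\to 0$ locally uniformly, i.e.\ that the residual $g$ of Proposition~\ref{propconv} is zero; without it the limit system is not \eqref{mfglimit2}. Your assertion ``$V_\eps\to 0$'' is either about $V_\eps(y)$ (irrelevant, since the equation contains $V_\eps(y+y_\eps)$) or presupposes a bound on $\eps^{\gamma'/(\gamma'-\alpha N)}|y_\eps|$, which is \eqref{puntoy} and is \emph{part of} the proof of Theorem~\ref{fullconv}; invoking the ``$x_\eps\to\bar x$'' part of Theorem~\ref{concthm} here would be circular, since Proposition~\ref{minimo} uses \eqref{puntoy}. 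The paper proceeds in the reverse order: it first proves the minimality \eqref{min2}, using as recovery sequence the \emph{shifted} competitor $(m(\cdot-y_\eps),w(\cdot-y_\eps))$ so that the potential contribution becomes $\int V_\eps(y)m(y)\,dy\le C\eps^{\kappa}\int(1+|y|)^b m\,dy\to 0$ (this is why the weighted $L^1$ condition appears, and no decay of $\bar m_\eps$ is needed); then it compares $(\bar m_\eps,\bar w_\eps)$ against the shifted limit $(\bar m(\cdot-y_\eps),\bar w(\cdot-y_\eps))$, using the exponential decay of $\bar m$ (not of $\bar m_\eps$), to obtain $\int\bar m_\eps V_\eps(y+y_\eps)\,dy\le C\eps^{\kappa}$; combined with the tightness \eqref{intmnuovo} this yields \eqref{puntoy}, hence $V_\eps(y+y_\eps)\to 0$ locally and $g\equiv 0$. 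In your $\Gamma$-limsup step you must likewise shift the competitor by $y_\eps$: plugging $(m,w)$ unshifted into $\cE_\eps$ produces $\int V_\eps(y+y_\eps)m(y)\,dy$, which you cannot control without \eqref{puntoy}.
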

We finally observe that by analogous methods, one can prove existence of solutions to more general potential-free MFG systems, see Remark \ref{remgs}.

\subsection*{Notation} 
We will intend a classical solution to the system \eqref{mfg} to be a
triple~$(u,   m, \lambda)\in C^{2}(\R^N) \times W^{1,p}(\R^N)\times\R$, for all $p>1$.\\
 For any given $p>1$, we will denote by $p'=\frac{p}{p-1}$ the conjugate exponent of $p$,  $p^*= \frac{Np}{N-p}$ if $p<N$ and $p^*=+\infty$ if $p\geq N$. \\ For all $R > 0$, $x \in \R^N$, $B_R(x) := \{y \in \R^N : |x-y| < R\}$. We will denote by $\omega_N := |B_1(0)|$.
 \\ Finally, $C,C_1,K,K_1,\ldots$ denote (positive) constants we need not to specify.

\medskip

{\bf Acknowledgements.} The authors are partially supported by the Fondazione CaRiPaRo Project ``Nonlinear Partial Differential Equations: Asymptotic Problems and Mean-Field Games'' and PRAT CPDA157835 of University of Padova ``Mean-Field Games and Nonlinear PDEs". 
 \section{Some preliminary regularity results}\label{s:preli}
Let $L$ be  the Legendre transform of $H$, i.e.
\begin{equation}\label{Leg}
L(q) = H^*(q) = \sup_{p \in \R^N} [p \cdot q- H(p)], \quad q \in \R^N.
\end{equation}
The assumptions on $H$ guarantee the following (see, e.g., \cite[Proposition 2.1]{c14}). 

\begin{proposition}\label{Lproperties} There exist $C_L, C_1, C_2>0$ depending on $C_H$ and on $\gamma$ such that  $\forall \ p, q \in \R^N$,
\begin{itemize}
\item[\textit{i)}] $L \in C^2(\R^N \setminus \{0\})$ and it is strictly convex,
\item[\textit{ii)}] $0\leq C_L|q|^{\gamma'} \le L(q) \le C_L(|q|^{\gamma'} + 1)$,
\item[\textit{iii)}] $\nabla L(q) \cdot q - L(q) \ge C_1|q|^{\gamma'} -C_1^{-1}$,
\item[\textit{iv)}] $ C_1q|^{\gamma'-1} -  C_1^{-1} \le |\nabla L(q)| \le  C_1^{-1}(|q|^{\gamma'-1} + 1)$.
\item[\textit{v)}]  $ C_2|p|^{\gamma-1} -  C_2^{-1} \le |\nabla H(p)| \le  C_2^{-1}(|p|^{\gamma-1} + 1)$.

\end{itemize}
\end{proposition}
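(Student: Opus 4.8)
The statement is a purely convex-analytic fact about the Legendre transform $L=H^\ast$, and the plan is to read off every item directly from the structural bounds \eqref{Hass} via Fenchel duality. The preliminary observation that makes the duality work near the origin is that the last bound in \eqref{Hass}, $|\nabla H(p)|\le K|p|^{\gamma-1}$, forces $\nabla H$ — which is continuous on $\R^N\setminus\{0\}$ by the $C^2$ hypothesis — to extend continuously to $0$ with value $0$; hence $H\in C^1(\R^N)$ with $\nabla H(0)=0$, and $H$ is strictly convex and superlinear. Consequently, for every $q$ the map $p\mapsto p\cdot q-H(p)$ is strictly concave and coercive, so it admits a unique maximizer $p^\ast(q)$; thus $L$ is finite and convex on $\R^N$, of class $C^1$ with $\nabla L(q)=p^\ast(q)$, strictly convex on $\R^N$ (as the conjugate of an everywhere-differentiable function), and the Fenchel equality $L(q)+H(p)=p\cdot q$ holds precisely when $q=\nabla H(p)$, equivalently $p=\nabla L(q)$.

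For item ii) I would Legendre-transform the two-sided power bound on $H$. From $H(p)\le C_H|p|^\gamma$ one gets $L(q)\ge\sup_p(p\cdot q-C_H|p|^\gamma)$, and this supremum equals $c_\gamma(C_H)|q|^{\gamma'}$ by a one-variable optimization along the ray through $q$; together with $L(q)\ge -H(0)\ge 0$ this yields $0\le C_L|q|^{\gamma'}\le L(q)$. From $H(p)\ge C_H|p|^\gamma-K$ the same computation gives $L(q)\le c_\gamma(C_H)|q|^{\gamma'}+K\le C_L(|q|^{\gamma'}+1)$. Item v) needs only \eqref{Hass}: writing $|\nabla H(p)|\,|p|\ge\nabla H(p)\cdot p=(\nabla H(p)\cdot p-H(p))+H(p)\ge (K^{-1}+C_H)|p|^\gamma-2K$ and dividing by $|p|$ gives $|\nabla H(p)|\ge (K^{-1}+C_H)|p|^{\gamma-1}-2K$ for $|p|\ge 1$, while for $|p|<1$ one simply uses $|\nabla H(p)|\ge 0\ge C_2|p|^{\gamma-1}-C_2^{-1}$ with $C_2$ small enough; the upper bound in v) is the assumed one.

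For items iv) and iii), and for the quantitative content of i), I would run everything through the gradient correspondence. Since $\nabla H(0)=0$, the identity $q=\nabla H(p^\ast(q))$ shows $p^\ast(q)=0$ only when $q=0$; hence for $q\ne 0$ the maximizer $p^\ast(q)=\nabla L(q)$ lies in $\R^N\setminus\{0\}$, where $H$ is $C^2$, so the inverse function theorem gives $L\in C^2$ near $q$ with $\nabla L(q)=(\nabla H)^{-1}(q)$ — this, together with the strict convexity already noted, is item i). Now set $p=\nabla L(q)$, so $q=\nabla H(p)$: the assumed $|q|=|\nabla H(p)|\le K|p|^{\gamma-1}$ gives $|\nabla L(q)|=|p|\ge K^{-(\gamma'-1)}|q|^{\gamma'-1}$, the lower bound in iv), while inverting the lower bound $|\nabla H(p)|\ge C_2|p|^{\gamma-1}-C_2^{-1}$ just proved in v) — a routine estimate for the power $\gamma'-1$ — gives $|p|\le C_1^{-1}(|q|^{\gamma'-1}+1)$, the upper bound in iv); the case $q=0$ is trivial. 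Finally, the Fenchel equality gives $\nabla L(q)\cdot q-L(q)=p\cdot q-L(q)=H(p)\ge C_H|p|^\gamma-K$, and combining with $|p|=|\nabla L(q)|\ge K^{-(\gamma'-1)}|q|^{\gamma'-1}$, hence $|p|^\gamma\ge K^{-\gamma'}|q|^{\gamma'}$ (using $(\gamma'-1)\gamma=\gamma'$), one obtains $\nabla L(q)\cdot q-L(q)\ge C_HK^{-\gamma'}|q|^{\gamma'}-K\ge C_1|q|^{\gamma'}-C_1^{-1}$, which is item iii).

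The main obstacle is really only the regularity claim in i) near the origin: without the bound $|\nabla H(p)|\le K|p|^{\gamma-1}$ one could not exclude that $H$ has a corner at $0$ (a nontrivial subdifferential $\partial H(0)$), on which $L$ would be locally constant and hence fail to be $C^2$ at the boundary of that flat region — it is precisely this bound that forces $\nabla H(0)=0$ and removes the issue. The $C^2$-transfer away from the origin rests on the nondegeneracy of $D^2H$ on $\R^N\setminus\{0\}$, implicit in the ``strictly convex, $C^2$'' package in this setting; beyond this point the argument is the routine bookkeeping above, essentially reproducing \cite[Proposition 2.1]{c14}. The only remaining care is in the elementary power inversions and the $|p|\ge 1$ versus $|p|<1$ split used in v).
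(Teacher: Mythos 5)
Your proof is correct; the paper itself gives no proof of this proposition, deferring entirely to the citation \cite[Proposition 2.1]{c14}, and your Fenchel-duality computation is exactly the standard argument that reference carries out, so there is nothing substantive to compare. The one point you rightly flag — that $L\in C^2(\R^N\setminus\{0\})$ requires nondegeneracy of $D^2H$ away from the origin, which is not literally contained in \eqref{Hass} — is an imprecision of the stated hypotheses rather than of your argument; everything else (the extension $\nabla H(0)=0$ forced by $|\nabla H(p)|\le K|p|^{\gamma-1}$, the conjugation of the two-sided power bound for ii), the identity $\nabla L(q)\cdot q-L(q)=H(\nabla L(q))$ for iii), and the inversion of the gradient bounds for iv) and v)) checks out.
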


We will use the following (standard) result on H\"older functions vanishing at infinity.

\begin{lemma}\label{van_lemma} Suppose that $m \ge 0$, $\|m\|_{C^{0,\theta}(\R^N)} \le c_h $, for some $\theta, c_h > 0$, and $\int_{\R^N} m \, dx < \infty$. Then, $m(x) \to 0$ as $|x| \to \infty$. Moreover, if
\[
\int_{|x| \ge R} m \, dx < \eta
\]
for some $\eta, R > 0$, then
\begin{equation}\label{maxcontrol}
\max_{|x| \ge R} m(x) \le C \eta^{\frac{\theta}{\theta+N}},
\end{equation}
where $C > 0$ depends only on $c_h, N$.
\end{lemma}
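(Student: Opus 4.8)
The plan is to prove the quantitative bound \eqref{maxcontrol} first; the decay $m(x)\to 0$ as $|x|\to\infty$ then follows immediately from the integrability of $m$. Fix $R,\eta>0$ as in the hypothesis and let $x_0$ be any point with $|x_0|\ge R$; set $\delta:=m(x_0)$. Since $[m]_{C^{0,\theta}}\le c_h$, we have $m(x)\ge \delta-c_h|x-x_0|^\theta$ for every $x\in\R^N$, hence $m\ge \delta/2$ on the ball $B_r(x_0)$ with $r:=(\delta/(2c_h))^{1/\theta}$. The key point is then to show that a definite fraction of $B_r(x_0)$ sits inside the region $\{|x|\ge R\}$, where the total mass is controlled by $\eta$.

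For that I would use convexity of $B_R(0)$, i.e. that $x_0$ lies outside it or on its boundary. Since $|x_0|\ge R>0$, for every $x\in B_R(0)$ one has $x\cdot x_0\le |x|\,|x_0|< R|x_0|$, so the open half-space $\{x:\ x\cdot x_0> R|x_0|\}$ is disjoint from $B_R(0)$, i.e. it is contained in $\{|x|\ge R\}$; moreover $x_0\cdot x_0=|x_0|^2\ge R|x_0|$, so $x_0$ lies in the closed half-space $\{x\cdot x_0\ge R|x_0|\}$ and therefore $|B_r(x_0)\cap\{x\cdot x_0>R|x_0|\}|\ge \tfrac12\omega_N r^N$ (a half-ball, or more). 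Combining,
\[
\eta>\int_{|x|\ge R}m\,dx\ \ge\ \int_{B_r(x_0)\cap\{x\cdot x_0> R|x_0|\}}m\,dx\ \ge\ \frac{\delta}{2}\cdot\frac{\omega_N r^N}{2}\ =\ \frac{\omega_N}{4}\,\delta\Big(\frac{\delta}{2c_h}\Big)^{N/\theta},
\]
which rearranges to $\delta^{(N+\theta)/\theta}\le C\,\eta$ with $C$ depending only on $N$, $\theta$, $c_h$, i.e. $\delta\le C'\eta^{\theta/(N+\theta)}$. As $x_0$ was an arbitrary point with $|x_0|\ge R$, taking the supremum over such $x_0$ gives \eqref{maxcontrol}.

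Finally, for the decay at infinity: given $\eta'>0$, finiteness of $\int_{\R^N}m$ yields $R=R(\eta')$ with $\int_{|x|\ge R}m<\eta'$, and the estimate just proven gives $m(x)\le C'(\eta')^{\theta/(N+\theta)}$ for all $|x|\ge R$; letting $\eta'\to 0$ shows $\limsup_{|x|\to\infty}m(x)=0$. I do not expect any genuine obstacle here: the only step that is not a direct computation is the volume lower bound for $B_r(x_0)\cap\{x\cdot x_0>R|x_0|\}$, and this holds for every $r>0$ — regardless of how large $\delta$, hence $r$, may be — precisely because the separating half-space is disjoint from $B_R(0)$ independently of $r$. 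So there is nothing delicate even when $\delta$ is of order one.
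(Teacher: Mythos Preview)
Your proof is correct and follows essentially the same approach as the paper: use the H\"older bound to get $m\ge\delta/2$ on a ball of radius $(\delta/(2c_h))^{1/\theta}$ and integrate over the part lying in $\{|x|\ge R\}$. The only differences are organizational---you prove the quantitative bound first and then deduce the decay (so you never need to argue that the maximum is attained), and you make the half-ball geometry explicit where the paper just absorbs it into a factor $1/4$ without comment.
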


\begin{proof} By contradiction, suppose that there exists $\delta > 0$ and a sequence $|x_n| \to \infty$ such that $m(x_n) > \delta$ for all $n$. We may also assume that $|x_{n+1}| \ge |x_n| + 1$ for all $n$. By the H\"older regularity assumption,
\[
m(x) \ge m(x_n) - c_h |x-x_n|^{\theta} \ge \frac{\delta}{2},
\]
provided that $x \in B_r(x_n)$, and $r^\theta \le \frac{\delta}{2c_h}$. Choose $r = \min\{1, \left(\frac{\delta}{2c_h}\right)^{\frac{1}{\theta}}\}$, so that $B_r(x_n) \cap B_r(x_m) = \emptyset$ for all $n \neq m$. Then,
\[
\int_{\R^N} m \, dx \ge \sum_{n \in \mathbb{N}} \int_{B_r(x_n)} m \, dx \ge \sum_{n \in \mathbb{N}} \frac{\delta}{2}|B_r(0)| = +\infty
\]
that is impossible.

As for the second part, let $M := \max_{|x| \ge R} m(x) = m(\bar x)$, $|\bar x| \ge R$ (note that such a maximum is achieved as a consequence of the first part of the lemma). As before,
\[
m(x) \ge m(\bar x) - c_h |x-\bar x|^{\theta} \ge \frac{M}{2}
\]
for all $x \in B_r(\bar x)$, where $r = \left(\frac{M}{2c_h}\right)^{1/\theta}$. Therefore,
\[
\eta > \int_{|x| \ge R} m \, dx \ge \frac{M}{4} |B_r(\bar x)| = \frac{M}{4}|B_1(0)| \left(\frac{M}{2c_h}\right)^{N/\theta},
\]
and \eqref{maxcontrol} follows.
\end{proof}
We recall the following well known result, proved in \cite[Theorem 1]{bl}.
\begin{theorem}\label{teobrezis} 
Let $f_n\to f$ a.e. in $\R^N$ and assume that $\|f_n\|_{L^p(\R^N)} \leq C$ for all $n$ and for some $p\in [1, +\infty)$. 
Then
\[\lim_n [\|f_n\|_{L^p(\R^N)}^p- \|f_n-f\|_{L^p(\R^N)}^p]=\|f\|_{L^p(\R^N)}^p.\]
\end{theorem}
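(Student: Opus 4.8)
The plan is to reproduce the classical Brezis--Lieb argument, which rests on a single sharp pointwise inequality followed by an application of dominated convergence. First I would reduce to the right object: writing $g_n := f_n - f$, Fatou's lemma gives $f \in L^p(\R^N)$ with $\|f\|_{L^p(\R^N)} \le C$, hence $\|g_n\|_{L^p(\R^N)} \le 2C$ for all $n$, while $g_n \to 0$ a.e.; and the claimed limit is equivalent to showing $\int_{\R^N} W_n\, dx \to 0$, where $W_n := \big| \, |g_n+f|^p - |g_n|^p - |f|^p \, \big|$.

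The heart of the matter is the elementary estimate: for every $\eta > 0$ there is $C_\eta > 0$, depending only on $\eta$ and $p$, with
\[
\big| \, |a+b|^p - |a|^p \, \big| \le \eta\, |a|^p + C_\eta\, |b|^p \qquad \text{for all } a, b \in \R.
\]
I would prove this by homogeneity: assuming $a \neq 0$ (the case $a=0$ being trivial) and dividing by $|a|^p$, it reduces to $\big| \, |u + tv|^p - 1 \, \big| \le \eta + C_\eta t^p$ for unit vectors $u, v$ and all $t \ge 0$. Since $|u+tv|^p \in [(1-t)^p, (1+t)^p]$ for $t \le 1$ and $(1+t)^p + (1-t)^p \ge 2$ by convexity of $s \mapsto s^p$, the left-hand side is at most $(1+t)^p - 1$, which is $< \eta$ for $t \le t_0(\eta, p)$; for $t \ge t_0$ one simply divides $(1+t)^p + 1$ by $t^p$ to produce a constant $C_\eta$, completing the pointwise bound.

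Granted this, I would fix $\eta > 0$ and estimate, via the triangle inequality, $W_n \le \big| \, |g_n+f|^p - |g_n|^p \, \big| + |f|^p \le \eta\, |g_n|^p + (C_\eta + 1) |f|^p$. Then set $\Phi_{n,\eta} := \big(W_n - \eta\, |g_n|^p\big)^+$, so that $0 \le \Phi_{n,\eta} \le (C_\eta+1)|f|^p \in L^1(\R^N)$; since $g_n \to 0$ a.e. we get $|g_n+f|^p \to |f|^p$ and $|g_n|^p \to 0$ a.e., hence $W_n \to 0$ a.e. and $\Phi_{n,\eta} \to 0$ a.e., so dominated convergence yields $\int_{\R^N} \Phi_{n,\eta}\, dx \to 0$. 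Using $W_n \le \Phi_{n,\eta} + \eta\, |g_n|^p$ together with $\|g_n\|_{L^p(\R^N)} \le 2C$ gives $\limsup_n \int_{\R^N} W_n\, dx \le \eta\, (2C)^p$, and letting $\eta \to 0$ finishes the proof.

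I expect the only subtle point to be the uniformity of $C_\eta$ in the pointwise inequality — i.e. that it does not depend on the directions of $a$ and $b$ — but this is precisely what the homogeneity reduction to unit vectors takes care of; everything else is Fatou's lemma and dominated convergence.
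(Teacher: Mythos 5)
Your proof is correct and is precisely the classical Brezis--Lieb argument; the paper itself gives no proof of this statement, citing instead the original reference \cite{bl}, whose Theorem 1 is established by exactly the pointwise inequality $\bigl|\,|a+b|^p-|a|^p\,\bigr|\le \eta|a|^p+C_\eta|b|^p$ and the dominated-convergence step you carry out. All the details (Fatou to get $f\in L^p$, the homogeneity reduction, and the truncation $\Phi_{n,\eta}$) check out.
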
 
From classical elliptic regularity, we have the following result.

\begin{proposition}\label{ell_regularity} Let $p > 1$ and $m \in L^p(\R^N)$ be such that
\[
\left| \int_{\R^N} m \, \Delta \varphi \, dx \right| \le K\| \nabla \varphi\|_{L^{p'}(\R^N)} \quad \text{for all $\varphi \in C^\infty_0(\R^N)$}
\]
for some $K > 0$. Then,  $m \in W^{1,p}(\R^N)$  and  there exists $C > 0$ depending only on $p$, such that
\[
\|\nabla m\|_{L^{p}(\R^N)} \le C \, K.
\]
\end{proposition}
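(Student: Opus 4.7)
The plan is to represent the distributional equation $-\Delta m = (\text{something})$ in divergence form and then invoke classical Calder\'on--Zygmund theory. First I would observe that the hypothesis says the linear map
\[
\Lambda: \nabla\varphi \longmapsto -\int_{\R^N} m \Delta\varphi\,dx
\]
is well-defined and bounded, with norm at most $K$, on the subspace $\{\nabla\varphi : \varphi \in C_0^\infty(\R^N)\} \subset L^{p'}(\R^N;\R^N)$. Extending $\Lambda$ by Hahn--Banach to all of $L^{p'}(\R^N;\R^N)$ and representing it via the Riesz isomorphism, I obtain $G \in L^p(\R^N;\R^N)$ with $\|G\|_{L^p} \le K$ such that
\[
-\int_{\R^N} m \Delta\varphi\,dx = \int_{\R^N} G \cdot \nabla\varphi\,dx \qquad \forall \varphi \in C_0^\infty(\R^N),
\]
that is, $-\Delta m = \di G$ in $\mathcal{D}'(\R^N)$.

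The next step is to regularize by convolution with a standard mollifier $\rho_\delta$: set $m_\delta = m * \rho_\delta$ and $G_\delta = G * \rho_\delta$. Then $m_\delta, G_\delta \in C^\infty(\R^N) \cap L^p(\R^N)$, $\|G_\delta\|_{L^p} \le K$ (by Young's inequality), and $-\Delta m_\delta = \di G_\delta$ holds pointwise. For such a smooth $L^p$ solution one has the standard Calder\'on--Zygmund estimate
\[
\|\nabla m_\delta\|_{L^p(\R^N)} \le C \|G_\delta\|_{L^p(\R^N)} \le C K,
\]
with $C$ depending only on $p$ and $N$. I would justify this by writing $\nabla m_\delta$ as a composition of Riesz transforms applied to $G_\delta$: since any $L^p$ harmonic function on $\R^N$ vanishes, $m_\delta$ is identified with the unique $L^p$ solution produced by the Newtonian-potential representation, and $\nabla m_\delta = \nabla^2(-\Delta)^{-1}\di G_\delta$ is then controlled in $L^p$ by $L^p$-boundedness of the Riesz transforms for $1 < p < \infty$.

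Finally, since $m_\delta \to m$ in $L^p(\R^N)$ and $\{\nabla m_\delta\}_\delta$ is bounded in $L^p(\R^N;\R^N)$, a weak compactness argument together with uniqueness of distributional derivatives yields $\nabla m \in L^p(\R^N)$ with $\|\nabla m\|_{L^p} \le C K$, so $m \in W^{1,p}(\R^N)$ with the claimed bound. The genuinely hard ingredient is the Calder\'on--Zygmund estimate at the heart of step two; the rest is routine bookkeeping involving Hahn--Banach, mollification, and weak compactness.
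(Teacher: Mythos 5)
Your proof is correct, but it takes a genuinely different route from the paper's. The paper rescales: fixing $R>1$ and setting $v(x)=m(Rx)$, it converts the hypothesis into a local estimate on $B_2(0)$, applies the Agmon--Douglis--Nirenberg interior $W^{1,p}$ estimate on $B_1(0)$ (which carries a lower-order term $\|v\|_{L^p(B_2)}$), scales back, and lets $R\to\infty$ so the lower-order term vanishes. You instead globalise immediately: Hahn--Banach plus Riesz representation turns the hypothesis into $\Delta m = \di G$ with $G\in L^p$, $\|G\|_{L^p}\le K$; after mollifying, the gradient bound becomes a statement about second-order Riesz transforms on $\R^N$. Your method trades the paper's purely local elliptic input plus scaling for global Calder\'on--Zygmund machinery, and it avoids the need to discard a lower-order term. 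One small point of rigour: the phrase ``the Newtonian-potential representation'' is not literally justified, since $\nabla N * G_\delta$ need not converge for $G_\delta\in L^p$ without compact support. The clean fix is to first observe that $\nabla m_\delta = m*\nabla\rho_\delta\in L^p$ for each fixed $\delta$ (with a $\delta$-dependent bound), then compare Fourier transforms: both $\widehat{\partial_j m_\delta}$ and $\widehat{R_jR_kG_\delta^k}$ agree away from the origin, so their difference is a tempered distribution supported at $\{0\}$, hence a polynomial, which is in $L^p$ and therefore zero. This gives $\partial_j m_\delta = R_jR_k G_\delta^k$ exactly, from which the uniform bound $\|\nabla m_\delta\|_{L^p}\le C_{p,N}K$ and the weak-compactness passage to the limit both follow as you describe. (Also note the constant $C$ will depend on $N$ as well as $p$, as you wrote; the statement's ``only on $p$'' is slightly imprecise, but that is an issue with the statement itself rather than your argument.)
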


\begin{proof} Fix any $R > 1$. Let $\psi \in C^\infty_0(B_2(0))$, $\varphi(R x) := \psi(x)$ (so, $\varphi \in C^\infty_0(B_{2R}(0))$) and $v(x) := m(Rx)$ on $\R^N$. Then,
\begin{multline*}
\left| \int_{B_2(0)} v \, \Delta \psi \, dx \right| = R^{2-N} \left| \int_{B_{2R}(0)} m \, \Delta \varphi \, dy \right| \le K R^{2-N} \left( \int_{B_{2R}(0)} |\nabla \varphi|^{p'} dy \right)^{1/{p'} }\\
= K R^{1-N+N/p'} \left( \int_{B_{2}(0)} |\nabla \psi|^{p'} dx \right)^{1/{p'} } \le K R^{1-N/p}\| \psi \|_{W^{1,p'}(B_2(0))}.
\end{multline*}

Hence, by \cite[Theorem 6.1]{agmon},  $v\in W^{1,p}(B_1(0))$ and there exists a constant $C$, depending on $p$ (but not on $R$), such that
\[
\|\nabla v \|_{L^{p}(B_1(0))} \le \| v \|_{W^{1,p}(B_1(0))} \le C(K R^{1-N/p} + \| v \|_{L^{p}(B_2(0))}).
\]
Therefore,
\begin{multline*}
\left( \int_{B_R(0)} |\nabla m|^p \, dy \right)^{1/p} = R^{N/p - 1}\left( \int_{B_1(0)} |\nabla v|^p \, dx \right)^{1/p} \le C\left[K +  R^{N/p - 1} \left( \int_{B_2(0)} |v|^p \, dx \right)^{1/p} \right] \\ = 
C (K + R^{-1} \| m \|_{L^{p}(B_{2R}(0))} ).
\end{multline*}
Letting $R \to \infty$, we get that $|\nabla m|\in L^p(\R^n)$ and the desired estimate.
\end{proof}

\subsection{The Hamilton-Jacobi-Bellman equation on the whole space}

In this section we provide some a priori regularity estimates and existence results for Hamilton-Jacobi-Bellman equations in the whole spaces of ergodic type. 
In particular we will consider families of Hamilton-Jacobi-Bellman equations
\begin{equation}\label{hjb2}
-\Delta u_n + H_n(\nabla u_n) + \lambda_n = F_n(x) -f_n(x)\qquad \text{on $\R^N$}
\end{equation}
where $F_n-f_n$ is  locally H\"older continuous,
$\lambda_n\in \R$ are equibounded in $n$, that is  $|\lambda_n|\leq \lambda$ and $f_n \in L^\infty(\R^N)$,  with $\|f_n\|_\infty\leq c_f$ for some $c_f>0$ independent of $n$. 
Moreover $ H_n$ is for every $n$ an Hamiltonian which satisfies  \eqref{Hass}, with constants $\gamma$ and $C_H$ independent of $n$; finally,  
there exists $C_F\geq 0$ and $b\geq 0$ independent of $n$ such that 
\begin{equation}\label{vass2}
C_F^{-1} (\max\{ |x|-C_F, 0\})^{b} \le F_n(x) \le C_F (1 + |x|)^{b} \qquad \forall n \text{ and } \forall x \in \R^N.
\end{equation}
Note that, differently from assumption \eqref{vass} for the potential $V$,  the function $F_n$ can also be bounded, if $b=0$. 

  \begin{theorem}\label{bernstein2} 
Let $u_n \in C^2(\R^N)$ be a sequence of classical  solutions of the HJB equations \eqref{hjb2}.
Then there exists a constant $K > 0$ depending on $C_H, C_F, c_f, \gamma, N, \lambda$  such that
\begin{equation}\label{berne}
|\nabla u_n(x)| \le K(1+|x|)^{\frac{b}{\gamma}},
\end{equation}
where $b\geq 0$ is the growth of $F_n$  appearing in \eqref{vass2} and $\gamma$ is the growth of $H_n$ appearing in \eqref{Hass}. 
 \end{theorem}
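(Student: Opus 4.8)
The plan is to establish \eqref{berne} by a Bernstein-type gradient estimate, localized through a rescaling tuned to the growths $\gamma$ of $H_n$ and $b$ of $F_n$: the rescaling turns the bound at a far-away point $x_0$ into a \emph{scale-invariant} interior estimate on the unit ball for a normalized HJB equation, and all constants remain uniform in $n$ because only the structural bounds in \eqref{Hass}, \eqref{vass2} and the bounds $|\lambda_n|\le\lambda$, $\|f_n\|_\infty\le c_f$ enter.

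\emph{The interior estimate.} The analytic core is the following local Lipschitz bound: if $v\in C^2(B_1)$ solves $-\Delta v+\widetilde H(\nabla v)+\widetilde\lambda=\widetilde g$ on $B_1(0)$, where $\widetilde H$ satisfies \eqref{Hass} with the constants $C_H,K,\gamma$, $|\widetilde\lambda|\le\Lambda$ and $\|\widetilde g\|_{L^\infty(B_1)}\le\Lambda$, then $|\nabla v(0)|\le C_0=C_0(N,\gamma,C_H,K,\Lambda)$. This is proved by the Bernstein method: for $w:=|\nabla v|^2$, differentiating the equation yields
\[
\Delta w-\nabla\widetilde H(\nabla v)\cdot\nabla w\ \ge\ 2|D^2v|^2-2\,\nabla v\cdot\nabla\widetilde g\qquad\text{on }B_1 .
\]
Since $\widetilde g$ is only H\"older continuous, the last term cannot be used pointwise; instead one tests the inequality against $\zeta^{2k}w^{\beta}$, with $\zeta\in C^\infty_0(B_1)$, $\zeta\equiv1$ on $B_{1/2}$, and $k,\beta$ large, and integrates by parts, so that the derivative on $\widetilde g$ is transferred onto $\nabla v$, onto $\zeta$, and onto $\Delta v=\widetilde H(\nabla v)+\widetilde\lambda-\widetilde g$. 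This is legitimate directly, with no regularization of $\widetilde g$, because $\zeta^{2k}w^\beta$ and $\nabla v$ are $C^1$ and $v\in C^2$. The superlinear coercivity of $\widetilde H$, through $|D^2v|^2\ge N^{-1}(\Delta v)^2\ge c\,w^{\gamma}-C$ and $|D^2v|^2\ge|\nabla w|^2/(4w)$, produces a term $\gtrsim\int\zeta^{2k}w^{\beta+\gamma}$ that absorbs all remaining contributions (which carry only the lower powers $w^{\beta+\gamma/2},w^{\beta+\gamma-1},w^{\beta+1/2},w^{\beta}$, plus $|\nabla\zeta|$- and $|\nabla w|$-terms disposed of by Young's inequality); this bounds $w$ in every $L^q(B_{1/2})$ quantitatively, whence $\Delta v=\widetilde g-\widetilde H(\nabla v)-\widetilde\lambda\in L^q_{\rm loc}$ and interior $W^{2,q}$ estimates plus Sobolev embedding give $|\nabla v(0)|\le C_0$. (Alternatively one may quote a known local Lipschitz estimate for viscous HJB equations with coercive Hamiltonian and bounded right-hand side.)

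\emph{The rescaling.} Fix $n$ and $x_0$, put $\rho:=|x_0|$, and for $\rho\ge1$ set
\[
\sigma:=\rho^{-b(\gamma-1)/\gamma},\qquad \mu:=\sigma^{2}\rho^{b},\qquad v(y):=\mu^{-1}u_n(x_0+\sigma y)\quad(y\in B_1),
\]
so that $\mu^{\gamma-1}\sigma^{2-\gamma}=1$, $\mu^{-1}\sigma^{2}=\rho^{-b}$ and $\mu\sigma^{-1}=\rho^{b/\gamma}$. Then $v$ solves $-\Delta v+\widetilde H_n(\nabla v)+\widetilde\lambda_n=\widetilde g_n$ on $B_1$, with $\widetilde H_n(p):=\mu^{-1}\sigma^2H_n(\mu\sigma^{-1}p)$, $\widetilde\lambda_n:=\rho^{-b}\lambda_n$ and $\widetilde g_n(y):=\rho^{-b}(F_n-f_n)(x_0+\sigma y)$. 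Using \eqref{Hass} and $\mu^{\gamma-1}\sigma^{2-\gamma}=1$ one checks that $\widetilde H_n$ satisfies \eqref{Hass} with the \emph{same} $C_H,\gamma$ and with $K$ replaced by $K\mu^{-1}\sigma^2=K\rho^{-b}\le K$; moreover $|\widetilde\lambda_n|\le\rho^{-b}\lambda\le\lambda$, and since $\sigma\le1$ we have $|x_0+\sigma y|\le 2\rho$ on $B_1$, hence $|\widetilde g_n|\le\rho^{-b}\big(C_F(1+2\rho)^{b}+c_f\big)\le C_F3^{b}+c_f$. Thus the interior estimate applies with $\Lambda$ depending only on $C_F,c_f,\lambda,b$, giving $|\nabla v(0)|\le C_0$, and undoing the rescaling,
\[
|\nabla u_n(x_0)|=\mu\sigma^{-1}|\nabla v(0)|\le C_0\,\mu\sigma^{-1}=C_0\,\rho^{b/\gamma}.
\]
For $|x_0|\le1$ one applies the interior estimate directly to $u_n$ on $B_1(x_0)\subset B_2(0)$ (no rescaling is needed, $H_n$ being already normalized, and $|F_n-f_n|\le C_F3^{b}+c_f$ there), getting $|\nabla u_n(x_0)|\le C_0'$, which is trivially $\le C_0'(1+|x_0|)^{b/\gamma}$. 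Combining the two regimes yields \eqref{berne} with a constant depending only on $N,\gamma,C_H,K,C_F,c_f,\lambda$.

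\emph{The main difficulty} is the interior estimate for a right-hand side that is only H\"older continuous, since the pointwise Bernstein computation formally requires $\nabla\widetilde g$; the remedy is to run the argument in integrated form, where after integration by parts the only data-dependence is through $\widetilde g$ itself (paired with $C^1$ functions, licit because $v\in C^2$), while the strong coercivity of $\widetilde H$ — quantified by $|D^2v|^2\ge N^{-1}(\Delta v)^2$ — supplies the superlinear term in $w$ dominating everything coming from $\widetilde g$ and $\widetilde\lambda$. The remainder is bookkeeping: the exponents in the rescaling are chosen precisely so that the leading part of $H_n$ is left invariant while its additive constant and the right-hand side are scaled down by $\rho^{-b}\le1$, which is what keeps all constants uniform in $n$ and independent of $x_0$.
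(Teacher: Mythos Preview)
Your proposal is correct and follows essentially the same route as the paper: an interior Lipschitz estimate for the HJB equation with bounded right-hand side (the paper cites \cite{LL89} for the integral Bernstein method you sketch), combined with the same rescaling $\sigma=(1+|x_0|)^{-b/\gamma'}$, $v(y)=\sigma^{(2-\gamma)/(\gamma-1)}u_n(x_0+\sigma y)$, which reduces the problem at $x_0$ to the bounded case on $B_1$. The only cosmetic difference is that the paper first replaces $H_n$ by $C_H|p|^\gamma$ (absorbing the bounded discrepancy into $f_n$) before rescaling, whereas you keep the general Hamiltonian and verify that $\widetilde H_n$ still satisfies \eqref{Hass} uniformly; both lead to the same conclusion.
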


\begin{proof} Without loss of generality we may consider $H_n(p) = C_H|p|^\gamma$ for all $n$ and $p$. Indeed, every $v_n$ solves
\[
-\Delta u_n + C_H|\nabla u_n|^\gamma + \lambda_n = F_n(x) -f_n(x) + C_H|\nabla u_n|^\gamma - H_n(\nabla u_n)\qquad \text{on $\R^N$},
\]
and since $|C_H|\nabla u_n|^\gamma - H_n(\nabla u_n)| \le C_H$ by \eqref{Hass}, we can redefine $f_n$ to include $C_H|\nabla u_n|^\gamma - H_n(\nabla u_n)$, which then satisfies the bound $\|f_n\|_\infty\leq c_f + C_H$.  

We first claim that if $v \in C^2(B_2(0))$ satisfies
\[
|-\Delta v + C_H|\nabla v|^\gamma \,| \le k \quad \text{on $B_2(0)$}
\]
for some $k > 0$, then we have for any $r \in [1, \infty]$,
\begin{equation}\label{Lrest}
\|\nabla v\|_{L^r(B_1(0))} \le \widetilde C,
\end{equation}
where $\widetilde C$ depends only on $k, C_H, \gamma, N, r$. If $r \in [1, \infty)$, this is proven in \cite[Theorem A.1]{LL89}. The case $r = \infty$ follows by classical elliptic regularity, since if $r$ in \eqref{Lrest} is large enough, then $-\Delta v$ is bounded in $L^q(B_{3/2}(0))$ for some $q > N$, and the statement follows by Sobolev embeddings.

In view of these considerations, the gradient bound \eqref{berne} easily follows if $b = 0$. For the case $b > 0$, fix $x_0 \in \R^N$, and let $\delta = (1 + |x_0|)^{-b/\gamma'}$. Let \[v_n(y) := \delta^{\frac{2-\gamma}{\gamma-1}} u_n (x_0 + \delta y) \quad \text{on $\R^N$}.\]
Then, $v_n$ solves 
\[
-\Delta v_n + C_H |\nabla v_n|^\gamma  = \delta^{\gamma'}(F_n(x_0 + \delta y) -f_n(x_0 + \delta y)-\lambda_n).
\]
Since $\delta \le 1$,
\[
 \delta^{\gamma'} |F_n(x_0 + \delta y) -f_n(x_0 + \delta y)-\lambda_n| \le \frac{C_F (3 + |x_0|)^{b} + c_f + \lambda}{(1 + |x_0|)^b} \le C_1
\]
for all $y \in B_2(0)$ by \eqref{vass2} and the bound on $f_n$.

Therefore, by the first claim,
\[
\|\nabla v_n\|_{L^\infty(B_1(0))} \le \widetilde C,
\]
for all $n$. In particular, choosing $y = 0$,
\[
|\nabla u_n(x_0)| = \delta^{-\frac{1}{\gamma-1}} |\nabla v_n(0)| \le \widetilde C(1 + |x_0|)^{b/\gamma},
\]
and the desired estimate follows.
\end{proof}

Moreover, we prove the following a priori estimates on bounded from below solutions to \eqref{hjb2}. 
\begin{theorem}\label{boundedfrombelowsol} 
Let $u_n\in C^2(\R^N)$ be a family of uniformly  bounded from below classical solutions to \eqref{hjb2}, that is for which there exists $C>0$ such that $u_n\geq -C$ for every $n$. 

If $b=0$ in \eqref{vass2}, we moreover assume   that there exists $\delta>0$ and  $R>0$ independent of $n$
such that \begin{equation}\label{pos} F_n(x)-f_n(x)-\lambda_n>\delta>0, \qquad \text{for all $|x|>R$}.\end{equation}

Then there exists $C>0$ such that 
\begin{equation}\label{improvedgrowth}
u_n(x)\geq C|x|^{1+\frac{b}{\gamma}}-C^{-1}, \qquad \forall n \in \mathbb N, x\in \R^N, 
\end{equation}
where $b\geq 0$ is the growth power appearing in \eqref{vass2} and $\gamma$ is the growth power appearing in \eqref{Hass}. 
\end{theorem}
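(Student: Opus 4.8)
The goal is a lower bound $u_n(x) \ge C|x|^{1+b/\gamma} - C^{-1}$, uniform in $n$, and the natural device is to construct a subsolution (a Lyapunov-type function) of the HJB equation \eqref{hjb2} that grows like $|x|^{1+b/\gamma}$ and then invoke a comparison principle, using the uniform lower bound $u_n \ge -C$ to fix the additive constant. First I would treat the case $b>0$. Set $w(x) := c\,(1+|x|^2)^{\frac{1}{2}(1+b/\gamma)} - c^{-1}$ for a small constant $c>0$ to be chosen; then $|\nabla w| \sim c(1+|x|)^{b/\gamma}$ and $|\Delta w| \lesssim c(1+|x|)^{b/\gamma - 1}$, so for $|x|$ large the dominant term in $-\Delta w + H_n(\nabla w) + \lambda_n$ is $H_n(\nabla w) \ge C_H|\nabla w|^\gamma - K \gtrsim c^\gamma (1+|x|)^b$, while the right-hand side $F_n(x) - f_n(x) \le C_F(1+|x|)^b + c_f$. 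Choosing $c$ small enough (depending only on the structural constants) makes $-\Delta w + H_n(\nabla w) + \lambda_n \le F_n(x) - f_n(x)$ for $|x| \ge R_0$, with $R_0$ uniform in $n$; on the compact ball $B_{R_0}$ one simply subtracts a further constant so that $w \le -C \le u_n$ there. By the comparison principle for \eqref{hjb2} on $\R^N \setminus B_{R_0}$ (valid because $w$ is a subsolution, $u_n$ a supersolution, $w \le u_n$ on $\partial B_{R_0}$, and both are bounded below so the behavior at infinity causes no trouble — one can use the standard argument of shifting $w$ down and comparing on large balls, exploiting superlinearity of $H_n$), we get $u_n \ge w$ on $\R^N \setminus B_{R_0}$, and combined with $u_n \ge -C$ on $B_{R_0}$ this yields \eqref{improvedgrowth}.

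For the case $b = 0$ the potential part no longer forces growth, but the hypothesis \eqref{pos} provides a uniform positive gap $F_n - f_n - \lambda_n > \delta$ outside $B_R$, which plays the same role. Here I would take the linearly growing barrier $w(x) := c|x| - c^{-1}$ (smoothed near the origin, e.g. $c(1+|x|^2)^{1/2}$), so that $|\nabla w| \sim c$ and $|\Delta w| \lesssim c/|x| \to 0$; then $-\Delta w + H_n(\nabla w) + \lambda_n \le C_H c^\gamma + c_f'' + \lambda - \Delta w$, which for $c$ small and $|x|$ large is $\le \delta < F_n - f_n - \lambda_n$ — wait, more carefully: I want $-\Delta w + H_n(\nabla w) \le F_n - f_n - \lambda_n$, and since the right side exceeds $\delta$ while the left side is $\le C_H c^\gamma + o(1)$, choosing $c$ with $C_H c^\gamma < \delta/2$ and then $|x| \ge R_1$ large (uniform in $n$) suffices. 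Again comparison on $\R^N \setminus B_{R_1}$ plus $u_n \ge -C$ on the compact remainder gives $u_n(x) \ge c|x| - C'$, which is \eqref{improvedgrowth} with $b = 0$.

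The main obstacle is making the comparison principle rigorous on the unbounded domain $\R^N \setminus B_{R_0}$, since \eqref{hjb2} is not of a form where uniqueness is automatic. The clean way is: suppose $\inf_{\R^N \setminus B_{R_0}}(u_n - w) =: -\sigma < 0$; this infimum is either attained at an interior point — contradicting the subsolution/supersolution inequalities via the maximum principle, because at such a point $\nabla u_n = \nabla w$, $\Delta(u_n - w) \ge 0$, and subtracting the two equations gives a sign contradiction — or it is attained on $\partial B_{R_0}$, where $u_n - w \ge 0$ by construction, or it escapes to infinity; the last possibility is excluded because $u_n$ is bounded below while $w(x) \to +\infty$, so $u_n - w \to +\infty$ (when $b>0$; when $b=0$, $w(x) = c|x| - c^{-1} \to +\infty$ as well), hence $u_n - w$ cannot approach its negative infimum along a sequence going to infinity. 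A minor technical point is that $H_n$ is only assumed $C^2$ away from the origin and the barrier $w$ I should take to be genuinely smooth with $\nabla w \ne 0$ for $|x|$ large, which holds for the radial profiles above; the structural constants in \eqref{Hass}, \eqref{vass2} and the bounds on $\lambda_n$, $f_n$ are all uniform in $n$, so the resulting constant $C$ in \eqref{improvedgrowth} is uniform in $n$, as required.
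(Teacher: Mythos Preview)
Your barrier approach is natural, but there is a genuine gap in the comparison step. You claim that the possibility of $\inf(u_n - w)$ escaping to infinity ``is excluded because $u_n$ is bounded below while $w(x)\to+\infty$, so $u_n - w \to +\infty$''. This implication is backwards: from $u_n \ge -C$ and $w(x)\to +\infty$ one only gets $u_n - w \ge -C - w(x) \to -\infty$, so a priori the infimum of $u_n - w$ over $\R^N\setminus B_{R_0}$ may be $-\infty$, realised along a sequence $|x|\to\infty$, and the interior-minimum argument never fires. Ruling this out is precisely the content of the theorem; without some \emph{upper} control on $u_n$ at the scale $|x|^{1+b/\gamma}$, a global comparison with a barrier of that growth cannot close. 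The appeal to ``shifting $w$ down and comparing on large balls, exploiting superlinearity of $H_n$'' does not help: on $\partial B_R$ you have no information forcing $u_n \ge w$, and any penalisation that forces $w\to -\infty$ on $\partial B_R$ destroys the subsolution property at scale $|x|^b$. (There is also a minor slip in the subsolution verification: to get $-\Delta w + H_n(\nabla w)+\lambda_n \le F_n - f_n$ you should use the \emph{upper} bound $H_n(p)\le C_H|p|^\gamma$ and the \emph{lower} bound on $F_n$ from \eqref{vass2}; the conclusion ``take $c$ small'' is still right once the inequalities are oriented correctly.)

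The paper avoids global comparison altogether, arguing by contradiction and rescaling: if $u_{n_l}(x_l)/|x_l|^{1+b/\gamma}\to 0$ along some $|x_l|\to\infty$, set $a_l = |x_l|/2$ and $v^l(x) = a_l^{-(1+b/\gamma)} u_{n_l}(x_l + a_l x)$. The gradient estimate of Theorem~\ref{bernstein2} makes $v^l$ and $|\nabla v^l|$ uniformly bounded, so a subsequence converges locally uniformly to some $v\ge 0$. In the rescaled equation the Laplacian carries a factor $a_l^{b/\gamma - 1 - b}\to 0$ while the right-hand side stays bounded below by a positive constant on $B_1(0)$ (using \eqref{vass2} when $b>0$, or \eqref{pos} when $b=0$); in the limit $v$ is a viscosity supersolution of $C_H|\nabla v|^\gamma \ge \delta$ on $B_1(0)$ with $v\ge 0$ on $\partial B_1$, and comparison with the explicit eikonal solution forces $v(0)>0$, contradicting $v^l(0)\to 0$. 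Your barrier idea \emph{can} be repaired by making the barrier local, supported on an annulus $\{\frac12|\tilde x| \le |x|\le \frac32|\tilde x|\}$ so that comparison takes place on a bounded domain with $u_n\ge 0$ on the boundary (this is exactly what the paper does in Step~2 of the proof of Theorem~\ref{stimalinfinito}); but the global radial barrier as written does not work.
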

\begin{proof} The proof is based on the same argument as in \cite[Proposition 3.4]{bm}, we sketch it briefly for completeness.
Since $u_n$ is bounded from below we can assume $u_n\geq 0$, up to addition of constant $C$ (without changing the equation). 

We assume by contradiction that \eqref{improvedgrowth} does not hold. Then there exist  sequences $x_l$ and   $u_{n_l}$, such that $|x_l|>2R$, $|x_l|\to +\infty$, and 
$\frac{u_{n_l}(x_l)}{|x_l|^{1+\frac{b}{\gamma}}}\to 0$. Let $a_l= \frac{|x_l|}{2}$ and we define the function
\[v^l(x)=\frac{1}{a_l^{1+\frac{b}{\gamma}} }u_{n_l}(x_l+a_l x).\] By Theorem \ref{bernstein2}, we get that $|\nabla u_{n_l}(x)| \le K(1+|x|)^{\frac{b}{\gamma}}$. Therefore, $v^l, |\nabla v^l|$ are uniformly bounded.

Moreover, $v^l$ is a solution to 
\[- a_l^{ \frac{b}{\gamma}-1}\Delta v^l + H_{n_l}( a_l^{ \frac{b}{\gamma}} \nabla v^l)+\lambda_{n_l}= F_{n_l}(x_l+a_l x)- f_{n_l}(x_l+a_l x).\]
In particular, recalling \eqref{Hass}, we get that $v^l$ is a supersolution to 
\[- a_l^{ \frac{b}{\gamma}-1-b}\Delta v^l + C_H  |\nabla v^l|^\gamma \geq  a_l^{-b}\left( -\lambda_{n_l}+F_{n_l}(x_l+a_l x)- f_{n_l}(x_l+a_l x)\right).\]

Note that, for every $l$ sufficiently large,  by \eqref{vass2} and by \eqref{pos} (in the case $b=0$)
 the right hand side of the equation \[ a_l ^{-b}\left(  -\lambda_{n_l}+F_{n_l}(x_l+a_l x)- f_{n_l}(x_l+a_l x)\right)>0\] for  $x$ such that $|x|\leq 1$. 
 
 Moreover, passing eventually to a subsequence, we get that $v^l\to v$ locally uniformly in $n$ and  $a_l^{ \frac{b}{\gamma}-1-b}\to 0$. So 
  $v$ is a supersolution to  $C_H |\nabla v|^\gamma \geq \delta>0$ in $B(0, 1)$ with homogeneous boundary conditions (since $v\geq 0$). By comparison, recalling the explicit formula of the solution to the eikonal equation $|\nabla f|^\gamma= C$ in $B(0,1)$ with homogeneous boundary conditions, we conclude that $v(x)\geq C^{\frac{1}{\gamma}} (1-|x|)$ for all $x$ such that $|x|\leq 1$. Moreover, by uniform convergence, we get that, eventually enlarging $C$ and taking $l$ sufficiently large, $v^l(x)\geq C^{\frac{1}{\gamma}} (1-|x|)$ for all $x$ with $|x|\leq 1$, in particular $v^l(0)\geq  C^{\frac{1}{\gamma}} $. 
 Recalling the definition of $v^l$, we get that $v^l(0)\to 0$,  which yields a contradiction. 
\end{proof}

Define  \[
\bar{\lambda}_n := \sup\{\lambda \in \R : \text{\eqref{hjb2} has a  solution $u_n \in C^2(\R^N)$}\}.
\]
\begin{theorem}\label{principal_eig}
Assume that for every $n$ the function $F_n-f_n$ is bounded from below uniformly in $n$. 
\begin{enumerate} \item    
 $\bar{\lambda} _n< \infty$, for every $n$,  and there exists, for every $n$, a solution $u_n \in C^2(\R^N)$ to 
\eqref{hjb2} with $\lambda_n = \bar{\lambda}_n$. Moreover \[\bar{\lambda}_n := \sup\{\lambda \in \R : \text{\eqref{hjb2} has a  subsolution $u_n \in C^2(\R^N)$}\}.\]

\item  If $F_n$ satisfies \eqref{vass2}, with $b>0$, then, for every $n$, the solution $u_n$ to \eqref{hjb2} with $\lambda_n = \bar{\lambda}_n$ is unique up to addition of constants and satisfies 
\eqref{improvedgrowth}. 
\item If $F_n\equiv 0$, and there exists $\delta>0$ independent of $n$ such that \begin{equation}\label{positiva}\limsup_{ |x|\to +\infty} f_n(x)+\bar \lambda_n <-\delta<0,\end{equation}  then for every $n$ there exists a solution to \eqref{hjb2} 
with $\lambda_n = \bar{\lambda}_n$ which satisfies \eqref{improvedgrowth} with $b=0$.  
\end{enumerate} 
\end{theorem}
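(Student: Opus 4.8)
The plan is to prove the three parts of Theorem \ref{principal_eig} essentially following the classical ergodic-approximation scheme of Lions--Papanicolaou--Varadhan, adapted to the whole space with the a priori estimates already at our disposal. I will treat each $n$ separately (all bounds being uniform in $n$ anyway), so I drop the subscript $n$ where harmless.

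\textbf{Part (1).} First I would show $\bar\lambda<\infty$: if a subsolution $u$ of $-\Delta u+H(\nabla u)+\lambda\le F-f$ existed for arbitrarily large $\lambda$, then integrating against a fixed smooth compactly supported probability density $\mu$, i.e. testing with $\mu$, and using $H\ge C_H|p|^\gamma-K$ together with the boundedness from below of $F-f$ on the support would force $\lambda$ bounded, a contradiction. (Alternatively, one compares on a large ball with an explicit radial supersolution.) To produce a solution at $\lambda=\bar\lambda$, I would use the standard discounted approximation: for $\rho>0$ solve the coercive problem $\rho u_\rho-\Delta u_\rho+H(\nabla u_\rho)=F-f$ on $\R^N$ (solvable by Perron's method or by solving on balls $B_R$ with $u_\rho=0$ on $\partial B_R$ and letting $R\to\infty$, using the gradient bound of Theorem \ref{bernstein2} and local elliptic estimates for compactness). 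One checks $\rho u_\rho$ is locally bounded (from below by the lower bound on $F-f$, from above by comparison with a supersolution built from the subsolution defining $\bar\lambda$), so $-\rho u_\rho(0)\to\ell$ along a subsequence with $\ell\le\bar\lambda$, and $w_\rho:=u_\rho-u_\rho(0)$ has locally uniformly bounded gradient by Theorem \ref{bernstein2}, hence converges in $C^1_{\rm loc}$ (after a further subsequence) to a solution $u$ of \eqref{hjb2} with $\lambda=\ell$. Since $u$ is in particular a subsolution, $\ell\le\bar\lambda$, and since a solution exists at $\ell$ we also get $\ell\ge\bar\lambda$ from the definition; the two suprema (over solutions, over subsolutions) therefore coincide and equal $\bar\lambda$, and the sup is attained. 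The only subtlety is justifying the comparison arguments on $\R^N$ for merely continuous sub/supersolutions — this is where the growth estimates \eqref{berne} and the coercivity of $H$ are used to ensure the relevant comparison principle applies (sub/supersolutions with controlled growth).

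\textbf{Part (2).} When $b>0$, the lower bound \eqref{improvedgrowth} for the solution $u$ at $\lambda=\bar\lambda$ follows directly from Theorem \ref{boundedfrombelowsol} (the $b>0$ branch, no extra hypothesis needed), once we know $u$ is bounded below, which it is up to a constant because $F-f-\bar\lambda$ is bounded below and one can compare $u$ with a constant near $-\infty$. Uniqueness up to constants is the genuinely new point: given two solutions $u_1,u_2$, both satisfy \eqref{improvedgrowth}, hence the coupling term disappears in a fixed way and the difference solves a linear uniformly-elliptic equation with a drift; the coercive growth \eqref{improvedgrowth} lets one run a strong maximum principle / Liouville-type argument. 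Concretely, I would set $\psi=u_1-u_2$; using convexity of $H$, $\psi$ is a subsolution of a linear equation $-\Delta\psi+\,b(x)\cdot\nabla\psi\le0$ with $b(x)=\int_0^1\nabla H(t\nabla u_1+(1-t)\nabla u_2)dt$, and since both $u_i\to+\infty$ like $|x|^{1+b/\gamma}$, the normalized function cannot attain an interior positive maximum unless constant — the growth guarantees that if $\sup\psi$ is finite and positive it is attained, forcing $\psi\equiv$const by the strong maximum principle; symmetrizing gives uniqueness. The main obstacle here is handling the degeneracy of $H$ at $0$ so that $b(x)$ is only locally bounded — this is fine because Theorem \ref{bernstein2} gives local gradient bounds, so $b\in L^\infty_{\rm loc}$, enough for the maximum principle.

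\textbf{Part (3).} Now $F\equiv 0$ and \eqref{positiva} holds. Here $b=0$, so Theorem \ref{boundedfrombelowsol} requires the extra positivity hypothesis \eqref{pos}; but \eqref{positiva} says exactly that $-f(x)-\bar\lambda>\delta>0$ for $|x|$ large, which is $F-f-\bar\lambda>\delta$ outside a ball — precisely \eqref{pos}. So the scheme is: run the construction of Part (1) to get a solution $u$ at $\lambda=\bar\lambda$ which is bounded below (again because $-f-\bar\lambda$ is bounded below), then invoke Theorem \ref{boundedfrombelowsol} with $b=0$ to conclude $u(x)\ge C|x|-C^{-1}$, i.e. \eqref{improvedgrowth} with $b=0$. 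The only thing to verify is that among the solutions produced at $\lambda=\bar\lambda$ one can select a bounded-below one — this follows since any solution, being bounded below by comparison with a constant using the lower bound on $-f-\bar\lambda$ near infinity (on the complement of a ball the equation forces $u$ to grow, on the ball it is bounded by local estimates), is automatically bounded below, and then Theorem \ref{boundedfrombelowsol} upgrades this to the coercive bound. I expect Part (1), specifically the rigorous passage to the limit in the discounted/finite-ball approximation on the noncompact space and the attendant comparison principles, to be the main technical burden; Parts (2) and (3) are then relatively short corollaries of Theorems \ref{bernstein2} and \ref{boundedfrombelowsol}.
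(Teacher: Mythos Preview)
Your Part (1) is essentially the paper's approach: the paper simply cites \cite{bm} for the vanishing-discount/approximation scheme, with the only modification that the gradient bound of Theorem \ref{bernstein2} (which needs only local H\"older data) replaces the Bernstein estimate in \cite{bm} (which needs local Lipschitz data). Your outline of that scheme is correct.

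The genuine gaps are in Parts (2) and (3), and they are both about the \emph{bounded-below} property, which you treat as automatic when it is not.

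In Part (2) you write that the solution at $\bar\lambda$ is bounded below ``because $F-f-\bar\lambda$ is bounded below and one can compare $u$ with a constant near $-\infty$''. This does not work: a constant is not a global subsolution, and at the critical value $\bar\lambda$ there can in principle exist solutions that go to $-\infty$. The paper does not prove this step either; it invokes the nontrivial result of \cite{ichi11} that, when the potential is coercive ($b>0$), every solution at $\bar\lambda$ is bounded below. Your uniqueness argument then inherits a second gap: even once both $u_1,u_2$ satisfy \eqref{improvedgrowth}, this is only a \emph{lower} bound of order $|x|^{1+b/\gamma}$, and the gradient estimate only gives an \emph{upper} bound of the same order; hence $\psi=u_1-u_2$ may grow like $\pm|x|^{1+b/\gamma}$, so $\sup\psi$ need not be finite, let alone attained, and the strong maximum principle argument as written does not close. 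The paper again refers to \cite{ichi11,bm,c14} for uniqueness, where the argument is more delicate (one uses the criticality of $\bar\lambda$ together with a comparison for sub/supersolutions with the correct growth, not a naive maximum of the difference).

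In Part (3) the issue is sharper. You assert that ``any solution\dots is automatically bounded below'' from \eqref{positiva}. This is unjustified, and the paper does \emph{not} claim it: instead, the paper \emph{constructs} a particular bounded-below solution. The construction (following \cite[Theorem 2.6]{bm}) solves the state-constraint ergodic problem
\[
-\Delta u^R+H(\nabla u^R)+\lambda^R=-f \ \text{ in } B_R,\qquad u^R\to+\infty \ \text{ on } \partial B_R,
\]
shows $\lambda^R\downarrow\bar\lambda$, and then uses \eqref{positiva} in the key step: evaluating the equation at a minimum point $x^R$ of $u^R$ gives $\lambda^R+f(x^R)\ge H(0)\ge 0$ (since $H(0)\le 0$ actually, more precisely $H(0)+\lambda^R+f(x^R)\ge 0$), which combined with \eqref{positiva} forces $x^R$ to stay in a fixed compact set for all large $R$. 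Normalizing $u^R(0)=0$ and using the uniform local gradient bound then gives $\min u^R\ge -C$ uniformly in $R$, and this lower bound survives the limit $R\to\infty$. Only then can Theorem \ref{boundedfrombelowsol} be applied. Your proposal skips precisely this compactness-of-minima argument, which is where hypothesis \eqref{positiva} actually enters.
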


\begin{proof} 

(i). The proof of this result can be obtained by a straightforward adaptation of the proof of Theorem 2.1 in \cite{bm}, using the a priori estimates on the gradient given in 
Theorem \ref{bernstein2}.  Observe that actually in \cite{bm} it is required a stronger assumption on the regularity of $F_n-f_n$, in particular local Lipschitz continuity. 
This assumption is used to derive a priori estimates on the gradient of solutions by using the so called Bernstein method (see Appendix A in \cite{bm}), which depends also on the 
$L^\infty$ norm of $\nabla (F_n-f_n)$.  In our case we can weaken this assumption to just H\"older continuity (so still ensuring classical elliptic regularity)
 since we are using a priori estimates on the gradient given in Theorem \ref{bernstein2}, which depends only on the $L^\infty$ norm of $F_n-f_n$, and are obtained in \cite{LL89} by
  the so called integral Bernstein method. 
 
(ii). For the proof we refer to   \cite{ichi11} (see also \cite{bm} and \cite{c14}).  In particular in \cite{ichi11}, it is proved that $u_n$ is bounded from below.
By looking at the proof, it is easy to check that, due to the uniformity in $n$ of the norms of coefficients, the bound can be taken independent of $n$,  and by Theorem
\ref{boundedfrombelowsol} we get the estimate on the growth. 

(iii).  By adapting the argument in \cite[Theorem 2.6]{bm}, we get that there exists a bounded from below solution to \eqref{hjb2} with $\lambda_n= \bar{\lambda}_n$, with bound uniform in $n$. Then using Theorem
\ref{boundedfrombelowsol}, we get the estimate on the growth. 
We give a brief sketch of the proof of the existence of a bounded from below solution. 
For every $R>0$, we consider the ergodic problem
 \begin{equation}\label{sc} \begin{cases} -\Delta u_n^R+H_n(\nabla u_n^R) +\lambda_n^R= -f  & |x|<R \\ 
 u_n^R(x)\to +\infty & |x|\to R.\end{cases}
 \end{equation} 
Using the result  in \cite{bpt}, we get that for every $R>0$ there exists a unique $\lambda_n^R$ and a unique up to addition of constant solution $u_n^R\in C^2(B_R)$.

First of all we claim that $\lim_R\lambda_n^R=\bar\lambda_n$. 
 It is easy to check that if $R'>R$, then $\lambda_n^{R'}\leq \lambda_n^R$, and moreover that 
 $\lambda_n^R\geq \bar\lambda_n$. So, the sequence $\lambda_n^R$ is converging as $R\to +\infty$ to some
 $\lambda^\star_n\geq \bar\lambda_n$. Moreover, by the same argument as in Theorem \ref{bernstein2}, we get that for every compact $K\subset\R^N$,
 there exists a constant $C>0$ such that $|\nabla u_n^R|\leq C$ in $K$ for every $R$ sufficiently large and for all $n$.
  Without loss of generality we can assume that $u_n^R(0)=0$ for every $R$. So, using the gradient bound, 
  and elliptic regularity, we conclude that $u_n^R$ is bounded in $C^2(K)$ by some constant independent of $R$. 
  Hence, by Ascoli-Arzel\`a Theorem, and via a diagonalization procedure, we get that $u_n^R$ converges locally in $\R^N$, with $u_n\in C^2(\R^N)$. Moreover, $u_n$ is a solution to \eqref{hjb2}, with $\lambda=\lambda_n^\star$. Recalling the characterization of $\bar\lambda_n$ 
 and the fact that $\lambda^\star_n\geq \bar\lambda_n$, we conclude that $\lambda^\star_n=\bar\lambda_n$. 
 
Then, we consider $x_n^R\in B_R$ such that $u_n^R(x_n^R)=\min_{|x|\leq R} u_n^R$. Recalling that $u_n^R$ is a solution to \eqref{sc}, we get by computing the equation at $x_n^R$ and by recalling that $H_n(0)\leq 0$, that 
\[\lambda_n^R+f(x_n^R)\geq H_n(0)+\lambda_n^R+f(x_n^R)\geq 0.\] Using condition \eqref{positiva}, and recalling that $\lambda_n^R\to\bar \lambda_n$, 
we get that there exists a compact set $K$ (independent of $R$ and of $n$) 
and $R_0>0$ such that for all $R>R_0$, $x_n^R\in K$. 

Recalling that $u_n^R(0)=0$ and  $|\nabla u_n^R|\leq C$ in $K$ with $C$ independent of $n, R$, we conclude that $u_n^R(x_R)\geq -C$ for some constant $C$ independent of $n, R$. 
 But, this implies, since $u_n^R(x)\geq u_n^R(x_n^R)$ 
for every $R$, that passing to the limit $u_n(x)\geq -C$, with $C$ independent of $n$. 
 \end{proof}

\subsection{A priori estimates for the Kolmogorov equation}\label{s:kolmo_reg}
In this section we provide general a priori estimates for couples $(m,w)\in  (L^1(\R^N)\cap  W^{1,q}(\R^N)) \times L^1(\R^N)$ such that 
$\int_{\R^N} m(x)=M$ and $-\eps\Delta m+\di w=0$ where
\begin{equation}\label{definizioneq} 
q=\begin{cases} \gamma' & \gamma'\geq N\\ 
\frac{N}{N-\gamma'+1} & \gamma'<N.
\end{cases} 
\end{equation} 

\begin{lemma}\label{lemma1}  Let $\beta\leq \frac{Nq}{N-q}$, for $q<N$, and $\beta<+\infty$ for $q\geq N$. 
We define $1\leq r\leq \beta $ as follows
\begin{equation}\label{r} \frac{1}{r}= \frac{1}{\gamma'}+ \left(1-\frac{1}{\gamma'}\right)\frac{1}{\beta}.
\end{equation} 
Then,  there exists a constant $C$, depending only on $N$ and $\beta$, such that
\begin{eqnarray}\label{stimam}
\|m\|_{W^{1,r}(\R^N)}&\leq &C \left(\frac{1}{\eps^{\gamma'}}\int_{\R^N} m  \left|\frac{w}{m}\right|^{\gamma'} \, dx  + M\right)^{\frac{1}{\gamma'}}\|m\|_{L^\beta(\R^N)}^{\frac{1}{\gamma}}
\\ \nonumber &\leq & C  \left( \frac{C_L}{ \eps^{\gamma'}}\int_{\R^N} m L\left(-\frac{w}{m}\right) \, dx+ M \right)^{\frac{1}{\gamma'}}\|m\|_{L^\beta(\R^N)}^{\frac{1}{\gamma}},
\end{eqnarray} 
where $C_L = C_L(C_H, \gamma)$ is the constant appearing in  Proposition \ref{Lproperties}. 

We now assume that
 \begin{equation} \label{beta}
1<\beta< 1+\frac{\gamma'}{N}.
\end{equation} 
Then, there exists $\delta>0$   such that  \begin{equation}\label{stimabeta}
\|m\|_{L^\beta(\R^N)}^{(1+\delta)\beta}\leq C \frac{1}{\eps^{\gamma'}}  M^{ (1+\delta)\beta-1}     \left(\int_{\R^N} m 
\left|\frac{w}{m}\right|^{\gamma'}dx\right)\leq  C C_L \frac{1}{\eps^{\gamma'} }  M^{ (1+\delta)\beta-1}    \int_{\R^N}m L\left(-\frac{w}{m}\right) \, dx, 
\end{equation}
where the constant $C$ depends only on $\gamma$, $N$, and  $\beta$. 
 \end{lemma}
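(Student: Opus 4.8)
The plan is to prove the two chains of inequalities in Lemma~\ref{lemma1} in turn, using the PDE $-\eps\Delta m + \di w = 0$ together with interpolation and Sobolev embedding, and then to close a self-improving estimate in the range \eqref{beta}. The second inequality in each chain is immediate from Proposition~\ref{Lproperties}~\textit{ii)}, namely $C_L|q|^{\gamma'}\le L(q)$, so the real content is the first inequality of \eqref{stimam} and the first inequality of \eqref{stimabeta}.

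For \eqref{stimam}: I would first control $\|\nabla m\|_{L^r}$ via Proposition~\ref{ell_regularity}. Testing the equation against $\varphi\in C_0^\infty$ gives $\eps|\int m\,\Delta\varphi| = |\int w\cdot\nabla\varphi| \le \|w\|_{L^{r}}\|\nabla\varphi\|_{L^{r'}}$ provided we choose the test-function exponent so that H\"older applies; but $w$ is only in $L^1$ a priori, so instead I would estimate $\|w\|_{L^r}$ by H\"older with the splitting $w = m^{1/\gamma'}\cdot (w/m^{1/\gamma'})$: writing $|w| = m^{1/\gamma'}\,m^{-1/\gamma'}|w| = m^{1/\gamma'}\bigl(m^{-1}|w|^{\gamma'}\bigr)^{1/\gamma'}\cdot m^{1/\gamma'-1/\gamma'}$... more precisely $|w|^{r} = \bigl(m|w/m|^{\gamma'}\bigr)^{r/\gamma'} m^{r(1-1/\gamma')}$, and applying H\"older with exponents $\gamma'/r$ and its conjugate yields
\[
\|w\|_{L^r} \le \Bigl(\int m\,|w/m|^{\gamma'}\Bigr)^{1/\gamma'}\|m\|_{L^{\beta}}^{1/\gamma},
\]
where the bookkeeping of exponents is exactly the definition \eqref{r} of $r$ (here $1/\gamma + 1/\gamma' = 1$ and $\beta$ is the exponent dual to the remaining factor). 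Then Proposition~\ref{ell_regularity}, applied with $p=r$ and $K = \eps^{-1}\|w\|_{L^r}$, gives $\|\nabla m\|_{L^r}\le C\eps^{-1}\|w\|_{L^r}$; combining with the trivial bound $\|m\|_{L^r}\le M^{1/\gamma'}\|m\|_{L^\beta}^{1/\gamma}$ (again by H\"older, since $1\le r\le\beta$ and the same exponent split works with $m$ itself in place of $w$) produces \eqref{stimam} after absorbing $M$ inside the $\gamma'$-th root.

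For \eqref{stimabeta}: the idea is to feed \eqref{stimam} into the Sobolev inequality. Since $\beta < 1 + \gamma'/N$, one checks that the $r$ defined by \eqref{r} satisfies $r < N$ and $\beta < r^* = Nr/(N-r)$ strictly, so by Sobolev embedding and interpolation between $L^1$ and $L^{r^*}$,
\[
\|m\|_{L^\beta} \le \|m\|_{L^1}^{1-\th}\|m\|_{L^{r^*}}^{\th} \le C\,M^{1-\th}\|m\|_{W^{1,r}}^{\th}
\]
for a suitable $\th\in(0,1)$ depending on $\beta,\gamma',N$. Plugging \eqref{stimam} in, one gets $\|m\|_{L^\beta} \le C\,M^{1-\th}\bigl(\eps^{-\gamma'}\int m|w/m|^{\gamma'} + M\bigr)^{\th/\gamma'}\|m\|_{L^\beta}^{\th/\gamma}$. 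The exponent of $\|m\|_{L^\beta}$ on the right is $\th/\gamma < 1$ (this is where $\beta$ subcritical is used — strict inequality $\beta < r^*$ forces $\th<1$, hence $\th/\gamma<1$), so the term can be absorbed to the left, yielding a bound of the form $\|m\|_{L^\beta}^{1-\th/\gamma}\le C M^{1-\th}\eps^{-\th}\bigl(\int m|w/m|^{\gamma'} + \eps^{\gamma'}M\bigr)^{\th/\gamma'}$. Setting $1+\delta := (1-\th/\gamma)^{-1}\cdot(\text{appropriate power})$ and raising to the right power, then using again the trivial $\eps^{\gamma'}M \lesssim \int m|w/m|^{\gamma'}$-type reduction or simply a Young inequality to replace $(\,\cdot\,+\eps^{\gamma'}M)$ by $M^{\text{const}}$ plus the action term, gives \eqref{stimabeta} with $\delta>0$.

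The main obstacle I expect is the exponent bookkeeping: one must verify carefully that \eqref{beta} implies simultaneously $r<N$, $\beta\le r^*$ with \emph{strict} inequality (to get $\th<1$ and the self-improvement), and that after absorbing $\|m\|_{L^\beta}$ the surviving power $1+\delta$ is genuinely larger than $1$ — i.e.\ that $\delta>0$ — which is precisely the subcriticality gain. A secondary technical point is justifying the use of Proposition~\ref{ell_regularity}: one needs $m\in L^r$ to start with, which is why the hypothesis $m\in L^1\cap W^{1,q}$ with the specific $q$ of \eqref{definizioneq} is imposed (it guarantees $m\in L^\beta$ for the relevant range via Sobolev), so the estimates are a priori and all quantities are finite. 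Everything else is H\"older, interpolation, and Young.
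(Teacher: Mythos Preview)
Your proof of \eqref{stimam} is exactly the paper's: the same H\"older splitting $|w|=(m|w/m|^{\gamma'})^{1/\gamma'}m^{1/\gamma}$, then Proposition~\ref{ell_regularity}, then interpolation for $\|m\|_{L^r}$.

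For \eqref{stimabeta} the overall strategy is also the same, but two loose ends need fixing. First, your claim that $r<N$ can fail: as $\beta\uparrow 1+\gamma'/N$ one computes $r\to (N+\gamma')/(N+1)$, which exceeds $N$ once $\gamma'>N^2$. Second, and more consequentially, your proposed removal of the additive $+M$ term does not work --- the ``trivial $\eps^{\gamma'}M\lesssim\int m|w/m|^{\gamma'}$'' is simply false (take $w=0$), and Young's inequality does not convert an additive constant into the clean multiplicative bound stated in \eqref{stimabeta}. The paper sidesteps both issues at once: instead of Sobolev on the full $W^{1,r}$ norm, it applies Gagliardo--Nirenberg in the form $\|m\|_{L^\eta}\le C\|\nabla m\|_{L^r}^{N/(N+1)}M^{1/(N+1)}$ with $\eta=(N+1)r/N$ (valid for every $r\ge1$, no case split on $r$ versus $N$), and checks that $\eta>\beta$ precisely under \eqref{beta}. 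Since this involves only $\|\nabla m\|_{L^r}$, one can feed in the gradient-only estimate
\[
\|\nabla m\|_{L^r}\le C\Bigl(\eps^{-\gamma'}\!\int_{\R^N} m\,|w/m|^{\gamma'}\,dx\Bigr)^{1/\gamma'}\|m\|_{L^\beta}^{1/\gamma}
\]
(which is what Proposition~\ref{ell_regularity} actually gives, before adding $\|m\|_{L^r}$), and after interpolating $\|m\|_{L^\beta}$ between $L^1$ and $L^\eta$ and absorbing, the clean form of \eqref{stimabeta} follows with $\delta=\frac{1}{\beta-1}\bigl(\frac{\gamma'}{N}+1-\beta\bigr)>0$. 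Note also that the subcriticality \eqref{beta} is what forces $\delta>0$, not what makes the absorption possible: the absorption step ($\theta/\gamma<1$) holds for any $\beta$, but the resulting exponent on $\|m\|_{L^\beta}$ exceeds $\beta$ only when $\beta<1+\gamma'/N$.
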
 
\begin{proof} Since $m\in W^{1,q}(\R^N)$, by Sobolev embedding and interpolation, we get that  $m\in L^\beta(\R^N) $.   
Using  $-\eps \Delta m+\di w=0$, we get for all $\varphi \in C_0^\infty(\R^N)$, 
\[\eps \int_{\R^N} \nabla m \cdot \nabla \varphi \, dx = \int_{\R^N} w \cdot \nabla \varphi \, dx .\]
Using Holder inequality, recalling \eqref{r},  we obtain
\begin{multline*}\left|\frac{1}{\eps}\int_{\R^N} w\cdot \nabla \varphi \, dx\right|\leq \int_{\R^N} \frac{1}{\eps }\left|\frac{w}{m}\right| m^{\frac{1}{\gamma'}} m^{1-\frac{1}{\gamma'}} |\nabla\varphi| dx
\\ \leq \left(\frac{1}{\eps^{\gamma'}}\int_{\R^N} m \left|\frac{w}{m}\right|^{\gamma'}dx\right)^{\frac{1}{\gamma'}} \|m\|_{L^\beta(\R^N)}^{\frac{1}{\gamma}} \|\nabla\varphi\|_{L^{r'}(\R^N)}.\end {multline*}
Therefore, we get that for all $\varphi \in C_0^\infty(\R^N)$, 
\[\left| \int_{\R^N} \nabla m \cdot \nabla \varphi \, dx\right| \leq  \left(\frac{1}{\eps^{\gamma'}}\int_{\R^N} m \left|\frac{w}{m}\right|^{\gamma'}dx\right)^{\frac{1}{\gamma'}} \|m\|_{L^\beta(\R^N)}^{\frac{1}{\gamma}} \|\nabla\varphi\|_{r'} .\]
We apply then Proposition \ref{ell_regularity} and we obtain that $m\in W^{1,r}(\R^N)$ and that there exists a constant $C$, depending only on $r$, such that
\begin{equation}\label{c1} \|\nabla m\|_{L^{r}(\R^N)} \le C    \left(\frac{1}{\eps^{\gamma'}}\int_{\R^N} m \left|\frac{w}{m}\right|^{\gamma'}dx\right)^{\frac{1}{\gamma'}} \|m\|_{L^\beta(\R^N)}^{\frac{1}{\gamma}}.
\end{equation} 
From this inequality, using Proposition \ref{Lproperties}  and recalling that by interpolation, since $\|m\|_{L^1(\R^N)}=M$,
 $\|m\|_{L^r(\R^N)}\leq \|m\|_{L^\beta(\R^N)}^{\frac{1}{\gamma}}M^{\frac{1}{\gamma'}}$, we conclude the desired inequality \eqref{stimam}. 

Now we fix $\eta$ such that 
\[\frac{1}{\eta}=\left(\frac{1}{r}-\frac{1}{N}\right)\frac{N}{N+1}+1-\frac{N}{N+1}= \frac{N}{N+1}\frac{1}{r}.\]
Note that, by a simple computation using \eqref{r}, we get  $\frac{1}{\eta}-\frac{1}{\beta}= \frac{N}{N+1} \frac{1}{\beta\gamma'}\left(\beta-1-\frac{\gamma'}{N}\right)$, therefore, by
 \eqref{beta}, we conclude that
that $\eta>\beta$.
By Gagliardo Nirenberg inequality, and recalling that $\|m\|_1=M$, we get
\begin{equation}\label{c2}
\|m\|_{L^\eta(\R^N)}\leq C \|\nabla m\|_{L^r(\R^N)}^{\frac{N}{N+1}}M^{\frac{1}{N+1}}.
\end{equation} 
Since $\eta>\beta$, by interpolation we get that there exists $\theta>1$ such that $\|m\|_{L^\beta(\R^N)}^\theta\leq \|m\|_{L^\eta(\R^N)} M^{\theta-1}$. 
Actually \[\frac{1}{\theta}= \left(1-\frac{1}{\beta}\right) (N+1) \frac{1}{1+N \left(1-\frac{1}{\beta}\right) \left(1-\frac{1}{\gamma'}\right)}.\]
So, we substitute in \eqref{c2} and \eqref{c1} and we get, elevating both terms to $\gamma'\frac{N+1}{N}$, 
\begin{equation}\label{c3}
\|m\|_{L^\beta(\R^N)}^{\theta \gamma' \frac{N+1}{N}}\leq C \frac{1}{\eps^{\gamma'} }  M^{\gamma'(\theta\frac{N+1}{N}-1)}   \left(\int_{\R^N} m 
\left|\frac{w}{m}\right|^{\gamma'}dx\right) \|m\|_{L^\beta(\R^N)}^{\frac{\gamma'}{\gamma}}.
\end{equation}
Now, since $\theta>1$, by \eqref{beta}, we get \[\theta \gamma' \frac{N+1}{N}-\frac{\gamma'}{\gamma}= \frac{\beta\gamma'}{N(\beta-1)}=\beta+\frac{\beta}{\beta-1} \left[\frac{\gamma'}{N}+1-\beta \right]>0.\] 
Therefore we deduce \eqref{stimabeta} from \eqref{c3} with \begin{equation}\label{delta} \delta=\frac{1}{\beta-1} \left[\frac{\gamma'}{N}+1-\beta \right].\end{equation}  
 \end{proof} 
 
\begin{corollary} \label{calpha} For every $r<q$, there exists $C>0$ depending on $N$, $\gamma'$  and $r$ such that     \begin{equation}\label{stimaw1q}  \|m\|_{W^{1,r}(\R^N)} \leq \frac{C}{\eps^{\gamma'}}     \left(C_L\int_{\R^N} m L\left(-\frac{w}{m}\right) \, dx+\eps^{\gamma'} M\right).\end{equation}

Moreover, if $\gamma'>N$ (so $q>N$), then   $m\in C^{0,\theta}(\R^N)$ and
\begin{equation}\label{stimaholder}  \|m\|_{C^{0,\theta}(\R^N)} \leq \frac{C}{\eps^{\gamma'}}     \left(C_L\int_{\R^N} m L\left(-\frac{w}{m}\right) \, dx+\eps^{\gamma'} M\right).\end{equation}
\end{corollary}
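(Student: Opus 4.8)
\emph{Strategy.} The idea is to bootstrap the two estimates of Lemma~\ref{lemma1} along the Lebesgue scale. Set
\[
A := \frac{1}{\eps^{\gamma'}}\left(C_L\int_{\R^N} m L\left(-\frac{w}{m}\right)dx + \eps^{\gamma'}M\right) = \frac{C_L}{\eps^{\gamma'}}\int_{\R^N} m L\left(-\frac{w}{m}\right)dx + M,
\]
so that $M\le A$ and the right-hand sides of \eqref{stimam} and \eqref{stimabeta} are both controlled by $A$ (up to the fractional powers occurring there). Given $r<q$, let $\beta$ be the exponent linked to $r$ by \eqref{r}; since $\beta\mapsto r$ is increasing and $r=q$ corresponds to $\beta=\tfrac{Nq}{N-q}$ when $q<N$ (resp. $\beta\to\infty$ when $q\ge N$), the value $\beta$ is admissible in Lemma~\ref{lemma1}, and by \eqref{stimam} it suffices to establish the single bound $\|m\|_{L^\beta(\R^N)}\le CA$ with $C=C(N,\gamma',\beta)$: indeed then $\|m\|_{W^{1,r}(\R^N)}\le CA^{1/\gamma'}(CA)^{1/\gamma}=CA$, which is \eqref{stimaw1q}.

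\emph{The bootstrap.} For the base step, take any $\beta_0\in\big(1,\,1+\tfrac{\gamma'}{N}\big)$; then \eqref{stimabeta} gives $\|m\|_{L^{\beta_0}}^{(1+\delta)\beta_0}\le C\,M^{(1+\delta)\beta_0-1}\,A$, and since $M\le A$ and $(1+\delta)\beta_0-1>0$ the exponents of $A$ sum to $(1+\delta)\beta_0$, whence $\|m\|_{L^{\beta_0}}\le CA$. For the iteration step, assume $\|m\|_{L^{\beta_i}}\le C_iA$ for some admissible $\beta_i$; letting $r_i$ be linked to $\beta_i$ via \eqref{r}, estimate \eqref{stimam} yields $\|m\|_{W^{1,r_i}}\le C_iA^{1/\gamma'}(C_iA)^{1/\gamma}\le C_i'A$, and the Sobolev embedding $W^{1,r_i}(\R^N)\hookrightarrow L^{r_i^*}(\R^N)$ — with $r_i^*=\tfrac{Nr_i}{N-r_i}$ if $r_i<N$, and $r_i^*$ an arbitrary finite exponent (or $\infty$) if $r_i\ge N$ — combined with interpolation against $\|m\|_{L^1}=M\le A$, gives $\|m\|_{L^\beta}\le C_{i+1}A$ for all $\beta\le r_i^*$. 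Writing $s=1/\beta$, the recursion $\beta_i\mapsto r_i^*$ reads $s_{i+1}=\big(1-\tfrac1{\gamma'}\big)s_i+\big(\tfrac1{\gamma'}-\tfrac1N\big)$, an affine map with contraction factor $1-\tfrac1{\gamma'}\in(0,1)$, whose fixed point is $s^\infty=1-\tfrac{\gamma'}{N}$; hence $\beta^\infty=\tfrac{N}{N-\gamma'}=\tfrac{Nq}{N-q}$ when $\gamma'<N$, while $s^\infty\le0$ when $\gamma'\ge N$ (so $\beta_i$ grows without bound, and once $r_i\ge N$ one obtains all finite exponents at a single step). Starting from $\beta_0<\beta^\infty$ we thus get $\beta_i\nearrow\beta^\infty$ geometrically, so after finitely many steps $\beta_i$ exceeds the prescribed $\beta$, and a final interpolation between $L^1$ and $L^{\beta_i}$ yields $\|m\|_{L^\beta}\le CA$; since only finitely many steps occur, $C$ depends on $N,\gamma',\beta$, hence on $N,\gamma',r$, only.

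\emph{The H\"older bound and the main difficulty.} If $\gamma'>N$ then $q=\gamma'>N$; choosing any $r\in(N,\gamma')$, the first part gives $\|m\|_{W^{1,r}(\R^N)}\le CA$, and Morrey's embedding $W^{1,r}(\R^N)\hookrightarrow C^{0,1-N/r}(\R^N)$ yields $m\in C^{0,\theta}(\R^N)$ with $\theta=1-\tfrac Nr$ and $\|m\|_{C^{0,\theta}(\R^N)}\le C\|m\|_{W^{1,r}(\R^N)}\le CA$, which is \eqref{stimaholder}. The heart of the argument is the bootstrap: one must check that each step strictly enlarges the Lebesgue exponent and that the iteration converges exactly to the sharp endpoint $\tfrac{Nq}{N-q}$, so that every subcritical exponent is reached after finitely many iterations — this is precisely the affine fixed-point computation above, and it is also the reason the statement requires $r<q$ rather than $r\le q$. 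A routine point is keeping the constants uniform, which holds because the number of iterations is bounded in terms of $r$.
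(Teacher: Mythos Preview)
Your bootstrap is correct, but the paper closes the estimate in a single step, without iteration. The key observation you missed is that when $r<q$, the exponent $\beta$ linked to $r$ by \eqref{r} automatically satisfies $r\le\beta\le r^*$ (a short computation: from \eqref{r} one has $\tfrac1r-\tfrac1\beta=\tfrac1{\gamma'}(1-\tfrac1\beta)\ge0$, giving $r\le\beta$, while $\beta\le r^*$ reduces to $N\ge r(N-\gamma'+1)$, which is exactly $r\le q$). Hence $W^{1,r}(\R^N)\hookrightarrow L^\beta(\R^N)$ directly, and \eqref{stimam} becomes self-referential:
\[
\|m\|_{W^{1,r}}\le CA^{1/\gamma'}\|m\|_{L^\beta}^{1/\gamma}\le CA^{1/\gamma'}\|m\|_{W^{1,r}}^{1/\gamma},
\]
so $\|m\|_{W^{1,r}}^{1/\gamma'}\le CA^{1/\gamma'}$, done. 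Your recursion $s_{i+1}=(1-\tfrac1{\gamma'})s_i+(\tfrac1{\gamma'}-\tfrac1N)$ is correct and its fixed point indeed lands at the critical exponent, but the paper's approach shows the iteration converges in one step precisely because the Sobolev gain $r\mapsto r^*$ already overshoots $\beta$. What your approach buys is an independent starting point via \eqref{stimabeta}, at the cost of tracking a finite sequence of constants; the paper's approach is shorter but requires spotting that the relation \eqref{r} is exactly tuned to the Sobolev embedding at the endpoint $r=q$.
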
  
\begin{proof} 
For $q\geq  N$ (equivalently $\gamma'\geq N$), we fix $r<q$ and we choose $\beta$ which satisfies  \eqref{r} for such $r$. 
By Sobolev embedding theorem, $W^{1,r}(\R^N)$ is continuously embedded in $L^\beta(\R^N)$.
So, there exists $C$ depending on $N$ and $r$ such that $\|m\|_{L^\beta(\R^N)}\leq C \|m\|_{W^{1,r}(\R^N)}$.
Using inequality \eqref{stimam}, we get 
\[\|m\|_{L^\beta(\R^N)} 
\leq \frac{C}{\eps^{\gamma'}}   \left(\int_{\R^N} m  \left|\frac{w}{m}\right|^{\gamma'} \, dx   +\eps^{\gamma'}  M\right).\]
If we substitute again in \eqref{stimam} we get 
\[\|m\|_{W^{1,r}(\R^N)}\leq \frac{C}{\eps^{\gamma'}} 
\left(\int_{\R^N} m  \left|\frac{w}{m}\right|^{\gamma'} \, dx  +\eps^{\gamma'} M\right). \]
In particular for $q>N$, we can choose $r>N$ and by Sobolev embedding theorem  we get that there exists $\theta=1-\frac{N}{r}$ and a constant $C>0$ depending on $N$ and $r$ such that 
\begin{eqnarray*} \|m\|_{C^{0,\theta}(\R^N)}&\leq & \frac{C}{\eps^{\gamma'}} 
\left(\int_{\R^N} m  \left|\frac{w}{m}\right|^{\gamma'} \, dx  +\eps^{\gamma'} M\right)
\\ &\leq &  \frac{C}{\eps^{\gamma'}}     \left(C_L\int_{\R^N} m L\left(-\frac{w}{m}\right) \, dx  +\eps^{\gamma'} M\right).
\end{eqnarray*}

For $q<N$, we fix $r<q$, and choose the corresponding $\beta$ in \eqref{r}, that satisfies $\beta<\frac{N}{N-\gamma'}$. Hence
we conclude again from inequality \eqref{stimam}. 

\end{proof}


\section{Regularization procedure and existence of approximate solutions for $\eps>0$} \label{sectionreg}
\medskip
\subsection{The regularized problem} 
We consider the following approximation of the system \eqref{mfg},

\begin{equation}\label{mfgeps}\begin{cases}
- \eps\Delta u + H(\nabla u )+\lambda  =f_k[m](x)+V(x),\\
-\eps \Delta  m -\di(m  \nabla H(\nabla u ) )=0, \\   \int_{\R^N} m \, dx=M, \end{cases}
\end{equation}
where
\begin{equation}\label{regf} f_k[m](x)= f(m \star\chi_k)\star \chi_k (x)=\int_{\R^N} \chi_k(x-y) f\left (\int_{\R^N} m (z)\chi_k(y-z)dz\right)dy\end{equation} 
and $\chi_k$, for $k>0$,  is a sequence of standard symmetric mollifiers approximating the unit as $k \to \infty$.

We observe that $f_k[m](x)$ is the $L^2$-gradient of a $C^1$  potential $F_k: L^1(\R^N)\to \R$, defined as follows  
\begin{equation}\label{datamol}  F_k[m]:= \int_{\R^N} F(m\star \chi_k(x)) dx,
\end{equation}  
where $F(m)=\int_0^m f(n)dn$ for $m\geq 0$ and $F(m)=0$ for $m\leq 0$. 
Note that using Jensen inequality and \eqref{assFlocal}, we get that  for all $m\in L^1(\R^N)$ such that $m\geq 0$, and $\int_{\R^N}m(x)dx=M$, 
\begin{equation}\label{cond1} -\frac{C_f}{\alpha+1} \int_{\R^N} m^{\alpha+1}(x)dx-KM\leq F_k[m]\leq  -\frac{C_f}{\alpha+1} \int_{\R^N} \left(m\star\chi_k(x)\right)^{\alpha+1}\, dx+KM.\end{equation} 

In order to construct solutions to the system, we follow a variational approach and we associate to \eqref{mfgeps} a energy, as already described in the introduction. We define the energy \begin{equation}\label{energia}
\mathcal{E}_k(m, w) := \begin{cases}
\displaystyle \int_{\R^N} m L\left(-\frac{w}{m}\right) + V(x)m\, dx + F_k[m]  & \text{ if $(m,w) \in \mathcal{K}_{\eps, M}$}, \\
+\infty & \text{otherwise},
\end{cases}
\end{equation}
where $\mathcal{K}_{\eps,M}$ is defined in \eqref{kcalconstraint} and  $L$ is defined in \eqref{dati}.  We recall that the exponent $q$ appearing in the definition of   $\mathcal{K}_{\eps,M}$ is \[ q=\begin{cases} \frac{N}{N-\gamma'+1} & \gamma'\leq N \\ \gamma' & \gamma'>N.\end{cases}\] Therefore, $q\leq \gamma'$.  Observe that, if $q<N$,  $q^*=\frac{qN}{N-q}= \frac{N}{N-\gamma'}$, and that $q^*>1+\frac{\gamma'}{N}>1+\alpha$ by \eqref{alpha}. 
If $q=\gamma'\geq N$, then we let $q^*=+\infty$.

\subsection{A priori estimates and energy bounds} 
In this section, we provide bounds from  below for the energy $\cE_k$, assuring in particular that the minimum problem is well defined.

\begin{lemma}\label{boundfrombelow} Let $(m,w)\in \cK_{\eps, M}$.  
Then 
\begin{equation}\label{energybound1}\cE_k(m, w) \geq  -K-C  \eps^{-\frac{\gamma'\alpha N}{ \gamma'-\alpha N}}  \end{equation}
where $C, K >0 $ are constants depending only on $N, M, C_L, \gamma,\alpha, M$.

In particular there exists finite
\[e_{k,\eps}(M)=\inf_{(m,w) \in\cK_{\eps, M}}\cE_k(m, w). \]
\end{lemma}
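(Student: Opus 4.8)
The plan is to bound each of the three terms in $\cE_k(m,w) = \int_{\R^N} mL(-w/m)\,dx + \int_{\R^N}V(x)m\,dx + F_k[m]$ from below for an arbitrary $(m,w)\in\cK_{\eps,M}$. The second term is immediately $\ge 0$ by \eqref{vass} (or at worst bounded below since $V$ is bounded from below), so it can be dropped. The first term is nonnegative by Proposition \ref{Lproperties}(ii). For the third term, I would use the lower bound in \eqref{cond1}, namely $F_k[m] \ge -\frac{C_f}{\alpha+1}\int_{\R^N} m^{\alpha+1}\,dx - KM$, which reduces everything to controlling $\int m^{\alpha+1}$ in terms of the action $\cA := \int mL(-w/m)\,dx$ and $M$.

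The heart of the matter is to apply the Kolmogorov-regularity estimate \eqref{stimabeta} from Lemma \ref{lemma1} with the choice $\beta = \alpha+1$, which is admissible precisely because \eqref{alpha} gives $1 < \alpha+1 < 1+\gamma'/N$, matching \eqref{beta}. That estimate yields, for some $\delta>0$ (explicitly $\delta = \frac{1}{\alpha}[\gamma'/N - \alpha]$ by \eqref{delta} with $\beta = \alpha+1$),
\[
\|m\|_{L^{\alpha+1}(\R^N)}^{(1+\delta)(\alpha+1)} \le C\,C_L\,\eps^{-\gamma'} M^{(1+\delta)(\alpha+1)-1}\,\cA,
\]
so $\int_{\R^N} m^{\alpha+1}\,dx = \|m\|_{L^{\alpha+1}}^{\alpha+1} \le \left(C\,C_L\,\eps^{-\gamma'}M^{(1+\delta)(\alpha+1)-1}\,\cA\right)^{\frac{1}{1+\delta}}$. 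Plugging this into \eqref{cond1} gives
\[
\cE_k(m,w) \ge \cA - \frac{C_f}{\alpha+1}\left(C\,C_L\,\eps^{-\gamma'}M^{(1+\delta)(\alpha+1)-1}\right)^{\frac{1}{1+\delta}}\cA^{\frac{1}{1+\delta}} - KM.
\]

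It remains to absorb the negative term using the positive $\cA$: since $\frac{1}{1+\delta} < 1$, the function $t \mapsto t - Bt^{1/(1+\delta)}$ (with $t = \cA \ge 0$) is bounded below by a constant of the form $-C' B^{(1+\delta)/\delta}$, by an elementary one-variable minimization (take the derivative, or use Young's inequality $Bt^{1/(1+\delta)} \le \frac12 t + C'B^{(1+\delta)/\delta}$). Here $B$ is a constant times $\eps^{-\gamma'/(1+\delta)}$, so $B^{(1+\delta)/\delta} $ is a constant times $\eps^{-\gamma'/\delta}$. A direct computation with $\delta = \frac{1}{\alpha}(\gamma'/N - \alpha) = \frac{\gamma' - \alpha N}{\alpha N}$ shows $\gamma'/\delta = \frac{\gamma'\alpha N}{\gamma' - \alpha N}$, which is exactly the exponent in \eqref{energybound1}. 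This yields $\cE_k(m,w) \ge -K - C\eps^{-\gamma'\alpha N/(\gamma'-\alpha N)}$ with constants depending only on $N, M, C_L, \gamma, \alpha$ (and $C_f$), uniformly in $k$. Taking the infimum over $\cK_{\eps,M}$ gives $e_{k,\eps}(M) > -\infty$; finiteness from above follows by exhibiting one admissible pair (e.g. a smooth compactly supported $m_0$ with its associated $w_0$ solving the linear Kolmogorov equation), so $e_{k,\eps}(M)$ is a finite number.

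The main obstacle is bookkeeping the exponents: one must verify that $\beta = \alpha+1$ is strictly admissible in \eqref{beta} (which is where \eqref{alpha} enters crucially), and then chase the power of $\eps$ through \eqref{stimabeta}, the factor $\frac{1}{1+\delta}$, and the Young-inequality absorption, checking that all constants remain independent of $k$ (this holds because \eqref{cond1} and Lemma \ref{lemma1} are both $k$-uniform) and that the final exponent collapses to $\frac{\gamma'\alpha N}{\gamma'-\alpha N}$. The case $\gamma' \ge N$ versus $\gamma' < N$ only affects which Sobolev embedding is invoked inside Lemma \ref{lemma1} and does not change the argument here.
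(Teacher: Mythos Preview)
Your proposal is correct and follows essentially the same approach as the paper: drop $V\ge 0$, use \eqref{cond1} for $F_k[m]$, apply \eqref{stimabeta} with $\beta=\alpha+1$ (admissible by \eqref{alpha}), and then do a one-variable optimization using $\delta=\frac{\gamma'-\alpha N}{\alpha N}$ to produce the exponent $\gamma'/\delta=\frac{\gamma'\alpha N}{\gamma'-\alpha N}$. The only cosmetic difference is that the paper uses \eqref{stimabeta} to bound $\int mL(-w/m)$ from \emph{below} by a power of $\|m\|_{L^{\alpha+1}}$ and then minimizes over $X=\|m\|_{L^{\alpha+1}}^{\alpha+1}$, whereas you bound $\|m\|_{L^{\alpha+1}}^{\alpha+1}$ from \emph{above} by a power of $\mathcal A$ and minimize over $\mathcal A$; these are equivalent manipulations of the same inequality.
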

\begin{proof}
Recalling that $V\geq 0$,  estimate \eqref{cond1} and applying   \eqref{stimabeta} with $\alpha=\beta-1$, we get  
\begin{eqnarray*}
\cE_k(m, w) &\geq& \int_{\R^N} m L\left(-\frac{w}{m}\right) \, dx   - \frac{C_f}{\alpha+1} \int_{\R^N} m^{\alpha + 1} \, dx-KM \\
& \geq & C \eps^{\gamma'}  M^{1-(1+\delta)(1+\alpha)}\|m\|_{L^{\alpha+1}}^{(1+\alpha)(1+\delta)}  -\frac{1}{\alpha+1} \|m\|_{L^{\alpha+1}}^{(1+\alpha)}-KM \nonumber \\
&\geq &   -C\delta \eps^{-\frac{\gamma'}{\delta}} \left(\frac{1}{(\delta+1)(\alpha+1)}\right)^{1+\frac{1}{\delta}} -KM 
\end{eqnarray*}
where $C$ is a constant depending only on $N, M, C_L, \gamma,\alpha$ and 
\begin{equation}\label{contodelta} \delta=\frac{1}{\alpha} \left[\frac{\gamma'}{N}-\alpha \right]. \end{equation}
Therefore, substituting in the energy, we get 
\[\cE_k(m, w) \geq -C\frac{(\gamma'-\alpha N)}{\alpha N}  \eps^{-\frac{\gamma'\alpha N}{ \gamma'-\alpha N}} \left(\frac{\alpha N}{\gamma'(\alpha+1)}\right)^{\frac{\gamma'}{\gamma'-\alpha N}}-KM, \]
which gives the desired inequality.
\end{proof}

We get also a priori bounds on minimizers and minimizing sequences. 
\begin{proposition}\label{aprioriest} 
Let $(m,w)\in  \cK_{\eps, M}$ such that $e_{k,\eps}(M)\geq \cE_k(m,w)-\eta$, for some positive $\eta$. 
Then 
\begin{eqnarray}\label{risc1} \int_{\R^N} m \left| \frac{w}{m}\right|^{\gamma'} dx \leq C\eps^{-\frac{\gamma'N\alpha}{\gamma'-N\alpha}} +K,\\
\label{risc2} \|m\|_{L^{\alpha+1}(\R^N)}^{\alpha+1}\leq C\eps^{-\frac{\gamma'N\alpha}{\gamma'-N\alpha} }+K,\end{eqnarray}

for some   $C, K$ positive constants which depends only on $\alpha, N, V, C_L$. 
\end{proposition}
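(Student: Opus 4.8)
The plan is to combine the lower bound on the energy from Lemma \ref{boundfrombelow} with an almost-minimality hypothesis, and then extract the two claimed bounds from the structure of $\cE_k$. First I would use the assumption $\cE_k(m,w) \le e_{k,\eps}(M) + \eta$ together with the explicit competitor estimate: fix a reference element $(m_0,w_0) \in \cK_{\eps,M}$ with $m_0$ compactly supported (for instance a rescaled fixed profile), so that $e_{k,\eps}(M) \le \cE_k(m_0,w_0) \le C\eps^{-\frac{\gamma'N\alpha}{\gamma'-N\alpha}} + K$ — the scaling exponent here is exactly the one appearing in Lemma \ref{boundfrombelow}, and this upper bound on $e_{k,\eps}(M)$ can be obtained by the same computation used there (or simply absorbed into the constants). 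Hence $\cE_k(m,w) \le C\eps^{-\frac{\gamma'N\alpha}{\gamma'-N\alpha}} + K$.

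Next I would revisit the chain of inequalities in the proof of Lemma \ref{boundfrombelow}, but keeping the kinetic term. Using $V \ge 0$, \eqref{cond1}, and \eqref{stimabeta} with $\beta = \alpha+1$, one has
\[
\cE_k(m,w) \ge \int_{\R^N} m L\!\left(-\frac{w}{m}\right) dx - \frac{C_f}{\alpha+1}\|m\|_{L^{\alpha+1}}^{\alpha+1} - KM,
\]
and the negative term is controlled by a fractional power of $\int m L(-w/m)$ via \eqref{stimabeta}: precisely $\|m\|_{L^{\alpha+1}}^{(1+\delta)(\alpha+1)} \le C C_L \eps^{-\gamma'} M^{(1+\delta)(\alpha+1)-1} \int m L(-w/m)$. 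Writing $A := \int_{\R^N} m L(-w/m)\,dx \ge 0$ and $B := \|m\|_{L^{\alpha+1}}^{\alpha+1}$, the almost-minimality gives $A - \frac{C_f}{\alpha+1} B \le C\eps^{-\frac{\gamma'N\alpha}{\gamma'-N\alpha}} + K$, while \eqref{stimabeta} gives $B \le (C\eps^{-\gamma'} A)^{\frac{1}{1+\delta}} M^{\ast}$ for suitable exponents, i.e. $B$ is a sublinear (power $<1$) function of $A$. Substituting, $A - C' \eps^{-\gamma'/(1+\delta)} A^{1/(1+\delta)} \le C\eps^{-\frac{\gamma'N\alpha}{\gamma'-N\alpha}} + K$; since $1/(1+\delta) < 1$, Young's inequality absorbs the middle term into $\tfrac12 A$ plus a constant of the form $C\eps^{-\gamma'/\delta}$, and a direct check using \eqref{contodelta}, namely $\gamma'/\delta = \frac{\gamma'\alpha N}{\gamma'-\alpha N}$, shows this is exactly the claimed exponent. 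This yields \eqref{risc1} for $A$, and then \eqref{risc2} follows by feeding the bound on $A$ back into \eqref{stimabeta}. Finally, Proposition \ref{Lproperties}(ii) gives $\int m|w/m|^{\gamma'} \le C_L^{-1} A$, converting the bound on $A$ into the stated bound on $\int m|w/m|^{\gamma'}$.

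The only genuinely delicate point is bookkeeping the $\eps$-powers: one must verify that the competitor bound on $e_{k,\eps}(M)$, the Young-inequality remainder $\eps^{-\gamma'/\delta}$, and the scaling exponent in Lemma \ref{boundfrombelow} all coincide, which they do precisely because of the identity $\frac{\gamma'}{\delta} = \frac{\gamma' \alpha N}{\gamma' - \alpha N}$ coming from \eqref{contodelta}. Everything else is a routine application of Young's inequality and the already-established estimates \eqref{stimabeta} and Proposition \ref{Lproperties}; no new analytic input is needed. I would also note that $\eta$ plays no quantitative role here (any fixed $\eta$, say $\eta \le 1$, is absorbed into $K$), so the statement holds uniformly for minimizing sequences.
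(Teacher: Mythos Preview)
Your proposal is correct and follows essentially the same approach as the paper: bound $e_{k,\eps}(M)$ from above by a competitor, drop $V\ge 0$ and use \eqref{cond1} to reduce to the kinetic term minus $\|m\|_{L^{\alpha+1}}^{\alpha+1}$, then invoke \eqref{stimabeta} and elementary algebra. Two minor differences: the paper uses the simpler (and stronger) upper bound $e_{k,\eps}(M)\le C$ via the fixed competitor $m=ce^{-|x|}$, $w=\eps\nabla m$, rather than a rescaled profile; and the paper solves for $B=\|m\|_{L^{\alpha+1}}^{\alpha+1}$ first by lower-bounding the kinetic term through \eqref{stimabeta} (obtaining $C\eps^{\gamma'}B^{1+\delta}-cB\le C$ and reading off $B\le C\eps^{-\gamma'/\delta}$ directly), then recovers the kinetic bound from \eqref{energia1}, whereas you solve for $A$ first via Young and then feed back into \eqref{stimabeta} for $B$ --- these are equivalent rearrangements of the same inequalities.
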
 
\begin{proof} First of all we observe that there exists $C\geq 0$ depending   on $M, C_L, C_V$ such that 
\begin{equation}\label{stimaalto1} e_{k,\eps}(M)\leq C.\end{equation} 
Let $m=c e^{-|x|}$, where $c$ is chosen to have $\int_{\R^n} mdx=M$, 
and $w=\eps \nabla m$, so that $(m,w)\in \mathcal{K}_{\eps, M}$. By assumption \eqref{vass}, we get that $\int_{\R^n} mV(x) dx\leq C$ for some constant $C>0$, by \eqref{cond1} that $F_k[m]\leq KM$ and   by the properties of $L$ in Proposition \ref{Lproperties}, we have that $\int_{\R^n} mL(-w/m)dx\leq( \frac{\eps^{\gamma'}}{c^{\gamma'}}+C_L)M $. So, in conclusion $e_{k,\eps}(M)\leq \mathcal{E}_k(m,w)\leq C$ as required.

Note that if $(m,w)\in  \cK_{\eps, M}$, and $e_\eps(M)\geq \cE(m,w)-\eta$, for some positive $\eta$, then, by \eqref{cond1},  by the fact that  $V\geq 0$, and by the properties of $L$ in Proposition \ref{Lproperties}, we get 
\begin{equation} \label{energia1} 
C+\eta\geq e_\eps(M)+\eta\geq  \cE_k(m,w) \geq \int_{\R^N}  m\left|\frac{w}{m}\right|^{\gamma'}-\frac{C_f}{\alpha+1} m^{\alpha+1} \, dx\ -KM. \end{equation} 
We apply  \eqref{stimabeta} with $\alpha=\beta-1$, and we obtain   
\begin{multline*}
C+\eta+KM\geq \int_{\R^N}  m\left|\frac{w}{m}\right|^{\gamma'}-\frac{C_f}{\alpha+1} m^{\alpha+1} \, dx\\
 \geq  C \eps^{\gamma'}  M^{1-(1+\delta)(1+\alpha)}\|m\|_{L^{\alpha+1}}^{(1+\alpha)(1+\delta)}  -\frac{C_f}{\alpha+1} \|m\|_{L^{\alpha+1}}^{(1+\alpha)}.  \end{multline*}
 Recall that  $\delta+1=\frac{\gamma'}{\alpha N}$ (can be computed using \eqref{delta}), so $\frac{\gamma'}{\delta}=  \frac{\gamma'N\alpha}{\gamma'-N\alpha}$. 
Note that if we choose $A$ sufficiently large (depending on $\delta, M, C_f, C_L)$, we get that 
\[C \eps^{\gamma'}  M^{1-(1+\delta)(1+\alpha)}(\eps^{-\frac{\gamma'}{\delta}}A)^{1+\delta}  -\frac{C_f}{\alpha+1}( \eps^{-\frac{\gamma'}{\delta}}A)\geq C+\eta+KM,\]
from which we conclude that $\|m\|_{L^{\alpha+1}}^{(1+\alpha)}\leq \eps^{-\frac{\gamma'}{\delta}}A$, and so estimate \eqref{risc2} holds. 
Estimate \eqref{risc1} comes from \eqref{risc2} and \eqref{energia1}. 
\end{proof}
 
\subsection{Existence of a solution} 
We are now in the position to show existence of minimizers of the energy $\cE_k$  in the class  $\cK_{\eps, M}$ for every $\eps, M > 0$.

\begin{proposition}\label{existence} For every $\eps>0$ and $M > 0$, there exists a minimizer $(m_k, w_k)\in \cK_{\eps, M}$ of  $\cE_k$, that is
\[\cE_k(m_k,w_k)=\inf_{(m,w)\in \cK_{\eps, M}}\cE_k(m,w).\] 
Moreover,  for every minimizer $(m_k,w_k)\in \cK_{\eps, M}$ of  $\cE_k$, there holds 
\begin{equation}\label{moreintegr}
m_k (1 + |x|)^{b} \in L^1(\R^N), \quad w_k(1 + |x|)^{b/\gamma} \in L^{1}(\R^N),
\end{equation}
and there exist constants $C>0$ and $K$, independent of $\eps$ and $k$, such that 
 \begin{equation}\label{intV} 
\int_{\R^N} m_k \left| \frac{w_k}{m_k}\right|^{\gamma'} dx +\int_{R^N} m_k V(x)\, dx+ \|m_k\|_{L^{\alpha+1}(\R^N)}^{\alpha+1} \leq C \eps^{-\frac{\gamma'\alpha N}{\gamma'-N\alpha}}+K.
\end{equation} 

\end{proposition}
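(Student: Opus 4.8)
The plan is to obtain a minimizer by the direct method, exploiting the a priori estimates of Proposition \ref{aprioriest} and the compactness that follows from the Kolmogorov regularity in Section \ref{s:kolmo_reg}. First I would take a minimizing sequence $(m_j, w_j) \in \cK_{\eps, M}$ for $\cE_k$; since $\cE_k$ is bounded below by Lemma \ref{boundfrombelow}, for $j$ large we may assume $\cE_k(m_j, w_j) \le e_{k,\eps}(M) + 1$, so Proposition \ref{aprioriest} gives uniform bounds on $\int m_j |w_j/m_j|^{\gamma'}\,dx$ and on $\|m_j\|_{L^{\alpha+1}}$. By Jensen's inequality $\int m_j |w_j/m_j|^{\gamma'}\,dx$ controls $\|w_j\|_{L^1}$ (writing $|w_j| = m_j^{1/\gamma} \cdot m_j^{1/\gamma'}|w_j/m_j|$ and using H\"older with $\|m_j\|_{L^1}=M$), and in fact controls $\|w_j\|_{L^{r}}$ for a suitable $r>1$. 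Then Lemma \ref{lemma1} (with $\beta = \alpha+1$, which satisfies \eqref{beta} by \eqref{alpha}) gives a uniform bound on $\|m_j\|_{W^{1,r}(\R^N)}$ with $r$ as in \eqref{r}, hence, by Corollary \ref{calpha}, a uniform $W^{1,\rho}$ bound for every $\rho < q$.

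Next I would extract weak limits: up to subsequence $m_j \rightharpoonup m$ weakly in $W^{1,\rho}$ and in $L^{\alpha+1}$, $w_j \rightharpoonup w$ weakly in $L^r$, and (by Rellich on balls plus a diagonal argument) $m_j \to m$ a.e. and strongly in $L^\sigma_{\rm loc}$ for $\sigma < \rho^*$. Passing to the limit in the weak formulation of $-\eps\Delta m_j + \di w_j = 0$ and in $m_j \ge 0$ is immediate, so the only point in checking $(m,w) \in \cK_{\eps,M}$ is that no mass escapes to infinity, i.e. $\int m = M$. This is where I expect the main obstacle to lie: on the whole of $\R^N$ the embeddings are not compact, so I would use the coercivity of $V$. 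From $e_{k,\eps}(M) + 1 \ge \cE_k(m_j,w_j)$ and estimate \eqref{risc2}, the term $\int V(x) m_j\,dx$ is uniformly bounded; since $V(x) \ge C_V^{-1}(|x|-C_V)^b_+ \to \infty$, this forces tightness of $(m_j)$, hence $\int m = M$ and also $m(1+|x|)^b \in L^1$ by Fatou. (If $b = 0$ one still gets tightness directly from the $L^{\alpha+1}$ bound together with the scaling/concentration structure — but here $V$ is assumed coercive only when $b>0$; for $b=0$ one argues instead that the splitting in the energy is incompatible with strict subadditivity, cf. the discussion around \eqref{split}. Since in this Proposition we only need existence of \emph{some} minimizer and $\cE_k$ is translation-covariant, a cleaner route is: if $b>0$ tightness is automatic; if $b=0$, translate the minimizing sequence so that a fixed fraction of mass stays in a fixed ball, then the coercive part is not needed. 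I would present the $b>0$ case in detail and remark on the reduction.)

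For lower semicontinuity I would argue term by term. The map $(m,w) \mapsto \int_{\R^N} m L(-w/m)\,dx$ is convex and lower semicontinuous for the weak $L^1_{\rm loc}\times L^1_{\rm loc}$ convergence because the integrand $(m,w) \mapsto mL(-w/m)$ is convex, jointly lower semicontinuous, and nonnegative (this is a standard fact on integral functionals of linear-growth/convex type; it can be proved by writing $mL(-w/m) = \sup_{p}[-w\cdot p - mH(p)]$ as a supremum of affine functionals and using Fatou on balls, then letting the balls exhaust $\R^N$). The potential term $\int V m$ is l.s.c. by Fatou (using $V \ge 0$ and $m_j \to m$ a.e.). Finally $F_k[m_j] \to F_k[m]$: since $\chi_k$ is fixed, $m_j \star \chi_k \to m\star\chi_k$ pointwise and, by the uniform $L^{\alpha+1}$ bound and \eqref{cond1}, the integrand is equi-integrable, so one passes to the limit (continuity, not merely l.s.c., of $F_k$). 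Combining, $\cE_k(m,w) \le \liminf_j \cE_k(m_j,w_j) = e_{k,\eps}(M)$, so $(m,w)$ is a minimizer.

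It remains to prove that \emph{every} minimizer $(m_k,w_k)$ satisfies \eqref{moreintegr} and \eqref{intV}. Estimate \eqref{intV} is just Proposition \ref{aprioriest} with $\eta = 0$ (the bound on $\int V m_k$ is the byproduct of \eqref{energia1} noted above, since all three nonnegative quantities $\int m_k|w_k/m_k|^{\gamma'}$, $\int V m_k$, $\|m_k\|_{L^{\alpha+1}}^{\alpha+1}$ are bounded by the right-hand side of \eqref{energia1} after using \eqref{stimabeta}). For \eqref{moreintegr}, the bound $\int V m_k \le C\eps^{-\cdots}+K$ together with $V(x) \ge C_V^{-1}(|x|-C_V)^b_+$ gives $\int (|x|-C_V)^b_+ m_k\,dx < \infty$, hence $m_k(1+|x|)^b \in L^1$; and then $|w_k|(1+|x|)^{b/\gamma} = (1+|x|)^{b/\gamma} m_k^{1/\gamma'}|w_k/m_k|\cdot m_k^{1/\gamma}$, so by H\"older with exponents $\gamma',\gamma$ one gets $\|w_k(1+|x|)^{b/\gamma}\|_{L^1} \le (\int m_k|w_k/m_k|^{\gamma'})^{1/\gamma'}(\int (1+|x|)^b m_k)^{1/\gamma} < \infty$, which is \eqref{moreintegr}. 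The main difficulty throughout is the whole-space compactness in the tightness step; everything else is the standard direct-method machinery adapted to the constraint set $\cK_{\eps,M}$.
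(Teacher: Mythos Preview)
Your proof is correct and follows essentially the same route as the paper: direct method on a minimizing sequence, a priori bounds from Proposition~\ref{aprioriest} and Corollary~\ref{calpha}, tightness from the coercivity of $V$, lower semicontinuity of the kinetic and potential terms, continuity of $F_k$, and finally \eqref{moreintegr}--\eqref{intV} via the bound on $\int V m_k$ plus the H\"older argument for $w_k$. Your digression on the case $b=0$ is unnecessary (assumption \eqref{vass} requires $b>0$) and the remark that ``$\cE_k$ is translation-covariant'' is not quite right in the presence of $V$; simply drop that paragraph and the argument is clean.
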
 

\begin{proof} Let $(m_n, w_n)\in \cK_{\eps, M}$ be a minimizing sequence, that is  $\cE_k(m_n, w_n)\to e_{k,\eps}(M)$. 
This implies that, choosing $n$ sufficiently large, $\cE_k(m_n, w_n)\leq e_\eps(M)+1$. 
From this and \eqref{cond1} we get
\begin{multline}\label{eq20} \int_{\R^N} m_n L\left(-\frac{w_n}{m_n}\right) \, dx + \int_{\R^N} V(x) m_n \, dx \leq \cE_k(m_n, w_n)+  \frac{C_f}{\alpha+1} \int_{\R^N} m_n^{\alpha + 1} \, dx +KM\\ \leq e_{k,\eps}(M)+1+ \frac{C_f}{\alpha+1} \int_{\R^N} m_n^{\alpha + 1}+KM.\end{multline}

By Proposition \ref{aprioriest}, we get  that 
\[\|m_n\|_{L^{\alpha+1}} +\int_{\R^N} m_n^{1-\gamma'} |w_n|^{\gamma'}dx\leq C\eps^{-\frac{\gamma'\alpha N} {\gamma'-\alpha N}}+K.\]
We conclude also that 
\[\int_{\R^N} V(x)m_n(x)dx\leq C\eps^{-\frac{\gamma'\alpha N} {\gamma'-\alpha N}}+K,\] for some $C,K>0$. 
These estimates will imply \eqref{intV}, after passing to the limit,   using Fatou lemma.
 
Moreover, by Corollary \ref{calpha}, we have that there exists $C_\eps>0$ depending on $\eps$ such that  for all $r<q$, 
\[\|m_n\|_{W^{1,r}(\R^N)}\leq C_\eps.\] Moreover, due to Sobolev embeddings, we get that for all $s<q^*$, then $\|m_n\|_{L^s(\R^N)}\leq C_\eps$. 
In addition, by applying Holder inequality, we get that there exists $C>0$
\[ \int_{\R^N} |w_n|^{\frac{\gamma'\alpha+\gamma'}{\gamma'+\alpha}}dx \leq C \left( \int_{\R^N} m_n^{1-\gamma'} |w_n|^{\gamma'}dx\right)^{\frac{\alpha+1}{\gamma'+\alpha}} 
\|m_n\|_{L^{\alpha+1}(\R^N)}^{\frac{\gamma'-1}{(\alpha+1)(\gamma'+\alpha)}}. \]

By  these estimates and Sobolev compact embeddings, we get that eventually extracting a subsequence via a diagonalization procedure, 
$m_n\to m_k$ weakly in $W^{1,r}(\R^N)$  for all $r<q$ and   strongly in $L^s(K)$ for all  $1 \le s<q^*$ and for every compact $K\subset \R^N$, and $w_n\to w_k$ weakly in $L^{\frac{\gamma'\alpha+\gamma'}{\gamma'+\alpha}}(\R^N)$.  
By using  the fact that $\int_{\R^N} V(x)m_n(x)dx\leq C_\eps$ and \eqref{vass}, we get that 
we get that for all $R>1$, 
\[C_\eps\geq \int_{\R^N}m_n(x) V(x)dx\geq \int_{|x|>R} m_n(x) V(x)dx\geq CR^b  \int_{|x|>R} m_n(x)dx.\]
So for every $\eps>0$ fixed and all $\eta>0$, there exists $R>0$ for which $ \int_{|x|>R} m_n(x)dx\leq \eta$:
up to extracting a subsequence we get that $m_n\to m_k$ in $L^1(\R^N)$,  and so $\int_{\R^N}m_k(x) dx=M$. 
By boundedness of $m_n$ in $L^s(\R^N)$ for all  $1 \le s<q^*$, we then have $m_n\to m_k$ strongly in $L^{\alpha+1}(\R^N)$.
Finally, observe that from \eqref{intV}, using \eqref{vass}, we conclude that $m_k(1+|x|^b)\in L^1(\R^N)$. Moreover, we get that 
\[
\int_{\R^N} |w_k| \, dx \le \int_{\R^N} |w_k| (1 + |x|)^{b/\gamma} \, dx \le \left( \int_{\R^N} \frac{|w_k|^{\gamma'}}{m_k^{\gamma'-1}} \, dx \right)^{1/\gamma'} \left( \int_{\R^N} m_k (1 + |x|)^{b} \, dx \right)^{1/\gamma},
\]
and so $w_k(1 + |x|)^{b/\gamma} \in L^{1}(\R^N)$.

Therefore the convergence is sufficiently strong to assure that $(m_k,w_k)\in \mathcal{K}_{\eps,M}$. We conclude that $(m_k,w_k)$ is a minimum of the energy,   by the lower semicontinuity  with respect to weak convergence of the functional $ \int_{\R^N} m L\left(-\frac{w}{m}\right) + V(x)m dx $ and  by using the fact that $F_k[m_n]\to F_k[m_k]$, since $m_n\to m_k$ strongly  in $L^{\alpha+1}(\R^N)$.
\end{proof} 
Using the minimizers we constructed in Proposition \ref{existence}, we prove existence of a classical solution to \eqref{mfgeps}. 
\begin{proposition}\label{ex}  There exists a classical solution $(u_k, m_k,\lambda_k)$ to \eqref{mfgeps} that satisfies for some constant $C_{k,\eps} > 0$ the following inequalities \begin{equation}\label{ukesti} |\nabla u_k(x)|\leq C_{k,\eps}(1+|x|^{\frac{b}{\gamma}})\qquad u_k(x)\geq  C_{k,\eps}^{-1}(1+|x|^{1+\frac{b}{\gamma}})-C_{k,\eps}.\end{equation}
Finally there exist $C, K>0$ not depending on $\eps, k$ such that 
 \begin{equation}\label{lambda}-K -C \eps^{-\frac{\gamma'\alpha N} {\gamma'-\alpha N}}\leq \lambda_k \leq C \eps^{-\frac{\gamma'\alpha N} {\gamma'-\alpha N}}+ K.\end{equation}  
\end{proposition}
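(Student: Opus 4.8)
The plan is to recover $(u_k,\lambda_k)$ from the minimizer $(m_k,w_k)\in\cK_{\eps,M}$ of $\cE_k$ produced by Proposition \ref{existence}, by a duality argument after linearizing the coupling around $m_k$. First observe that $m_k\star\chi_k$ is bounded (being a convolution of an $L^1$ function with a mollifier), so $f_k[m_k]=f(m_k\star\chi_k)\star\chi_k\in L^\infty(\R^N)$ and is locally H\"older; hence $g_k:=V+f_k[m_k]$ is bounded from below and obeys a two-sided estimate of the form \eqref{vass2} with the same exponent $b$ as $V$ in \eqref{vass}. I would then show that $(m_k,w_k)$ minimizes the \emph{convex} functional
\[
\cA_k(m,w):=\int_{\R^N} m\, L\left(-\frac{w}{m}\right)+g_k(x)\,m\;dx,\qquad (m,w)\in\cK_{\eps,M},
\]
as follows: for any competitor $(m,w)\in\cK_{\eps,M}$ with finite energy, the segment $(m_t,w_t)=(m_k,w_k)+t\bigl((m,w)-(m_k,w_k)\bigr)$ remains in $\cK_{\eps,M}$; by joint convexity of $(m,w)\mapsto\int m L(-w/m)+Vm$ and by $F_k\in C^1$ with $DF_k[m_k]\cdot h=\int_{\R^N} f_k[m_k]h\,dx$, the right derivative at $t=0$ of $t\mapsto\cE_k(m_t,w_t)$ is $\leq\cA_k(m,w)-\cA_k(m_k,w_k)$; since $(m_k,w_k)$ minimizes $\cE_k$, this derivative is $\geq0$, whence $\cA_k(m_k,w_k)\leq\cA_k(m,w)$. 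Note that $F_k$ itself is \emph{not} convex, so the one-sided argument around the minimizer is essential.

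Next I would apply Fenchel--Rockafellar duality to the convex problem $\inf_{\cK_{\eps,M}}\cA_k$, as in \cite{BC16}: its dual is $\sup\{M\lambda:\,\exists\,u\in C^2(\R^N)\ \text{with}\ -\eps\Delta u+H(\nabla u)+\lambda\leq g_k\ \text{on}\ \R^N\}$, whose value is $M\bar\lambda_k$, where $\bar\lambda_k$ is the principal ergodic value associated with the datum $g_k$ (cf.\ Theorem \ref{principal_eig}). After checking the absence of a duality gap and the attainment of the dual problem, one sets $\lambda_k:=\bar\lambda_k$ and takes $u_k$ to be the corresponding solution of $-\eps\Delta u_k+H(\nabla u_k)+\lambda_k=V+f_k[m_k]$, which is of class $C^2$ by elliptic regularity ($g_k$ being locally H\"older) and, since $b>0$ in \eqref{vass}, unique up to additive constants by Theorem \ref{principal_eig}(ii). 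The primal--dual optimality relations force equality in the Fenchel--Young inequality $m_k L(-w_k/m_k)\geq -\nabla u_k\cdot w_k-m_k H(\nabla u_k)$, i.e.\ $w_k=-m_k\nabla H(\nabla u_k)$ a.e., so that $(u_k,m_k,\lambda_k)$ solves \eqref{mfgeps}; moreover $m_k\in W^{1,p}(\R^N)$ for every $p$ follows by bootstrapping in $-\eps\Delta m_k=\di\bigl(m_k\nabla H(\nabla u_k)\bigr)$ via Proposition \ref{ell_regularity}, using the polynomial growth of $\nabla u_k$ and the weighted integrability \eqref{moreintegr} of $m_k$ and $w_k$. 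I expect this duality step on the non-compact space $\R^N$ to be the main obstacle: proving the no-gap property and the attainment of the dual, and reading off from the optimality conditions both the classical solvability of the HJB equation and the feedback law, all while keeping the interaction of $u_k$ with $m_k$ as $|x|\to\infty$ under control.

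The gradient and lower bounds \eqref{ukesti} then follow from Section \ref{s:preli}. Applying Theorem \ref{bernstein2} with $F_n:=V$ (of growth $b$), $f_n:=-f_k[m_k]\in L^\infty$ and $\lambda_n:=\lambda_k$ gives $|\nabla u_k(x)|\leq C_{k,\eps}(1+|x|)^{b/\gamma}$; and, since $b>0$, Theorem \ref{principal_eig}(ii) gives that $u_k$ is bounded from below and satisfies \eqref{improvedgrowth}, i.e.\ $u_k(x)\geq C_{k,\eps}^{-1}(1+|x|^{1+b/\gamma})-C_{k,\eps}$. Here the constants are allowed to depend on $k$ and $\eps$ (through $\|f_k[m_k]\|_{L^\infty}$ and $|\lambda_k|$).

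Finally, for the $\eps,k$-uniform bounds \eqref{lambda} I would use again the absence of a duality gap, which gives $M\lambda_k=\cA_k(m_k,w_k)=\cE_k(m_k,w_k)-F_k[m_k]+\int_{\R^N}m_k f_k[m_k]\,dx$, with $\cE_k(m_k,w_k)=e_{k,\eps}(M)$. Now $e_{k,\eps}(M)$ is bounded below by Lemma \ref{boundfrombelow} and above by \eqref{stimaalto1}; and, using the symmetry of $\chi_k$, Jensen's inequality, \eqref{cond1}, \eqref{assFlocal} and Young's convolution inequality, both $F_k[m_k]$ and $\int_{\R^N}m_k f_k[m_k]\,dx=\int_{\R^N}(m_k\star\chi_k)f(m_k\star\chi_k)\,dx$ lie in an interval of the form $[-C\|m_k\|_{L^{\alpha+1}}^{\alpha+1}-K,\,K]$; inserting the a priori bound \eqref{intV} yields \eqref{lambda} with constants independent of $\eps$ and $k$. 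Equivalently, the same bounds follow by multiplying the HJB equation by $m_k$, integrating, using the Kolmogorov equation tested against $u_k$, and invoking \eqref{Hass}, $V\geq0$, \eqref{assFlocal}, Proposition \ref{Lproperties}(v) and \eqref{intV}.
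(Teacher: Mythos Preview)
Your approach is essentially the same as the paper's: linearize the energy around the minimizer $(m_k,w_k)$ to obtain a convex problem, identify its dual with the HJB sub/supersolution problem, invoke Theorem \ref{principal_eig} for $(u_k,\lambda_k)$, and read off the feedback law $w_k=-m_k\nabla H(\nabla u_k)$ from the primal--dual optimality. The bounds \eqref{ukesti} and \eqref{lambda} are derived exactly as you indicate.

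The one point where the paper is more explicit than your proposal is precisely the obstacle you flag: the duality on $\R^N$. Rather than invoking Fenchel--Rockafellar abstractly, the paper introduces a tailored class of test functions $\cA=\cA_{b,\gamma}$ (functions $\psi\in C^2$ with $|\nabla\psi|\lesssim|x|^{b/\gamma}$ and $|\Delta\psi|\lesssim|x|^b$ at infinity), first extends the weak Kolmogorov identity from $C^\infty_0$ to $\cA$ using the weighted integrability \eqref{moreintegr}, and then runs a direct min--max argument for the Lagrangian $\mathcal{L}(m,w,\lambda,\psi)=\widetilde J(m,w)+\int(\eps m\Delta\psi+w\cdot\nabla\psi-\lambda m)+\lambda M$ over $(\lambda,\psi)\in\R\times\cA$. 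Attainment of the sup is then immediate once one checks that the $u_k$ produced by Theorem \ref{principal_eig} belongs to $\cA$ (which follows from the gradient bound and the HJB equation). Your directional-derivative argument for the minimality of the linearized functional is a clean alternative to the paper's citation of \cite{BC16}, and your identity $M\lambda_k=\cE_k(m_k,w_k)-F_k[m_k]+\int m_k f_k[m_k]$ is exactly the paper's route to \eqref{lambda}.
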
 
\begin{proof} 
Let $(m_k, w_k)$ be a minimizer of $\cE_k$. Define the space of test functions
\begin{equation}\label{Adef}
\cA = \cA_{b,\gamma} := \left\{ \psi \in C^2(\R^N) : \limsup_{|x| \to \infty} \frac{|\nabla \psi(x)|}{|x|^{b/\gamma}} < \infty, \,  \limsup_{|x| \to \infty} \frac{|\Delta \psi(x)|}{|x|^{b}} < \infty \right\}.
\end{equation}
Note that we also have,  for all $\psi\in \cA$,
\[
\limsup_{|x| \to \infty} \frac{|\psi(x)|}{|x|^{b/\gamma+1}} < \infty.
\]

We claim that
\begin{equation}\label{eq42}
- \eps \int_{\R^N} m_k \Delta \psi \, dx = \int_{\R^N} w_k \nabla \psi \, dx \qquad \forall \psi \in \cA.
\end{equation}
Indeed, consider a radial smooth cutoff function $\chi(x)$ which is identically equal to one in $B_1(0)$ and identically zero in $\R^N \setminus B_2(0)$. Set $\chi_R(x) := \chi(x/R)$; we have $|\nabla \chi_R| \le C \, R^{-1}$ and $|\Delta \chi_R| \le C \, R^{-2}$ on $\R^N$ for some positive constant $C$. 

Since the equality $\eps \Delta m_k  = {\rm div} w_k$ holds in the weak sense on $\R^N$, we may multiply it by $\chi_R \psi$ with $\psi\in \cA$ and integrate by parts to obtain
\begin{equation}\label{eq43}
- \eps \int_{B_{2R}} m_k ( \chi_R \Delta \psi + 2 \nabla \psi \cdot \nabla \chi_R + \psi \Delta \chi_R) \, dx = \int_{B_{2R}} w_k \cdot (\chi_R \nabla \psi + \psi \nabla\chi_R ). \, dx
\end{equation}
Note that for some positive $C$,
\[
\int_{\R^N} |w_k \nabla \psi| \, dx \le C \int_{\R^N} |w_k| (1 + |x|)^{b/\gamma} \, dx < \infty, \quad \int_{\R^N} m_k |\Delta \psi| \, dx \le C \int_{\R^N} m_k (1 + |x|)^{b} \, dx < \infty
\]
by the integrability properties \eqref{moreintegr}. Moreover,
\begin{multline*}
\int_{R \le |x| \le 2R} m_k |\psi| |\Delta \chi_R| \, dx \le C \int_{R \le |x| \le 2R} m_k \frac{(1 + |x|)^{b/\gamma + 1}}{R^2} \, dx \\ \le C_1 \int_{R \le |x| \le 2R} m_k (1 + |x|)^{b/\gamma -1} \, dx \to 0 \qquad \text{as $R \to \infty$,}
\end{multline*}
because $b/\gamma -1 \le b$. Reasoning in a similar way, we also have that $\int_{R \le |x| \le 2R} m_k \nabla \psi \cdot \nabla \chi_R$ and $\int_{R \le |x| \le 2R} w_k \cdot \psi \nabla \chi_R$ converge to zero as $R \to \infty$. Equality \eqref{eq42} then follows by passing to the limit in \eqref{eq43}. 

Therefore, recalling the integrability properties of $m_k,w_k$ obtained in Proposition \ref{existence}, 
the problem of minimizing $\cE_k$ on $\cK_{\eps, M}$ is equivalent to minimize $\cE_k$ on $\cK$, where
\[
\cK :=\{(w,m) \in (L^1 \cap W^{1,r})(\R^N) \times L^{\frac{\gamma'(\alpha+1)}{\gamma'+\alpha}}(\R^N) : \text{$(w,m)$ satisfies \eqref{moreintegr}, \eqref{eq42}, $m \ge 0$, $\int_{\R^N}m = M$}\}
\] for some $r<q$. 
As in \cite[Proposition 3.1]{BC16}, convexity of $L$ implies that $(m_k, w_k)$ is also a minimizer of the following convex functional on $\cK$:
\[
\widetilde{J}(m, w) = \int_{\R^N} m L\left(-\frac{w}{m}\right)+ (V(x) +f_k[m_k]) m  \, dx.
\]

We now aim to prove that
\begin{equation}\label{dual_pb} 
\sup \{ \lambda M : \text{$- \eps \Delta \psi + H(\nabla \psi) + \lambda \le V(x) + f_k[m_k]$ on $\R^N$ for some $\psi \in \cA$} \} = \min_{(w,m) \in \cK}\widetilde{J}(m, w).
\end{equation}
We proceed as in \cite[Theorem 3.5]{CG}: setting \[\mathcal{L}(m,w,\lambda, \psi) := \widetilde{J}(m, w) +  \int_{\R^N} \eps m \Delta \psi + w \nabla \psi - \lambda m\, dx + \lambda M,\] we have
\[
\min_{(m,w) \in \cK}\widetilde{J}(m, w) = \min_{(m,w)} \sup_{(\lambda, \psi) \in \R \times \cA } \mathcal{L}(m,w,\lambda, \psi),
\]
where the minimum in the right hand side has to be intended among couples $(m,w) \in(L^1 \cap W^{1,r})(\R^N) \times L^{\frac{\gamma'(\alpha+1)}{\gamma'+\alpha}}(\R^N)$ for some $r<q$, satisfying \eqref{moreintegr}. Note that $\mathcal{L}(\cdot,\cdot,\lambda, \psi)$ is convex, and $\mathcal{L}(m,w,\cdot,\cdot)$ is linear. Moreover, since $\mathcal{L}(\cdot,\cdot,\lambda, \psi)$ is weak-* lower semi-continuous, we can use the min-max theorem (see \cite[Theorem 2.3.7]{BW}), to get
\begin{multline*}
\min_{(m,w)} \sup_{(\lambda, \psi) \in \R \times \cA } \mathcal{L}(m,w,\lambda, \psi) = \sup_{(\lambda, \psi) \in \R \times \cA } \min_{(m,w)} \mathcal{L}(m,w,\lambda, \psi) = \\
\sup_{(\lambda, \psi) \in \R \times \cA } \min_{(m,w)} \int_{\R^N} m L\left(-\frac{w}{m}\right)+(V(x) +f_k[m_k]) m +  \eps m \Delta \psi + w \nabla \psi - \lambda m\, dx + \lambda M =\\
\sup_{(\lambda, \psi) \in \R \times \cA } \int_{\R^N} \min_{(m,w) \in \R \times \R^N}  m L\left(-\frac{w}{m}\right)+(V(x)+f_k[m_k]) m +  \eps m \Delta \psi + w \nabla \psi - \lambda m\, dx + \lambda M,
\end{multline*}
where the interchange of the $\min$ and the integration is possible by standard results in convex optimisation. By computation, $\min_{(m,w) \in \R \times \R^N}  m L\left(-\frac{w}{m}\right)+(V(x) +f_k[m_k]) m +  \eps m \Delta \psi + w \nabla \psi - \lambda m$ is zero whenever $\eps \Delta \psi - H(\nabla \psi) - \lambda + (V(x) +f_k[m_k])$ is positive, and it is $-\infty$ otherwise. Therefore, we have proven \eqref{dual_pb}.

By Theorem \ref{principal_eig}, {\it i), ii)}, there exists $u_k \in C^2(\R^N)$ such that
\begin{equation}\label{hjbeps}
- \eps \Delta u_k + H(\nabla u_k) + \lambda_k= V(x) +f_k[m_k] \qquad \text{on $\R^N$},
\end{equation} and which satisfies  
\[ |\nabla u_k(x)| \le C_{k,\eps}(1+|x|)^{\frac{b}{\gamma}}\qquad u_k(x)\geq  C_{k,\eps}|x|^{\frac{b}{\gamma}+1}- C_{k,\eps}^{-1}\] for some $ C_{k,\eps}>0$.

Moreover,  \[
\eps |\Delta u_k(x)| \le |H(\nabla u_k(x))| + |\lambda_k| + V(x) -f_k[m_k]\le C_{k,\eps}(1 + |x|)^{b} \qquad \text{on $\R^N$}
\]
so $u_k \in \cA$. Thus, the supremum in the left hand side of \eqref{dual_pb} is achieved by $\lambda_k$, and it holds true that
\begin{equation}\label{lambdaJ}
\lambda_k M = \widetilde{J}(m_k, w_k)=\mathcal{E}_k(m_k, w_k)+\int_{\R^N} f_k[m_k]m_k \, dx- F[m_k].
\end{equation}
This gives in particular \eqref{lambda}, using  Lemma \ref{boundfrombelow}, estimates \eqref{stimaalto1} and   recalling Proposition \ref{aprioriest} and assumptions \eqref{assFlocal}, \eqref{regf} and \eqref{cond1}.

We now use \eqref{lambdaJ}, \eqref{hjbeps} and  \eqref{eq42} with $\psi = u_k$ to get
\begin{multline*}
0 = \int_{\R^N} \left( L \left(-\frac{w_k}{m_k} \right) + V(x) - m_k^\alpha - \lambda_k \right)m_k \, dx = \int_{\R^N} \left( L \left(-\frac{w_k}{m_k} \right) - \eps \Delta u_k + H(\nabla u_k) \right)m_k \, dx \\
= \int_{\R^N} \left( L \left(-\frac{w_k}{m_k} \right) + H(\nabla u_k) + \nabla u_k \cdot \frac{w_k}{m_k} \right)m_k \, dx,
\end{multline*}
that implies
\[
\frac{w_k}{m_k} = - \nabla H(\nabla u_k) \qquad \text{on the set $\{m_k > 0\}$}.
\]
Hence, the Kolmogorov equation $\eps \Delta m_k + {\rm div} (m_k \nabla H(\nabla u_k)) = 0$ holds in the weak sense, and by elliptic regularity we conclude that $(u_k, m_k, \lambda_k)$ is a classical solution to \eqref{mfg}.
\end{proof} 

\begin{remark}\label{remarkene}\upshape 
Note that if we assume that the local term $f$ satisfies \eqref{assFlocal2} instead of \eqref{assFlocal},
then the same argument as above applies. In particular there exists a classical solution $(u_k, m_k,\lambda_k)$ to \eqref{mfgeps}
such that 
\begin{gather*}|\nabla u_k(x)|\leq  C_{k,\eps}(1+|x|^{\frac{b}{\gamma}})\qquad u_k(x)\geq   C_{k,\eps}^{-1}(1+|x|^{1+\frac{b}{\gamma}})- C_{k,\eps},\\
\int_{\R^N} m_k^{\alpha+1}dx, \int_{\R^N} m_k(x) V(x)dx\leq C\eps^{-\frac{\gamma'\alpha N} {\gamma'-\alpha N}}+K.\end{gather*} 

We finally prove that every $m_k$ is bounded from above in $\R^N$ (this is not obvious from Proposition \ref{ex} unless $\gamma' > N$). Note that the following result does not provide uniform bounds with respect to $k$. These will be produced in Theorem \ref{stimalinfinito} using a much more involved argument.

\begin{proposition}\label{mbond} Let $(u_k, m_k,\lambda_k)$ be as in Proposition \ref{ex}. Then, $m_k$ is bounded in $L^\infty(\R^N)$.
\end{proposition}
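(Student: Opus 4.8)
The goal is to show that $m_k$, the density component of the classical solution $(u_k,m_k,\lambda_k)$ to the regularized system \eqref{mfgeps}, lies in $L^\infty(\R^N)$ (without any uniformity in $k$). The starting point is that we already control $\nabla u_k$: by Proposition \ref{ex}, $|\nabla u_k(x)| \le C_{k,\eps}(1+|x|^{b/\gamma})$, hence $b_k(x):=\nabla H(\nabla u_k(x))$ satisfies the polynomial bound $|b_k(x)|\le C_{k,\eps}(1+|x|)^{(b/\gamma)(\gamma-1)}$ by the gradient bound on $H$ in \eqref{Hass}. Moreover $f_k[m_k]\in L^\infty(\R^N)$ by \eqref{assFlocal} and \eqref{regf}, since convolution with $\chi_k$ makes $m_k\star\chi_k$ bounded, so $f_k[m_k]$ is bounded. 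Thus $m_k$ is a nonnegative $W^{1,r}$ solution of the linear drift-diffusion equation
\[
-\eps\Delta m_k - \di(m_k\, b_k) = 0 \qquad\text{on }\R^N,
\]
with $b_k$ locally bounded (with polynomial growth) and $m_k\in L^1\cap L^{q^*-}(\R^N)$ from Proposition \ref{existence}.

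\textbf{Key steps.} First, I would derive a local $L^\infty$ bound: on any ball $B_R$, $m_k$ solves an equation with bounded drift, so by the De Giorgi--Nash--Moser iteration (e.g. \cite[Theorem 8.15]{agmon}-type local boundedness for $W^{1,2}$ subsolutions, applicable since $m_k\in W^{1,r}$ for $r$ close to $q\ge$ the relevant Sobolev exponent, or after a bootstrap using $f_k[m_k]\in L^\infty$) one gets $\|m_k\|_{L^\infty(B_1)}\le C(\|m_k\|_{L^1(B_2)}+1)$, with the constant depending on $\eps,k$ and on $\sup_{B_2}|b_k|$, hence on $R$. This already gives $m_k\in L^\infty_{\rm loc}(\R^N)$, so in particular $m_k$ is continuous. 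Second, I would upgrade to a global bound by exploiting the decay of $m_k$ at infinity. Since we know $m_k(1+|x|)^b\in L^1(\R^N)$ from \eqref{moreintegr}, and since the local sup bound on an annulus $\{R\le|x|\le R+1\}$ reads $\sup_{R\le|x|\le R+1}m_k \le C(R)\big(\int_{R-1\le|x|\le R+2}m_k\,dx + 1\big)$ with $C(R)$ of polynomial growth in $R$ (from the polynomial growth of $b_k$ and the scaling in the Moser estimate), and the mass on far-out annuli goes to zero, one needs the product of the polynomial factor $C(R)$ with the tail mass $\int_{|x|\ge R}m_k\,dx$ to stay bounded. This is the place where the quantitative decay of $m_k$ is needed — not merely $m_k(1+|x|)^b\in L^1$, but an actual pointwise bound.

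\textbf{The main obstacle.} The delicate point is precisely the global control at infinity: the local Moser constant $C(R)$ grows polynomially in $R$, so $L^1$-integrability of $m_k$ alone is not enough to conclude a uniform-in-$R$ bound on the far annuli. I expect the argument to go through a \emph{Lyapunov/barrier function} built from $u_k$: since $u_k(x)\ge C_{k,\eps}^{-1}|x|^{1+b/\gamma}-C_{k,\eps}$ grows superlinearly, the function $\phi_k := e^{-\theta u_k/\eps}$ (or a suitable power) is a supersolution of the adjoint Kolmogorov equation for $\theta$ small — plugging $\phi_k$ into the weak formulation against $m_k$ and using the equation for $u_k$ one obtains that $\int m_k e^{\sigma |x|^{1+b/\gamma}}\,dx<\infty$ for some $\sigma=\sigma(k,\eps)>0$, i.e. $m_k$ has super-polynomial (stretched-exponential) decay. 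This exponential-type weighted integrability dominates the polynomial Moser factor $C(R)$ on far annuli, and combining with the local $L^\infty$ bound yields $\sup_{\R^N}m_k<\infty$. Alternatively, one may avoid the barrier by observing that $b_k\cdot x \to +\infty$ (the drift points inward at infinity since $\nabla u_k$ points outward), which already forces fast decay of $m_k$ via a direct integration-by-parts estimate against $m_k|x|^{s}$; either route closes the gap, and the rest is the routine local elliptic theory cited above.
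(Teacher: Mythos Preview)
Your strategy is essentially the paper's: build a Lyapunov function from $u_k$ to obtain weighted integrability of $m_k$ that dominates the polynomial growth of the drift $b_k = \nabla H(\nabla u_k)$, then conclude by elliptic estimates. One correction, though: your candidate $\phi_k = e^{-\theta u_k/\eps}$ \emph{decays} at infinity (since $u_k \to +\infty$ by \eqref{ukesti}), and plugging in the HJB equation gives
\[
-\eps\Delta\phi_k + b_k\cdot\nabla\phi_k = \tfrac{\theta}{\eps}\phi_k\big[-\theta|\nabla u_k|^2 + H(\nabla u_k) - \nabla H(\nabla u_k)\cdot\nabla u_k + \lambda_k - f_k[m_k] - V\big],
\]
where the bracket tends to $-\infty$ by \eqref{Hass} and coercivity of $V$. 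So $\phi_k$ is a \emph{sub}solution of the adjoint operator outside a compact set, and testing against $m_k$ yields no useful inequality. The Lyapunov function must \emph{grow}: the paper takes $\phi = u_k^p$ with $p > 1$ and shows, via the same substitution, that $-\eps\Delta\phi + b_k\cdot\nabla\phi \ge c|x|^{(1+b/\gamma)(p-1)}$ for large $|x|$. Then \cite[Proposition 2.6]{MP05} gives $|x|^{(1+b/\gamma)(p-1)} m_k \in L^1$ for arbitrary $p$; since $|b_k(x)| \le C(1+|x|)^{b/\gamma'}$ by \eqref{ukesti}, choosing $p$ large yields $|b_k|^s m_k \in L^1$ for some $s > N$, and \cite[Theorem 3.5]{MP05} delivers $m_k \in L^\infty$ directly --- cleaner than your Moser-on-annuli endgame, though that route would also close once the Lyapunov sign is fixed (e.g.\ with $e^{+\theta u_k}$ or, as in the paper, $u_k^p$).
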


\begin{proof} Let $\phi(x)= u_k(x)^p$, for $p>1$ to be chosen later. Using the fact that $u_k$ is a classical solution to the HJB equation, we get
\begin{multline}\label{eq123}- \eps \Delta \phi + \nabla H(\nabla u_k)\cdot \nabla \phi = p u_k^{p-1} \left(-\Delta u_k-(p-1)\frac{|\nabla  u_k|^2}{u_k}+ \nabla H(\nabla u_k)\cdot \nabla u_k\right) \\
= p u_k^{p-1} \left(-\Delta u_k+H(\nabla u_k)-(p-1)\frac{|\nabla  u_k|^2}{u_k}- H(\nabla u_k) + \nabla H(\nabla u_k)\cdot \nabla u_k\right) \\
=  p u_k^{p-1} \left(-(p-1)\frac{|\nabla  u_k|^2}{u_k}- H(\nabla u_k) + \nabla H(\nabla u_k)\cdot \nabla u_k-\lambda+f_k[m_k]+V\right).
\end{multline} 
Observe that by \eqref{Hass},  \eqref{vass}, \eqref{ukesti} and the fact that $f_k[m_k]$ is bounded on $\R^N$, there exist large $R$ and $C$ such that
\begin{multline*}
G(x)\geq K^{-1} |\nabla u_k|^{\gamma}-(p-1)\frac{|\nabla  u_k|^2}{u_k} -K-\lambda+f_k[m_k]+V(x)
 \\
\geq (p-1)|\nabla u_k|^\gamma 
\left(\frac{1}{K(p-1)}-
\frac{|\nabla  u_k|^{2-\gamma}}{u_k}\right)-C +C_V^{-1} |x|^b \ge 1 \qquad \text{for all $|x| > R$}.
\end{multline*}
Hence, again by \eqref{ukesti}, for all $|x| > R$
\[
- \eps \Delta \phi + \nabla H(\nabla u_k)\cdot \nabla \phi \ge c|x|^{(1+b/\gamma)(p-1)}.
\]
In view of \cite[Proposition 2.6]{MP05}, we have $|x|^{(1+b/\gamma)(p-1)} m_k \in L^1(\R^N)$. Recall now that $|\nabla H(\nabla u_k)| \le C(1+|x|)^{\frac{b}{\gamma'}}$ by \eqref{ukesti}. Therefore, by choosing $p$ large enough, $|\nabla H(\nabla u_k)|^s m_k\in L^1(\R^N)$ for some $s > N$. We conclude boundedness of $m_k$ in $L^\infty$ by \cite[Theorem 3.5]{MP05}.
\end{proof} 

\end{remark}

\section{Existence of a solution to the MFG system for $\eps>0$}\label{secex}
Our aim is to pass to the limit $k \to \infty$ for solutions to \eqref{mfgeps}.  

\subsection{A priori $L^\infty$ bounds} 
We need first a priori $L^\infty$ bounds on  $m_k$ that are independent w.r.t. $k$. These will be achieved by a blow-up argument, as proposed in \cite{c16} for systems set on the flat torus $\mathbb{T}^N$. Here, the unbounded space $\R^N$ and the presence of the unbounded term $V$ make the argument much more involved than the one in \cite{c16}. To control the points $x_k \in \R^N$ where $m_k(x_k)$ possibly explodes, some delicate estimates on the decay (in $L^1$) of its renormalization will be produced.

We provide a more general result, that will be used also in the rescaled framework (Section \ref{secco}). Let $ r_k, s_k, t_k$ be bounded sequences of positive real numbers.

\begin{theorem}\label{stimalinfinito}Let  $( u_k, \lambda_k,  m_k)$
be a classical solution to the mean field game system
\[
\begin{cases}
- \Delta u + r^\gamma_k H(r_k^{-1}\nabla u )+\lambda_k  =g_k[m]+ s_kV(t_k x),\\
-\Delta  m -\di(m \, r_k^{\gamma-1} \nabla H(r_k^{-1} \nabla u ) )=0, \\   \int_{\R^N} m \, dx=M, \end{cases}
\] where $g_k : L^1(\R^N) \to L^1(\R^N)$ are so that for all $m\in L^\infty(\R^N)\cap L^1(\R^N)$ and for all $k$, 
\begin{equation}\label{glocal} \|g_k[m]\|_{L^\infty(\R^N)} \leq K(\|m\|_{L^\infty(\R^N)}^\alpha + 1) \end{equation} 
for some $K > 0$. Suppose also that  for all $k$, $u_k$ is bounded from below and $m_k$ is bounded from above on $\R^N$. Then, there exists a constant $C$ independent of $k$ such that 
 \[\|m_k\|_{L^\infty}\leq C.\]
\end{theorem}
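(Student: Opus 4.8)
The plan is to argue by contradiction via a blow-up (rescaling) analysis, in the spirit of \cite{c16}. Suppose $\|m_k\|_{L^\infty} \to \infty$ along a subsequence. Since each $m_k$ is bounded from above on $\R^N$ and (by Lemma \ref{van_lemma}-type reasoning, using the Hölder bounds that follow from the equation) vanishes at infinity, the supremum $L_k := \|m_k\|_{L^\infty}$ is attained at some point $x_k \in \R^N$. I would then rescale around $x_k$ at the natural scale dictated by the nonlinearities: set $\mu_k := L_k^{-1}$, choose exponents so that the rescaled density has unit $L^\infty$ norm, and define
\[
\wt m_k(y) := \mu_k^{N} m_k(x_k + \mu_k^{\beta} y), \qquad \wt u_k(y) := \mu_k^{\delta}\big(u_k(x_k + \mu_k^{\beta} y) - u_k(x_k)\big),
\]
with $\beta, \delta$ fixed by requiring that the rescaled system has coefficients of order one in front of the Laplacian and of the power nonlinearity $m^\alpha$; this is the same bookkeeping as in \eqref{risca}. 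The key algebraic fact is that the exponent relation \eqref{alpha} (strict subcriticality $\alpha < \gamma'/N$) is exactly what makes the blow-up of the potential term $s_k V(t_k \cdot)$, and of the constant $\lambda_k$, vanish in the limit (they come with a positive power of $\mu_k$), so the limiting system is potential-free.

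Next I would establish uniform (in $k$) estimates on the rescaled objects sufficient to pass to the limit. For $\wt u_k$: apply the integral-Bernstein gradient bound of Theorem \ref{bernstein2} (the rescaled HJB equation has right-hand side bounded in $L^\infty$ on balls, thanks to \eqref{glocal}, which gives $\|g_k[m_k]\|_\infty \le K(L_k^\alpha + 1)$, and the rescaling is designed so that $L_k^\alpha$ times the scaling factor is $O(1)$) to get local $C^{1,\theta}$ bounds on $\wt u_k$, hence local $C^2$ bounds after normalizing $\wt u_k(0) = 0$. For $\wt m_k$: use the a priori Sobolev/Hölder estimates for the Kolmogorov equation from Corollary \ref{calpha} and Lemma \ref{lemma1} — the quantity $\int \wt m_k |\wt w_k/\wt m_k|^{\gamma'}$ is, up to the designed scaling factor, controlled by the corresponding quantity for $m_k$, which is finite; combined with $\|\wt m_k\|_\infty = 1$ and $\int \wt m_k \le M$, this yields local $C^{0,\theta}$ bounds on $\wt m_k$ uniform in $k$. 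Then, up to a subsequence, $(\wt u_k, \wt m_k) \to (\wt u, \wt m)$ in $C^1_{\rm loc} \times C^0_{\rm loc}$, and $(\wt u, \wt m)$ solves a limiting MFG system of the form $-\Delta \wt u + C_H|\nabla \wt u|^\gamma = -C_f \wt m^\alpha + (\text{const})$, $-\Delta \wt m - \di(\wt m\, \nabla(\cdot)) = 0$ on $\R^N$, with $\wt m(0) = 1$ (so $\wt m \not\equiv 0$) and $\int_{\R^N} \wt m \le M$.

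Finally I would derive the contradiction from the limiting system. Integrating the limiting Kolmogorov equation against a suitable test function (or using a Bernstein-type identity as in the proof of Proposition \ref{ex}) one shows that a nontrivial, integrable $\wt m \ge 0$ solving such a system on the whole space with $\int \wt m \le M < \infty$ cannot exist — essentially because the HJB equation forces $\wt u$ to grow and hence forces $\wt m$ to have too small (or inconsistent) total mass, or one invokes a Liouville-type / nonexistence result for the potential-free subcritical system. Concretely, one multiplies the $\wt u$-equation by $\wt m$ and integrates, using $\wt w = -\wt m\,\nabla H_\infty(\nabla \wt u)$, to get $\int \wt m L_\infty(-\wt w/\wt m) + \int \wt m H_\infty(\nabla \wt u) + \int \wt w\cdot\nabla\wt u = (\lambda_\infty)M' + C_f\int \wt m^{\alpha+1}$ and combine with the energy lower bound $\int \wt m |\wt w/\wt m|^{\gamma'} \gtrsim \|\wt m\|_{\alpha+1}^{(1+\alpha)(1+\delta)}$ from \eqref{stimabeta}, obtaining a strict inequality that is violated; this contradicts $\wt m(0)=1$.

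\medskip

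\textbf{Main obstacle.} The delicate part is controlling the blow-up points $x_k$ and, relatedly, ensuring that the limiting density is genuinely nontrivial and that no mass escapes to infinity in the rescaling (a dichotomy/vanishing issue analogous to the one handled by concentration-compactness in Section \ref{secco}). One must rule out that the rescaled mass $\int_{B_R} \wt m_k$ goes to zero for every fixed $R$ while $\wt m_k(0)=1$ — this is prevented by the uniform Hölder bound on $\wt m_k$ (a function with unit sup-norm and uniformly bounded Hölder seminorm carries a definite amount of mass on a fixed ball), which is precisely why obtaining $k$-uniform Hölder regularity of $\wt m_k$ — i.e. a $k$-uniform version of Corollary \ref{calpha}, valid even when $\gamma' \le N$ — is the technical heart of the argument.
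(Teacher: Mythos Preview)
Your blow-up strategy and the role of subcriticality are correct in spirit, but there is a genuine gap, and your proposed final contradiction is wrong.

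\medskip

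\textbf{The wrong contradiction.} You propose to pass to a limit $(\wt u,\wt m)$ solving a potential-free MFG system on $\R^N$ with $\wt m(0)=1$ and $\int\wt m\le M$, and then invoke a Liouville/nonexistence result. But such solutions \emph{do} exist: this is precisely Theorem \ref{fullconv} of the paper (existence of ground states). So no nonexistence principle is available. The correct contradiction is much simpler and you almost state it in your ``Main obstacle'' paragraph: with the $L^\infty$-normalizing rescaling (not the mass-preserving one of \eqref{risca}, which you conflate with it), subcriticality \eqref{alpha} forces the \emph{total} rescaled mass $\int n_k\to 0$, while a uniform local H\"older bound on $n_k$ near its maximum point, together with $n_k(x_k)\approx 1$, forces a fixed positive amount of mass on a small ball --- a direct contradiction, with no limit system needed.

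\medskip

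\textbf{The real gap.} Your claim that ``the potential term\ldots vanishes in the limit (it comes with a positive power of $\mu_k$)'' is only valid at points where the rescaled spatial variable stays bounded. After centering at the maximum point $x_k$, the rescaled potential at $y=0$ involves $V$ evaluated at (essentially) $t_k x_k$, and nothing prevents $|x_k|$ from escaping so fast that this term blows up and overwhelms the $\mu_k$-prefactor. In that regime, the right-hand side of the rescaled HJB is \emph{not} locally bounded near $0$, Theorem \ref{bernstein2} gives no uniform gradient bound there, and your uniform H\"older control on $\wt m_k$ collapses exactly where you need it. Your ``Main obstacle'' paragraph has the causality backwards: you cannot use the H\"older bound to control $x_k$; you need control on $x_k$ (or a substitute) to obtain the H\"older bound.

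The paper resolves this as follows. It does \emph{not} center at the max point; it rescales about the origin, obtains the growth bounds $|\nabla v_k|\lesssim 1+\sigma_k^{1/\gamma}|x|^{b/\gamma}$ and $v_k\gtrsim\sigma_k^{1/\gamma}|x|^{1+b/\gamma}$ (with $\sigma_k\to 0$), and then splits into two cases according to whether $\sigma_k|x_k|^b$ stays bounded or not. The bounded case gives the easy contradiction you describe. For the unbounded case, the paper constructs a Lyapunov function $\phi=v_k^p$ and uses the Kolmogorov equation to derive the integral decay
\[
\int_{B_1(x_k)}n_k\,dx\;\le\;\big(\sigma_k^{1/\gamma}|x_k|^{b/\gamma}\big)^{1-p},
\]
which, fed back into the local $W^{1,q}$ estimate (via the $\|n_k\|_{L^1}^{1/q}$ factor), makes the H\"older norm of $n_k$ on $B_{1/2}(x_k)$ uniformly bounded despite the growing drift, and the same mass-versus-H\"older contradiction closes the argument. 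This Lyapunov step is the technical heart of the proof and is entirely absent from your proposal.
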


\begin{proof}
We argue by contradiction, so  we assume that \[\sup_{\R^N} m_k =L_k\to +\infty.\] 
We divide the proof in several steps.

\noindent {\bf Step 1: rescaling of the solutions}.\\
Let \[\mu_k:= L_k^{-\beta}\qquad \beta=\alpha\frac{\gamma-1}{\gamma}>0.\] So, observe that $\mu_k\to 0$ as $k\to 0$. 
Since $u_k$ is bounded by below, up to adding a suitable constant we can assume that $\min_{\R^N} u_k=0$. 
We define the following rescaling
\[\begin{cases} v_k(x)= \mu_k^{\frac{2-\gamma}{\gamma-1}} u_k(\mu_k x )+1\\
n_k(x)= L_k^{-1} m_k(\mu_k x).
\end{cases} \]

Note that $0\leq n_k(x)\leq 1$. Moreover, due to \eqref{alpha},
\begin{equation}\label{l1} \int_{\R^N} n_k(x)dx = M L_k^{\frac{\alpha N(\gamma-1)}{\gamma}-1}\to 0, \end{equation} 
and $\min v_k=1 $.  We define 
\[ H_k(q)=\mu_k^{\frac{ \gamma}{\gamma-1}} r_k^{\gamma}H(r_k^{-1}\mu_k^{\frac{1}{1-\gamma}} q), \qquad \text{ so } \quad\nabla H_k(q)= \mu_k  r_k^{\gamma-1 }\nabla H(r_k^{-1} \mu_k^{\frac{1 }{1-\gamma}} q).\] 
Recalling \eqref{Hass} we have that for all $q \in \R^N$, 
\begin{equation}\label{bh}
\begin{split}
&C_H|q|^\gamma- K\leq H_k(q)\leq C_H( |q|^\gamma +1), \\
 &|\nabla H_k(q))|\leq C_H|q|^{\gamma-1}, \\
& \nabla H_k(q)\cdot q-H_k(q)\geq K^{-1} |q|^\gamma -K.
\end{split}
\end{equation} 
Moreover, we define 
\[\tilde g_k(x)=\mu_k^{ \frac{\gamma}{\gamma-1}} g_k[m_k](\mu_k x).\]
Recalling that $0\leq m_k\leq L_k$,  by \eqref{glocal}  we get that for all $x$ and for all $k$,
\begin{equation}\label{bf} |\tilde g_k(x)|\leq\mu_k^{\frac{\gamma}{\gamma-1}} K ( L_k^{\alpha} + 1) \leq 2 K\end{equation}
where we used the fact that $\mu_k= L_k^{-\beta}$ with $\beta=\alpha \frac{\gamma-1}{\gamma}$. 
Finally, we let
\[\tilde \lambda_k= \mu_k^{ \frac{\gamma}{\gamma-1}} \lambda_k=\frac{1}{L_k^\alpha} \lambda_k \] and we observe that
 \begin{equation}\label{bl} |\tilde \lambda_k|\leq C.\end{equation}
Finally, let
\[
V_k(x) = \mu_k^{\frac{\gamma}{\gamma-1}} s_kV(\mu_k t_k x).
\]
By assumption \eqref{vass}, we get 
 \begin{equation}\label{bv} s_k \mu_k^{\frac{\gamma}{\gamma-1}} C_V^{-1} (\max\{ |t_k \mu_k x|-C_V, 0\})^{b} \leq V_k(x) \leq C_V(1+ \sigma_k |x|^b),  \end{equation}
where
\[
\sigma_k := \mu_k^{\frac{\gamma}{\gamma-1}+b} s_k t_k^b \to 0 \qquad \text{as $k \to \infty$.}
\]
 In particular we also have the following bound from below for $V_k$, 
 \begin{equation}\label{bv2}
V_k(x) \ge \frac{C_V^{-1}}{2^b} \sigma_k |x|^b \quad \text{for all $|x| \ge 2C_V (t_k \mu_k)^{-1}$.}
 \end{equation}

An easy computation shows that  by rescaling 
we have that $(v_k, n_k,\tilde \lambda_k)$ is a solution to
\begin{equation}\label{resc} \begin{cases}-\Delta v_k + H_k(\nabla v_k)+\tilde \lambda_k =\tilde g_k(x)+V_k(x),\\
-\Delta  n_k -\di(n_k  \nabla H_k(\nabla v_k ) )=0.
 \end{cases} \end{equation}

\noindent {\bf Step 2: a priori bounds on the rescaled solution to the  Hamilton-Jacobi equation}.\\ 
We observe that by Theorem \ref{bernstein2} and \eqref{bv}, there exists $C>0$, independent of $k$, such that 
 \begin{equation}\label{gb}  |\nabla v_k(x)|\leq C (1+\sigma_k^{\frac{1}{\gamma}}|x|^{\frac{b}{\gamma}}) \quad \text{on $\R^N$.}\end{equation}
 We recall that we assumed $v_k(\hat x_k) = \min v_k=1$. 
Since $v_k$ is a classical solution to \eqref{resc}, at a minimum point $\hat x_k$ we have, by \eqref{bh}, \eqref{bf}, \eqref{bl} and \eqref{bv2}, \[\sigma_k|\hat x_k|^b\leq C.\]
 Therefore, by using this estimate and \eqref{gb}, since $|v_k(0)| \le |v_k(\hat x_k)| + |\hat x_k| \sup_{|y| \le |\hat x_k|} |\nabla u_k(y)|$ we obtain \[|v_k(0)|\leq 1+ C (1+\sigma_k^{\frac{1}{\gamma}}|\hat x_k|^{1+\frac{b}{\gamma}})\leq C_1
 (1+\sigma_k^{-\frac{1}{b}})\] and then again by \eqref{gb}, 
\begin{equation} \label{stimasu} |v_k(x)|\leq C(1+ \sigma_k^{-\frac{1}{b}} + \sigma_k^{\frac{1}{\gamma}} |x|^{\frac{b}{\gamma}+1}) \quad \text{on $\R^N$}.\end{equation}

Let $\chi$ be a smooth function $\chi:[0,+\infty)\to [0, +\infty)$ such that $\chi\equiv 0$ in $(0, 1/2)\cup (3/2, +\infty)$, $\chi(1) > 0$ and that $|\chi'|, |\chi''|\leq 1$.  
We fix $\tilde x\in \R^N$ such that $|\tilde x|> 4 C_V (t_k \mu_k)^{-1}$,
and we denote by \[w(x)=K\sigma_k^{\frac{1}{\gamma}}|\tilde x|^{1+\frac{b}{\gamma}}\chi\left(\frac{|x|}{|\tilde x|}\right)\]where $K\geq 0$ has to be chosen.
We have that $w(x)\leq v_k(x)$ for all $x$ such that $|x|\geq \frac{3}{2}|\tilde x|$ or $|x|\leq \frac{1}{2} |\tilde x|$. 
Moreover,  for $x$ such that $\frac{1}{2}|\tilde x|\leq |x|\leq \frac{3}{2}|\tilde x|$
we have $|x|> 2 C_V (\mu_k t_k)^{-1}$, so using the estimates \eqref{bh}, \eqref{bf}, \eqref{bl} and \eqref{bv2}, 
\[- \Delta w + H_k(\nabla w)+\tilde \lambda_k -\tilde g_k(x)-V_k(x)\\\leq 
KN \sigma_k^{\frac{1}{\gamma}}|\tilde x|^{\frac{b}{\gamma}-1} +C_HK^\gamma \sigma_k |\tilde x|^{b}+C- \frac{C_V^{-1}}{2^b} \sigma_k |\tilde x|^b. 
\]
Note that there exist $K > 0$ small and $C_2 >0 $ large, depending only $C_V$ and $C_H$ and not on $|\tilde x|$, $k$, such that the right-hand side of the last expression is negative if 
\[
\sigma_k |\tilde x|^b \ge C_2
\]
(this also implies that $t_k \mu_k |\tilde x|> 4 C_V$, as required). The test function $w$ is then a subsolution of the HJB equation in \eqref{resc}, therefore by comparison we get that,
\[v_k(\tilde x) \geq K \chi(1) \sigma_k^{\frac{1}{\gamma}}|\tilde x|^{1+\frac{b}{\gamma}}. \] By arbitrariness of $\tilde x$ we conclude that, for some $C>0$, 
\begin{equation} \label{stimagiu} v_k(x)\geq  C\sigma_k^{\frac{1}{\gamma}}|x|^{\frac{b}{\gamma}+1} \qquad \text{for all $\sigma_k |x|^b \ge C_2$}.\end{equation}

\noindent {\bf Step 3: estimates on the (approximate) maxima of $n_k$}. \\ We now fix $0<\delta<<1$ and $x_k$ such that $n_k(x_k)=1-\delta$. 
Two possibilities may arise: either $\lim_k \sigma_k|x_k|^b=+\infty$ up to some subsequence, or there exists $C>0$ such that 
$\sigma_k|x_k|^b\leq C$. We rule out the second possibility by contradiction. Suppose indeed that   there exists $C>0$ such that   $\sigma_k|x_k|^b\leq C$.
By \eqref{gb},  $|\nabla v_k|\leq C$ on $B_2(x_k)$ for some $C>0$. Therefore, using the fact that $n_k$ solves the second equation in \eqref{resc}, 
the elliptic estimates in  Proposition \ref{ell_regularity}, \eqref{bh}, the interpolation inequality $\|n\|_q \le \|n\|^{1/q}_1 \|n\|^{1-1/q}_\infty$ and the fact that $0\leq n_k\leq 1$, 
we get for all $q>1$, 
\begin{equation}\label{Wb}
\|n_k\|_{W^{1,q}(B_1(x_k))}\leq C (1+\|\nabla H_k(\nabla v_k)\|_{L^\infty(B_2(x_k))})\|n_k\|^{1/q}_{L^1(B_2(x_k))} \leq C_q
\end{equation}
for some $C_q>0$ depending on $q$. This implies, choosing $q>N$, that  for all $\theta\in (0,1)$ there exists $C_\theta$ depending on $\theta$ (but not on $k$) such that 
$\|n_k\|_{C^{0,\theta}(B_1(x_k))}\leq C_\theta$. Recalling that $n_k(x_k)=1-\delta$, we can fix $r<1$ such that $n_k(x)\geq \frac{1}{2}$ for all $x\in B_r(x_k)$. It is sufficient to choose $r=C_\theta^{-1/\theta}(1/2-\delta)^{1/\theta}$.
Therefore we have, by \eqref{l1}, 
\[0< \frac{1}{2} \omega_N r^N \leq \int_{B_r(x_k) } n_k(x)dx\leq \int_{\R^N}n_k(x)dx= M L_k^{\frac{\alpha N(\gamma-1)}{\gamma}-1}\to 0. \]
This gives a contradiction. Then we deduce that, up to a subsequence, 
  \begin{equation}\label{limite} \lim_k \sigma_k|x_k|^b=+\infty.\end{equation}

\noindent {\bf Step 4: construction of a Lyapunov function}.\\ 
Let $\phi(x)= v_k(x)^p$, for $p>1$ to be chosen later. Using the fact that $v_k$ is a classical solution to \eqref{resc} (arguing as in \eqref{eq123}) we get
\begin{multline*}- \Delta \phi + \nabla H_k(\nabla v_k)\cdot \nabla \phi = p v_k^{p-1} \left(-\Delta v_k-(p-1)\frac{|\nabla  v_k|^2}{v_k}+ \nabla H_k(\nabla v_k)\cdot \nabla v_k\right) \\
=  p v_k^{p-1} \left(-(p-1)\frac{|\nabla  v_k|^2}{v_k}- H_k(\nabla v_k) + \nabla H_k(\nabla v_k)\cdot \nabla v_k-\tilde\lambda_k+\tilde g_k(x)+V_k(x)\right).
\end{multline*} 
We denote by 
\begin{equation}\label{g} G_k(x)=-(p-1)\frac{|\nabla  v_k|^2}{v_k}- H_k(\nabla v_k) + \nabla H_k(\nabla v_k)\cdot \nabla v_k-\tilde\lambda_k+\tilde g_k(x)+V_k(x).\end{equation} 
Using the previous computation and the fact that $n_k$ is a solution to \eqref{resc}, we get, by integrating by parts,  that
\[0=\int_{\R^N} n_k(x)\left(- \Delta \phi(x) + \nabla H_k(\nabla v_k(x))\cdot \nabla \phi(x)\right)dx =p\int_{\R^N} n_k(x) G_k(x)  \phi^{\frac{p-1}{p}}(x)dx.\]
Therefore from this, for every $\Lambda>0$ we get 
\begin{equation}\label{stimaint} 
\int_{\{\phi(x)\geq \Lambda^p\}}  n_k(x) G_k(x) \phi^{\frac{p-1}{p}}(x)dx = - \int_{\{\phi(x)\leq \Lambda^p\}}  n_k(x) G_k(x)  \phi^{\frac{p-1}{p}}(x)dx.
\end{equation} 
Observe that by \eqref{bh}, \eqref{bf}, \eqref{bl} and \eqref{bv2} we get that for all $t_k \mu_k |x| \ge 2C_V$, 
\begin{multline}\label{stimag} 
G_k(x)\geq K^{-1} |\nabla v_k|^{\gamma}-(p-1)\frac{|\nabla  v_k|^2}{v_k} -K-\tilde\lambda_k+\tilde g_k(x)+V_k(x)
 \\
\geq (p-1)|\nabla v_k|^\gamma 
\left(\frac{1}{K(p-1)}-
\frac{|\nabla  v_k|^{2-\gamma}}{v_k}\right)-C +C_V\sigma_k|x|^b.
\end{multline}
We first claim that by \eqref{gb} and \eqref{stimagiu}, $\frac{1}{K(p-1)}-
\frac{|\nabla  v_k|^{2-\gamma}}{v_k}$ is positive if $\sigma_k|x|^b \ge C_2$, eventually enlarging $C_2$ in \eqref{stimagiu}. Indeed,
\begin{equation}\label{ineq9}
\frac{|\nabla  v_k(x)|^{2-\gamma}}{v_k(x)} \le C\frac{\left[1 + \sigma_k^{\frac{1}{\gamma}}|x|^{\frac{b}{\gamma}}\right]^{2-\gamma}}
{\left[\sigma_k^{\frac{1}{\gamma}}|x|^{\frac{b}{\gamma}}\right]|x|} \le \frac{C_H}{p-1}
\end{equation}
whenever $\sigma_k|x|^b$ is large enough. This implies that for all $\sigma_k|x|^b \ge C_2$, by \eqref{stimag} we have $G_k(x) \ge -C$. On the other hand, again by the gradient bounds in \eqref{gb} we have that $|\nabla v_k(x)|\leq C (1+C_2)$ on the set $\sigma_k|x|^b \le C_2$, so \eqref{stimag} and $\min v_k=1$ again guarantee that $G_k(x) \ge -C_3$. In conclusion, there exists $C>0$ such that \[G_k(x)\geq -C\qquad \forall x\in\R^N.\] 

Therefore, going back to \eqref{stimaint}, recalling \eqref{l1}, we obtain that
\begin{multline}\label{stimaint2} 
\int_{\{\phi(x)\geq \Lambda^p\}}  n_k(x) G_k(x) \left(\frac{\phi(x)}{\Lambda^p}\right)^{\frac{p-1}{p}} dx \leq C \int_{\{\phi(x)\leq \Lambda^p\}}  n_k(x) dx\leq C\int_{\R^N} n_k(x)dx\\=
CM\mu_k^{-N+\frac{\gamma}{\alpha (\gamma-1)}} \to 0
\end{multline} 
as $k \to \infty$.

Note that by \eqref{stimag} and \eqref{ineq9}, if $x$ is such that $G_k(x)\leq 0$, then necessarily  $\sigma_k|x|^b \leq C$ for some $C>0$. Hence, by \eqref{stimasu}, we get that $v_k(x)\leq C_3(1+\sigma_k^{-\frac{1}{b}})$. 
Therefore if we choose $\Lambda=\Lambda_k= K \sigma_k^{-\frac{1}{b}}$ for a sufficiently large $K>0$, we get that $G_k(x)>0$ in the set $\{x| \phi(x)\geq \Lambda^p\}$. 

 \smallskip 
\noindent {\bf Step 5: integral estimates on $n_k$}.\\ 
Arguing as in the end of Step 4, we may choose $K$ big enough so that $G_k(x)\geq 1$ in the set $\{x| \phi(x)\geq \Lambda^p\}$, where $\Lambda_k= K \sigma_k^{-\frac{1}{b}}$. If $k$ is sufficiently large, by \eqref{stimagiu}  and \eqref{limite} it follows that for some $C > 0$,
\[
\begin{split}
& v_k(x)\geq C\sigma_k^{\frac{1}{\gamma}}|x_k|^{1+\frac{b}{\gamma}} \quad \text{in $B_1(x_k)$, and} \\
& B_1(x_k)\subseteq  \{x| \phi(x)\geq \Lambda^p\}.
\end{split}
\]

Therefore, we may conclude that 
\begin{multline}\label{stimaint3} \int_{\{\phi(x)\geq \Lambda^p\}}  n_k(x) G_k(x) \left(\frac{\phi(x)}{\Lambda^p}\right)^{\frac{p-1}{p}} dx
\geq C \left( \frac{ \sigma_k^{\frac{1}{\gamma}}|x_k|^{1+\frac{b}{\gamma}} }{ \sigma_k^{-\frac{1}{b}}}\right)^{p-1} \int_{B_1(x_k)} n_k(x)dx \\ \ge
C \left( \sigma_k^{\frac{1}{\gamma}}|x_k|^{\frac{b}{\gamma}} \right)^{p-1} \int_{B_1(x_k)} n_k(x)dx,
\end{multline}
that together with \eqref{stimaint2} gives
\begin{equation}\label{stimaint4}
 \int_{B_1(x_k)} n_k(x)dx \le \left(\sigma_k^{\frac{1}{\gamma}} |x_k|^{\frac{b}{\gamma}} \right)^{1-p}
\end{equation}
for all $k$ large.

Reasoning as in Step 3 (see in particular \eqref{Wb}), by Proposition \ref{ell_regularity}, \eqref{bh}, \eqref{gb} and \eqref{stimaint4}, we get that for all $q>1$, 
\begin{multline*}
\|n_k\|_{W^{1,q}(B_{1/2}(x_k))}\leq C (1+\|\nabla H_k(\nabla v_k)\|_{L^\infty(B_1(x_k))})\|n_k\|^{1/q}_{L^1(B_1(x_k))}\\ \leq C_4 \left[1+ \left(\sigma_k^{\frac{1}{\gamma}} |x_k|^{\frac{b}{\gamma}}\right)^{\gamma-1}\right] \left( \sigma_k^{\frac{1}{\gamma}} |x_k|^{\frac{b}{\gamma}} \right)^{(1-p)/q} \le 1,
\end{multline*}
whenever $p$ is such that $\gamma-1+(1-p)/q < 0$ and $k$ is large (recall that we are supposing $\sigma_k^{\frac{1}{\gamma}} |x_k|^{\frac{b}{\gamma}}\to +\infty$).

Therefore, we may conclude as in Step 3: choosing $q>N$, for some $\theta\in (0,1)$ there exists $C_\theta$ such that 
$\|n_k\|_{C^{0,\theta}(B_{1/2}(x_k))}\leq C_\theta$. Since $n_k(x_k)=1-\delta$, we can fix $r<1$ such that $n_k(x)\geq \frac{1}{2}$ for all $x\in B_r(x_k)$. Finally, by \eqref{l1}
\[0< \frac{1}{2} \omega_N r^N \leq \int_{B_r(x_k) } n_k(x)dx\leq \int_{\R^N}n_k(x)dx= M L_k^{\frac{\alpha N(\gamma-1)}{\gamma-1}}\to 0. \]
That gives a contradiction and rules out the possibility that $ \sigma_k |x_k|^{b}\to +\infty$. Therefore, $L_k\to +\infty$ is impossible.
\end{proof} 
\subsection{Existence of a solution to the MFG system} 
Using the a priori bounds we obtained, we can pass to the limit in $k$ in the MFG system  \eqref{mfgeps} to get a solution to \eqref{mfg} for every $\eps>0$. 
\begin{proof}[Proof of Theorem \ref{exthm}]
First, by Proposition \ref{ex}, the existence for all $k$ of a classical solution  $(u_k,m_k,\lambda_k)$ to \eqref{mfgeps} follows. 
By \eqref{lambda},  up to passing to a subsequence we have that $\lambda_k\to \lambda_\eps$. 

Note that by Propositions \ref{ex} and \ref{mbond}, $u_k$ and $m_k$ are bounded by below and above respectively, so due to Theorem \ref{stimalinfinito} (with $g[m]=f_k[m]$ and $r_k = s_k = t_k = 1$ for all $k$),
we get that there exists $C_\eps>0$ independent of $k$ (but eventually on $\eps>0$) such that $\|m_k\|_{L^\infty(\R^N)}\leq C_\eps$. Using Theorem \ref{bernstein2}, this implies that 
$|\nabla u_k(x)|\leq C_\eps(1+|x|^{\frac{b}{\gamma}})$, for some $C_\eps$ independent of $k$. We can normalize $u_k(0)=0$ and using Ascoli-Arzel\'a theorem we can extract by a diagonalization procedure 
a sequence $u_k$ such that $u_k\to u_\eps$ locally uniformly in $\R^N$. Moreover, by using the estimates and the equation we have that actually $u_k\to u_\eps$ 
locally uniformly in  $C^1$. Note that, denoting by $x_k$ a minimum point of $u_k$ on $\R^N$, we have by the HJB equation that
\[
H(0)+\lambda_k  -f_k[m_k](x_k) \ge V(x_k).
\]
Coercivity \eqref{vass} of $V$ and uniform  boundedness of $\lambda_k$ and $f_k[m_k]$ guarantee that $x_k$ remains bounded, in particular that $u_k \ge -C$ on $\R^N$ by gradient bounds. Theorem \ref{boundedfrombelowsol} then applies, in particular $
u_k(x)\geq C|x|^{1+\frac{b}{\gamma}}-C^{-1}$ for all $k$. This implies, passing to the limit, that
\begin{equation}\label{uhasmin}
u_\eps(x)\geq C|x|^{1+\frac{b}{\gamma}}-C^{-1} \qquad \text{on $\R^N$}.
\end{equation}

By the elliptic estimates in Proposition \ref{ell_regularity}, we get that $m_k\to m_\eps $ locally uniformly in $C^{0,\alpha}$ for all $\alpha\in (0,1)$  and weakly in $W^{1,p}(B_R)$ for every $p>1$ and  $R>0$. Therefore we get that $u_\eps$ is a solution  in the viscosity sense of the Hamilton-Jacobi equation, by stability with respect to uniform convergence, and $m_\eps$ is a weak solution to the Fokker-Planck equation, by strong convergence of $\nabla u_k\to \nabla u_\eps$. 
Finally this implies, again by using the regularity of the HJB equation, that $u_k\to u_\eps$ locally uniformly  in $C^2$. 
Therefore, $u_\eps, m_\eps$ solve in classical  sense the system \eqref{mfg}. 

Now we show that $\int_{\R^N} m_\eps(x)dx=M$. We have that $m_k\to m_\eps$ locally uniformly in $C^{0,\alpha}$ for every $\alpha\in (0,1)$. Moreover, due to \eqref{intV} and to \eqref{vass}, we get that for all $R>1$, 
\[C_\eps\geq \int_{\R^N}m_k(x) V(x)dx\geq \int_{|x|>R} m_k(x) V(x)dx\geq CR^b  \int_{|x|>R} m_k(x)dx.\]
This implies that $\int_{|x|\leq R} m_k(x)dx\geq  M-C_\eps R^{-b}$ and then by uniform convergence we get that for every $\eps>0$, and $\eta>0$, there exists $R>0$ 
such that 
\[\int_{|x|\leq R} m_\eps(x)dx \geq M-\eta.\] From this we can conclude that $m_k\to m_\eps$ in $L^1(\R^N)$, that is $\int_{\R^N}m_\eps(x)dx=M$. By boundedness of $m_k$ in $L^\infty$, it also follows that $m_k\to m_\eps$ in $L^{\alpha+1}(\R^N)$.

Finally, we get that if $w_\eps= -m_\eps \nabla H (\nabla u_\eps)$, then $(m_\eps, w_\eps)\in \mathcal{K}_{\eps, M}$, due to the second equation in \eqref{mfg}. Moreover, 
we have that if $m_k\to m$ strongly in $L^{\alpha+1}(\R^N)$, then, due to the Lebesgue dominated convergence theorem and \eqref{cond1}, 
$ F(m_k\star \chi_k)\to F(m)$ strongly in $L^1(\R^N)$. 
This implies that the energy $\mathcal{E}_k$ $\Gamma$-converges to the energy $\mathcal{E}$, from which we conclude that $(m_\eps, w_\eps)$ is a minimizer of $\mathcal{E}$ in the set $\mathcal{K}_{\eps,M}$. 
\end{proof} 

\begin{remark}\label{rem2} \upshape Note that by the very same arguments,  recalling Remark \ref{remarkene}, we have the existence of solutions also in the more general case that condition 
\eqref{assFlocal2} is satisfied. 
\end{remark} 

We conclude proving some estimates on the solution $(u_\eps, m_\eps, \lambda_\eps)$ given in Theorem \ref{exthm} that will be useful in the following. 
\begin{corollary} \label{pnuova}  Let $(u_\eps, m_\eps,\lambda_\eps)$ be as in Theorem \ref{exthm}. 
There exist constants $C, C_1,C_2, K, K_1, K_2> 0$ independent of  $\eps$  such that
\begin{gather}
\int_{\R^N} m_\eps |\nabla u_\eps|^{\gamma} dx+ \int_{\R^N} m_\eps^{\alpha+1}dx+ \int_{\R^N} m_\eps(x) V(x)dx\leq C\eps^{-\frac{\gamma'\alpha N} {\gamma'-\alpha N}}+K
\label{intV1}\\
\label{lambda1} -K_1-C_1\eps^{-\frac{\gamma'\alpha N} {\gamma'-\alpha N}}\leq \lambda_\eps \leq K_2-C_2\eps^{-\frac{\gamma'\alpha N} {\gamma'-\alpha N}}.
\end{gather} 
 \end{corollary}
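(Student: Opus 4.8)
The estimate \eqref{intV1} should be obtained essentially by passing to the limit $k \to \infty$ in the uniform bounds already available for the approximate solutions. Indeed, Proposition \ref{existence} (see \eqref{intV}) gives, for every $k$, the bound
\[
\int_{\R^N} m_k \left| \frac{w_k}{m_k}\right|^{\gamma'} dx + \int_{\R^N} m_k V(x)\, dx + \|m_k\|_{L^{\alpha+1}(\R^N)}^{\alpha+1} \leq C \eps^{-\frac{\gamma'\alpha N}{\gamma'-N\alpha}} + K,
\]
with $C, K$ independent of $\eps$ and $k$. On the set $\{m_k > 0\}$ we have $w_k / m_k = -\nabla H(\nabla u_k)$, so by Proposition \ref{Lproperties}(v) and \eqref{Hass} the first term controls $\int_{\R^N} m_k |\nabla u_k|^\gamma\, dx$ up to a constant times $M$ (the term $+K$ in (v) contributes $\int m_k = M$). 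From the proof of Theorem \ref{exthm} we know $m_k \to m_\eps$ locally uniformly in $C^{0,\alpha}$, $\nabla u_k \to \nabla u_\eps$ locally uniformly, and $m_k \to m_\eps$ in $L^{\alpha+1}(\R^N)$; together with the coercivity \eqref{vass} of $V$ this suffices to pass to the limit. So the plan for \eqref{intV1} is: rewrite the $mL(-w/m)$ term in terms of $m_k|\nabla u_k|^\gamma$ using $w_k = -m_k \nabla H(\nabla u_k)$ and Proposition \ref{Lproperties}, then apply Fatou's lemma to each of the three (nonnegative) integrands along the converging subsequence, using the $k$-uniform bound \eqref{intV} on the right.

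For \eqref{lambda1}, the lower bound is just the passage to the limit in \eqref{lambda} from Proposition \ref{ex}, since $\lambda_k \to \lambda_\eps$. The content is the \emph{strict negativity} of the leading term in the upper bound, i.e.\ that $\lambda_\eps \le K_2 - C_2 \eps^{-\frac{\gamma'\alpha N}{\gamma'-\alpha N}}$ with $C_2 > 0$. For this I would use the energy identity. From \eqref{lambdaJ} we have
\[
\lambda_k M = \mathcal{E}_k(m_k, w_k) + \int_{\R^N} f_k[m_k] m_k\, dx - F_k[m_k],
\]
and since $(m_k, w_k)$ is a \emph{minimizer}, $\mathcal{E}_k(m_k,w_k) = e_{k,\eps}(M) \le C$ (estimate \eqref{stimaalto1}); but more importantly, I need an \emph{upper} bound on $e_{k,\eps}(M)$ that carries the negative power of $\eps$. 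The idea is to test the energy with the rescaling $(\sigma^{-N} m_0(\sigma^{-1}\cdot), \sigma^{-(N+1)} w_0(\sigma^{-1}\cdot))$ of a fixed compactly supported competitor and optimize over $\sigma$: the $mL(-w/m)$ term scales like $\sigma^{-\gamma'} \eps^{\gamma'-N\text{(stuff)}}$... more precisely, after accounting for the $\eps^{\gamma'}$ weight appearing in \eqref{stimam}, one finds that the optimal $\sigma$ balances the kinetic part against $-\|m\|_{\alpha+1}^{\alpha+1}$, producing exactly $e_{k,\eps}(M) \le -c\, \eps^{-\frac{\gamma'\alpha N}{\gamma'-\alpha N}} + C$ for some $c > 0$ (the potential term $\int Vm$ stays bounded for $\sigma$ small since the support shrinks, and $F_k[m] \le KM$ by \eqref{cond1}). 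This mirrors the computation sketched in the introduction showing $\mathcal{E}$ is unbounded below when $\alpha > \gamma'/N$. Then, combining this with the lower bound \eqref{cond1} for $\int f_k[m_k]m_k - F_k[m_k]$ (which is bounded by a constant times $\|m_k\|_{\alpha+1}^{\alpha+1} + KM$, and the latter is itself $\lesssim \eps^{-\frac{\gamma'\alpha N}{\gamma'-\alpha N}}$ — but with a coefficient one can make small relative to $c$, or absorb by choosing the test competitor carefully), one gets $\lambda_k M \le K_2 - C_2 \eps^{-\frac{\gamma'\alpha N}{\gamma'-\alpha N}}$ uniformly in $k$, and letting $k \to \infty$ finishes it.

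The main obstacle is the bookkeeping in the upper bound for $e_{k,\eps}(M)$: one must exhibit an explicit competitor (e.g.\ $m_0$ a smooth bump of mass $M$, $w_0 = \eps \nabla m_0$) and track precisely how $\mathcal{E}_k$ transforms under the scaling $m_\sigma = \sigma^{-N} m_0(\sigma^{-1}\cdot)$, $w_\sigma = \sigma^{-(N+1)} w_0(\sigma^{-1}\cdot)$ — checking that $(m_\sigma, w_\sigma) \in \mathcal{K}_{\eps,M}$ (the continuity equation scales correctly because of the $\sigma^{-(N+1)}$ in $w$), that $\int m_\sigma L(-w_\sigma/m_\sigma) \sim \sigma^{-\gamma'}\eps^{\gamma'} \cdot(\text{const})$, that $\int V m_\sigma$ is bounded as $\sigma \to 0$, and that $-\frac{C_f}{\alpha+1}\int m_\sigma^{\alpha+1} \sim -\sigma^{-\alpha N}(\text{const})$. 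Minimizing $A\sigma^{-\gamma'}\eps^{\gamma'} - B\sigma^{-\alpha N}$ over $\sigma > 0$ — legitimate precisely because $\alpha N < \gamma'$ by \eqref{alpha}, so the negative term dominates for small $\sigma$ — yields the critical scale $\sigma_\eps \sim \eps^{\gamma'/(\gamma'-\alpha N)}$ and the claimed value $-C_2 \eps^{-\gamma'\alpha N/(\gamma'-\alpha N)}$. One should also double check that the $+K$ and $KM$ constants appearing throughout \eqref{cond1}, Proposition \ref{Lproperties}, and \eqref{vass} only contribute to the $K_2$ term and never spoil the sign of the leading coefficient; this is where I expect to spend most of the care, but it is a routine (if delicate) optimization once the scaling relations are written down.
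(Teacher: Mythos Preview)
Your plan is correct and essentially matches the paper's proof. For \eqref{intV1} the paper does exactly what you describe: Fatou's lemma on each of the three nonnegative integrands, using the $k$-uniform bound \eqref{intV}. For \eqref{lambda1} the lower bound is immediate from \eqref{lambda}, and for the upper bound the paper also uses \eqref{lambdaJ} together with an explicit scaling competitor to obtain $\inf \mathcal E \le -C_2\,\eps^{-\gamma'\alpha N/(\gamma'-\alpha N)} + C_2$.

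Two points where the paper is cleaner than your outline, and which dissolve the worry you flag at the end:
\begin{itemize}
\item From \eqref{assFlocal} one has $f(m)m - F(m) \le -\tfrac{C_f\alpha}{\alpha+1}m^{\alpha+1} + 2Km \le 2Km$, hence $\int f(m)m - F(m) \le 2KM$ is an \emph{absolute constant}. There is no $\|m\|_{\alpha+1}^{\alpha+1}$ term to absorb, so the sign of the leading coefficient is never in danger.
\item Rather than a compactly supported bump, the paper takes $m(x)=A\phi(\tau|x|)$ with $\phi$ solving $\phi'=-\phi(1+\phi^\alpha)^{1/\gamma'}$, $\phi(0)=\tfrac12$, and $w=\eps\nabla m$. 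This yields $0<\phi\le \tfrac12 e^{-r}$, so $\int m^{1-\gamma'}|w|^{\gamma'}$ is manifestly finite and the potential term $\int Vm$ is bounded for the optimal $\tau\sim \eps^{-\gamma'/(\gamma'-\alpha N)}$; the optimization in $\tau$ then gives the claimed bound directly. Your bump competitor also works, provided you choose $m_0$ with $\int m_0^{1-\gamma'}|\nabla m_0|^{\gamma'}<\infty$ (e.g.\ a mollifier-type bump with infinite-order contact at the boundary of its support).
\end{itemize}
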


\begin{proof}
We observe that, by the arguments in the proof of Theorem \ref{exthm},  $m_k\to m_\eps$ and  $|\nabla u_k|\to |\nabla u_\eps|$ almost everywhere, and using the fact that $V(x)\geq 0$, we have that by Fatou lemma 
$ \int_{\R^N} m_\eps(x) |\nabla u_\eps|^\gamma dx\leq \liminf_k \int_{\R^N} m_k(x) |\nabla u_k|^\gamma dx $,  $ \int_{\R^N} m_\eps(x) V(x)dx\leq \liminf_k \int_{\R^N} m_k(x) V(x)dx$  and $\int_{\R^N} m_\eps^{\alpha+1}dx\leq \liminf_k \int_{\R^N} m_k^{\alpha+1}dx$. So inequality \eqref{intV} gives immediately \eqref{intV1}. 

Now we prove \eqref{lambda1}. Note that the estimate from below is a direct consequence of \eqref{lambda}. So, it remains to show that 
$ \lambda_\eps \leq C_2-C_2\eps^{-\frac{\gamma'\alpha N} {\gamma'-\alpha N}}$. 
Recalling that formula \eqref{lambdaJ} holds and $\int f(m)m - F(m) \le 2KM$ by \eqref{assFlocal}, it is sufficient to show that \begin{equation}\label{energybound2}\inf_{(m,w)\in \cK_{\eps, M}}\cE(m,w) \leq  -C_2 \eps^{-\frac{\gamma'\alpha N}{ \gamma'-\alpha N}} + C_2 \end{equation}
where $C_2$ is a constant depending only on $N, M, C_L, \gamma,\alpha, V$.
We construct a couple $(m,w)\in \cK_{\eps, M}$  as follows. 
First of all we consider a smooth function $\phi:[0, +\infty)\to \R$ which solves the following ordinary differential equation
\[\begin{cases} \phi'(r)=-\phi(r)(1+\phi(r)^\alpha)^{\frac{1}{\gamma'}}\\ \phi(0)=\frac{1}{2}.\end{cases}\]
Then, it is easy to check that $0<\phi(r)\leq \frac{1}{2}e^{-r}$. 
We define $m(x)=A \phi(\tau |x|)$, where $A, \tau$ are constants to be fixed, and $w(x)=\eps \nabla m(x)$. 

First of all we impose 
\[M=\int_{\R^N} m(x)dx= \frac{A}{\tau^N} \int_{\R^N} \phi(|y|)dy= \frac{A}{\tau^N} C^{-1},\]
recalling that $\phi$ is exponentially decreasing. 
So $A= M  \tau^N C $, where $C^{-1}=\int_{\R^N} \phi(|y|)dy$. 

Observe also that 
\begin{equation}\label{malpha}  \int_{\R^N} m^{\alpha+1}(x)dx=  M^{\alpha+1}\tau^{\alpha N} C^{\alpha+1} \int_{\R^N} \phi^{\alpha+1}(|y|)dy = M^{\alpha+1}\tau^{\alpha N} C^{\alpha+1} C_\alpha \end{equation} 
where $C_\alpha = \int_{\R^N} \phi^{\alpha+1}(|y|)dy.$

We check, recalling that the growth condition \eqref{vass}, that the following holds 
\begin{equation}\label{vest} \int_{\R^N} m(x)V(x)dx=MC \int_{\R^N} V\left(\frac{y}{\tau}\right) \phi(|y|)dy= C_1 \frac{1}{\tau^b},
\end{equation}  where $K$ is a constant depending on $N$, $\phi$, $C_0$. 

Moreover, we compute, recalling that $\phi$ solves the ODE
\begin{equation}\label{we} 
|w|^{\gamma'} =\left|\eps  \tau m \left(1+ \frac{1}{M^\alpha C^\alpha \tau^{N\alpha}} m^\alpha \right)^{\frac{1}{\gamma'}}\right|^{\gamma'}= 
\eps^{\gamma'} \tau^{\gamma'}m^{\gamma'} \left(1+ \frac{1}{M^\alpha C^\alpha \tau^{N\alpha}} m^\alpha\right).
\end{equation}

We consider the energy at $(m,w)$
\[\cE(m, w) = \int_{\R^N} m L\left(-\frac{w}{m}\right) +F(m) + mV(x)\,dx  
 \]
 Observe that by \eqref{assFlocal}, $F(m)\leq -\frac{C_f}{\alpha+1}m^{\alpha+1} +Km$. 
 Using   Proposition \ref{Lproperties}, and computation \eqref{we} and \eqref{malpha}, we get 
\begin{multline*} \int_{\R^N} m L\left(-\frac{w}{m}\right) +F(m) \,dx \leq \int_{\R^N} m L\left(-\frac{w}{m}\right) \, dx   - \frac{C_f}{\alpha+1} \int_{\R^N} m^{\alpha + 1} \, dx+KM   \\
\leq C_L \int_{\R^N} m \frac{|w|^{\gamma'}}{m^{\gamma'}}\, dx +(C_L+K) M - \frac{C_f}{\alpha+1} \int_{\R^N} m^{\alpha + 1} \, dx \\ =
C_L\eps^{\gamma'} \tau^{\gamma'} \left(M+\int_{\R^N}  \frac{1}{M^\alpha C^\alpha \tau^{N\alpha}} m^{\alpha+1}dx\right)  +(C_L+K)M - 
\frac{C_f}{\alpha+1} \int_{\R^N} m^{\alpha + 1}  
\\ = C_L \eps^{\gamma'} \tau^{\gamma'} M+ (C_L+K) M - \left(\frac{C_f}{\alpha+1}-  \frac{\eps^{\gamma'} \tau^{\gamma'- N\alpha}}{M^\alpha C^\alpha }\right) \int_{\R^N} m^{\alpha + 1} \, dx\\ =
 (M C_L +M CC_\alpha) \eps^{\gamma'} \tau^{\gamma'} - \frac{C_f}{\alpha+1}M^{\alpha+1}C^{\alpha+1} C_\alpha\tau^{\alpha N}+(C_L+K)M.
\end{multline*} 
We choose now $\tau$ such that $ \tau=\frac{1}{A} \eps^{-\frac{\gamma'}{\gamma'- N\alpha}} $, where $A$ is sufficiently large, in such a way that 
\[ \int_{\R^N} m L\left(-\frac{w}{m}\right) \, dx   +F(m) \, dx\leq -C\eps^{-\frac{\gamma' N\alpha}{\gamma'-N\alpha}} +C \]
where $C$ is a constant depending on $\alpha, C_L, M$. 
Substituting this in the energy and recalling \eqref{vest}, we get the desired inequality. 
\end{proof}

\section{Concentration phenomena}\label{secco}
In the second part of this work, we are interested in the asymptotic analysis of solutions to \eqref{mfg} when $\eps\to 0$. 

\subsection{The rescaled problem} \label{rescaledsection}
We consider the following rescaling
\begin{equation}
\label{rescaling} 
\begin{cases} 
\tilde m(y ) := \eps^{\frac{N \gamma'}{\gamma'-\alpha N}}  m(\eps^{\frac{\gamma'}{\gamma'-\alpha N}} y),\\
\tilde{u}(y) := \eps^{\frac{N \alpha(\gamma'-1)-\gamma'}{\gamma'-\alpha N}} {u} (\eps^{\frac{\gamma'}{\gamma'-\alpha N}}  y) \\
\tilde \lambda := \eps^{\frac{N \alpha \gamma'}{\gamma'-\alpha N}} \lambda.
\end{cases} 
\end{equation} 
We introduce  the rescaled potential  \begin{equation}\label{riscalata} 
V_\eps(y) = \eps^{\frac{N \alpha \gamma'}{\gamma'-\alpha N}}  V(\eps^{\frac{\gamma'}{\gamma'-\alpha N}} y ) .
\end{equation}
Note that by \eqref{vass}, we get    
\begin{equation}\label{vassresc}
C_V^{-1} \eps^{\frac{N \alpha \gamma'}{\gamma'-\alpha N}} (\max\{ |\eps^{\frac{\gamma'}{\gamma'-\alpha N}}y|-C_V, 0\})^{b} \le V_\eps(y) \le C_V \eps^{\frac{N \alpha \gamma'}{\gamma'-\alpha N}} (1 + \eps^{\frac{\gamma'}{\gamma'-\alpha N}}|y|)^{b} .
\end{equation}
 The rescaled coupling term is given by
  \begin{equation}\label{friscalata} 
f_\eps(\tilde m(y)) = \eps^{\frac{N \alpha \gamma'}{\gamma'-\alpha N}}  f\left(\eps^{-\frac{N \gamma'}{\gamma'-\alpha N}}  m(\eps^{\frac{\gamma'}{\gamma'-\alpha N}} y)\right).\end{equation}
Note that, using \eqref{assFlocal}, we obtain that 
\begin{equation}\label{assFlocalr}  -C_fm^{\alpha}-K\eps^{\frac{N \alpha \gamma'}{\gamma'-\alpha N}} \leq  f_\eps( m)
\leq -C_fm^{\alpha}+K\eps^{\frac{N \alpha \gamma'}{\gamma'-\alpha N}} ,\end{equation}
Then we get that 
\begin{equation}\label{flimite} \lim_{\eps\to 0} f_\eps(m)= -C_f m^{\alpha}\qquad\text{uniformly in $[0, +\infty)$}.\end{equation}
Moreover, we define $F_\eps(m)=\int_0^m f_\eps(n)dn $ if $m\geq 0$ and $0$ otherwise, and we get 
\begin{equation}\label{Fresc}  -\frac{C_f}{\alpha+1}  m^{\alpha+1}-K\eps^{\frac{N \alpha \gamma'}{\gamma'-\alpha N}}m \leq  F_\eps( m)
\leq -\frac{C_f}{\alpha+1} m^{\alpha+1}+K\eps^{\frac{N \alpha \gamma'}{\gamma'-\alpha N}} m.\end{equation}

We define also the rescaled  Hamiltonian
\begin{equation}\label{Lrescaled}
 H_\eps(p) = \eps^{\frac{N \alpha \gamma'}{\gamma'-\alpha N}} H\left(\eps^{-\frac{N \alpha (\gamma'-1)}{\gamma'-\alpha N}} p \right).
\end{equation}
By \eqref{Hass}, 
\begin{equation} \label{Hassrescaled} 
\begin{gathered}
C_H|p|^\gamma-  \eps^{\frac{N \alpha \gamma'}{\gamma'-\alpha N}}K \leq  H_\eps(p)  \leq C_H  |p|^\gamma,\\
 |\nabla H_\eps(p) | \leq K |p|^{\gamma-1}.
 \end{gathered}
\end{equation}
So, we get that 
\begin{equation}\label{Hlimite}   \lim_{\eps\to 0} H_\eps(p) = H_0(p):=C_H |p|^\gamma\qquad\text{uniformly in $\R^N$}.\end{equation} 
Moreover, if we assume  that $\nabla H_\eps$ is locally bounded in 
$C^{0,\gamma-1}(\R^N)$, then \[\nabla H_\eps(p)\to\nabla H_0(p)=\frac{C_H}{\gamma}|p|^{\gamma-2} p\qquad \text{ locally uniformly.}\]

We can define $L_\eps$ as in \eqref{dati}, with $H_\eps$ in place of $H$ and we obtain that condition \eqref{Hassrescaled} gives that there exists $C_L>0$ such that 
\begin{equation} \label{Lassrescaled} 
C_L|q|^{\gamma'}\leq  L_\eps(q)  \leq C_L  |q|^{\gamma'} +\eps^{\frac{N \alpha \gamma'}{\gamma'-\alpha N}}C_L 
\end{equation}  which in turns gives that 
\begin{equation}\label{Llimite} L_\eps(q)\to L_0(q)= C_L|q|^{\gamma'}\qquad\text{uniformly in $\R^N$}.\end{equation} 
%

The rescalings \eqref{mfgresc} lead to the following rescaled system
\begin{equation}\label{mfgresc}\begin{cases}
- \Delta \tilde u_\eps+ H_\eps(\nabla\tilde u_\eps)+\tilde \lambda_\eps=f_\eps(\tilde m_\eps) +V_\eps(y)\\
- \Delta \tilde m_\eps-\di(\tilde m_\eps \nabla H_\eps(\nabla \tilde u_\eps) )=0 \\   \int_{\R^N} \tilde m_\eps=M.  \end{cases}
\end{equation}
Existence of a triple $(\tilde u_\eps, \tilde m_\eps, \tilde \lambda_\eps)$ solving the previous system is an immediate consequence of Theorem \ref{exthm}.
 We first start by stating some a priori estimates.

\begin{lemma}\label{existencerescaled}
There exist   $ C, C_1, C_2>0$ independent of $\eps$ such that  the following holds 
 \begin{gather}\label{lambdatilda} 
-C_1\leq\tilde\lambda_\eps\leq-C_2,  \\ \label{intVtilda}  \int_{\R^N} \tilde{m}_\eps |\nabla \tilde u_\eps|^{\gamma}dy+ \int_{\R^N} \tilde{m}_\eps(y)V_\eps(y )dy + \|\tilde m_\eps\|_{L^{\alpha+1}(\R^N)}^{\alpha+1} \leq C,\\
\label{minfinitytilda} 
\|\tilde m_\eps\|_{L^{\infty}(\R^N)}\leq C.\end{gather}
\end{lemma}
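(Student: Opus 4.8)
The plan is to obtain all three estimates from the bounds already established for the non-rescaled triple $(u_\eps,m_\eps,\lambda_\eps)$ in Corollary \ref{pnuova} --- namely \eqref{intV1} and \eqref{lambda1} --- simply by tracking how the rescaling \eqref{rescaling} transforms each quantity, the one exception being the uniform $L^\infty$ bound \eqref{minfinitytilda}, for which I would instead invoke Theorem \ref{stimalinfinito} directly at the rescaled level. It is convenient to set $\kappa := \frac{\gamma'}{\gamma'-\alpha N}$, so that $\frac{\gamma'\alpha N}{\gamma'-\alpha N}=\alpha N\kappa$; then \eqref{rescaling} reads $\tilde m_\eps(y)=\eps^{N\kappa}m_\eps(\eps^\kappa y)$ and $\tilde\lambda_\eps=\eps^{\alpha N\kappa}\lambda_\eps$, and differentiating gives $\nabla\tilde u_\eps(y)=\eps^{\frac{N\alpha(\gamma'-1)}{\gamma'-\alpha N}}\nabla u_\eps(\eps^\kappa y)$.

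For \eqref{lambdatilda} I would just multiply \eqref{lambda1} by $\eps^{\alpha N\kappa}>0$, obtaining $-K_1\eps^{\alpha N\kappa}-C_1\le\tilde\lambda_\eps\le K_2\eps^{\alpha N\kappa}-C_2$; choosing $\eps$ small enough that $K_2\eps^{\alpha N\kappa}\le C_2/2$ then gives the claimed two-sided bound with new constants. For \eqref{intVtilda} I would change variables $x=\eps^\kappa y$ in each of the three integrals and check, by bookkeeping of the exponents --- using $\gamma(\gamma'-1)=\gamma'$ and $V_\eps(y)=\eps^{\alpha N\kappa}V(\eps^\kappa y)$ --- that
\[
\int_{\R^N}\tilde m_\eps|\nabla\tilde u_\eps|^\gamma\,dy=\eps^{\alpha N\kappa}\int_{\R^N}m_\eps|\nabla u_\eps|^\gamma\,dx,\qquad\int_{\R^N}\tilde m_\eps V_\eps\,dy=\eps^{\alpha N\kappa}\int_{\R^N}m_\eps V\,dx,
\]
and $\|\tilde m_\eps\|_{L^{\alpha+1}(\R^N)}^{\alpha+1}=\eps^{\alpha N\kappa}\|m_\eps\|_{L^{\alpha+1}(\R^N)}^{\alpha+1}$; summing and using \eqref{intV1} then gives a bound by $\eps^{\alpha N\kappa}(C\eps^{-\alpha N\kappa}+K)=C+K\eps^{\alpha N\kappa}\le C+K$. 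These two points are purely computational.

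The substantive point is \eqref{minfinitytilda}: here scaling $\|m_\eps\|_{L^\infty}$ is useless, since that norm is not known to be controlled uniformly in $\eps$, so I would instead recognize that the rescaled system \eqref{mfgresc} is an instance of the general system treated in Theorem \ref{stimalinfinito}. Indeed, with $r_\eps:=\eps^{\frac{N\alpha(\gamma'-1)}{\gamma'-\alpha N}}$ one has, by \eqref{Lrescaled} and $\gamma(\gamma'-1)=\gamma'$, that $H_\eps(p)=r_\eps^{\gamma}H(r_\eps^{-1}p)$ and hence $\nabla H_\eps(p)=r_\eps^{\gamma-1}\nabla H(r_\eps^{-1}p)$, so \eqref{mfgresc} matches that template with $r_k=r_\eps$, $s_k=\eps^{\alpha N\kappa}$, $t_k=\eps^{\kappa}$ --- all three $\le 1$ for $\eps\le 1$, hence bounded --- and with coupling $g_k[m]=f_\eps(m)$, which satisfies the structural bound \eqref{glocal} uniformly in $\eps$ by \eqref{assFlocalr}. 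The remaining hypotheses of Theorem \ref{stimalinfinito} are only required for each fixed $\eps$ and are available: $\tilde u_\eps$ is bounded from below because $u_\eps$ is, by \eqref{uhasmin}, and $\tilde m_\eps\in L^\infty(\R^N)$ because $m_\eps$ is, being the locally uniform limit of the functions $m_k$ solving \eqref{mfgeps} which are $L^\infty$-bounded (Proposition \ref{mbond}, and Theorem \ref{stimalinfinito} as used in the proof of Theorem \ref{exthm}). Applying Theorem \ref{stimalinfinito} along an arbitrary sequence $\eps\to 0$ then yields a constant $C$ independent of $\eps$ with $\|\tilde m_\eps\|_{L^\infty(\R^N)}\le C$. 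The only place where genuine care is needed is this last matching step --- checking that $r_\eps,s_\eps,t_\eps$ stay bounded and that $f_\eps$ obeys \eqref{glocal} with a constant independent of $\eps$; everything else is routine rescaling bookkeeping.
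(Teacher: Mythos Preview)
Your proposal is correct and follows exactly the same approach as the paper: the paper also obtains \eqref{lambdatilda} and \eqref{intVtilda} by rescaling \eqref{lambda1} and \eqref{intV1}, and obtains \eqref{minfinitytilda} by applying Theorem \ref{stimalinfinito} with precisely the same choices $g_k[m]=f_\eps(m)$, $r_k=\eps^{\frac{N\alpha(\gamma'-1)}{\gamma'-\alpha N}}$, $s_k=\eps^{\frac{N\alpha\gamma'}{\gamma'-\alpha N}}$, $t_k=\eps^{\frac{\gamma'}{\gamma'-\alpha N}}$. You have simply written out the bookkeeping that the paper leaves implicit.
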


\begin{proof}  
Estimates   \eqref{lambda1}, \eqref{intV1} give \eqref{lambdatilda}, \eqref{intVtilda} by rescaling. 
 
We apply Theorem \ref{stimalinfinito}  with $g[m](x)=f_\eps(m(x))$, $r_k=\eps^{\frac{N \alpha (\gamma'-1)}{\gamma'-\alpha N}} $, $s_k= \eps^{\frac{N \alpha \gamma'}{\gamma'-\alpha N}} $ and $t_k= \eps^{\frac{\gamma'}{\gamma'-\alpha N}} $, which are all bounded sequences, and we obtain  \eqref{minfinitytilda}.
 \end{proof}
  
Using the  a priori bounds on the solutions to \eqref{mfgresc}, we want to pass to the limit $\eps \to 0$. The problem is that these estimates are not sufficient to assure that there is no loss of mass, namely that the limit of $\tilde m_\eps$ has still $L^1$-norm equal to $M$. Therefore, we need  to translate the reference system at a point around which the mass of $\tilde m_\eps$ remains positive. 
This will be done as follows.

Let $y_\eps\in\R^N$ be such that \begin{equation}\label{y} \tilde u_\eps(y_\eps)=\min_{\R^N} \tilde u_\eps(y),\end{equation} note that this point exists due to \eqref{uhasmin}. 

We will denote by \begin{equation}\label{resc2} \begin{cases}
\bar u_\eps(y)=\tilde u_\eps(y+y_\eps)-\tilde u_\eps(y_\eps) \\ 
\bar m_\eps(y)=\tilde m_\eps (y+y_\eps).
\end{cases}\end{equation} 
Note that $(\bar u_\eps, \bar m_\eps, \tilde \lambda_\eps)$ is a classical solution to 
\begin{equation} 
\label{mfgresc1}\begin{cases}
- \Delta \bar u_{\eps}+ H_\eps(\nabla\bar u_{\eps})+\tilde \lambda_{\eps}=f_{\eps}(\bar m_{\eps}) +V_\eps(y+y_\eps)\\
- \Delta \bar m_{\eps}-\di(\bar m_{\eps} \nabla H_\eps(\nabla \bar u_{\eps}) )=0 \\   \int_{\R^N} \bar m_{\eps}=M,  \end{cases}
\end{equation}  
and in addition $\bar u_\eps(0)=0=\min_{\R^N} \bar u_\eps$.

\subsection{A preliminary convergence result} 
In this section, we provide some preliminary convergence results, where we are not preventing possible loss of mass in the limit. First of all we need some a priori estimates on the solutions to \eqref{mfgresc1}. 
\begin{proposition}\label{yepsilon}
Let $(\bar u_\eps, \bar m_\eps, \tilde \lambda_\eps)$ be as in \eqref{resc2}. 
Then there exists a constant $C>0$ independent of $\eps$ such that the following hold
\begin{gather} \label{Vtilda}   \eps^{\frac{(N\alpha +b)\gamma'}{\gamma'-N\alpha}} |y_\eps|^b\leq C \qquad \text{and}\qquad
0\leq V_\eps(y+y_\eps)\leq  C( \eps^{\frac{(N\alpha +b)\gamma'}{\gamma'-N\alpha}} |y|^b+1), \\ \label{gradienttilda}  |\nabla \bar u_\eps(y)|\leq C(1+|y|)^{\frac{b}{\gamma}} \qquad \text{ and } \qquad \bar u_\eps(y)\geq C|y|^{1+\frac{b}{\gamma}}-C^{-1},\\  \int_{B_R(0)} \bar m_\eps(y)dy\geq C\qquad \forall R\geq 1\label{intm} .
\end{gather}

Finally, if $\bar w_\eps=-\bar m_\eps\nabla H_\eps(\nabla \bar u_\eps)$, then  $(\bar m_\eps, \bar w_\eps)$ 
 is a minimizer  in the set $\mathcal{K}_{1,M}$ of the energy
\begin{equation}\label{energiarisc} \mathcal{E}_\eps(m, w)=\int_{\R^N} m L_\eps \left(-\frac{ w}{ m}\right) + V_\eps(y+y_\eps)m +F_\eps(m)\, dy,\end{equation}
where $L_\eps$ and $F_\eps$ are defined in Section \ref{rescaledsection}.  
\end{proposition}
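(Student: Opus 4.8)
The plan is to prove the four assertions in the order in which they are stated, the first three by evaluating the Hamilton--Jacobi equation in \eqref{mfgresc1} at the global minimum $y=0$ of $\bar u_\eps$ together with the a priori bounds of Lemma \ref{existencerescaled}, and the last by transporting the minimality from Theorem \ref{exthm} through the rescaling. For \eqref{Vtilda} I would note that at $y=0$ one has $\nabla\bar u_\eps(0)=0$ and $-\Delta\bar u_\eps(0)\le 0$, so the first equation in \eqref{mfgresc1} gives
\[
f_\eps(\bar m_\eps(0))+V_\eps(y_\eps)=-\Delta\bar u_\eps(0)+H_\eps(0)+\tilde\lambda_\eps\le H_\eps(0)+\tilde\lambda_\eps\le \tilde\lambda_\eps\le -C_2<0,
\]
using $H_\eps(0)\le 0$ from \eqref{Hassrescaled} and $\tilde\lambda_\eps\le -C_2$ from \eqref{lambdatilda}. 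Since $V_\eps\ge 0$ and $f_\eps(\bar m_\eps(0))\ge -C$ by \eqref{assFlocalr} and $\|\bar m_\eps\|_\infty\le C$ (i.e. \eqref{minfinitytilda}), this forces $0\le V_\eps(y_\eps)\le C$; the lower bound in \eqref{vassresc} then yields $\eps^{\frac{(N\alpha+b)\gamma'}{\gamma'-\alpha N}}|y_\eps|^b\le C$ (distinguishing whether $\eps^{\gamma'/(\gamma'-\alpha N)}|y_\eps|$ exceeds $2C_V$ or not), and the pointwise bound on $V_\eps(y+y_\eps)$ follows from the upper bound in \eqref{vassresc}, $|y+y_\eps|\le|y|+|y_\eps|$ and $(a+c)^b\le 2^b(a^b+c^b)$.

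For the gradient bound in \eqref{gradienttilda} I would apply Theorem \ref{bernstein2} to the first equation in \eqref{mfgresc1}: $H_\eps$ satisfies \eqref{Hass} with constants independent of $\eps$ --- the exponents in \eqref{rescaling} are chosen exactly so that $\gamma'-(\gamma'-1)\gamma=0$, which makes the bounds in \eqref{Hassrescaled} $\eps$-uniform up to the vanishing term $K\eps^{N\alpha\gamma'/(\gamma'-\alpha N)}$ --- while $|\tilde\lambda_\eps|\le C$, $\|f_\eps(\bar m_\eps)\|_\infty\le C$, and $0\le V_\eps(\cdot+y_\eps)\le C(1+|\cdot|)^b$ by the previous step; this gives $|\nabla\bar u_\eps(y)|\le C(1+|y|)^{b/\gamma}$. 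For the lower bound on $\bar u_\eps$ I would use Theorem \ref{boundedfrombelowsol} applied to \eqref{mfgresc1}, which is available because $\bar u_\eps\ge 0=\min_{\R^N}\bar u_\eps$ and $b>0$ (so that the extra hypothesis \eqref{pos} is not required); alternatively one can push the bound \eqref{uhasmin} for $u_\eps$ through the change of variables \eqref{rescaling}--\eqref{resc2}.

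For \eqref{intm} I would refine the computation at $y=0$: from $f_\eps(\bar m_\eps(0))\le\tilde\lambda_\eps-V_\eps(y_\eps)\le -C_2$ and $f_\eps(\bar m_\eps(0))\ge -C_f\bar m_\eps(0)^\alpha-K\eps^{N\alpha\gamma'/(\gamma'-\alpha N)}$ from \eqref{assFlocalr} one obtains $\bar m_\eps(0)\ge c_0>0$ with $c_0$ independent of $\eps$, for $\eps$ small. By the gradient bound just proved, $\nabla H_\eps(\nabla\bar u_\eps)$ is bounded on $B_2(0)$ uniformly in $\eps$; the Kolmogorov equation in \eqref{mfgresc1}, Proposition \ref{ell_regularity} and $\|\bar m_\eps\|_\infty\le C$ then bound $\|\bar m_\eps\|_{C^{0,\theta}(B_1(0))}$ uniformly for some $\theta\in(0,1)$, so there is $r\in(0,1]$ independent of $\eps$ with $\bar m_\eps\ge c_0/2$ on $B_r(0)$, whence $\int_{B_R(0)}\bar m_\eps\,dy\ge\frac{c_0}{2}\omega_N r^N$ for all $R\ge 1$.

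Finally, for the minimality statement I would argue that the change of variables \eqref{rescaling}, extended to $w$ so that $-m\nabla H(\nabla u)\mapsto -\tilde m\nabla H_\eps(\nabla\tilde u)$, is a bijection of $\mathcal{K}_{\eps,M}$ onto $\mathcal{K}_{1,M}$ (the exponents being tuned so that $\eps\int m(-\Delta\varphi)=\int w\cdot\nabla\varphi$ turns into $\int\tilde m(-\Delta\varphi)=\int\tilde w\cdot\nabla\varphi$), and, by \eqref{Lrescaled}, \eqref{riscalata}, \eqref{friscalata}, it transforms $\mathcal{E}$ into the rescaled energy with potential $V_\eps(y)$ up to a positive multiplicative constant and an additive constant independent of $(m,w)$; since $(m_\eps,-m_\eps\nabla H(\nabla u_\eps))$ minimizes $\mathcal{E}$ over $\mathcal{K}_{\eps,M}$ by Theorem \ref{exthm}, $(\tilde m_\eps,\tilde w_\eps)$ minimizes the rescaled energy over $\mathcal{K}_{1,M}$, and then the translation $y\mapsto y+y_\eps$ --- a bijection of $\mathcal{K}_{1,M}$ leaving the kinetic term and $F_\eps$ invariant and replacing $V_\eps(y)$ by $V_\eps(y+y_\eps)$ --- yields that $(\bar m_\eps,\bar w_\eps)$ minimizes $\mathcal{E}_\eps$ over $\mathcal{K}_{1,M}$. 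The step I expect to be the main obstacle is the uniform control in \eqref{gradienttilda}, and specifically the lower bound on $\bar u_\eps$: because $V_\eps$ degenerates as $\eps\to 0$, one cannot directly quote Theorem \ref{boundedfrombelowsol} with constants independent of $\eps$, and must either track how the constant in \eqref{uhasmin} rescales or reproduce a self-contained comparison with an eikonal subsolution as in Step 2 of the proof of Theorem \ref{stimalinfinito}; the remaining points are routine given the a priori estimates already at hand.
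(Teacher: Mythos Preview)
Your approach is correct and mirrors the paper's proof step by step: evaluate the HJB at the minimum $y=0$ to get \eqref{Vtilda}, apply Theorem \ref{bernstein2} for the gradient bound, extract $\bar m_\eps(0)\ge c_0$ and use local H\"older regularity via Proposition \ref{ell_regularity} for \eqref{intm}, and transport the minimality of Theorem \ref{exthm} through the rescaling and the translation by $y_\eps$. Your caution about the uniform lower bound $\bar u_\eps(y)\geq C|y|^{1+b/\gamma}-C^{-1}$ is well placed: the paper itself only invokes Theorem \ref{bernstein2} for \eqref{gradienttilda}, which yields the gradient estimate but not the growth from below, and in fact this lower bound is never used in the sequel with $\eps$-uniform constants (only the gradient part of \eqref{gradienttilda} enters in Proposition \ref{propconv} and Theorem \ref{nodicotomy}).
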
 
\begin{proof} Since $\bar u_\eps$ is a classical solution, we can compute the equation in $y = 0$, obtaining 
\[H_\eps(0)+\tilde \lambda_\eps\geq f_\eps(\bar m_\eps(0))+V(y_\eps).\] 
Using the a priori estimates \eqref{lambdatilda}, \eqref{minfinitytilda}, \eqref{Hassrescaled}  and the assumption \eqref{assFlocalr}, \eqref{vassresc}, this implies 
that $\eps^{\frac{(N\alpha +b)\gamma'}{\gamma'-N\alpha}} |y_\eps|^b\leq C$, and then, again by assumption \eqref{vassresc}, that \eqref{Vtilda} holds. 

Using estimates \eqref{lambdatilda}, \eqref{minfinitytilda},  and  \eqref{Vtilda}, we conclude by Theorem \ref{bernstein2} that estimate \eqref{gradienttilda} holds.  

Again by the equation computed at $y = 0$, recalling that $H_\eps(0)\to 0$ and $V_\eps\geq 0$ and estimate \eqref{lambdatilda}, we deduce  
that $-f_\eps(\bar m_\eps(0))\geq -C_2>0$. So,  by assumption \eqref{assFlocalr}, we get that there exists $C>0$ indipendent of $\eps$, such that 
 $\tilde m_\eps(0)>C>0$.  Using the estimates \eqref{gradienttilda} and \eqref{minfinitytilda}, by Proposition \ref{ell_regularity}, we get that there exists a positive constant depending on $p$ such that  $\|\bar m_\eps\|_{W^{1,p}(B_2(0))}\leq C_p$ for all $p>1$. This, by Sobolev embeddings, gives that $\|\bar m_\eps\|_{C^{0,\alpha}(B_2(0))}\leq C_\alpha$ for every $\alpha\in (0,1)$ and for some positive constant depending on $\alpha$.  We choose now $R_0\in (0,1]$ such that $\bar m_\eps\geq C/2$ in $B_{R_0}(0)$, using the $C^\alpha$ estimate and the fact that $\bar m_\eps(0)>C>0$. This implies immediately that  $\int_{B_{R_0}(0)} \bar m_\eps(y)dy\geq C/2 |B_{R_0}|>0$. This gives the estimate \eqref{intm}, for all radii bigger than $R_0$.

Finally the fact that $(\bar m_\eps, \bar w_\eps)$ is a minimizer of \eqref{energiarisc} in $\mathcal{K}_{1, M}$ follows from Theorem \ref{exthm}, by rescaling. 
\end{proof}

We get the first convergence result. 
\begin{proposition}\label{propconv}
Let   $(\bar u_\eps, \bar m_\eps, \tilde \lambda_\eps)$ be the  classical solution to \eqref{mfgresc1} constructed above.  
Up to subsequences, we get that $ \tilde \lambda_\eps\to \bar \lambda $,   and   \begin{equation}\label{convergenza1}  
\bar u_\eps \to \bar u,\qquad  \bar m_\eps \to \bar m, \qquad 
\nabla \bar u_\eps\to \nabla \bar u,\qquad \nabla H_\eps(\nabla \bar u_\eps )\to \nabla H_0(\nabla \bar u) \end{equation} 
 locally uniformly, where $\bar u\geq 0=\bar u(0)$, and $(\bar u, \bar m, \bar \lambda)$ is a classical solution to 
\begin{equation}\label{mfglimit}\begin{cases}
- \Delta \bar u+ H_0(\nabla\bar u)+\bar \lambda = - C_f \bar m^\alpha+g(x)\\
- \Delta \bar m -\di(\bar m  \nabla H_0(\nabla \bar u) )=0 \end{cases}
\end{equation}
for a continuous function $g$ such that $0\leq g(x)\leq C$ on $\R^N$ for some $C>0$.  

Moreover,    there exist  $a\in (0, M]$, $C, K, \kappa >0$ such that $ \int_{\R^N}\bar m dx =a$,  and  
\begin{equation}\label{improvedintbarra} 
\bar u (x)\geq C |x| - C, \qquad |\nabla \bar u|\leq K \text{ on $\R^N$}, \qquad \int_{\R^N} e^{\kappa |x|} \bar m(x)dx<+\infty.
\end{equation} 
\end{proposition}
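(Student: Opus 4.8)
The plan is to extract a subsequence along which all quantities converge, identify the limit system, and then \emph{upgrade} the $\eps$-uniform bounds — which only control gradients with polynomial growth and the potential merely from above — to the global estimates claimed in \eqref{improvedintbarra}, exploiting that in the limit the right-hand side of the Hamilton--Jacobi equation becomes globally bounded.

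\textbf{Compactness and passage to the limit.} By \eqref{lambdatilda} I may assume $\tilde\lambda_\eps\to\bar\lambda\in[-C_1,-C_2]$, so $\bar\lambda<0$. Since $H_\eps(\nabla\bar u_\eps)$ and $f_\eps(\bar m_\eps)$ are bounded in $L^\infty(\R^N)$ and $V_\eps(\cdot+y_\eps)$ is bounded in $L^\infty_{\rm loc}$ uniformly in $\eps$ (by \eqref{Hassrescaled}, \eqref{assFlocalr}, \eqref{minfinitytilda}, \eqref{Vtilda}), the first equation of \eqref{mfgresc1} gives a uniform $L^\infty_{\rm loc}$ bound on $\Delta\bar u_\eps$, hence $\bar u_\eps$ is bounded in $W^{2,p}_{\rm loc}\cap C^{1,\theta}_{\rm loc}$; together with $\bar u_\eps(0)=0=\min\bar u_\eps$ and \eqref{gradienttilda}, a diagonal argument yields $\bar u_\eps\to\bar u$ in $C^1_{\rm loc}$, with $\bar u\ge0=\bar u(0)$. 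Plugging $\nabla H_\eps(\nabla\bar u_\eps)$ (bounded in $L^\infty_{\rm loc}$) into the second equation and applying Proposition \ref{ell_regularity} locally gives $\bar m_\eps$ bounded in $W^{1,p}_{\rm loc}$, hence $\bar m_\eps\to\bar m$ in $C^{0,\theta}_{\rm loc}$. Using $H_\eps\to H_0$ and $f_\eps\to-C_f(\cdot)^\alpha$ uniformly, the convergence $\nabla H_\eps(\nabla\bar u_\eps)\to\nabla H_0(\nabla\bar u)$ locally uniformly (at points with $\nabla\bar u\ne0$ this is the standard convergence of subgradients of the convex $H_\eps$; where $\nabla\bar u=0$ one uses $|\nabla H_\eps(q)|\le C_H|q|^{\gamma-1}$), and extracting a further subsequence so that $V_\eps(\cdot+y_\eps)\to g$ locally uniformly with $0\le g\le C$ (the limit is continuous since $V$ is locally Hölder and the dilation factor vanishes), I obtain that $(\bar u,\bar m,\bar\lambda)$ solves \eqref{mfglimit} in the weak, hence classical, sense. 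Fatou gives $\int_{\R^N}\bar m\le M$, while \eqref{intm} and the $C^{0,\theta}_{\rm loc}$ convergence give $\int_{B_1}\bar m\ge C>0$; thus $a:=\int_{\R^N}\bar m\in(0,M]$.

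\textbf{Global bounds on $\bar u$.} The crucial point is that, since $\|\bar m\|_\infty\le C$, $0\le g\le C$ and $|\bar\lambda|\le C$, the right-hand side of $-\Delta\bar u+H_0(\nabla\bar u)=-\bar\lambda-C_f\bar m^\alpha+g$ is \emph{globally} bounded on $\R^N$; the interior gradient estimate in the proof of Theorem \ref{bernstein2} (the case $b=0$) then gives $|\nabla\bar u|\le K$ on $\R^N$, whence $\bar u(x)\le K|x|$. Moreover $\bar m$ is now globally Hölder and $\int\bar m<\infty$, so $\bar m(x)\to0$ as $|x|\to\infty$ by Lemma \ref{van_lemma}; consequently $-C_f\bar m^\alpha+g-\bar\lambda\ge C_2/2>0$ for $|x|$ large, and Theorem \ref{boundedfrombelowsol} with $b=0$ yields $\bar u(x)\ge C|x|-C$.

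\textbf{Exponential decay of $\bar m$ and the main obstacle.} Finally I build a Lyapunov function out of $\bar u$: using the equation for $\bar u$, the identity $\nabla H_0(p)\cdot p-H_0(p)=C_H(\gamma-1)|p|^\gamma\ge0$, the bound $|\nabla\bar u|\le K$, $-\bar\lambda\ge C_2>0$ and $\bar m(x)\to0$, the function $\psi:=e^{\kappa\bar u}$ satisfies $-\Delta\psi+\nabla H_0(\nabla\bar u)\cdot\nabla\psi\ge c\,\psi$ outside some ball $B_R$, for $\kappa,c>0$ small enough. Testing the Kolmogorov equation formally against $\psi$ and using $\int_{B_R}\bar m\psi<\infty$ gives $\int_{\R^N}e^{\kappa\bar u}\bar m<\infty$, and since $\bar u\le K|x|$ this is the assertion (with a possibly smaller $\kappa$). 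I expect this last step to be the main difficulty: $\psi$ is not an admissible test function, and because it grows exponentially the cutoff error terms cannot be absorbed by the naive truncation; one must use a cutoff adapted to $\bar u$ (so that the gradient error term has a favorable sign), exploit the fact — available only in the limit — that $\Delta\bar u$ is globally bounded, and run a bootstrap on the tails $\int_{\{\bar u\ge t\}}\bar m$, in the spirit of the Lyapunov estimates of \cite{bm} and of the proof of Corollary \ref{pnuova}. The remaining steps are routine given the a priori bounds already in hand.
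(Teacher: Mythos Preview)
Your argument follows essentially the same route as the paper's proof: local compactness via gradient estimates and elliptic regularity, passage to the limit, upgrade to global Lipschitz bounds on $\bar u$ using Theorem~\ref{bernstein2} with $b=0$, linear growth from below via Theorem~\ref{boundedfrombelowsol}, and finally the Lyapunov function $e^{\kappa\bar u}$ to obtain exponential integrability of $\bar m$.

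Two remarks. First, in your last paragraph you conclude $\int e^{\kappa\bar u}\bar m<\infty$ and then write ``since $\bar u\le K|x|$ this is the assertion''; this inequality goes the wrong way. You need instead the lower bound $\bar u(x)\ge C|x|-C$ (which you already established) to deduce $\int e^{\kappa'|x|}\bar m<\infty$ for $\kappa'=\kappa C$. Second, the ``main obstacle'' you highlight --- justifying the use of the exponentially growing $\psi=e^{\kappa\bar u}$ as a test function in the Kolmogorov equation --- is a known result: the paper simply invokes \cite[Proposition~4.3]{ichi15} or \cite[Proposition~2.6]{MP05} at this point, rather than carrying out a bespoke cutoff argument. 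The differential inequality $-\Delta\psi+\nabla H_0(\nabla\bar u)\cdot\nabla\psi\ge c\psi$ outside a ball, together with $|\nabla\bar u|\in L^\infty$, is precisely the hypothesis needed there.
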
 

\begin{proof} 
First of all observe that,   since $V$ is a locally H\"older continuous function, then \eqref{Vtilda} implies that, up to subsequence, 
 $V_\eps(x+y_\eps)\to g(x)$, locally uniformly as $\eps\to 0$, where $g$ is a continuous function such that $0\leq g(x)\leq C$, for some $C>0$.

Using the a priori estimate \eqref{gradienttilda},  and recalling that $\bar u_\eps$ is a classical solution to \eqref{mfgresc1}, by classical elliptic regularity theory we obtain that $\bar u_\eps$ is locally bounded in $C^{1, \alpha}$ in every compact set, uniformly with respect to $\eps$. 
So, up to extracting a subsequence via a diagonalization procedure, we get that \[\bar u_\eps\to \bar u, \quad \nabla \bar u_\eps\to \nabla \bar u, \quad \nabla H_\eps(\nabla\bar u_\eps)\to \nabla H_0(\nabla \bar u)\] locally uniformly, and $\tilde \lambda_\eps\to \bar \lambda$.   Using the estimates \eqref{gradienttilda} and \eqref{minfinitytilda}, by Proposition \ref{ell_regularity}, and by Sobolev embeddings, for every compact set $K\subset \R^N$, we have that $\|\bar m_\eps\|_{C^{0,\alpha}(K)}\leq C_{K,\alpha}$ for every $\alpha\in (0,1)$ and  for some positive constant depending on $\alpha$ and $K$. So, up to extracting a subsequence via a diagonalization procedure, we get that $\bar m_\eps\to \bar m$ locally uniformly.  

So, we can pass to the limit in   \eqref{mfgresc1} and obtain that $(\bar u, \bar m, \bar \lambda)$ is a  solution to \eqref{mfglimit}, which is classical  by elliptic regularity theory.

Using \eqref{intm} and locally uniform convergence, we get that there exists $a\in (0, M]$ such that $\int_{\R^N} \bar m dy=a$. 

Observe that $\bar u$ is a solution to 
\[- \Delta \bar u+ H_0(\nabla\bar u)+\bar \lambda =-C_f\bar m^\alpha+g(x).\]
By Theorem \ref{bernstein2}, we get that there exists a constant $K$ depending on   $\sup g$ and $-\bar \lambda$ such that $|\nabla \bar u|\leq K$. Moreover, by construction $\bar u\geq 0$.

Since $\bar m$ is H\"older continuous, and such that $\int_{\R^N}\bar m dx=a \in (0, M]$, by Lemma \ref{van_lemma}, we get that $\bar m\to 0$ as $|x|\to +\infty$. Therefore, we get that
$\liminf_{|x|\to +\infty} (-\bar m^\alpha(x)+g(x)-\bar \lambda -H_0(0))\geq -\lambda>0$. 
So, by Theorem \ref{boundedfrombelowsol}, recalling that by construction $\bar u(0)=0\leq \bar u(y)$, we get that 
$\bar u $ satisfies \begin{equation}\label{stimeeps3}\bar u(x) \geq C  |x| -C  \end{equation} for some $C>0$. 

To conclude, consider the function $\Phi(x)=e^{\kappa\bar u (x)}$. We claim that we can choose $\kappa>0$ such that there exist $R>0$ and $\delta>0$  with 
\begin{equation}\label{Phidef}  -\Delta \Phi+\nabla H_0 (\nabla \bar u)\cdot \nabla \Phi>\delta \Phi \qquad |x|>R.\end{equation} 
Indeed, since $\bar u$ solves the first equation in  \eqref{mfglimit} , we get \[-\Delta \Phi+\nabla H_0 (\nabla \bar u)\cdot \nabla \Phi\geq \kappa (-\bar\lambda-\kappa |\nabla\bar u|^2- \bar m^\alpha) \Phi.\] 
Using \eqref{stimeeps3} and  $\bar m\to 0$ as $|x|\to +\infty$, we obtain the claim. 
Reasoning as in \cite[Proposition 4.3]{ichi15}, or \cite[Proposition 2.6]{MP05}, we get that $\int_{\R^N} e^{\kappa \bar u}\bar m dx<+\infty$, which concludes the estimate \eqref{improvedintbarra}.

\end{proof}

\begin{remark}\upshape\label{remarkcrescita}  With estimates \eqref{improvedintbarra} in force, the pointwise bounds stated in \cite[Theorem 6.1]{MP05} hold, namely there exist positive constants $c_1, c_2$,  such that
\[
\bar m (x) \le c_1 e^{-c_2|x|} \quad \text{on $\R^N$}.
\]
\end{remark}

\subsection{Concentration-compactness} 
In this section we show that actually  there is no loss of mass when passing to the limit as in Proposition \ref{propconv}.
In order to do so, we  apply a kind of concentration-compactness argument. 

First of all we show that the functional $ \mathcal{E}_\eps (m, w)$  enjoys the following subadditivity property.
Let us denote \[\tilde e_\eps (M)=\min_{(m,w)\in \cK_M} \cE_\eps(m,w).\] 
Recalling \eqref{energybound1} and \eqref{energybound2}, and the rescaling \eqref{rescaling}, we get that for every $M>0$ there exist $C_1(M), C_2(M), K_1, K_2>0$ depending on $M$  (and on the other constants of the problem) but not on $\eps$ 
such that there holds \begin{equation}\label{er}-C_1(M) -K_1\eps^{\frac{N\alpha \gamma'}{\gamma'-N\alpha}}\leq \tilde e_\eps (M)\leq -C_2(M) -K_2\eps^{\frac{N\alpha \gamma'}{\gamma'-N\alpha}}.
\end{equation} 
\begin{lemma}\label{subadd} For all $a \in (0,M)$, there exist $\eps_0=\eps_0(a)$ and a constant $C=C(a,M)\geq 0$ depending only  on $a, M$ and the data (not on $\eps$), 
such that  $C(M, M)=0=C(0,M)$, $C(a, M)>0$ for $0<a<M$ and   
\begin{equation}
\tilde{e}_{\eps}(M) \leq \tilde{e}_{\eps}(a) +  \tilde{e}_{\eps} (M-a)-C(a,M)\qquad \forall \eps\leq \eps_0.\end{equation}

\end{lemma}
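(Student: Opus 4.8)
The plan is to prove the strict subadditivity of $\tilde e_\eps$ by a scaling argument. The key algebraic fact is that the rescaled energy $\mathcal{E}_\eps$ has a homogeneity structure that makes $\tilde e_\eps(M)$ behave (to leading order, as $\eps \to 0$) like a \emph{strictly concave} function of the mass $M$: roughly speaking, $\tilde e_\eps(M) \approx c M^{1+\delta}$ with $1 + \delta = \gamma'/(\alpha N) > 1$ and $c < 0$, which is the source of strict subadditivity.

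\medskip

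\textbf{Step 1: A scaling relation for $\mathcal{E}_\eps$.} Given a competitor $(m, w) \in \mathcal{K}_{1,a}$ with finite energy, I would build from it a competitor for mass $M$ by the dilation $m_\sigma(y) := \sigma^{-N} m(\sigma^{-1} y)$, $w_\sigma(y) := \sigma^{-(N+1)} w(\sigma^{-1} y)$, which preserves the constraint $-\Delta m_\sigma + \di w_\sigma = 0$ and scales the mass by a factor $1$ in a wrong way — so instead I scale the amplitude: set $m^\tau(y) = \tau m(y)$, $w^\tau = \tau w(y)$, which gives mass $\tau a$ but breaks the Kolmogorov constraint linearly (it is preserved, actually, since the constraint is linear in $(m,w)$). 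The term $m L_\eps(-w/m)$ is $1$-homogeneous in $(m,w)$ jointly, while $\int V_\eps(\cdot + y_\eps) m$ is $1$-homogeneous in $m$, and $\int F_\eps(m)$ scales like $\int m^{\alpha+1}$, i.e. by $\tau^{\alpha+1}$ (up to the lower-order linear term in \eqref{Fresc}). Combining amplitude scaling $m \mapsto \tau m$ with a space dilation $m \mapsto \sigma^{-N} m(\sigma^{-1}\cdot)$ (and the matching $w$), one obtains a two-parameter family; choosing $\tau, \sigma$ so that the mass becomes $M$ while optimizing the leading terms is exactly the computation sketched in the introduction showing $\cE_\eps(\sigma^{-N} m_0(\sigma^{-1}\cdot), \dots) \to -\infty$ when $\alpha > \gamma'/N$. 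Here in the subcritical regime the optimization gives a genuine minimum and produces the leading-order behavior $\tilde e_\eps(M) \sim -C M^{\gamma'/(\alpha N)}$.

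\medskip

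\textbf{Step 2: Leading-order concavity and the error terms.} Having both bounds in \eqref{er}, the cleanest route is: take an almost-optimal $(m,w)$ for mass $M$, split the mass as $\lambda M$ and $(1-\lambda)M$ with $\lambda = a/M$, by scaling the \emph{amplitude} down — i.e. compare $\cE_\eps(\lambda m, \lambda w)$ (a competitor with mass $a$) to $\lambda \cdot (\text{linear part}) + \lambda^{\alpha+1}\cdot(\text{the }m^{\alpha+1}\text{ part})$. Since the dangerous negative term $-\frac{C_f}{\alpha+1}\int m^{\alpha+1}$ scales with the superlinear power $\lambda^{\alpha+1}$ while everything else scales linearly, one gets
\[
\tilde e_\eps(a) \le \lambda \!\left[\int m L_\eps\!\left(-\tfrac{w}{m}\right) + V_\eps(\cdot+y_\eps) m\, dy\right] - \lambda^{\alpha+1}\tfrac{C_f}{\alpha+1}\!\int m^{\alpha+1} + K\eps^{\frac{N\alpha\gamma'}{\gamma'-N\alpha}} a,
\]
and symmetrically for $M - a$ with factor $1 - \lambda$. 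Adding the two and using $\lambda^{\alpha+1} + (1-\lambda)^{\alpha+1} < 1$ for $\lambda \in (0,1)$ (here is where $\alpha > 0$, hence strict convexity of $t \mapsto t^{\alpha+1}$, enters) one bounds $\tilde e_\eps(a) + \tilde e_\eps(M-a)$ from below by $\tilde e_\eps(M)$ plus a strictly positive multiple of $\int m^{\alpha+1}$, minus an $O(\eps^{N\alpha\gamma'/(\gamma'-N\alpha)})$ error. The positive gain $\bigl(1 - \lambda^{\alpha+1} - (1-\lambda)^{\alpha+1}\bigr)\frac{C_f}{\alpha+1}\int m^{\alpha+1}$ must be bounded below away from $0$; this follows because the a priori bound \eqref{intVtilda} (equivalently \eqref{risc2} rescaled) controls $\int m^{\alpha+1}$ from above, and the lower bound \eqref{er} on $\tilde e_\eps(M)$ together with the upper structure forces $\int m^{\alpha+1} \ge c(M) > 0$ for almost-minimizers — otherwise $\cE_\eps(m,w)$ could not be as negative as $-C_2(M)$. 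Setting $C(a,M) := c(M)\bigl(1 - (a/M)^{\alpha+1} - (1-a/M)^{\alpha+1}\bigr)\frac{C_f}{\alpha+1} > 0$ and absorbing the $\eps$-error for $\eps \le \eps_0(a)$ small, one concludes. The boundary values $C(0,M) = C(M,M) = 0$ are immediate since $1 - \lambda^{\alpha+1} - (1-\lambda)^{\alpha+1} \to 0$ as $\lambda \to 0, 1$.

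\medskip

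\textbf{Main obstacle.} The delicate point is quantifying $\int m^{\alpha+1} \ge c(M) > 0$ for \emph{all} almost-minimizers uniformly in $\eps$, i.e. ruling out that a near-optimal configuration has vanishingly small $L^{\alpha+1}$-norm while still achieving a strictly negative energy of order $-C_2(M)$: this needs the interplay between the lower bound in \eqref{er} and the kinetic/potential terms being non-negative, so that the only source of negativity is $\int F_\eps(m) \approx -\frac{C_f}{\alpha+1}\int m^{\alpha+1}$, forcing $\int m^{\alpha+1} \gtrsim C_2(M)$. One must also be careful that passing from a minimizer of mass $M$ to the two scaled competitors of masses $a$ and $M-a$ keeps them admissible in $\mathcal{K}_{1,a}$ and $\mathcal{K}_{1, M-a}$ (amplitude scaling trivially preserves $m \ge 0$ and the linear PDE constraint), and that the weighted integrability $m(1+|y|^b) \in L^1$ is inherited — which it is, being linear in $m$.
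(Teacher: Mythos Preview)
Your argument has the scaling going in the wrong direction, and the inequality it produces is the opposite of what is needed. When you take an (almost) minimizer $(m,w)$ for mass $M$ and form $(\lambda m,\lambda w)$ with $\lambda=a/M$, this is a \emph{competitor} for mass $a$, so it only yields the \emph{upper} bound $\tilde e_\eps(a)\le \cE_\eps(\lambda m,\lambda w)$, and similarly for $M-a$. Carrying out your computation (using \eqref{Fresc}) gives
\[
\tilde e_\eps(a)+\tilde e_\eps(M-a)\ \le\ \tilde e_\eps(M)+\bigl(1-\lambda^{\alpha+1}-(1-\lambda)^{\alpha+1}\bigr)\,\frac{C_f}{\alpha+1}\int m^{\alpha+1}\,dy + o_\eps(1),
\]
with the extra term \emph{positive}. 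This is an upper bound on $\tilde e_\eps(a)+\tilde e_\eps(M-a)$, not the lower bound ``from below'' that you claim; it does not imply $\tilde e_\eps(M)\le \tilde e_\eps(a)+\tilde e_\eps(M-a)-C(a,M)$.

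The fix is to scale the other way: take a minimizer $(m,w)$ for a \emph{smaller} mass $B$ and multiply by $c>1$ to obtain a competitor for mass $cB$, so that $\tilde e_\eps(cB)\le \cE_\eps(cm,cw)$. Since $\int F_\eps(cm)-c\int F_\eps(m)\le -\frac{C_f}{\alpha+1}c(c^\alpha-1)\int m^{\alpha+1}+O(\eps^{\frac{N\alpha\gamma'}{\gamma'-N\alpha}})$, and your observation that $\int m^{\alpha+1}$ is bounded below by a positive constant for minimizers (via \eqref{er}) is correct, one gets the key strict inequality $\tilde e_\eps(cB)< c\,\tilde e_\eps(B)-\text{const}(c,B)$ for $c>1$. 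Applying this with $(B,c)=(a,M/a)$ and then with $(B,c)=(M-a,\,a/(M-a))$ (assuming w.l.o.g.\ $a\ge M/2$) and combining the two yields the stated subadditivity. All the ingredients you identified---amplitude scaling preserving $\mathcal{K}_{1,\cdot}$, superlinear scaling of $F_\eps$, the uniform lower bound on $\int m^{\alpha+1}$---are the right ones; only the direction of the scaling has to be reversed.
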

\begin{proof}  We assume   that $a \geq M/2$ (otherwise it suffices to replace $a$ with $M-a$).

Let $c >1$ and $B>0$. For all admissible couples $(m,w)\in \cK_B$ we have, recalling \eqref{Fresc},
\begin{multline}\label{subadd2}
\tilde e_\eps(cB)\leq \cE_\eps(c m, c w) = \int_{\R^N} c m L_\eps\left(- \frac{w}{m}\right)+F_\eps(cm) + c V_\eps(x+y_\eps)m\, dx \\
 = c \cE_\eps( m,  w)+ \int_{\R^N} F_\eps(cm)-cF_\eps (m)\, dx\\ \leq c \cE_\eps(m, w) - \frac{c (c^{\alpha}-1)C_f}{\alpha+1}\int_{\R^N}m^{\alpha+1}\, dx+2KcB \eps^{\frac{N\alpha \gamma'}{\gamma'-N\alpha}}.
\end{multline}

Let now $(m_n , w_n )$ be a  minimizing sequence of $\cE_\eps$ in $\cK_{B}$, such that $\cE_\eps(m_n, w_n)\leq \tilde e_\eps(B)+\frac{C_2(B)}{4}$ where $C_2(B)$ is the constant appearing in \eqref{er}, which depends on $B$ and on the data of the problem.  Recalling that $V_\eps\geq 0$ and $L_\eps\geq 0$, and estimate \eqref{Fresc}, 
we get that
\[\tilde e_\eps(M)+\frac{C_2(B)}{4}\geq \cE_\eps(m_n, w_n)\geq \int_{\R^N} F_\eps(m_n)\, dx\geq - \frac{C_f}{\alpha+1} \int_{\R^n} m^{\alpha+1}\, dx-KB \eps^{\frac{N\alpha \gamma'}{\gamma'-N\alpha}}. \]
Using \eqref{er}, we get, for all $\eps$ sufficiently small,
\[\frac{C_f}{\alpha+1}\int_{\R^N}m_n^{\alpha+1}\, dx\geq  \frac{3 C_2(B)}{4} -K\eps^{\frac{N \alpha \gamma'}{\gamma'-\alpha N}}> \frac{C_2(B)}{2}>0.
\]
So, this estimate in particular holds for a minimizer of $\cE_\eps$.
Therefore in \eqref{subadd2} we get, taking $(m,w)$ to be a minimizer of $\cE_\eps$ (which exists by Proposition \ref{yepsilon})
\begin{equation}\label{suba1}
\tilde e_\eps(cB) < c \tilde e_\eps(B)-c (c^\alpha-1)\frac{C_2(B)}{2}+2KcB\eps^{\frac{N\alpha \gamma'}{\gamma'-N\alpha}}.
\end{equation}
Using \eqref{suba1}  with $B=a$ and $c = M/a$  we get 
\[ \tilde e_\eps(M) < \frac{M}{a}\tilde e_\eps(a) -
\frac{M}{a}\left[\left(\frac{M}{a}\right)^{\alpha}-1\right]\frac{C_2(a)}{2}+2KM \eps^{\frac{N\alpha \gamma'}{\gamma'-N\alpha}}.\] 
If $a=M/2$, this permits to conclude, choosing $\eps$ sufficiently small (depending on $a$). If  $a > M/2$, we use \eqref{suba1} with $B=M-a$ and  $c = a/(M-a)$ to get (multiplying everything by $\frac{M-a}{a}$) 
\begin{multline*}\frac{M-a}{a}\tilde e_\eps(a) < \tilde e_\eps(M-a) -
\left[\left( \frac{a}{M-a}\right)^{\alpha}-1\right]\frac{C_2(M-a)}{2}+2K(M-a) \eps^{\frac{N\alpha \gamma'}{\gamma'-N\alpha}}\\
<\tilde e_\eps(M-a) -
\left[\left( \frac{a}{M-a}\right)^{\alpha}-1\right]\frac{C_2(M-a)}{2}+2KM \eps^{\frac{N\alpha \gamma'}{\gamma'-N\alpha}}\leq \tilde e_\eps(M-a) +2KM \eps^{\frac{N\alpha \gamma'}{\gamma'-N\alpha}}.\end{multline*}
 So putting to together the last two inequalities we get 
\begin{multline*} 
\tilde e_\eps(M) < \frac{M}{a}\tilde e_\eps(a) -
\frac{M}{a}\left[\left(\frac{M}{a}\right)^{\alpha}-1\right]\frac{C_2(a)}{2}+2KM \eps^{\frac{N\alpha \gamma'}{\gamma'-N\alpha}} \\
= \tilde e_\eps(a) + \frac{M-a}{a}\tilde e_\eps(a) - 
\frac{M}{a}\left[\left(\frac{M}{a}\right)^{\alpha}-1\right]\frac{C_2(a)}{2}+2KM \eps^{\frac{N\alpha \gamma'}{\gamma'-N\alpha}}
\\
<\tilde e_\eps(a) + \tilde e_\eps(M-a) - 
\frac{M}{a}\left[\left(\frac{M}{a}\right)^{\alpha}-1\right]\frac{C_2(a)}{2}+4KM \eps^{\frac{N\alpha \gamma'}{\gamma'-N\alpha}}
\\
 \leq \tilde e_\eps(a) + \tilde e_\eps(M-a) - 
\frac{M}{a}\left[\left(\frac{M}{a}\right)^{\alpha}-1\right]\frac{C_2(a)}{4}
\end{multline*}
for $\eps$ sufficiently small (depending on $a$). 
\end{proof}

\begin{theorem}\label{nodicotomy} 
Let $(\bar m_\eps, \bar w_\eps)$  be the minimizer of $\mathcal{E}_\eps$ as in  Proposition \ref{yepsilon}.  Let $ \bar u,\bar m$ as in Proposition \ref{propconv}, so that
$\bar m_\eps\to \bar m$, $\bar w_\eps\to \bar w=-\bar m\nabla H_0(\nabla \bar u)$ locally uniformly, and $\bar m$ satisfies the exponential decay \eqref{improvedintbarra}. 
Then,   \begin{equation}\label{stimelimite} \int_{\R^N}\bar m dx =M.
\end{equation}  \end{theorem}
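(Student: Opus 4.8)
The plan is to argue by contradiction: suppose that $a := \int_{\R^N} \bar m \, dx < M$, which by Proposition \ref{propconv} means $a \in (0,M)$, and derive a contradiction with the strict subadditivity of Lemma \ref{subadd}. The heuristic is that mass $a$ stays in a bounded region around the origin, while the remaining mass $M-a$ escapes to infinity; the energy $\cE_\eps$ should then split (asymptotically) as $\tilde e_\eps(M) \gtrsim \tilde e_\eps(a) + \tilde e_\eps(M-a)$, contradicting $\tilde e_\eps(M) \leq \tilde e_\eps(a) + \tilde e_\eps(M-a) - C(a,M)$ with $C(a,M) > 0$.

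First I would set up the decomposition of $\bar m_\eps$. Using the exponential decay of $\bar m$ from \eqref{improvedintbarra} and the locally uniform convergence $\bar m_\eps \to \bar m$, for any $\eta > 0$ I can find $R$ large and $\eps$ small so that $\int_{B_R(0)} \bar m_\eps \geq a - \eta$ while $\int_{B_{2R}(0) \setminus B_R(0)} \bar m_\eps$ is small (at most $\eta$, say), using that $\bar m$ contributes negligibly on the annulus and a covering/mean-value argument to pick a good radius. The delicate point, as the authors flag in the introduction, is that a crude spatial cutoff $\chi_{B_R} \bar m_\eps$ is not admissible, because the term $mL_\eps(-w/m)$ blows up where $m$ is small and because the cut density no longer satisfies the continuity (Kolmogorov) constraint. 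So I would instead build two genuine competitors $(m_\eps^1, w_\eps^1) \in \cK_{1, a_\eps}$ and $(m_\eps^2, w_\eps^2) \in \cK_{1, M - a_\eps}$ with $a_\eps \to a$, by gluing $\bar m_\eps$ (near the origin, appropriately rescaled in mass to total $a_\eps$) to a translated fixed profile (a smooth exponentially decaying bump, as used in Corollary \ref{pnuova}) carrying mass $M - a_\eps$ and centered at a point $z_\eps$ with $|z_\eps| \to \infty$; one must also locally correct $w_\eps$ so that $-\Delta m_\eps^i + \di w_\eps^i = 0$ holds exactly. Then, using the Brezis–Lieb lemma (Theorem \ref{teobrezis}) applied to $\|m_\eps\|_{L^{\alpha+1}}^{\alpha+1}$ to handle the superlinear term $F_\eps$, together with the fact that the interaction energy between the two far-apart pieces is negligible, I would show
\[
\tilde e_\eps(a_\eps) + \tilde e_\eps(M - a_\eps) \leq \cE_\eps(\bar m_\eps, \bar w_\eps) + o(1) = \tilde e_\eps(M) + o(1)
\]
as $\eps \to 0$.

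Combining this with Lemma \ref{subadd} gives $\tilde e_\eps(M) + o(1) \geq \tilde e_\eps(a_\eps) + \tilde e_\eps(M-a_\eps) \geq \tilde e_\eps(M) + C(a_\eps, M)$, and since $a_\eps \to a \in (0,M)$ and $a \mapsto C(a,M)$ is bounded below by a positive constant near $a$ (by Lemma \ref{subadd}, where $C(a,M) > 0$ for $0 < a < M$, together with continuity in $a$, which follows from the explicit form of the bound there), we reach $0 \geq C(a,M)/2 > 0$ for $\eps$ small, a contradiction. Hence $a = M$.

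The main obstacle is the construction of the admissible competitors: one must carefully manage the singular term $m L_\eps(-w/m)$ where $\bar m_\eps$ is small (avoiding any region where $m = 0$ but $w \neq 0$), correct the velocity field $w$ so that the linear constraint in $\cK_{1,\cdot}$ is satisfied exactly, control the cross term coming from $F_\eps$ of a sum of two far-apart densities via Brezis–Lieb, and verify that the kinetic energy $\int m L_\eps(-w/m)$ of the glued object does not exceed the sum of the two pieces' energies plus $o(1)$. Once the energetic splitting is established, the contradiction with strict subadditivity is immediate. A third scenario in concentration-compactness — vanishing — is excluded here by the lower bound \eqref{intm}, which already fixes $a > 0$.
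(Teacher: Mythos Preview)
Your overall strategy---assume $a<M$, show $\tilde e_\eps(M)\ge\tilde e_\eps(a)+\tilde e_\eps(M-a)+o(1)$, then contradict Lemma~\ref{subadd}---is correct and is exactly what the paper does. But the construction you sketch for the energy splitting is different from the paper's and, as stated, has a gap.

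Your description builds the second competitor from a \emph{fixed profile} of mass $M-a_\eps$ placed at $z_\eps\to\infty$. This cannot yield the inequality $\cE_\eps(m_\eps^1)+\cE_\eps(m_\eps^2)\le\cE_\eps(\bar m_\eps,\bar w_\eps)+o(1)$: the second competitor must capture the energy of the \emph{escaping part of $\bar m_\eps$ itself}, not of an unrelated bump whose energy bears no relation to what $\bar m_\eps$ does at infinity. If you meant the fixed profile only as a small correction to make the far piece of $\bar m_\eps$ admissible, that is closer, but then all the real work (controlling $mL_\eps(-w/m)$ through the cut, restoring the Kolmogorov constraint exactly) is left undone; and you would still need the subadditivity constant $C(a,M)$ to be locally bounded below in $a$ to handle $a_\eps\to a$, which Lemma~\ref{subadd} does not directly provide.

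The paper avoids cutting entirely. It takes $(\bar m,\bar w)$ itself as the mass-$a$ competitor and
\[
(\bar m_\eps-\bar m+2\nu_R,\ \bar w_\eps-\bar w+2\nabla\nu_R)
\]
as the second, where $\nu_R$ equals $\bar c\,e^{-R}$ in $B_R$ and $\bar c\,e^{-|x|}$ outside (so $\bar m\le\nu_R$ off $B_R$). The Kolmogorov constraint is linear and $-\Delta(2\nu_R)+\di(2\nabla\nu_R)=0$ trivially, so this pair lies automatically in $\cK_{1,M-a+o_R(1)}$; the $2\nu_R$ term forces $\bar m_\eps-\bar m+2\nu_R\ge\nu_R>0$ everywhere, which is precisely what tames the singular kinetic term. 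The energy splitting is then obtained from convexity of $(m,w)\mapsto mL_\eps(-w/m)$ (linearizing around $(\bar m,\bar w)$ on $B_R$ and around the difference on $\R^N\setminus B_R$, and estimating the first-order remainders using the exponential smallness of $\nu_R$ and the locally uniform convergence), together with Brezis--Lieb for the $F_\eps$ term. A final rescaling by $c_R\to1$ brings the second competitor to mass exactly $M-a$. This bypasses every cutting and gluing difficulty you flag and works directly at the fixed levels $a$ and $M-a$, so no continuity of $C(\cdot,M)$ is needed.
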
 
\begin{proof}   Assume by contradiction that $\int_{\R^N} \bar m\,dx=a$, with $0<a<M$. We fix $\eps_0(a)$ as in Lemma \ref{subadd}, and we consider from now on $\eps\leq \eps_0(a)$. 
Let $\bar c>0$ be such that $\bar m\leq \bar c e^{-|x|}$ (such $\bar c$ exists by Remark \ref{remarkcrescita}). \\
For $R$ sufficiently large (to be chosen later), we define 
\begin{equation}\label{nu}\nu_R(x)=\begin{cases} \bar c e^{-R} & |x|\leq R\\ \bar c  e^{-|x|} & |x|>R.\end{cases} \end{equation}
So in particular $\bar m (x)\leq \nu_R(x)$ for $|x|>R$. 
 
We observe that as $R\to +\infty$ 
\begin{eqnarray}\label{nu1} 
\int_{\R^n}  \nu_R(x) dx  = \bar c\omega_N e^{-R} R^N+\int_{\R^N\setminus B_R} \bar c e^{-|x|} \, dx\leq C  e^{-R} R^N\to  0.
\end{eqnarray}

Since $\bar m_\eps\to \bar m$ and $\nabla H_\eps(\nabla\bar u_\eps)\to \nabla H_0(\nabla \bar u)$ locally uniformly, there exists $\eps_0=\eps_0(R)$ such that 
for all $\eps\leq \eps_0$, 
\begin{equation}\label{in}
|\bar m_\eps-\bar m|+|\nabla H_\eps(\nabla\bar u_\eps)- \nabla H_0(\nabla \bar u)|\leq \bar c e^{-R} \qquad |x|\leq R.
\end{equation} 
We observe  that  for all $\eps\leq \eps_0$, 
\begin{equation}\label{positivo}  \bar m_\eps-\bar m+2\nu_R\geq \nu_R(x) \qquad \forall x \in \R^N. \end{equation} 
Indeed, if $|x|>R$, then $ \bar m_\eps-\bar m+2\nu_R\geq \bar m_\eps+\nu_R\geq \nu_R$, since $\bar m\leq \nu_R$. On the other hand, if $|x|\leq R$, then by \eqref{in}
$\bar m_\eps-\bar m+2\nu_R\geq -\bar c e^{-R}+2 \bar c e^{-R}= \bar c e^{-R}=\nu_R$.
From \eqref{positivo} we deduce that  
\begin{equation}\label{positivo2} |\bar m_\eps-\bar m|\leq \bar m_\eps-\bar m+2\nu_R. \end{equation} 

Moreover, since $\bar m_\eps\to \bar m$ a.e. by Theorem \ref{teobrezis},  
recalling that $\int_{\R^N} \bar m_\eps dx=M$, $\int_{\R^n}\bar m=a$ and using \eqref{nu1} and  \eqref{positivo2}, we have that 
 \begin{eqnarray} \label{bre1} 
 \int_{\R^N} (\bar m_\eps-\bar m+2\nu_R)dx&=& M-a+ 2\int_{\R^N} \nu_Rdx\to M-a \qquad  \text{as } R\to +\infty,\\
 \label{bre3} 
  \lim_{\eps\to  0} \int_{\R^N} \bar m_\eps^{\alpha+1}dx &= & \int_{\R^N}\bar m^{\alpha+1}  dx+\lim_{\eps\to 0}   \int_{\R^N} |\bar m_\eps-\bar m|^{\alpha+1}dx  
\\ &\leq &
 \int_{\R^N}\bar m^{\alpha+1}  dx+\lim_{\eps\to 0}   \int_{\R^N}(\bar m_\eps-\bar m+2\nu_R)^{\alpha+1}dx .\nonumber 
\end{eqnarray}

We claim that
\begin{equation}\label{claim}
\cE_\eps(\bar m_\eps, \bar w_\eps)\geq \cE_\eps(\bar m,\bar w)+\cE_\eps(\bar m_\eps-\bar m+2 \nu_R, \bar w_\eps-\bar w+2\nabla \nu_R)+ o_\eps(1)+o_R(1),
\end{equation} 
where $o_\eps(1)$ is an error such that $\lim_{\eps\to 0} o_\eps(1)=0$.

We consider the function $(m,w)\mapsto mL_\eps\left(-\frac{w}{m}\right)$. This is a convex function in $(m,w)$. We compute  $\nabla_w\left( mL_\eps\left(-\frac{w}{m}\right)\right)
=- \nabla L_\eps\left(-\frac{w}{m}\right)$, so in particular by \eqref{Lassrescaled}  we get 
\begin{equation}\label{l1primo}C_L \left|\frac{w}{m}\right|^{\gamma'-1} - C_L^{-1} \eps^{\frac{N \alpha (\gamma'-1)}{\gamma'-\alpha N}}\leq  |\nabla_w\left( mL_\eps\left(-\frac{w}{m}\right)\right)| \leq C_L^{-1} \left|\frac{w}{m}\right|^{\gamma'-1}  + C_L^{-1} \eps^{\frac{N \alpha (\gamma'-1)}{\gamma'-\alpha N}}.
\end{equation}  
 Moreover, $\partial_m\left( mL_\eps\left(-\frac{w}{m}\right)\right)
=L_\eps\left(-\frac{w}{m}\right) + \frac{w}{m}\cdot \nabla L_\eps\left(-\frac{w}{m}\right)$, therefore, again by  \eqref{Lassrescaled}  we get 
\begin{equation}\label{l2}C_L \left|\frac{w}{m}\right|^{\gamma' } - C_L^{-1} \eps^{\frac{N \alpha (\gamma'-1)}{\gamma'-\alpha N}}\leq |\partial_m\left( mL_\eps\left(-\frac{w}{m}\right)\right)|\leq C_L^{-1} \left|\frac{w}{m}\right|^{\gamma'}  + C_L^{-1} \eps^{\frac{N \alpha (\gamma'-1)}{\gamma'-\alpha N}}.
\end{equation}

\smallskip

Note that 
\[  \int_{\R^N} V_\eps(y+y_\eps)\bar m_\eps\, dx = \int_{\R^N} V_\eps(y+y_\eps) \bar m \,dx + \int_{\R^N} V_\eps(y+y_\eps)(  \bar m_\eps-\bar m +2\nu_R)\,dx -
2 \int_{\R^N} V_\eps(y+y_\eps)\nu_R  \,dx. 
\]
Recalling the estimate \eqref{Vtilda}  and the definition of $\nu_R$, we have 
  \[ 2 \int_{\R^N} V_\eps(y+y_\eps)\nu_R  \,dx \leq C  R^{b+N} e^{-R}.  \]
Hence we obtain  \begin{equation}\label{stimav1} \int_{\R^N} V_\eps(y+y_\eps)\bar m_\eps\, dx \ge \int_{\R^N} V_\eps(y+y_\eps) \bar m \,dx + \int_{\R^N} V_\eps(y+y_\eps)(  \bar m_\eps-\bar m +2\nu_R)\,dx -
C  R^{ b+N} e^{-R}. 
\end{equation} 

\smallskip

By \eqref{bre3} and \eqref{Fresc} we get 
 \begin{multline}\label{stimam1}  \int_{\R^N} F_\eps(\bar m_\eps)\, dx\geq  -\frac{C_f}{\alpha+1}  \int_{\R^N}  \bar m_\eps^{\alpha+1}\, dx -KM\eps^{\frac{N\alpha \gamma'}{\gamma'-\alpha N}}
 \\ \geq -\frac{C_f}{\alpha+1}  \int_{\R^N}  \bar m^{\alpha+1} \,dx -\frac{C_f}{\alpha+1} 
\int_{\R^N}  (  \bar m_\eps-\bar m +2\nu_R)^{\alpha+1}\,dx + o_{\eps} (1)  \\
\geq  \int_{\R^N} F_\eps(\bar m)\, dx + \int_{\R^N} F_\eps(\bar m_\eps-\bar m +2\nu_R)\, dx+ o_{\eps} (1).
 \end{multline} 

\smallskip

Finally, we estimate the kinetic terms in the energy. Splitting 
\[\int_{\R^N}  \bar m_\eps L_\eps\left(-\frac{\bar w_\eps}{\bar m_\eps}\right)dx= \int_{B_R}  \bar m_\eps L_\eps\left(-\frac{\bar w_\eps}{\bar m_\eps}\right)dx+\int_{\R^N\setminus B_R}  \bar m_\eps L\left(-\frac{\bar w_\eps}{\bar m_\eps}\right)dx, \]
we proceed by estimating separately the two terms. 

\smallskip 

{\bf Estimates in $\R^N\setminus B_R$}. 

First of all, note that by \eqref{improvedintbarra},  \eqref{Hassrescaled}  and    \eqref{Lassrescaled}, we get that $L_\eps\left(-\frac{\bar w}{\bar m}\right) = L_\eps(\nabla H_0(\nabla \bar u)) \leq C$ 
for come constant $C>0$, just depending on the data. Moreover, recalling that $\bar m\leq \bar c e^{-|x|}$, we get that, eventually enlarging $C$, 
\begin{equation}\label{Lfuorimbarra}
\int_{\R^N\setminus B_R}  \bar m L_\eps\left(-\frac{\bar w}{\bar m}\right)\, dx \leq C\int_{|x|>R} e^{-|x|} dx \leq CR^Ne^{-R}.
\end{equation}

By convexity of the function $(m,w)\mapsto mL\left(-\frac{w}{m}\right)$, we get that 
\begin{eqnarray}\nonumber \int_{\R^N\setminus B_R}  \bar m_\eps L\left(-\frac{\bar w_\eps}{\bar m_\eps}\right) dx
\geq \int_{\R^N\setminus B_R} ( \bar m_\eps-\bar m +2\nu_R) L_\eps\left(-\frac{\bar w_\eps-\bar w+2\nabla \nu_R}{\bar m_\eps-\bar m +2\nu_R}\right) dx \\  
\label{conve1} + \int_{\R^N\setminus B_R} \partial_m \left(( \bar m_\eps-\bar m +2\nu_R) L_\eps\left(-\frac{\bar w_\eps-\bar w+2\nabla \nu_R}{\bar m_\eps-\bar m +2\nu_R}\right) \right) (\bar m-2\nu_R) \,dx \\ 
\label{conve2} + \int_{\R^N\setminus B_R} \nabla_w \left[ ( \bar m_\eps-\bar m +2\nu_R) L_\eps 
\left(-\frac{\bar w_\eps-\bar w+2\nabla \nu_R}{\bar m_\eps-\bar m +2\nu_R}\right) \right] \cdot (\bar w-2\nabla \nu_R) \, dx.
\end{eqnarray}

We recall  that   $|\bar w|=\bar m |\nabla H_0(\nabla\bar u)|\leq C\bar m$
by \eqref{improvedintbarra}  and $|\nabla \nu_R|\leq C\nu_R$ by definition. Moreover, by \eqref{gradienttilda} and \eqref{Hassrescaled},
\[|\bar w_\eps|=\bar m_\eps |\nabla H_\eps(\nabla \bar u_\eps)|\leq C\bar m_\eps [(1+|x|)^{\frac{b}{\gamma}}]^{\gamma-1}\leq C_1\bar m_\eps (1+|x|)^{\frac{b}{\gamma'}}.\]
Using the triangular inequality 
we get the following, where the constant $C$ can change from line to line, 
\begin{eqnarray}\label{rapporto} \left|\frac{\bar w_\eps-\bar w+2\nabla \nu_R}{\bar m_\eps-\bar m +2\nu_R}\right| 
\leq \frac{\bar m_\eps|\nabla H_\eps(\nabla \bar u_\eps)|}{\bar m_\eps-\bar m +2\nu_R}+\frac{\bar m  |\nabla H_0(\nabla\bar u)|}{\bar m_\eps-\bar m +2\nu_R}+
\frac{C\nu_R}{\bar m_\eps-\bar m +2\nu_R}\\ 
\leq \frac{C \bar m_\eps (1+|x|)^{\frac{b}{\gamma'}}}{\bar m_\eps-\bar m +2\nu_R}+ \frac{C\bar m}{\bar m_\eps-\bar m +2\nu_R}+\frac{C\nu_R}{\bar m_\eps-\bar m +2\nu_R} \leq C (1+|x|)^{\frac{b}{\gamma'}} \nonumber 
\end{eqnarray} 
on $\R^N\setminus B_R(0)$, where we used  respectively the fact that $\bar m_\eps-\bar m +2\nu_R\geq \bar m_\eps$, $\bar m\leq \nu_R$ and that $\bar m_\eps-\bar m +2\nu_R\geq \nu_R$.

Now, using   \eqref{l2}  and \eqref{rapporto}, we can estimate \eqref{conve1}, and by \eqref{l1primo} and \eqref{rapporto} we can estimate \eqref{conve2}.
Indeed, we get 
\[  \int_{\R^N\setminus B_R} \left|\partial_m \left(( \bar m_\eps-\bar m +2\nu_R) L_\eps\left(-\frac{\bar w_\eps-\bar w+2\nabla \nu_R}{\bar m_\eps-\bar m +2\nu_R}\right) \right)\right| |\bar m-2\nu_R| \,dx \leq C \int_{\R^N\setminus B_R}  (1+|x|)^{b} \nu_R(x)dx  \]
and 
\[\int_{\R^N\setminus B_R} \left|\nabla_w \left[ ( \bar m_\eps-\bar m +2\nu_R) L_\eps 
\left(-\frac{\bar w_\eps-\bar w+2\nabla \nu_R}{\bar m_\eps-\bar m +2\nu_R}\right) \right] \right| ( |\bar w| +2|\nabla \nu_R|)dx\leq  C \int_{\R^N\setminus B_R}  (1+|x|)^{\frac{b }{\gamma}}\nu_R(x)dx,  \]
because $\bar w \le C \bar m$ on $\R^N$. Therefore, we may conclude, possibly enlarging $C$, that 
\begin{multline}\label{mepsfuori}
\int_{\R^N\setminus B_R}  \bar m_\eps L\left(-\frac{\bar w_\eps}{\bar m_\eps}\right) dx  \\
\geq \int_{\R^N\setminus B_R} ( \bar m_\eps-\bar m +2\nu_R) L_\eps\left(-\frac{\bar w_\eps-\bar w+2\nabla \nu_R}{\bar m_\eps-\bar m +2\nu_R}\right) dx -C \int_{\R^N\setminus B_R}(1+|x|)^{b} \nu_R(x)dx\\
\geq \int_{\R^N\setminus B_R} ( \bar m_\eps-\bar m +2\nu_R) L_\eps\left(-\frac{\bar w_\eps-\bar w+2\nabla \nu_R}{\bar m_\eps-\bar m +2\nu_R}\right) dx -C R^{N+b}e^{-R}.
\end{multline}
Finally,  putting together \eqref{Lfuorimbarra} and \eqref{mepsfuori}, we have, choosing $C$ suffficiently large 
\begin{multline}\label{fuori} 
\int_{\R^N\setminus B_R}  \bar m_\eps L_\eps\left(-\frac{\bar w_\eps}{\bar m_\eps}\right)dx\geq \int_{\R^N\setminus B_R}  \bar m L_\eps\left(-\frac{\bar w}{\bar m}\right)\, dx\\+\int_{\R^N\setminus B_R} ( \bar m_\eps-\bar m +2\nu_R) L_\eps\left(-\frac{\bar w_\eps-\bar w+2\nabla \nu_R}{\bar m_\eps-\bar m +2\nu_R}\right) dx -C R^{N+b}e^{-R}.
\end{multline} 

\smallskip 

{\bf Estimates in $B_R$}. 
Again by convexity of the function $(m,w)\mapsto mL\left(-\frac{w}{m}\right)$, we get that 
\begin{multline} \label{conve1r}
\int_{ B_R}  \bar m_\eps L\left(-\frac{\bar w_\eps}{\bar m_\eps}\right) dx
\geq \int_{ B_R} \bar m  L_\eps\left(-\frac{ \bar w }{ \bar m }\right) dx \\  
 + \int_{  B_R} \partial_m \left(\bar m L_\eps\left(-\frac{\bar w}{\bar m}\right) \right) (\bar m_\eps- \bar m) \,dx  + \int_{ B_R} \nabla_w \left[ \bar m L_\eps 
\left(-\frac{\bar w}{\bar m}\right) \right] \cdot (\bar w_\eps- \bar w ) \, dx.
\end{multline}

We now estimate \eqref{conve1r}. We recall  that $\left|\frac{\bar w}{\bar m}\right|\leq |\nabla H_0(\nabla \bar u)|\leq K$  and also $|\nabla H_\eps(\nabla \bar u_\eps)|\leq K$ for all $\eps\leq \eps_0(R)$. Then, using this fact and  \eqref{l1primo} and \eqref{l2} and  recalling \eqref{in},  we get 
\[  \int_{B_R}\left| \partial_m \left(\bar m  L_\eps\left(-\frac{\bar w}{\bar m}\right) \right) \right| \left|\bar m_\eps- \bar m\right|  \,dx =   \int_{B_R} \left|\partial_m \left(\bar m L_\eps\left(\nabla H_0(\nabla \bar u)\right) \right) \right| |\bar m_\eps- \bar m| \,dx\leq C e^{-R} R^N\]
and 
\[ \int_{ B_R} \left| \nabla_w \left[ \bar m L_\eps 
\left(\nabla H_0(\nabla \bar u)\right) \right] \right| ( |\nabla H_\eps(\nabla u_\eps)| |\bar m_\eps-\bar m| +|\nabla H_\eps(\nabla \bar u_\eps)- \nabla H_0(\nabla \bar u)| \bar m) \, dx\leq C e^{-R} R^N.
\]
This implies that for all $\eps\leq \eps_0(R)$ 
\begin{equation} \label{mbarrain} \int_{ B_R}  \bar m_\eps L\left(-\frac{\bar w_\eps}{\bar m_\eps}\right) dx
\geq \int_{ B_R} \bar m  L_\eps\left(-\frac{ \bar w }{ \bar m }\right) dx -C e^{-R} R^N.
\end{equation}

Now we observe that by \eqref{Lassrescaled}, 
\[ \int_{B_R} ( \bar m_\eps-\bar m +2\nu_R) L_\eps\left(-\frac{\bar w_\eps-\bar w+2\nabla \nu_R}{\bar m_\eps-\bar m +2\nu_R}\right) dx\leq 
C \int_{B_R} \left[ \left|\frac{\bar w_\eps-\bar w+2\nabla \nu_R}{\bar m_\eps-\bar m +2\nu_R}\right|^{\gamma'} +1\right] (\bar m_\eps-\bar m +2\nu_R)dx .   \]
By \eqref{positivo2} we get that $\bar m_\eps-\bar m +2\nu_R\leq |\bar m_\eps-\bar m |+2\nu_R\leq C e^{-R}$, eventually enalarging $C$. 
Moreover, reasoning as in \eqref{rapporto}, we get 
\begin{align*} \left|\frac{\bar w_\eps-\bar w+2\nabla \nu_R}{\bar m_\eps-\bar m +2\nu_R}\right| \leq 
|\nabla H_\eps(\nabla \bar u_\eps)| \frac{|\bar m_\eps-\bar m|}{\bar m_\eps-\bar m +2\nu_R}+  \frac{|\nabla H_\eps(\nabla \bar u_\eps)-\nabla H_0(\nabla \bar u)| }{\bar m_\eps-\bar m +2\nu_R}\bar m 
 \leq C\end{align*} 
where we used that $\nabla \nu_R=0$ for $|x|<R$,  that $|\nabla H_\eps(\nabla \bar u_\eps)|\leq K$, that  by \eqref{positivo2} $\frac{|\bar m_\eps-\bar m|}{\bar m_\eps-\bar m +2\nu_R}\leq 1$, $\frac{  |\nabla H_\eps(\nabla \bar u_\eps)-\nabla H_0(\nabla \bar u)| }
 {\bar m_\eps-\bar m +2\nu_R} \leq C$ by \eqref{positivo} and \eqref{in}. So, we conclude that 
 \begin{equation} \label{mepsbarrain}\int_{B_R} ( \bar m_\eps-\bar m +2\nu_R) L_\eps\left(-\frac{\bar w_\eps-\bar w+2\nabla \nu_R}{\bar m_\eps-\bar m +2\nu_R}\right) dx\leq 
  C e^{-R} R^N. 
\end{equation}

Putting together \eqref{mbarrain} and \eqref{mepsbarrain} 
we get, choosing $C$ suffficiently large and for all $\eps\leq \eps_0(R)$, 
\begin{multline}\label{dentro} 
\int_{B_R}  \bar m_\eps L_\eps\left(-\frac{\bar w_\eps}{\bar m_\eps}\right)dx\geq \int_{B_R}  \bar m L_\eps\left(-\frac{\bar w}{\bar m}\right)\, dx\\+\int_{B_R} ( \bar m_\eps-\bar m +2\nu_R) L_\eps\left(-\frac{\bar w_\eps-\bar w+2\nabla \nu_R}{\bar m_\eps-\bar m +2\nu_R}\right) dx -C R^N e^{-R}.    
\end{multline}

Therefore, summing up \eqref{dentro}, \eqref{fuori}, \eqref{stimav1} and \eqref{stimam1}, 
we conclude for all $\eps\leq \eps_0(R)$, 
\begin{align} \label{claimf} 
\cE_\eps(\bar m_\eps, \bar w_\eps)\geq  \cE_\eps(\bar m,\bar w)+\cE_\eps(\bar m_\eps-\bar m+2 \nu_R, \bar w_\eps-\bar w+2\nabla \nu_R)
+o_\eps(1)-C R^{ b+N} e^{-R}.\end{align}

Let now $c_R= \frac{M-a}{M-a+2\int_{\R^N}\nu_R dx}$.  We have that $c_R\to 1$ as $R\to +\infty$ and $c_R<1$.
In particular, $(c_R(\bar m_\eps-\bar m+2 \nu_R), c_R (\bar w_\eps-\bar w+2\nabla \nu_R))\in \cK_{M-a}$. 
By the same  computation as in  \eqref{subadd2}, we get 
\begin{multline}\label{riga1} c_R \cE_\eps(\bar m_\eps-\bar m+2 \nu_R, \bar w_\eps-\bar w+2\nabla \nu_R)\\
= \cE_\eps(c_R(\bar m_\eps-\bar m+2 \nu_R),c_R( \bar w_\eps-\bar w+2\nabla \nu_R))+\int_{\R^N} c_R F_\eps(\bar m_\eps-\bar m+2 \nu_R)-F_\eps(c_R(\bar m_\eps-\bar m+2 \nu_R))\, dx\\\geq  \cE_\eps(c_R(\bar m_\eps-\bar m+2 \nu_R),c_R( \bar w_\eps-\bar w+2\nabla \nu_R))\\
+c_R \frac{c_R^{\alpha}-1}{\alpha+1}C_f \int_{\R^N} (\bar m_\eps-\bar m+2 \nu_R)^{\alpha+1} dx-2K\left(M-a+2\int_{\R^N}\nu_R dx\right) \eps^{\frac{N\alpha \gamma'}{\gamma'-N\alpha}}.  \end{multline}
Observe that by \eqref{intVtilda} there exists $C$ independent of $\eps$ such that 
\[0\leq \int_{\R^N} (\bar m_\eps-\bar m+2 \nu_R)^{\alpha+1} dx \leq  ( \|\bar m_\eps\|_{\alpha+1} + \|\bar m\|_{\alpha+1} + \|2\nu_R \|_{\alpha+1})^{\alpha+1}\leq C. \]
Therefore, \eqref{riga1} reads (recalling that $c_R<1$ and enlarging the constants $C,K$), 
\begin{multline*} c_R \cE_\eps(\bar m_\eps-\bar m+2 \nu_R, \bar w_\eps-\bar w+2\nabla \nu_R)\\ \geq  \cE_\eps(c_R(\bar m_\eps-\bar m+2 \nu_R),c_R( \bar w_\eps-\bar w+2\nabla \nu_R))+
c_R \frac{c_R^{\alpha}-1}{\alpha+1}C - K M  \eps^{\frac{N\alpha \gamma'}{\gamma'-N\alpha}}\\
\geq \tilde e_\eps\left (M-a\right)+
c_R \frac{c_R^{\alpha}-1}{\alpha+1}C - K M  \eps^{\frac{N\alpha \gamma'}{\gamma'-N\alpha}}.
 \end{multline*}

Using this inequality, and using the fact that  $\cE_\eps(\bar m_\eps, \bar w_\eps)=\tilde e_\eps(M)$ and that  $\cE_\eps(\bar m,\bar w)\geq \tilde e_\eps(a)$,   
 we obtain from \eqref{claimf}
\begin{align*} \tilde e_\eps(M)\geq \tilde e_\eps(a) +  \tilde e_\eps\left (M-a\right)+(1-c_R) \cE_\eps(\bar m_\eps-\bar m+2 \nu_R, \bar w_\eps-\bar w+2\nabla \nu_R) \\
+Cc_R\frac{c_R^\alpha-1}{\alpha+1} - K M  \eps^{\frac{N\alpha \gamma'}{\gamma'-N\alpha}}+o_\eps(1)-C R^{ b+N} e^{-R} \end{align*}
 Moreover by \eqref{er} we get that there exist $K=K(M-a)>0$ such that $\cE_\eps(\bar m_\eps-\bar m+2 \nu_R, \bar w_\eps-\bar w+2\nabla \nu_R) \geq -K$,
therefore the previous inequality gives 
\begin{align} \label{energiasuper}  \tilde e_\eps(M)\geq \tilde e_\eps(a) +  \tilde e_\eps\left (M-a\right)-(1-c_R)K+
Cc_R\frac{c_R^\alpha-1}{\alpha+1}  +o_\eps(1)-C R^{ b+N} e^{-R}. \end{align}
By Lemma \ref{subadd}, we get that 
\[ \tilde e_\eps(M)\leq  \tilde e_\eps(a) +\tilde e_\eps\left (M-a\right)-C(a, M),\] where $C(a, M)>0$ for $a<M$ and $C(M,M)=0$. 
This implies in particular that 
\[0>-C(a, M)\geq -(1-c_R)K+
Cc_R\frac{c_R^\alpha-1}{\alpha+1}  +o_\eps(1)-C R^{ b+N} e^{-R}. \]
Recalling that $c_R\to 1$ as $R\to +\infty$, this gives a contradiction, choosing $R$ sufficiently large and $\eps<\eps_0(R)$.  
\end{proof}

An immediate  corollary of the previous theorem is the following convergence result.
\begin{corollary}\label{convergenzainnorma}
Let $(\bar u_\eps, \bar m_\eps, \tilde \lambda_\eps)$  and  $(\bar u,  \bar m, \bar \lambda)$ be  as in Proposition \ref{propconv}.
Then,  \begin{equation}\label{mconvergenza} \bar m_\eps \to \bar m \qquad \text{ in }L^1(\R^N) \text { and }L^{\alpha+1}(\R^N).  \end{equation} 

Finally for all $\eta>0$, there exist $R>0$ and $\eps_0$ such that 
for all $\eps\leq \eps_0$,
\begin{equation}\label{intmnuovo}\int_{B(0,R)} \bar m_\eps dx \geq M-\eta.\end{equation}
\end{corollary}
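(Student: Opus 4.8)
The plan is to derive the corollary from three ingredients already in hand: the mass conservation $\int_{\R^N}\bar m\,dx = M$ of Theorem~\ref{nodicotomy}, the locally uniform convergence $\bar m_\eps \to \bar m$ of Proposition~\ref{propconv}, and the pointwise exponential decay $\bar m(x)\le c_1 e^{-c_2|x|}$ of Remark~\ref{remarkcrescita}, together with the uniform $L^\infty$ bound \eqref{minfinitytilda}. The conceptual point is that, although we have no uniform-in-$\eps$ a priori estimate forcing the tails of $\bar m_\eps$ to be small, such uniform tail smallness \emph{follows} once we combine $\int_{\R^N}\bar m_\eps = M = \int_{\R^N}\bar m$ with the local convergence.

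First I would fix $\eta>0$ and, using the exponential decay of $\bar m$ and $\int_{\R^N}\bar m = M$ (Theorem~\ref{nodicotomy}), choose $R\ge 1$ with $\int_{B_R(0)}\bar m\,dx > M-\eta$. Since $\bar m_\eps\to\bar m$ uniformly on the compact set $\overline{B_R(0)}$, we have $\int_{B_R(0)}\bar m_\eps\,dx \to \int_{B_R(0)}\bar m\,dx$, so there is $\eps_0>0$ with $\int_{B_R(0)}\bar m_\eps\,dx > M-2\eta$ for all $\eps\le\eps_0$. Since $\eta>0$ was arbitrary, this is exactly \eqref{intmnuovo} (after relabelling $\eta$), and it comes essentially for free. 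Using $\int_{\R^N}\bar m_\eps = M$ once more, the same inequality yields $\int_{\R^N\setminus B_R(0)}\bar m_\eps\,dx < 2\eta$ for $\eps\le\eps_0$.

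Next I would split $\R^N = B_R(0)\cup(\R^N\setminus B_R(0))$ to estimate the $L^1$ distance: on the ball, $\|\bar m_\eps-\bar m\|_{L^1(B_R(0))}\le |B_R(0)|\,\|\bar m_\eps-\bar m\|_{L^\infty(B_R(0))}\to 0$ by local uniform convergence; on the complement, $\|\bar m_\eps-\bar m\|_{L^1(\R^N\setminus B_R(0))}\le \int_{\R^N\setminus B_R(0)}\bar m_\eps\,dx + \int_{\R^N\setminus B_R(0)}\bar m\,dx < 3\eta$. Letting $\eps\to 0$ and then $\eta\to 0$ gives $\bar m_\eps\to\bar m$ in $L^1(\R^N)$. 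The $L^{\alpha+1}$ convergence then follows from this $L^1$ convergence together with the uniform bound $\|\bar m_\eps\|_{L^\infty(\R^N)}\le C$ of \eqref{minfinitytilda} (which, by pointwise passage to the limit, also gives $\|\bar m\|_{L^\infty(\R^N)}\le C$), via the elementary estimate $\|\bar m_\eps-\bar m\|_{L^{\alpha+1}(\R^N)}^{\alpha+1}\le (2C)^\alpha\|\bar m_\eps-\bar m\|_{L^1(\R^N)}\to 0$.

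I do not expect a genuine obstacle: all the substance sits in Theorem~\ref{nodicotomy}, and the only point requiring a moment's care is the one flagged above — that uniform smallness of the tails of $\bar m_\eps$ is obtained from mass conservation rather than from an a priori bound, which is what allows the local convergence to upgrade to convergence in $L^1(\R^N)$.
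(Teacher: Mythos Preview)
Your proof is correct and uses the same ingredients as the paper (Theorem~\ref{nodicotomy}, Proposition~\ref{propconv}, Remark~\ref{remarkcrescita}, and the $L^\infty$ bound \eqref{minfinitytilda}), though the logic is organised a bit differently. The paper first obtains $L^1$ convergence directly from a.e.\ convergence plus the equality $\int \bar m_\eps = \int \bar m = M$ via a Scheff\'e-type application of Fatou's lemma, namely $2M \le \liminf_\eps \int (\bar m_\eps + \bar m - |\bar m_\eps - \bar m|)$, and then deduces \eqref{intmnuovo} from the $L^1$ convergence together with the exponential decay of $\bar m$. You instead establish \eqref{intmnuovo} first, using local uniform convergence on $B_R(0)$, and then deduce $L^1$ convergence by the ball/complement splitting. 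Both orderings are perfectly valid; your version is slightly more hands-on and avoids the Scheff\'e trick, while the paper's is marginally slicker in that it needs only a.e.\ convergence (not local uniform convergence) for the $L^1$ step. The $L^{\alpha+1}$ interpolation step is identical in spirit in both proofs.
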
 
\begin{proof} By Proposition \ref{propconv} we get that $\bar m_\eps\to \bar m$ almost everywhere, and by Theorem \ref{nodicotomy}, $\int_{\R^N} \bar m_\eps=M=\int_{\R^N}\bar m$. This implies the convergence in $L^1(\R^N)$. Indeed, by Fatou lemma
\[2M\leq  \liminf_\eps \int_{\R^N} \bar m_\eps+\bar m-|\bar m_\eps-\bar m| dx\leq 2M- \limsup_\eps \int_{\R^N} |\bar m_\eps-\bar m|\, dx.\]
Moreover, recalling \eqref{minfinitytilda}, we get that
\[\|\bar m_\eps-\bar m\|_{L^{\alpha+1}(\R^N)}^{\alpha+1} \leq \|\bar m_\eps-\bar m\|_{L^{1}(\R^N)} (\|\bar m\|_{L^{\infty}(\R^N)}+  \|\bar m_\eps\|_{L^{\infty}(\R^N)})\to 0.\]

Finally observe that for all $R$, by  Remark \ref{remarkcrescita}, 
\[ \int_{B_R(0)} \bar m_\eps dy\geq \int_{B_R(0)} \bar m dy-\int_{B_R(0)} |\bar m_\eps-\bar m|dy\geq  
M- CR^{N-1} e^{-R} -\int_{\R^N} |\bar m_\eps-\bar m| dy.\]
So, using    the $L^1$ convergence we conclude the desired estimate. 
\end{proof}

\subsection{Existence of ground states.}

In this subsection we aim at proving that as $\eps$ goes to zero, $(\bar u_\eps, \bar m_\eps, \tilde \lambda_\eps)$ converges to a solution of the limiting MFG system \eqref{mfglimit2}, without potential terms. In particular, we will prove Theorem \ref{fullconv}. 

We first need a  $\Gamma$-convergence type result, proved in the following lemma.
\begin{lemma}\label{asin} 
Let $(m_\eps, w_\eps), (m,w)\in \mathcal{K}_{1,M}$  be such that $m_\eps\to m$ in $L^1\cap L^{\alpha+1}(\R^N)$ and $w_\eps\rightharpoonup w$ weakly in $L^q(\R^N)$ for some $q > 1$. Then 
\begin{equation}\label{en1} \lim\inf_\eps \mathcal{E}_\eps(m_\eps, w_\eps)\geq \mathcal{E}_0( m, w),\end{equation}
where $\mathcal{E}_0$ is defined in \eqref{energia0}. 

Let $(m,w)\in \mathcal{K}_{1,M}$ be such that $m(1+|y|^b)\in L^1(\R^N)$. Then 
\begin{equation}\label{en1gamma} \lim_\eps \mathcal{E}_\eps(m(\cdot-y_\eps),w(\cdot-y_\eps))\leq \mathcal{E}_0(m,w).\end{equation}
\end{lemma}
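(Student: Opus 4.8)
The statement is the two halves of a $\Gamma$-convergence result for $\mathcal{E}_\eps\to\mathcal{E}_0$, so the plan is the standard one. Throughout, write $\mathcal{E}_\eps$ as the sum of the kinetic term $\int m\,L_\eps(-w/m)$, the potential term $\int V_\eps(\cdot+y_\eps)\,m\ge 0$, and the local term $\int F_\eps(m)$; the matching splitting of $\mathcal{E}_0$ has no potential term, $C_L\int |w|^{\gamma'}/m^{\gamma'-1}$ in place of the kinetic one, and $-\tfrac{1}{\alpha+1}\int m^{\alpha+1}$ in place of the local one. In \eqref{en1gamma} only the case $\mathcal{E}_0(m,w)<+\infty$ needs to be treated, the other being trivial.

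For \eqref{en1} (the $\liminf$ inequality): since $V_\eps\ge 0$, drop the potential term. For the kinetic term, \eqref{Lassrescaled} gives $L_\eps\ge C_L|\cdot|^{\gamma'}$, so $\int m_\eps L_\eps(-w_\eps/m_\eps)\ge C_L\int |w_\eps|^{\gamma'}/m_\eps^{\gamma'-1}$, and the functional $(\rho,q)\mapsto\int |q|^{\gamma'}/\rho^{\gamma'-1}$ — the integral of the perspective function of the convex map $|\cdot|^{\gamma'}$, extended by $0$ at $(0,0)$ and $+\infty$ for $\rho<0$ — is convex and sequentially weakly lower semicontinuous, a standard fact in the variational theory of MFG (cf. \cite{BC16,CG}); applying it on balls $B_R$, where $m_\eps\to m$ and $w_\eps\rightharpoonup w$ weakly in $L^1(B_R)$, and letting $R\to\infty$ by monotone convergence (the integrand being nonnegative), yields $\liminf_\eps C_L\int |w_\eps|^{\gamma'}/m_\eps^{\gamma'-1}\ge C_L\int |w|^{\gamma'}/m^{\gamma'-1}$. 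For the local term, \eqref{Fresc} gives $\big|\int F_\eps(m_\eps)+\tfrac{C_f}{\alpha+1}\int m_\eps^{\alpha+1}\big|\le K\eps^{\frac{N\alpha\gamma'}{\gamma'-\alpha N}}M\to 0$, and $\int m_\eps^{\alpha+1}\to\int m^{\alpha+1}$ because $m_\eps\to m$ in $L^{\alpha+1}$; hence $\int F_\eps(m_\eps)$ converges to the local term of $\mathcal{E}_0(m,w)$. Adding the convergent local term to the $\liminf$ of the kinetic one gives $\liminf_\eps\mathcal{E}_\eps(m_\eps,w_\eps)\ge\mathcal{E}_0(m,w)$ (the case $\mathcal{E}_0(m,w)=+\infty$ being covered, since then the kinetic term of $\mathcal{E}_0$ is $+\infty$ and the lower semicontinuity forces $\liminf_\eps$ of the kinetic energies to be $+\infty$).

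For \eqref{en1gamma} (the recovery sequence): take the $y_\eps$-translate of $(m,w)$, which belongs to $\mathcal{K}_{1,M}$ by translation invariance of the continuity constraint and of the normalization; the translation is chosen precisely so that the mass of the translated density sits in the region where $V_\eps(\cdot+y_\eps)$ is small. After the change of variables that removes $y_\eps$, the kinetic and local terms are exactly $\int m\,L_\eps(-w/m)$ and $\int F_\eps(m)$, while the potential term equals $\int_{\R^N}V_\eps(y)\,m(y)\,dy$. By \eqref{Lassrescaled}, $C_L\int |w|^{\gamma'}/m^{\gamma'-1}\le\int m\,L_\eps(-w/m)\le C_L\int |w|^{\gamma'}/m^{\gamma'-1}+C_L\eps^{\frac{N\alpha\gamma'}{\gamma'-\alpha N}}M$, so the kinetic term converges to that of $\mathcal{E}_0(m,w)$ (finite in the relevant case); by \eqref{Fresc} and $m\in L^{\alpha+1}$ the local term converges to the local term of $\mathcal{E}_0(m,w)$; and by \eqref{vassresc}, $\int V_\eps(y)\,m(y)\,dy\le C\eps^{\frac{N\alpha\gamma'}{\gamma'-\alpha N}}M+C\eps^{\frac{(N\alpha+b)\gamma'}{\gamma'-\alpha N}}\int |y|^b m(y)\,dy\to 0$ since $m(1+|y|^b)\in L^1(\R^N)$, the a priori bound \eqref{Vtilda} on $\eps^{\gamma'/(\gamma'-\alpha N)}y_\eps$ being used to absorb the shift. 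Summing, $\lim_\eps\mathcal{E}_\eps$ of the translated pair equals $\mathcal{E}_0(m,w)$, in particular it is $\le\mathcal{E}_0(m,w)$.

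The main obstacle is the lower semicontinuity of the kinetic term in \eqref{en1}: this is the degenerate, non-coercive term $m\,L(-w/m)$ that is the recurring difficulty of the variational approach, and the key is to recognize it as a convex integrand and invoke weak lower semicontinuity of convex integral functionals, being careful that $m_\eps$ converges strongly in $L^1$ while $w_\eps$ converges only weakly in $L^q$, and that the domain is all of $\R^N$ (handled by exhausting with balls, since the integrand is nonnegative). A secondary point, in \eqref{en1gamma}, is to guarantee that the potential contribution of the translated competitor truly vanishes despite $V_\eps$ being small only locally — this is exactly where the polynomial growth \eqref{vassresc}, the control \eqref{Vtilda} on the concentration point, and the weighted integrability $m(1+|y|^b)\in L^1(\R^N)$ are used together.
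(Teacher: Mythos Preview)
Your proof is correct and follows essentially the same approach as the paper: for \eqref{en1}, drop the nonnegative potential, bound $L_\eps$ below via \eqref{Lassrescaled}, and invoke weak lower semicontinuity of the convex kinetic integrand (you make this step more explicit than the paper does, via the perspective function and exhaustion by balls); for \eqref{en1gamma}, use the two-sided bounds \eqref{Lassrescaled}, \eqref{Fresc}, and \eqref{vassresc} together with the weighted-$L^1$ hypothesis on $m$. Your final remark invoking \eqref{Vtilda} is superfluous once the translation exactly cancels the $y_\eps$-shift in the potential (so that the term is $\int V_\eps(y)m(y)\,dy$ as you wrote), which is how the paper handles it in \eqref{en2}.
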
 
\begin{proof}
We recall  that $L_\eps(q)\to C_L|q|^{\gamma'}$ uniformly in $\R^N$ by \eqref{Lassrescaled}  and $F_\eps(m)\to -\frac{1}{\alpha+1}m^{\alpha+1}$ uniformly in $[0, +\infty)$ by \eqref{Fresc}. Moreover we observe that    the energy $\mathcal{E}_0$ is lower semicontinuous with respect to weak $L^q$ convergence of $w$ and strong $L^{\alpha+1} \cap L^1$  convergence of $m$.
Since $V\geq 0$, we get  that 
\begin{multline*}
\lim\inf_\eps \mathcal{E}_\eps(m_\eps, w_\eps)
\geq \lim\inf_\eps \int_{\R^N} m_\eps L_\eps\left(-\frac{ w_\eps}{m_\eps}\right) +F_\eps( m_\eps)\, dx \\ 
\geq \lim\inf_\eps \int_{\R^N}C_L  m_\eps^{1-\gamma'} |  w_\eps|^{\gamma'} -\frac{C_f}{\alpha+1} m_\eps^{\alpha+1}\, dx\\
\geq \int_{\R^N}C_L  m^{1-\gamma'} |  w|^{\gamma'} -\frac{C_f}{\alpha+1}  m^{\alpha+1}\, dx=\mathcal{E}_0(m, w).
 \end{multline*}

Now we observe that for all $m$ such that $m(1+|y|^b)\in L^1(\R^N)$, using \eqref{vassresc},  we get that \begin{equation}\label{en2}\lim_{\eps\to 0} \int_{\R^N} m(y+y_\eps)V_\eps(y+y_\eps)dy\leq \lim_\eps 
C_V  \eps^{\frac{N \alpha \gamma'}{\gamma'-\alpha N}} \int_{\R^N} (1+|y|)^{b}m(y)dy=0.\end{equation}
Therefore, recalling again the uniform convergence of $L_\eps(q)\to C_L|q|^{\gamma'}$ and $F_\eps(m)\to -\frac{1}{\alpha+1}m^{\alpha+1}$, we conclude (noting that if we translate $m,w$ of $y_\eps$ the energy $\mathcal{E}_0$ remains the same)  \[\lim_\eps \mathcal{E}_\eps(m(\cdot-y_\eps),w(\cdot-y_\eps))=\mathcal{E}_0(m,w)+\lim_{\eps\to 0} \int_{\R^N} m(y+y_\eps)V_\eps(y+y_\eps)dy\leq \mathcal{E}_0(m,w). \]

\end{proof}

\begin{proof}[Proof of Theorem \ref{fullconv}]
We first show that $(\bar u, \bar m)$ obtained in Proposition \ref{propconv} are associated to minimizers of an appropriate energy, without potential term, so that \eqref{min2} holds. 

Note that $(\bar m, \bar w)\in \mathcal{K}_{1,M}$ where $\bar w=-\bar m \nabla H_0(\nabla \bar u)$, due to Proposition \ref{propconv} and Theorem 
\ref{nodicotomy} and    $\bar m(1+|y|^b)\in L^1(\R^N)$ by the exponential decay \eqref{improvedintbarra}. Moreover $\bar m_\eps\to \bar m$ in $L^1\cap L^{\alpha+1}$ by Corollary \ref{convergenzainnorma} and $\bar w_\eps=-\bar m_\eps \nabla H_\eps(\nabla \bar u_\eps)\to \bar w=-\bar m\nabla H_0(\nabla \bar u)$  locally uniformly (by Proposition \ref{propconv}) and  weakly in $L^{\frac{\gamma'(\alpha+1)}{\gamma'+\alpha}}$ by the same argument as in the proof of Theorem \ref{existence}.

Let now $(m,w)\in \mathcal{K}_{1,M}$ be such that $m(1+|y|^b)\in L^1(\R^N)$.
Using the minimality of $(\bar m_\eps, \bar w_\eps)$, \eqref{en1} and \eqref{en1gamma}, we conclude that 
\[\mathcal{E}_0(m,w)\geq \lim_\eps  \mathcal{E}_{\eps}(m(\cdot-y_\eps),w(\cdot-y_\eps)) \geq \lim_\eps  \mathcal{E}_{\eps}(\bar m_\eps, \bar w_\eps)\geq \mathcal{E}_0(\bar m,\bar w).\]
This implies  \eqref{min2}.
\smallskip

To obtain the first part of the theorem, that is the existence of a solution to \eqref{mfglimit2}, we need to prove that the function $g$ appearing in Proposition \ref{propconv} is actually zero on $\R^N$. 
To do that, we derive a better estimate on the term $V_\eps(y+y_\eps)$, in particular we show that  $V_\eps(y+y_\eps)\to 0$ locally uniformly in $\R^N$. 

By minimality of $(\bar m_\eps, \bar w_\eps)$ and $(\bar m, \bar w)$, \eqref{Lassrescaled}, \eqref{Fresc}  and \eqref{en2} we get  that
\begin{multline*} \mathcal{E}_\eps(\bar m_\eps, \bar w_\eps)\leq \mathcal{E}_\eps (\bar m(\cdot+y_\eps), \bar w(\cdot+y_\eps)) \\
\leq \mathcal{E}_0(\bar m, \bar w) +\int_{\R^N} \bar m(y+y_\eps)V_\eps(y+y_\eps)dy+C \eps^{\frac{N\alpha \gamma'}{\gamma'-N\alpha}}
\leq \mathcal{E}_0(\bar m_\eps, \bar w_\eps) +C_1 \eps^{\frac{N\alpha \gamma'}{\gamma'-N\alpha}}. \end{multline*}
Again using \eqref{Fresc} and \eqref{Lassrescaled}  we get
\[ \mathcal{E}_0(\bar m_\eps, \bar w_\eps) +C_1 \eps^{\frac{N\alpha \gamma'}{\gamma'-N\alpha}}\leq    \int_{\R^N} \bar m_\eps L_\eps \left(-\frac{\bar w_\eps}{\bar m_\eps}\right)+F_\eps (\bar m_\eps)dy +C\eps^{\frac{N\alpha \gamma'}{\gamma'-\alpha N}}M+C \eps^{\frac{N\alpha \gamma'}{\gamma'-N\alpha}}.\] 
So, putting together the last two inequalities, we conclude that
\begin{equation}\label{intvv}\int_{\R^N} \bar m_\eps V_\eps(y+y_\eps)dy\leq C \eps^{\frac{N\alpha \gamma'}{\gamma'-N\alpha}}.\end{equation}
Recalling \eqref{riscalata}, this implies that for all $R>0$, we get
\[C_V^{-1} (\max\{ \eps^{\frac{\gamma'}{\gamma'-\alpha N}}|y_\eps|-\eps^{\frac{\gamma'}{\gamma'-\alpha N}}R- C_V, 0\})^{b} \int_{B(0,R)}\bar m_\eps dy\leq C.\]
Using \eqref{intmnuovo}, we conclude that there exists $C>0$ such that
\begin{equation}\label{puntoy} \eps^{\frac{\gamma'}{\gamma'-\alpha N}}|y_\eps|\leq C.\end{equation} 
In turns this gives, recalling again \eqref{riscalata}, that
\[0\leq V_\eps(y+y_\eps)\leq C_V \eps^{\frac{N \alpha \gamma'}{\gamma'-\alpha N}} (1 + \eps^{\frac{\gamma'}{\gamma'-\alpha N}}|y|+\eps^{\frac{\gamma'}{\gamma'-\alpha N}}|y_\eps|)^{b}\leq C  \eps^{\frac{N \alpha \gamma'}{\gamma'-\alpha N}} (1 +|y|)^{b} \]
which implies that $V_\eps(y+y_\eps)\to 0$ locally uniformly. 
\end{proof} 

\begin{remark}\label{remgs} \upshape If $H$ and $f$ satisfy the growth conditions \eqref{Hass} and \eqref{assFlocal}, arguing as before one has that there exists a classical  solution to the potential-free version of \eqref{mfg}, 
\begin{equation}\label{mfgnopot}\begin{cases}
- \Delta u+ H(\nabla u)+ \lambda = f(m)\\
- \Delta  m - \di( \nabla H(\nabla  u)m)=0\\\int_{\R^N}  m=M. \end{cases}
\end{equation}
In addition, $(m, -\nabla H(\nabla  u)m)$ is a minimizer of \[
(m, w) \mapsto \int_{\R^N} mL\left(-\frac{w}{m}\right) + F(m)dx
\]
among $(m,w)\in \mathcal{K}_{1, M},\ m(1+|y|^b)\in L^1(\R^N)$. This can be done as follows: start with a sequence $(u_\delta, m_\delta, \lambda_\delta)$ solving
\begin{equation}\label{mfgdelta}\begin{cases}
- \Delta u_\delta+ H(\nabla u_\delta)+ \lambda_\delta= f(m_\delta) + \delta|x|^b\\
- \Delta  m_\delta - \di( \nabla H(\nabla  u_\delta)m_\delta)=0\\\int_{\R^N}  m_\delta=M. \end{cases}
\end{equation}
with $\delta = \delta_n \to 0$. Such a sequence exists by Theorem \ref{exthm}. The problem of passing to the limit in \eqref{mfgdelta} to obtain \eqref{mfgnopot} is the same as passing to the limit in \eqref{mfgresc}, and it is even simpler: in \eqref{mfgresc}, one has to be careful as the Hamiltonian $H_\eps$ and the coupling $f_\eps$ vary as $\eps \to 0$ (still, they converge uniformly), while in \eqref{mfgdelta} they are fixed, and only the potential is vanishing. We observe that $b > 0$ could be chosen arbitrarily, the perturbation $\delta|x|^b$ always disappears in the limit. Still, the limit $m,u$ somehow retains a memory of $b$ in terms of energy properties: $m$ minimizes an energy among competitors satisfying $m(1+|y|^b)\in L^1(\R^N)$.
\end{remark}
 
\begin{remark} \upshape We stress that uniqueness of solutions for \eqref{mfglimit2} does not hold in general; for example, a triple $(u, m, \lambda)$ solving the system may be translated in space to obtain a full family of solutions. On the other hand, a more subtle issue is the uniqueness of $m$ in the second equation (with $\nabla u$ fixed), that is, if $(u, m_1,  \lambda)$ and $(u, m_2, \lambda)$ are solutions, then $m_1 \equiv m_2$. This property is intimately related to the ergodic behaviour of the optimal trajectory $d X_s = -\nabla H_0(\nabla u (X_s)) ds + \sqrt{2 \eps} \, d B_s$ (see, for example, \cite{c14} and references therein). It is well-known that uniqueness for the Kolmogorov equation is guaranteed by the existence of a so-called Lyapunov function; in our cases, it can be checked that $u$ itself (or increasing functions of $u$, as in \eqref{Phidef}) acts as a Lyapunov function, so uniqueness of $m$ and ergodicity holds for \eqref{mfglimit2} and \eqref{mfg}.
\end{remark}
\subsection{Concentration of mass} 
The last problem we address is the localization of the point $y_\eps$, to conclude the proof of Theorem \ref{concthm}. Rewriting \eqref{intmnuovo} in view of \eqref{rescaling} and \eqref{resc2}, we get that for all $\eta>0$ there exist $R, \eps_0$ such that for all $\eps\leq \eps_0$,

\begin{equation}\label{conc1} \int_{B(\eps^{\frac{\gamma'}{\gamma'-\alpha N}}y_\eps, \eps^{\frac{\gamma'}{\gamma'-\alpha N}}R)} m(x)dx\geq M-\eta,\end{equation}
where $m$ is the  classical solution to \eqref{mfg} given in Theorem \ref{exthm}, and $\bar m_\eps(y ) = \eps^{\frac{N \gamma'}{\gamma'-\alpha N}}  m(\eps^{\frac{\gamma'}{\gamma'-\alpha N}} y+\eps^{\frac{\gamma'}{\gamma'-\alpha N}} y_\eps)$. 

By \eqref{puntoy}, we know that, up to subsequences, $\eps^{\frac{\gamma'}{\gamma'-\alpha N}}y_\eps\to \bar x$. Our aim is to locate this point, which is the point where mass concentrates. We need a preliminary lemma stating the existence of suitable competitors that will be used in the sequel.

\begin{lemma} For all $\eps \le \eps_0$, there exists $(\hat m_\eps, \hat w_\eps)\in \mathcal{K}_{1, M}$ that minimize
\begin{equation}\label{energiann} (m, w) \mapsto \int_{\R^N} m L_\eps \left(-\frac{ w}{ m}\right) +F_\eps(m)\, dy
\end{equation}
among  $(m,w)\in \mathcal{K}_{1, M},\ m(1+|y|^b)\in L^1(\R^N)$.
Moreover, for some positive constants $c_1, c_2$ independent of $\eps$,
\begin{equation}\label{expdecay}
\hat m_\eps (y) \le c_1 e^{-c_2|y|} \qquad \text{on $\R^N$}.
\end{equation}
\end{lemma}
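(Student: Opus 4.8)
The plan is to construct $(\hat m_\eps,\hat w_\eps)$ by essentially the same variational machinery already developed for $\cE_\eps$ and $\cE$, but now for the \emph{potential-free} energy \eqref{energiann} restricted to the class of $(m,w)\in\cK_{1,M}$ with $m(1+|y|^b)\in L^1(\R^N)$. First I would observe that this energy coincides, up to the vanishing remainders in \eqref{Lassrescaled} and \eqref{Fresc}, with a rescaled MFG energy of exactly the type treated in Section \ref{sectionreg}, with the potential term dropped: indeed $m L_\eps(-w/m)+F_\eps(m)$ differs from $\cE_\eps(m,w)$ only by the nonnegative term $V_\eps(y+y_\eps)m$. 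Since the subcriticality condition \eqref{alpha} is unchanged, the a priori lower bound \eqref{energybound1} (in its rescaled form) still applies once we note $V_\eps\ge 0$, so the infimum of \eqref{energiann} over $\cK_{1,M}$ is finite; and because we only \emph{enlarge} the functional by adding the potential, the infimum over the smaller class $\{m(1+|y|^b)\in L^1\}$ is still finite and bounded below by the same constant.

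The existence of a minimizer then follows by the direct method, mimicking Proposition \ref{existence}: take a minimizing sequence $(m_n,w_n)$, use the Sobolev estimates of Lemma \ref{lemma1} and Corollary \ref{calpha} (which do not involve the potential) to get uniform bounds on $\|m_n\|_{W^{1,r}}$ for $r<q$ and on $\|w_n\|_{L^{\gamma'(\alpha+1)/(\gamma'+\alpha)}}$, extract weak limits, and pass to the limit by lower semicontinuity of $(m,w)\mapsto \int m L_\eps(-w/m)$ together with strong $L^{\alpha+1}$-convergence on compacts. The one point requiring care is tightness: without $V$, there is no coercivity to prevent mass from escaping to infinity, so the minimizing sequence need not converge in $L^1(\R^N)$. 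Here I would argue exactly as in the concentration-compactness step (Lemma \ref{subadd} and Theorem \ref{nodicotomy}): the energy \eqref{energiann} is strictly sub-additive in $M$ by the same scaling computation as \eqref{subadd2} (the $F_\eps$-term is superlinear and strictly dominates, since by \eqref{Fresc} and the bound \eqref{risc2}-type estimate $\int m^{\alpha+1}$ stays bounded below on minimizing sequences), which rules out dichotomy; vanishing is excluded by the local lower bound on $\int_{B_1} m_n$ coming from the associated HJB solution exactly as in Proposition \ref{yepsilon}; hence concentration occurs and, after translating, the limit has full mass $M$. The constraint $m(1+|y|^b)\in L^1$ is preserved in the limit because the exponential decay of the minimizer (see below) gives it for free, so the minimizer over $\cK_{1,M}$ automatically lies in the restricted class and is therefore a minimizer there too.

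For the exponential bound \eqref{expdecay}, I would first produce the associated solution triple $(\hat u_\eps,\hat m_\eps,\hat\lambda_\eps)$ of the potential-free rescaled MFG system via the Fenchel--Rockafellar duality argument of Proposition \ref{ex} (with $V_\eps\equiv 0$), noting that by Theorem \ref{principal_eig}(iii) and the strict negativity of $\hat\lambda_\eps$ — which follows from the upper bound \eqref{er} for $\tilde e_\eps(M)$, $\hat\lambda_\eps M = \tilde e_\eps(M)+\int f_\eps(\hat m_\eps)\hat m_\eps - F_\eps(\hat m_\eps) \le -C_2(M)+CM\eps^{\ldots}<0$ — one gets $\hat u_\eps(x)\ge C|x|-C^{-1}$ and $|\nabla \hat u_\eps|\le K$ uniformly in $\eps$ from Theorems \ref{boundedfrombelowsol} and \ref{bernstein2} with $b=0$. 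Then the Lyapunov function $\Phi=e^{\kappa \hat u_\eps}$ satisfies $-\Delta\Phi+\nabla H_\eps(\nabla\hat u_\eps)\cdot\nabla\Phi\ge\delta\Phi$ outside a fixed ball, exactly as in \eqref{Phidef}, giving $\int e^{\kappa\hat u_\eps}\hat m_\eps<\infty$ with constants independent of $\eps$ (here the uniform $L^\infty$ bound of Theorem \ref{stimalinfinito} enters to control $\hat m_\eps^\alpha$ in the Lyapunov inequality), and finally the pointwise bound of \cite[Theorem 6.1]{MP05} upgrades this to $\hat m_\eps(y)\le c_1 e^{-c_2|y|}$ with $c_1,c_2$ independent of $\eps$, using $|\nabla H_\eps(\nabla\hat u_\eps)|\le K$ uniformly. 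The main obstacle I anticipate is making the concentration-compactness/tightness argument genuinely \emph{uniform in $\eps\le\eps_0$}, i.e.\ checking that the strict sub-additivity gap and the no-vanishing local mass bound can be taken with $\eps$-independent constants, so that the minimizer exists for every $\eps\le\eps_0$ and inherits $\eps$-uniform decay; but this is exactly parallel to Lemmas \ref{subadd} and the proof of Theorem \ref{nodicotomy}, where the $\eps$-dependence was already tracked carefully through the remainders of order $\eps^{N\alpha\gamma'/(\gamma'-N\alpha)}$.
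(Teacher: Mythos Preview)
Your overall strategy is close to the paper's, and the existence part is essentially fine (the paper obtains $(\hat m_\eps,\hat w_\eps)$ via the perturbation route of Remark~\ref{remgs}, i.e.\ by solving the problem with potential $\delta|x|^b$ and sending $\delta\to0$, rather than by a direct concentration-compactness argument on a minimizing sequence; either works, though your no-vanishing step is slightly off since Proposition~\ref{yepsilon} uses the HJB equation for a \emph{minimizer}, not for a minimizing sequence).

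The genuine gap is in the uniform exponential decay. You invoke Theorems~\ref{boundedfrombelowsol} and~\ref{principal_eig}(iii) with $b=0$ to get $\hat u_\eps(x)\ge C|x|-C^{-1}$ uniformly in $\eps$, and then the Lyapunov inequality ``outside a fixed ball''. But when $b=0$ both of those results require the hypothesis \eqref{pos}/\eqref{positiva}, namely
\[
-f_\eps(\hat m_\eps(x))-\hat\lambda_\eps>\delta>0\qquad\text{for }|x|>R,
\]
with $R,\delta$ \emph{independent of $\eps$}. Since $-f_\eps(\hat m_\eps)\sim C_f\hat m_\eps^\alpha$, this needs $\hat m_\eps(x)$ to be uniformly small for $|x|$ large. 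The $L^\infty$ bound from Theorem~\ref{stimalinfinito} that you cite only gives $\hat m_\eps\le C$ globally; it says nothing about decay. So the Lyapunov argument cannot be set up on a fixed ball with $\eps$-independent constants from that bound alone, and the circularity (you need uniform decay to prove uniform decay) is not broken.

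The paper closes this gap as follows: first a bootstrap (choose $p>N$ with $\alpha<\gamma'/p$, combine the Bernstein estimate for $\nabla\hat u_\eps$ with Proposition~\ref{ell_regularity} and interpolation) yields a \emph{uniform} H\"older bound $\|\hat m_\eps\|_{C^{0,\beta}(\R^N)}\le C$. Then one passes to a subsequential limit $(\hat u,\hat m,\hat\lambda)$ and applies the concentration-compactness Theorem~\ref{nodicotomy} to the limit to show $\int\hat m=M$; Corollary~\ref{convergenzainnorma} then gives the uniform concentration $\int_{B_R}\hat m_\eps\ge M-\eta$. Combined with the uniform H\"older bound, Lemma~\ref{van_lemma} converts this into $\max_{|x|\ge R}\hat m_\eps\le C\eta^{\beta/(\beta+N)}$, which finally verifies \eqref{pos} uniformly and lets the Lyapunov argument run with $\eps$-independent constants. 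Your proposal correctly flags uniformity in $\eps$ as the main obstacle, but locates it in the sub-additivity step rather than here, where the missing ingredient is this uniform smallness of $\hat m_\eps$ at infinity.
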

\begin{proof} The existence of $(\hat m_\eps, \hat w_\eps)$ is stated in Remark \ref{remgs}, together with a solution $(\hat u_\eps, \hat m_\eps, \hat \lambda_\eps)$ to the associated MFG system as the optimality conditions (see \eqref{sys9} below). To obtain the uniform exponential decay, we can argue by Lyapunov functions as in Proposition \ref{propconv}; here, we have to be careful, since the  argument in  Proposition \ref{propconv} mainly require
\[
f_\eps(\hat m_\eps)-\hat \lambda_\eps -H_\eps(0)\geq - \hat \lambda_\eps /2 > 0
\]
outside some fixed ball $B_r(0)$. This claim can be proved as follows: first, $-\hat \lambda_\eps$ is bounded away from zero for $\eps$ small. Indeed, 
\[
\hat \lambda_\eps M = \int_{\R^N} \hat m_\eps L_\eps \left(-\frac{ \hat w_\eps}{\hat m_\eps}\right) +f_\eps(\hat m_\eps)\hat m_\eps\, dy \le \mathcal E_\eps(\bar m_\eps, \bar w_\eps) + o_\eps(1) \le -C.
\]
The inequality follows by minimality of $(\hat m_\eps, \hat w_\eps)$ and $(\bar m_\eps, \bar w_\eps)$, and (rescaled) \eqref{energybound2}.

We now prove that $\hat m_\eps$ decays as $|x| \to \infty$ uniformly in $\eps$. Note that $\hat w_\eps = -\nabla H_\eps(\nabla  \hat u_\eps)\hat m_\eps$, where $(\hat u_\eps, \hat m_\eps, \hat \lambda_\eps)$ solves
\begin{equation}\label{sys9}
\begin{cases}
- \Delta  \hat u_\eps+ H_\eps(\nabla  \hat u_\eps)+ \lambda = f_\eps(\hat m_\eps)\\
- \Delta  \hat m_\eps - \di( \nabla H_\eps(\nabla   \hat u_\eps)\hat m_\eps)=0\\\int_{\R^N}  \hat m_\eps=M. \end{cases}
\end{equation}
We derive local estimates for $\hat u_\eps$ and $\hat m_\eps$. We shift the $x$-variable so that $\hat u_\eps(0) = 0 = \min_{\R^N} \hat u_\eps$ for all $\eps$. Choose $p > N$ such that
\[
\alpha < \frac{\gamma'}{p} < \frac{\gamma'}{N}.
\]
If one considers the HJB equation solved by $\hat u_\eps$, recalling \eqref{assFlocalr} and \eqref{Hassrescaled}, Theorem \ref{bernstein2}  gives the existence of $C > 0$ such that \[\|\nabla \hat u_\eps\|_{L^{\infty}(B_{2R}(x_0))} \le K(\|\hat m_\eps\|_{L^{\infty}(B_{4R}(x_0))}^\alpha+1)^{\frac{1}{\gamma}}.\] Note that $C > 0$ does not depend on $\eps$ and $x_0$. Turning to the Kolmogorov equation, again by \eqref{Hassrescaled} and Proposition \ref{ell_regularity},
\[
\|\hat m_\eps\|_{W^{1,p}(B_{R}(x_0))}\le C (\|\nabla \hat u_\eps\|^{\gamma-1}_{L^\infty(B_{2R}(x_0))} + 1) \|m_\eps\|_{L^p(B_{2R}(x_0))}.
\]
By the previous $L^\infty$ estimate on $\nabla u_\eps$ and interpolation of the $L^p$ norm of $m$ between $L^1$ and $L^\infty$ we get
\[
\|\hat m_\eps\|_{W^{1,p}(B_{R}(x_0))}\le C(\|\hat  m_\eps\|^{\frac{\alpha}{\gamma'}}_{L^\infty(B_{4R}(x_0))} + 1)\|\hat  m_\eps\|_{L^1(B_{4R}(x_0))}^{1/p}\|\hat  m_\eps\|_{L^\infty(B_{4R}(x_0))}^{1-1/p}.
\]
Recall that $\|\hat m_\eps\|_{L^1(B_{4R}(x_0))} \le M$; then, since $p > N$, by Sobolev embeddings we obtain that for some $\beta > 0$,
\begin{equation}\label{holdere1}
\|\hat m_\eps\|_{C^{0,\beta}(B_{R}(x_0))} \le C(\|\hat m_\eps\|^{\frac{\alpha}{\gamma'}}_{L^\infty(\R^N)} + 1)\|\hat m_\eps\|_{L^\infty(\R^N)}^{1-1/p}.
\end{equation}
First, since $C$ does not depend on $x_0$, this yields $\|\hat m_\eps\|_{L^\infty(\R^N)} \le C$, by the choice of $p < \gamma'/\alpha$. Secondly, plugging back this estimate into \eqref{holdere1}, we conclude $\|\hat m_\eps\|_{C^{0,\beta}(\R^N)} \le C $. 

Then, using these estimates, we get that up to subsequences, $\hat \lambda_\eps\to \hat \lambda$, $\hat u_\eps\to \hat u$ locally uniformly in $C^1$, and  
$\hat m_\eps\to \hat m$ locally uniformly, where $(\hat u, \hat m, \hat \lambda)$ is a solution to \eqref{mfglimit} with $g\equiv 0$. Arguing exactly as in Proposition \ref{propconv}, we get
that $\tilde u$, $\tilde m$ satisfy  the estimates  \eqref{improvedintbarra} (eventually modifying the constants). Moreover $\int_{\R^N} \hat m\,dx=a\in (0, M]$.
Observe now that Lemma \ref{subadd} and Theorem \ref{nodicotomy} hold also for the energy \eqref{energiann}, since it coincides with the energy $\mathcal{E}_\eps$ without  the potential term $\int_{\R^N} V_\eps m\, dx$.  Therefore we can apply Theorem \ref{nodicotomy} to $\hat m$, to conclude that actually $\int_{\R^N}\hat m\, dx=M$. So, by Corollary \ref{convergenzainnorma}, we obtain that for all $\eta>0$, there exist $R>0$ and $\eps_0$ such that 
for all $\eps\leq \eps_0$,
\begin{equation}\label{concmeps}\int_{B(0,R)}  \hat m_\eps dx \geq M-\eta.\end{equation}
By \eqref{holdere1} and \eqref{concmeps}, using Lemma \ref{van_lemma}, we get that 
\[
f_\eps(\hat m_\eps) \ge \frac{\hat \lambda_\eps}{4}
\]
outside a ball $B_r(0)$. Since $H_\eps(0) \to 0$, the claim
\begin{equation}\label{est99}
f_\eps(\hat m_\eps)-\hat \lambda_\eps -H_\eps(0)\geq - \hat \lambda_\eps /2 > 0
\end{equation}
outside a ball $B_r(0)$ follows. As previously mentioned, me may now proceed and conclude as in Proposition \ref{propconv}; basically, \eqref{est99} implies that $x \mapsto e^{k \hat u_\eps(x)}$ acts as a Lyapunov function for $\hat m_\eps$ for some small $k > 0$, giving
\[
c \int_{\R^N} e^{k |x| - k_1} \hat m_\eps \le \int_{\R^N} e^{k \hat u_\eps} \hat m_\eps \le C
\]
for all $\eps$ small, that easily implies the pointwise exponential decay \eqref{expdecay} of $\hat m_\eps$ by H\"older regularity of $\hat m_\eps$ itself.
\end{proof}

For general potentials, the point where mass concentrates is a minimum for $V$.  
\begin{proposition}\label{minimo} Up to subsequences,  $\eps^{\frac{\gamma'}{\gamma'-\alpha N}}y_\eps\to \bar x$, where $V(\bar x)=0$, i.e. $\bar x$ is a minimum of $V$. 
\end{proposition}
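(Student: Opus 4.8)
The plan is to compare the rescaled minimizer $(\bar m_\eps,\bar w_\eps)$ of $\mathcal{E}_\eps$ with the potential-free minimizers $(\hat m_\eps,\hat w_\eps)$ of \eqref{energiann}, translated so as to be centered over a minimum point of $V$. Since $\bar u_\eps$ attains its minimum (by \eqref{uhasmin}) and \eqref{puntoy} holds, the points $\bar x_\eps:=\eps^{\frac{\gamma'}{\gamma'-\alpha N}}y_\eps$ are bounded, so along a further subsequence $\bar x_\eps\to\bar x$ for some $\bar x\in\R^N$; by continuity and coercivity of $V$ (see \eqref{vass}) its minimum is attained at some $\bar x_0$, and since trivially $V(\bar x)\ge\min_{\R^N}V$, it suffices to prove the reverse inequality $V(\bar x)\le\min_{\R^N}V$ (which, under the normalization $\min_{\R^N}V=0$, is the claim).

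The core step is an energy comparison. Write $J_\eps(m,w):=\int_{\R^N} mL_\eps(-w/m)+F_\eps(m)\,dy$, which is translation invariant and, by the preceding lemma, is minimized over the class $\{(m,w)\in\mathcal{K}_{1,M}:\ m(1+|y|^b)\in L^1(\R^N)\}$ by $(\hat m_\eps,\hat w_\eps)$; recall also that $(\bar m_\eps,\bar w_\eps)$ minimizes $\mathcal{E}_\eps=J_\eps+\int_{\R^N} V_\eps(\cdot+y_\eps)m\,dy$ over $\mathcal{K}_{1,M}$ (Proposition \ref{yepsilon}) and satisfies $\bar m_\eps(1+|y|^b)\in L^1(\R^N)$ (rescaled \eqref{moreintegr}). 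I would choose the shift $z_\eps$ so that $\eps^{\frac{\gamma'}{\gamma'-\alpha N}}(z_\eps+y_\eps)=\bar x_0$ and test the minimality of $(\bar m_\eps,\bar w_\eps)$ against $(\hat m_\eps(\cdot-z_\eps),\hat w_\eps(\cdot-z_\eps))$; using translation invariance of $J_\eps$ and the minimality of $(\hat m_\eps,\hat w_\eps)$ for $J_\eps$ to absorb the $J_\eps$ terms, this yields
\[
\int_{\R^N} V_\eps(y+y_\eps)\,\bar m_\eps(y)\,dy\ \le\ \int_{\R^N} V_\eps(y+y_\eps)\,\hat m_\eps(y-z_\eps)\,dy.
\]

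The final step is to pass to the limit in this inequality after dividing by $\eps^{\frac{N\alpha\gamma'}{\gamma'-\alpha N}}$, which rewrites it as $\int_{\R^N} V(\eps^{\frac{\gamma'}{\gamma'-\alpha N}}y+\bar x_\eps)\bar m_\eps(y)\,dy\le\int_{\R^N} V(\eps^{\frac{\gamma'}{\gamma'-\alpha N}}y+\bar x_0)\hat m_\eps(y)\,dy$. On the left, Fatou's lemma together with $\bar m_\eps\to\bar m$ a.e.\ (Proposition \ref{propconv}), $\int_{\R^N}\bar m=M$ (Theorem \ref{nodicotomy}), $\bar x_\eps\to\bar x$ and continuity of $V$ gives $\liminf_\eps\ge M\,V(\bar x)$. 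On the right, the uniform exponential decay $\hat m_\eps(y)\le c_1e^{-c_2|y|}$ from \eqref{expdecay} dominates the polynomial upper bound \eqref{vassresc} for $V(\eps^{\frac{\gamma'}{\gamma'-\alpha N}}\,\cdot+\bar x_0)$, so after splitting into $\{|y|\le R\}$ (where $V(\eps^{\frac{\gamma'}{\gamma'-\alpha N}}y+\bar x_0)\to V(\bar x_0)$ uniformly and $\int\hat m_\eps\le M$) and $\{|y|>R\}$ (where the integral is uniformly small, then letting $R\to\infty$), one obtains $\limsup_\eps\le M\,V(\bar x_0)=M\min_{\R^N}V$. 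Hence $V(\bar x)\le\min_{\R^N}V$, which completes the proof.

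I expect the only real obstacle to be bookkeeping: checking that the shifted competitor $(\hat m_\eps(\cdot-z_\eps),\hat w_\eps(\cdot-z_\eps))$ is admissible for both minimization problems and that $(\bar m_\eps,\bar w_\eps)$ is admissible for the potential-free one, so that the two minimality inequalities can be chained and the $J_\eps$ terms genuinely cancel; and arranging the Fatou/dominated passage to the limit on the two sides, for which the uniform decay \eqref{expdecay} established in the preceding lemma is precisely what is needed. No new a priori estimate is required.
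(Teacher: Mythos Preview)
Your proof is correct and follows essentially the same approach as the paper: both chain the minimality of $(\bar m_\eps,\bar w_\eps)$ for $\mathcal E_\eps$ with the minimality of a shifted $(\hat m_\eps,\hat w_\eps)$ for the potential-free functional $J_\eps$ to obtain the key inequality $\int V_\eps(\cdot+y_\eps)\bar m_\eps\le\int V_\eps(\cdot+y_\eps)\hat m_\eps(\cdot-z_\eps)$, and then pass to the limit using concentration of $\bar m_\eps$ on the left and the uniform exponential decay \eqref{expdecay} on the right. The only cosmetic difference is that for the lower bound you apply Fatou's lemma directly in rescaled variables, whereas the paper uses the concentration estimate \eqref{conc1} in the original variables.
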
 
\begin{proof} Fix a generic $z\in \R^N$ and observe that $(\hat m_\eps(\cdot+z), \hat w_\eps(\cdot+z))$ is still a minimizer of $ \int m L_\eps \left(-\frac{ w}{ m}\right) +F_\eps(m)$.  
By  minimality of $(\bar m_\eps, \bar w_\eps)$ and of  $(\hat m_\eps(\cdot+z), \hat w_\eps(\cdot+z))$, 
we get that \begin{multline*} \int_{\R^N} \bar m_\eps L_\eps \left(-\frac{\bar w_\eps}{ \bar m_\eps}\right) +F_\eps(\bar m_\eps)dy +  \int_{\R^N} \bar m_\eps(y) V_\eps(y+y_\eps)dy =  \mathcal{E}_\eps(\bar m_\eps, \bar w_\eps) \\ \leq 
\mathcal{E}_\eps(\hat m_\eps(\cdot + z), \hat w_\eps(\cdot + z) ) \leq \int_{\R^N} \bar m_\eps L_\eps \left(-\frac{\bar w_\eps}{ \bar m_\eps}\right) +F_\eps(\bar m_\eps) +  \int_{\R^N} \hat m_\eps(y+z) V_\eps(y+y_\eps)dy.\end{multline*}
In particular this gives that \begin{equation}\label{dise} \int_{\R^N} \bar m_\eps(y) V_\eps(y+y_\eps)dy\leq \int_{\R^N} \hat m_\eps(y+z) V_\eps(y+y_\eps)dy= \int_{\R^N} \hat m_\eps(y) V_\eps(y+y_\eps-z)dy\qquad \forall z\in\R^N.\end{equation} 
Recalling the rescaling of $V_\eps$  and of $\bar m_\eps$ in \eqref{rescaling}, this is equivalent to 
 \begin{equation}\label{dise2} \int_{\R^N} m (x) V(x)dx 
 \leq \int_{\R^N} \hat m_\eps(y) V(\eps^{\frac{\gamma'}{\gamma'-\alpha N}}y+\eps^{\frac{\gamma'}{\gamma'-\alpha N}}y_\eps-\eps^{\frac{\gamma'}{\gamma'-\alpha N}}z)dy\qquad \forall z\in\R^N\end{equation} 
where $m$ is  the classical solution to \eqref{mfg} given in Theorem \ref{exthm}, such that $\bar m_\eps(y ) = \eps^{\frac{N \gamma'}{\gamma'-\alpha N}}  m(\eps^{\frac{\gamma'}{\gamma'-\alpha N}} y+\eps^{\frac{\gamma'}{\gamma'-\alpha N}} y_\eps)$. 

By \eqref{puntoy}, we get that up to passing to a  subsequence, $\eps^{\frac{\gamma'}{\gamma'-\alpha N}}y_\eps\to \bar x$ for some $\bar x\in\R^N$. Then 
by \eqref{conc1}, we get that 
\begin{equation}\label{alto} \liminf_{\eps\to 0} \int_{\R^N} m(x) V(x)dx \geq \liminf_{\eps\to 0}   \int_{B(\eps^{\frac{\gamma'}{\gamma'-\alpha N}}y_\eps, \eps^{\frac{\gamma'}{\gamma'-\alpha N}}R)} m(x)V(x)dx \geq (M-\eta)V(\bar x).\end{equation} 

We fix $\bar z$ such that $V(\bar z)=0$ and we choose in \eqref{dise2} $z= y_\eps-\eps^{-\frac{\gamma'}{\gamma'-\alpha N}}\bar z$.  
 We have, by the Lebesgue convergence theorem and \eqref{expdecay}, 
\begin{equation}\label{basso} \limsup_{\eps \to 0}\int_{\R^N} \hat m_\eps(y) V(\eps^{\frac{\gamma'}{\gamma'-\alpha N}}y+\bar z)dy\leq \limsup_{\eps \to 0} c_1 \int_{\R^N}e^{-c_2|y|} V(\eps^{\frac{\gamma'}{\gamma'-\alpha N}}y+\bar z)dy= 0.\end{equation}
By \eqref{alto}, \eqref{basso} and \eqref{dise2}, we conclude $V(\bar x)=0$. 

\end{proof} 

If we assume that the potential   $V$  has  a finite number of minima and polynomial behavior, that is, it satisfies assumption \eqref{Vpoly}, then we get that at the limit $\eps^{\frac{\gamma'}{\gamma'-\alpha N}}y_\eps $ selects at the limit the  more stable minima of $V$, as we will show in the next proposition.

\begin{proposition}\label{teoy} 
Assume that $V$ satisfies assumption \eqref{Vpoly}. Then, up to subsequences, there holds that 
\[\eps^{\frac{\gamma'}{\gamma'-\alpha N}}y_\eps\to x_i\qquad \text{as $\eps\to 0$}\]
where  $i\in \{j=1,\dots, n, \ | \ b_j=\max_{k} b_k\}$. 
\end{proposition}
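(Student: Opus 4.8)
The plan is to extract from inequality \eqref{dise2} a two-sided estimate, in powers of $\eps$, for the quantity $I_\eps := \int_{\R^N} m\,V\,dx$, where $m$ is the solution of \eqref{mfg} provided by Theorem \ref{exthm}. Set $\theta := \frac{\gamma'}{\gamma'-\alpha N}$. By Proposition \ref{minimo}, up to a subsequence $\eps^{\theta}y_\eps \to \bar x$ with $V(\bar x)=0$; under \eqref{Vpoly} the zero set of $V$ is exactly $\{x_1,\dots,x_n\}$, hence $\bar x = x_i$ for some $i$, and it remains to prove $b_i = \bar b$, where $\bar b := \max_k b_k$. Changing variables $x = \eps^{\theta}(y + y_\eps)$ and recalling that $\bar m_\eps(y) = \eps^{N\theta}m(\eps^{\theta}(y+y_\eps))$, one has $I_\eps = \int_{\R^N}\bar m_\eps(y)\,V(\eps^{\theta}y + \eps^{\theta}y_\eps)\,dy$, which is also the left-hand side of \eqref{dise2}.

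For the \emph{upper bound}, fix $j_0$ with $b_{j_0} = \bar b$ and apply \eqref{dise2} with the admissible choice $z = y_\eps - \eps^{-\theta}x_{j_0}$, so that $\eps^{\theta}y_\eps - \eps^{\theta}z = x_{j_0}$ and thus $I_\eps \le \int_{\R^N}\hat m_\eps(y)\,V(\eps^{\theta}y + x_{j_0})\,dy$. Split this integral over $\{|\eps^{\theta}y| < \rho_0\}$ and its complement, with $\rho_0>0$ small and fixed. On $B_{\rho_0}(x_{j_0})$ the function $h$ and the factors $|\cdot - x_l|^{b_l}$ for $l\ne j_0$ are bounded, so $V(x)\le A\,|x-x_{j_0}|^{b_{j_0}}$ there for an $\eps$-independent constant $A$; hence the first piece is $\le A\,\eps^{\theta\bar b}\int_{\R^N}\hat m_\eps(y)|y|^{\bar b}\,dy \le C\eps^{\theta\bar b}$, using the $\eps$-uniform exponential decay \eqref{expdecay}. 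On $\{|y|\ge \rho_0\eps^{-\theta}\}$ one bounds $V$ by the polynomial growth \eqref{vass} and again uses \eqref{expdecay}, so that piece is $\le C\int_{|y|\ge\rho_0\eps^{-\theta}}e^{-c_2|y|}(1+|y|)^{b}\,dy$, which is smaller than $\eps^{\theta\bar b}$ once $\eps$ is small enough. Altogether $I_\eps \le C\eps^{\theta\bar b}$ for all small $\eps$.

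For the \emph{lower bound}, use $V\ge 0$ to restrict $I_\eps$ to $|y|\le R$, with $R$ as in \eqref{intmnuovo}. Since $\eps^{\theta}y + \eps^{\theta}y_\eps \to x_i$ uniformly on $B_R$, for small $\eps$ this point stays bounded away from every $x_l$ with $l\ne i$; combined with $h\ge C_V^{-1}$ this yields $V(\eps^{\theta}y+\eps^{\theta}y_\eps)\ge c\,|\eps^{\theta}y + \eps^{\theta}y_\eps - x_i|^{b_i} = c\,\eps^{\theta b_i}|y-\zeta_\eps|^{b_i}$, where $\zeta_\eps := \eps^{-\theta}x_i - y_\eps$, so that
\[
I_\eps \ \ge\ c\,\eps^{\theta b_i}\int_{|y|\le R}\bar m_\eps(y)\,|y-\zeta_\eps|^{b_i}\,dy .
\]
Two cases, along the subsequence: if $\zeta_\eps$ stays bounded, pass to a further subsequence $\zeta_\eps\to\zeta$ and use $\bar m_\eps\to\bar m$ in $L^1$ (Corollary \ref{convergenzainnorma}) together with $\bar m>0$ (Theorem \ref{fullconv}) to get $\int_{|y|\le R}\bar m_\eps|y-\zeta_\eps|^{b_i}\,dy \to \int_{|y|\le R}\bar m\,|y-\zeta|^{b_i}\,dy > 0$; if $|\zeta_\eps|\to\infty$, then $|y-\zeta_\eps|\ge |\zeta_\eps|-R\to\infty$ on $B_R$ while $\int_{B_R}\bar m_\eps \ge M-\eta$, so the integral is even larger. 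In both cases $I_\eps \ge c'\eps^{\theta b_i}$ for $\eps$ small, with $c'>0$.

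Comparing the two bounds gives $c'\eps^{\theta b_i}\le C\eps^{\theta\bar b}$ for all small $\eps$ along the subsequence, i.e. $c'\le C\eps^{\theta(\bar b - b_i)}$; since $\bar b - b_i\ge 0$ and $\eps\to 0$, this forces $b_i = \bar b = \max_k b_k$, which is the assertion. I expect the lower bound to be the main obstacle: the delicate point is that $\eps^{\theta}y_\eps$ may approach $x_i$ on a scale coarser than $\eps^{\theta}$, so the genuinely relevant small parameter is $|\eps^{\theta}y_\eps - x_i|$ rather than $\eps^{\theta}$ — this is exactly what the dichotomy on $\zeta_\eps$ is designed to absorb — and one must ensure that the ball $B_R$ from \eqref{intmnuovo} still carries a definite fraction of the mass of $\bar m_\eps$ and that the limit density $\bar m$ is strictly positive, so that $\int_{B_R}\bar m\,|y-\zeta|^{b_i}\,dy>0$. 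The two-sided polynomial control of $V$ near each $x_j$ (bounding $\prod_{l\ne j}|x-x_l|^{b_l}$ from above and below on a fixed neighborhood of $x_j$) is routine but should be stated carefully.
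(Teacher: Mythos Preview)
Your proof is correct and follows essentially the same strategy as the paper: both obtain the upper bound $I_\eps \le C\eps^{\theta\bar b}$ from \eqref{dise2} with $z = y_\eps - \eps^{-\theta}x_{j_0}$ and the exponential decay \eqref{expdecay}, and both extract the lower bound by restricting to $B_R$ and bounding $V$ from below near $x_i$. The only difference is cosmetic: where the paper first uses the upper bound to prove that the minimum of $|\eps^\theta y + \eps^\theta y_\eps - x_\iota|$ over $B_R$ is small (thereby forcing what you call $\zeta_\eps$ to be bounded) and then passes to the limit in the integral, you replace that intermediate step by a direct dichotomy on whether $\zeta_\eps$ stays bounded, which is slightly more economical but logically equivalent.
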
 

\begin{proof} 
By Proposition \ref{minimo}, we know that up to subsequences, $\eps^{\frac{\gamma'}{\gamma'-\alpha N}}y_\eps\to  x_{\iota}$ for some 
$\iota=1,\dots n$.  It remains to prove that $b_{\iota}=\max_i b_i$. Assume by contradiction that it is not true, and then $b_{\iota}<\max_i b_i$. 

 We compute for $j\in 1,\dots n$, recalling  the uniform exponential decay of $\hat m_\eps$ given in \eqref{expdecay},
\begin{multline}\label{uno}
\int_{\R^n} \hat m_\eps(y+y_\eps-\eps^{-\frac{\gamma'}{\gamma'-\alpha N}}x_j) V_\eps(y+y_\eps)dy =
\int_{\R^n} \hat m_\eps(y) V_\eps(y+\eps^{-\frac{\gamma'}{\gamma'-\alpha N}}x_j)dy\\\leq 
C_V\eps^{\frac{\gamma'N\alpha}{\gamma'-N\alpha}}\int_{\R^n}\hat m_\eps(y) \eps^{\frac{b_j\gamma'}{\gamma'-N\alpha}}|y|^{b_j}\prod_{i\neq j} |\eps^{\frac{\gamma'}{\gamma'-N\alpha}}y-x_i+x_j|^{b_i}dy\\ \leq C \eps^{\frac{\gamma'(N\alpha+b_j)}{\gamma'-N\alpha}}\int_{\R^n}\hat m_\eps(y)  |y|^{b_j}\prod_{i\neq j} |y-x_i+x_j|^{b_i}dy  \leq C\eps^{\frac{\gamma'(N\alpha+b_j)}{\gamma'-N\alpha}} \end{multline} 
Note in particular that we can choose in the previous inequality $b_j=\max_i b_i$.

We   get from \eqref{dise} applied to $z=y_\eps-\eps^{-\frac{\gamma'}{\gamma'-\alpha N}}x_j$, where $j$ is such that  $b_j=\max_i b_i$,  and from \eqref{uno}  the following improvement of \eqref{intvv} 
\begin{equation}\label{intvv2}\int_{B(0,R)} \bar m_\eps V_\eps(y+y_\eps)dy\leq \int_{\R^N} \hat m_\eps(y+y_\eps-\eps^{-\frac{\gamma'}{\gamma'-\alpha N}}x_j) V_\eps(y+y_\eps)dy\leq C \eps^{\frac{(N\alpha+\max b_i) \gamma'}{\gamma'-N\alpha}}\end{equation} for all $R\geq 0$.  We choose $R>0$ sufficiently large such that $ \int_{B(0, R)} \bar m_\eps dy \geq \frac{M}{2}$. 
Recalling the rescaling of $V$, \eqref{intvv2} implies  that 
\begin{equation}
\label{m}  C \eps^{\frac{ \max b_j \gamma'}{\gamma'-N\alpha}}\geq \frac{M}{2}C_V^{-1} \min_{y\in B(0,R)} \prod_{j = 1}^{n} |\eps^{\frac{\gamma'}{\gamma'-N\alpha}} y+ \eps^{\frac{\gamma'}{\gamma'-N\alpha}} y_\eps-x_j|^{b_j}.
\end{equation} 
Note that for $\eps$ sufficiently small $ |\eps^{\frac{\gamma'}{\gamma'-N\alpha}} y+ \eps^{\frac{\gamma'}{\gamma'-N\alpha}} y_\eps-x_j|\geq \delta>0$ for all $i\not = \iota$ and all $y\in B(0,R)$. 
So, by \eqref{m} we get that there exists $C>0$ for which 
\[ \min_{y\in B(0, R)}|\eps^{\frac{\gamma'}{\gamma'-N\alpha}} y+ 
\eps^{\frac{\gamma'}{\gamma'-N\alpha}} y_\eps-x_{\iota}|^{b_{\iota}}\leq C \eps^{\frac{ \max b_j \gamma'}{\gamma'-N\alpha}} \] and then 
\begin{equation}\label{m2} | \hat y_\eps- \eps^{-\frac{\gamma'}{\gamma'-N\alpha}} x_{\iota}|^{b_{\iota}}= \min_{y\in B(0, R)}|y+ y_\eps- \eps^{-\frac{\gamma'}{\gamma'-N\alpha}} x_{\iota}|^{b_{\iota}}\leq C \eps^{\frac{ (\max b_j-b_{\iota})  \gamma'}{\gamma'-N\alpha}} \to 0 \end{equation} for some $\hat y_\eps\in B(y_\eps, R)$. Let $z_\eps=\hat y_\eps-y_\eps \in B(0, R)$. Up to subsequences we can assume that $z_\eps\to \bar z\in B(0, R)$. 

We use now \eqref{intvv2}, recalling assumption \eqref{Vpoly}, we get that 
\begin{multline*}  C \eps^{\frac{ \max b_j \gamma'}{\gamma'-N\alpha}}\geq C_V^{-1} \int_{B(0,R)} \bar m_\eps(y) \prod_{j = 1}^{n} |\eps^{\frac{\gamma'}{\gamma'-N\alpha}} y+ \eps^{\frac{\gamma'}{\gamma'-N\alpha}} y_\eps-x_j|^{b_j} dy \\ 
\geq c_1 \eps^{\frac{ b_{\iota} \gamma'}{\gamma'-N\alpha}}\int_{B(0,R)} \bar m_\eps(y) |y-z_\eps +\hat y_\eps-\eps^{-\frac{\gamma'}{\gamma'-N\alpha}} x_{\iota}|^{b_{\iota}}dy.  \end{multline*} 
In particular this implies that 
\begin{equation} \label{tre} \lim_{\eps\to 0} \int_{B(0,R)} \bar m_\eps(y) |y-z_\eps +\hat y_\eps-\eps^{-\frac{\gamma'}{\gamma'-N\alpha}} x_{\iota}|^{b_{\iota}}dy=0.\end{equation}

Recalling that $\bar m_\eps\to \bar m$ locally uniformly (see \eqref{convergenza1}), that $\hat y_\eps- \eps^{-\frac{\gamma'}{\gamma'-N\alpha}} x_{\iota}\to 0$ by \eqref{m2},  and that $z_\eps\to \bar z$, we get 
\[\lim_{\eps\to 0} \int_{B(0,R)} \bar m_\eps(y) |y-z_\eps +\hat y_\eps-\eps^{-\frac{\gamma'}{\gamma'-N\alpha}} x_{\iota}|^{b_{\iota}}dy=\int_{B(0,R)} \bar m(y)|y-\bar z|^{b_{\iota}}dy >0. \] 
This gives a contradiction with \eqref{tre}. \end{proof}

As a consequence of the previous results, we can conclude with the
\begin{proof}[Proof of Theorem \ref{concthm}] Setting $x_\eps = \eps^{\frac{\gamma'}{\gamma'-\alpha N}}y_\eps$, it suffices to recall \eqref{conc1} and Propositions \ref{minimo}, \ref{teoy}. \end{proof}

\small
\addcontentsline{toc}{section}{References}
\bibliography{concMFGpart2}
\bibliographystyle{abbrv}

\medskip

\medskip
\begin{flushright}
\noindent \verb"annalisa.cesaroni@unipd.it"\\
Dipartimento di Scienze Statistiche\\ Universit\`a di Padova\\
Via Battisti 241/243, 35121 Padova (Italy)

\smallskip

\noindent \verb"cirant@math.unipd.it"\\
Dipartimento di Matematica ``Tullio Levi-Civita''\\ Universit\`a di Padova\\
via Trieste 63, 35121 Padova (Italy)
\end{flushright}

\end{document}